\documentclass{amsart}
\usepackage{amssymb,latexsym,amsfonts}
\usepackage{amsthm}
\usepackage{fontenc}
\usepackage{amsmath}
\usepackage{geometry}
\usepackage{graphicx}
\usepackage{stmaryrd}
\usepackage[all]{xy}
\usepackage[francais]{babel}
\usepackage[latin1]{inputenc}
\usepackage[T1]{fontenc}
\pagestyle{plain}
\bibliographystyle{plain}
\setlength\parindent{0.5cm}
\DeclareMathOperator{\Nil}{Nil}

\DeclareMathOperator{\Sym}{Sym}

\DeclareMathOperator{\Red}{Red}

\DeclareMathOperator{\Spec}{Spec}

\DeclareMathOperator{\supp}{supp}
\DeclareMathOperator{\Id}{Id}

\DeclareMathOperator{\End}{End}
\DeclareMathOperator{\dime}{dim}

\DeclareMathOperator{\Gr}{Gr}

\DeclareMathOperator{\Aut}{Aut}
\DeclareMathOperator{\ad}{ad}
\DeclareMathOperator{\Ad}{Ad}

\DeclareMathOperator{\M}{M}
\DeclareMathOperator{\defa}{def}

\DeclareMathOperator{\Ker}{Ker}

\DeclareMathOperator{\Tr}{Tr}
\DeclareMathOperator{\car}{car}
\DeclareMathOperator{\rg}{rg}

\DeclareMathOperator{\val}{val}

\DeclareMathOperator{\Lie}{Lie}

\newtheorem{thme}{Theorem}
\newtheorem{thm}{Théorème}[section]
\newtheorem{prop}[thm]{Proposition}
\newtheorem{lem}[thm]{Lemme}
\newtheorem{defi}[thm]{Définition}

\newtheorem{cor}[thm]{Corollaire}

\newcommand{\rmq}{\noindent\textbf{Remarque :}}
\newcommand{\rmqs}{\noindent\textbf{Remarques :}}
\newcommand{\cH}{\mathcal{H}}

\newcommand{\cE}{\mathcal{E}}

\newcommand{\GL}{GL}
\newcommand{\bP}{\mathbb{P}}
\newcommand{\co}{\mathcal{O}}

\newcommand{\cB}{\mathcal{B}}
\newcommand{\cL}{\mathcal{L}}
\newcommand{\cm}{\mathcal{M}}

\newcommand{\g}{\gamma}

\newcommand{\la}{\lambda}

\newcommand{\eps}{\epsilon}

\newcommand{\kX}{\mathfrak{X}}

\newcommand{\kc}{\mathfrak{C}_{+}}

\newcommand{\kcd}{\mathfrak{C}_{+}^{\lambda}}
\newcommand{\kg}{\mathfrak{g}}

\newcommand{\kx}{\mathfrak{X}}

\newcommand{\kt}{\mathfrak{t}}

\newcommand{\ev}{ev}

\newcommand{\ab}{\mathbb{A}}

\begin{document}
\title{Dimension des fibres de Springer affines pour les groupes}
\author{Alexis Bouthier}
\maketitle
\tableofcontents

\begin{center}
\textbf{Abstract:}
\end{center}
This article establishes a dimension formula for a group version of affine Springer fibers. We follow the method initiated by Bezrukavnikov in the case of Lie algebras. It consists in the introduction of a big enough regular open subset, with the same dimension as the affine Springer fiber. We show that, in the case of groups, such a regular open subset with analogous properties exists. Its construction needs the introduction of the Vinberg semi-group $V_{G}$ for which we study an adjoint quotient $\chi_{+}$ and extend for $\chi_{+}$ the results previously established by Steinberg.
\bigskip
\bigskip
\begin{center}
\textbf{Résumé:}
\end{center}
Cet article établit une formule de dimension pour les fibres de Springer affines dans le cas des groupes. On suit la méthode initiée par Bezrukavnikov dans le cas des algèbres de Lie. Elle consiste en l'introduction d'un ouvert régulier suffisament gros dont on montre qu'il est de même dimension que la fibre de Springer affine entière.
On montre que dans le cas des groupes, un tel ouvert régulier avec des propriétés analogues, existe. Sa construction passe par l'introduction du semi-groupe de Vinberg $V_{G}$ pour lequel nous étudions un morphisme `polynôme caractéristique' et étendons les résultats précedemment établis par Steinberg pour les groupes.
\bigskip
\section*{Introduction in English}
Let $k$ be an algebraically closed field.
We consider $G$ a connected algebraic group, semisimple, simply connected over $k$. Let $T$ be a maximal torus of $G$ and $\kg$ the Lie algebra of $G$.
We note $F=k((\pi))$ and $\co:=k[[\pi]]$. Kazhdan and Lusztig have introduced in \cite{KL} the affine Springer fibers for Lie algebras. There are varieties of the form
\begin{center}
$\mathfrak{X}_{\gamma}=\{g\in G(F)/G(\mathcal{O})\vert~\ad(g)^{-1}\gamma \in\mathfrak{g}(\mathcal{O})\}$
\end{center}
where $\gamma\in \mathfrak{g}(F)$. They establish that they are $k$-schemes locally of finite type and of finite dimension if $\g$ is regular semisimple. They also conjecture a dimension formula for these varieties which was later proved by Bezrukavnikov \cite{B}. If we name by $\kg_{\g}$, the centralizer of $\g$ in $\kg$, the formula is the following:
\begin{center}
$\dime \mathfrak{X}_{\gamma}=\frac{1}{2}[\delta'(\gamma)-\defa(\gamma)]$
\end{center}
where $\delta'(\gamma)=\val(\det(\ad(\gamma):\mathfrak{g}(F)/\mathfrak{g}_{\gamma}(F)\rightarrow \mathfrak{g}(F)/\mathfrak{g}_{\gamma}(F)))$ and $\defa(\gamma)=\rg\mathfrak{g}-\rg_{F}(\mathfrak{g}_{\gamma}(F))$. The first term is the discriminant invariant and the second one is a Galois invariant which mesures the drop of torus rank.
In this work, we are interested in the affine Springer fibers for groups:
\begin{center}
$X_{\gamma}^{\la}=\{g\in G(F)/G(\mathcal{O})\vert~g^{-1}\gamma g\in G(\mathcal{O})\pi^{\lambda}G(\mathcal{O})\}$
\end{center}
with $\lambda\in X_{*}(T)^{+}$ a dominant cocharacter and $\g\in G(F)$.
These varieties were introduced by Kottwitz-Viehmann \cite{KV} in their article on generalized Springer fibers. If $\la=0$, we have the variety
\begin{center}
$X_{\gamma}^{0}=\{g\in G(F)/G(\mathcal{O})\vert~g^{-1}\gamma g\in G(\mathcal{O})\}$
\end{center}
and the dimension formula and the proof of Bezrukavnikov are the same. For a general $\la$, we prove:
\smallskip
\begin{thme}\label{1b}
Let $\gamma\in G(F)$ be a regular semisimple element. Then:
\begin{enumerate}
\item $X_{\gamma}^{\lambda}$  is a $k$-scheme locally of finite type.
\item Si $X_{\gamma}^{\lambda}$ non vide, $\dime X_{\gamma}^{\lambda}=\left\langle \rho,\lambda\right\rangle+ \frac{1}{2}[\delta(\gamma)-\defa(\gamma)]$,

where $\delta(\gamma)=\val(\det(\Id-\ad(\gamma):\mathfrak{g}(F)/\mathfrak{g}_{\gamma}(F)\rightarrow \mathfrak{g}(F)/\mathfrak{g}_{\gamma}(F)))$.
\end{enumerate}
\end{thme}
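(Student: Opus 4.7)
The plan is to extend Bezrukavnikov's method from the Lie algebra setting to the group setting, using the Vinberg semi-group $V_{G}$ as the main new tool. The role of $V_{G}$ is to provide a common framework in which the affine Springer fibers $X_{\gamma}^{\lambda}$ for all dominant coweights $\lambda$ are simultaneously reinterpreted through the characteristic morphism $\chi_{+}\colon V_{G} \to \kc$. Concretely, the condition $g^{-1}\gamma g \in G(\co)\pi^{\lambda}G(\co)$ is reformulated as an integrality condition for $\chi_{+}$ valued in $\kcd(\co)$, replacing the Lie algebra condition $\ad(g)^{-1}\gamma \in \kg(\co)$ and making $\lambda$ visible through the polynomial invariants.

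To prove (i), I would exhaust $X_{\gamma}^{\lambda}$ by its intersections with the finite-dimensional Schubert subvarieties $\overline{\Gr}_{\mu}$ of the affine Grassmannian. Each such intersection is cut out by a closed condition inside a finite-dimensional quasi-projective $k$-variety; the regular semisimplicity of $\gamma$, together with the boundedness of $\chi_{+}(g^{-1}\gamma g)$ imposed by the bound $\mu$ and the fixed $\lambda$, ensures this intersection is of finite type, so $X_{\gamma}^{\lambda}$ is a $k$-scheme locally of finite type.

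For (ii), the heart of the argument lies in isolating the regular open subset $X_{\gamma}^{\lambda,reg}\subset X_{\gamma}^{\lambda}$ consisting of those cosets $gG(\co)$ for which the reduction of $g^{-1}\gamma g$ has regular image in $V_{G}$ (equivalently, the minimal possible centralizer). Following Bezrukavnikov, I would first show that $X_{\gamma}^{\lambda,reg}$ is open and dense, so that $\dime X_{\gamma}^{\lambda}=\dime X_{\gamma}^{\lambda,reg}$, and then compute the dimension of the regular open directly. On the regular locus the centralizer of $g^{-1}\gamma g$ is a torus, and $X_{\gamma}^{\lambda,reg}$ fibers over a lattice of translations inside $G_{\gamma}(F)/G_{\gamma}(\co)$ with fibers of explicit dimension. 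The contribution $\langle \rho,\lambda\rangle=\frac{1}{2}\dime \grl$ then appears as the freedom afforded by the Schubert cell $G(\co)\pi^{\lambda}G(\co)$, while $\frac{1}{2}[\delta(\gamma)-\defa(\gamma)]$ comes from the group-theoretic discriminant $\val(\det(\Id-\ad\gamma))$, which is the appropriate replacement in the group setting of Bezrukavnikov's Lie-algebra discriminant.

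The main obstacle will be the density of $X_{\gamma}^{\lambda,reg}$ in $X_{\gamma}^{\lambda}$. This requires comparing the dimension of each non-regular stratum, classified by centralizer type or equivalently by the behaviour along the non-regular locus of $\kcd$, with that of the regular stratum. This is precisely where the refined study of $\chi_{+}$ and the extension of Steinberg's theorems to the Vinberg semi-group developed in the earlier sections become indispensable: they provide the dimension bounds on fibers of $\chi_{+}$ over non-regular points which, when fed back into the local Bezrukavnikov argument, allow one to conclude that the non-regular strata contribute strictly less to the total dimension than the regular one.
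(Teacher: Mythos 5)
Your overall framework (reinterpret the condition via the Vinberg semi-group and $\chi_{+}$, isolate a regular open subset, reduce the dimension count to it) matches the paper's, and your treatment of (i) is fine. But the core of (ii) — why $\dime X_{\gamma}^{\lambda,reg}=\dime X_{\gamma}^{\lambda}$ — is proposed along a route that does not work and is not the one the paper (or Bezrukavnikov) takes. You suggest stratifying the non-regular locus by centralizer type and feeding in dimension bounds on the fibers of $\chi_{+}$ over non-regular points of $\mathfrak{C}_{+}$. The affine Springer fiber lives over a \emph{single} point $a$ of $\mathfrak{C}_{+}^{\lambda}(\mathcal{O})$, so the geometry of $\chi_{+}$ over other, non-regular points of the base says nothing about the non-regular locus inside $X_{\gamma}^{\lambda}$. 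The paper's actual mechanism is Kazhdan--Lusztig's: one proves that the associated affine flag variety $\mathcal{B}_{\gamma_{+}}$ is \emph{equidimensional} (via chains of $\mathbb{P}^{1}$'s of type $\alpha$ and sections $s_{\alpha}$ of bundles $\mathcal{E}_{\alpha}$), and then uses that the projection $\mathcal{B}_{\gamma_{+}}\rightarrow X_{\gamma_{+}}$ is finite over the regular locus (every regular quasi-unipotent lies in a unique Borel semi-group, Prop.~\ref{stnil}) but has fibers of dimension $\geq 1$ over the irregular locus (Prop.~\ref{stnilb}). Making the line argument work in the semi-group setting is where the real work lies: one needs the topological Jordan decomposition and Kottwitz--Viehmann descent to reduce to a topologically nilpotent element, the pro-nilpotent radicals $R_{\alpha}$ to be ideals so that Rees quotients define the sections $s_{\alpha}$, and the exclusion of lines of type $\alpha_{0}$. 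None of this appears in your proposal, and without it the equality of dimensions is unproved.

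The second gap is the numerical formula itself. You assert that $\left\langle\rho,\lambda\right\rangle$ ``appears as the freedom afforded by the Schubert cell'' and that the discriminant term ``comes from'' $\val(\det(\Id-\ad\gamma))$, but give no computation. In the paper the formula is obtained in two steps: for $\gamma$ split one computes $\dime X_{\gamma}^{\lambda}=\left\langle\rho,\lambda\right\rangle+\tfrac{1}{2}d(\gamma)$ by intersecting with $U(F)$-orbits and invoking the Mirkovi\'c--Vilonen/G\"ortz--Haines--Kottwitz--Reuman dimension formula $\dime(K\pi^{\lambda}K\pi^{-\nu}\cap U(F))=\left\langle\rho,\lambda-\nu\right\rangle$; the defect term $-\tfrac{1}{2}\defa(\gamma)$ then enters only through Bezrukavnikov's comparison of $G_{\gamma}$-orbits over $F$ and over the splitting field $\tilde{F}$, applied to the regular open, which is a single orbit under the Picard stack of the regular centralizer. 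Your ``fibration over a lattice of translations'' is too vague to produce either term, and in particular gives no mechanism by which $\defa(\gamma)$ could appear.
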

An analog dimension formula for affine Deligne-Lusztig was already established by \cite{GHKR} and \cite{Vie}.
\medskip
The proof of Bezrukavnikov uses crucially a distinguished open subset, called the regular open subset. It consists in the elements $g\in\kX_{\g}$ such that $\ad(g)^{-1}(\g)$ is regular when we reduce mod $\pi$. In the case of groups, there is a double difficulty coming from the fact that the condition $g^{-1}\g g\in G(\co)\pi^{\la}G(\co)$ is non-linear and that we cannot give a sense to the reduction modulo $\pi$. To linearize the problem, one way to proceed, is to consider a faithful representation $\rho: G\rightarrow\End(V)$ such that $\rho(G(\co)\pi^{\la}G(\co))\subset\pi^{-N}\End(V)\otimes_{k}\co$, for $N\in\mathbb{N}$. We note that, if we add a central factor to $G$, which acts by multiplication by $\pi$ in $\End(V)\otimes_{k}\co$, we  obtain a similar integrality condition, as in the Lie algebra case. To do that in a uniform way for all groups, there exists a natural envelop, called the Vinberg's semi-group $V_{G}$, introduced by Vinberg \cite{Vi} in characteristic zero and Rittatore \cite{Ri} in arbitrary characteristics. This formulation allows us to define in this context a regular open subset.
 
In the case of Lie algebras, the regular open subset is a torsor under the affine grassmannian of the regular centralizer of $\g$. Following Ngô \cite{N}, the existence of such a regular centralizer comes from the existence of a commutative group scheme $J$, smooth on the adjoint quotient $\kt/W$ and from the existence of a map $\chi^{*}J\rightarrow I$, where $\chi$ is the Chevalley morphism $\chi:\kg\rightarrow\kt/W$ and $I$ the scheme of centralizers over $\kg$.
Moreover, the morphism $\chi^{*}J\rightarrow I$ is an isomorphism over $\kg^{reg}$. One way to obtain the group scheme $J$ is to construct a section to $\chi$, called the Kostant section, and to pullback $I$ by this section.

In our case, we try to obtain a Chevalley type morphism for the Vinberg's semigroup $V_{G}$. It is an algebraic monoïd, i.e. a semigroup with unity, with unit group $G_{+}:=(T\times G)/Z_{G}$ which is open dense. In particular, $V_{G}$ is a partial compactification of $G_{+}$, affine and which contains the toric variety $V_{T}$, the closure of $T_{+}:=(T\times T)/Z_{G}$ in $V_{G}$.

By Steinberg \cite{S}, we have a morphism
\begin{center}
$\chi_{+}: G_{+}\rightarrow T_{+}/W$
\end{center}
and a section to this morphism (the simply connectedness assumption is necessary in order to get a section).
We obtain the following theorem:
\smallskip
\begin{thme}\label{3b}
The Steinberg's morphism extends to a map,
\begin{center}
$\chi_{+}:V_{G}\rightarrow V_{T}/W$,
\end{center}
invariant by conjugation by $G_{+}$.

The morphism $\chi_{+}$ admits a section $\epsilon_{+}:V_{T}/W\rightarrow V_{G}^{reg}$, in the regular locus.
\end{thme}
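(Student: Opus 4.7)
The plan is to construct the extended morphism $\chi_+$ and its section $\epsilon_+$ by exploiting the affine monoid structure of $V_G$ together with the density of $G_+$ in $V_G$ and of $T_+$ in $V_T$. The construction splits naturally into two halves: I first extend $\chi_+$ using invariant theory, then extend Steinberg's section using its explicit multiplicative form. Verifying that the section lands in the regular locus is the last and hardest step.

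For the extension of $\chi_+$, since $V_G$ is affine, it is enough to produce an algebra morphism $k[V_T]^W \to k[V_G]$ compatible with Steinberg's map on $G_+$. The ring $k[V_T]^W$ is generated, via restriction from $k[T_+]^W$, by the characters of the highest-weight $G_+$-representations $V_\lambda$, where $\lambda$ ranges over the dominant cocharacters cutting out $V_T$ inside $T_+$. By Vinberg's construction $k[V_G]$ contains the matrix coefficients of each such $V_\lambda$, so their traces extend to regular functions on $V_G$. The resulting map agrees with Steinberg's $\chi_+$ on the open dense $G_+$, and $G_+$-conjugation invariance extends from $G_+$ to $V_G$ by density.

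For the section, Steinberg's $\epsilon_+: T_+/W \to G_+^{reg}$ is defined by a product formula involving simple-reflection representatives in $N(T)$, root-subgroup elements $x_\alpha(c)$, and coordinates pulled back from $T_+$. Every factor is multiplicative and no inverse of the $T_+$-coordinate appears, so the very same formula defines a morphism $\epsilon_+: V_T/W \to V_G$, and by density it automatically satisfies $\chi_+ \circ \epsilon_+ = \Id_{V_T/W}$.

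The main obstacle is verifying that $\epsilon_+$ lands in the regular locus $V_G^{reg}$, i.e.\ the points whose $G_+$-conjugation stabilizer has minimal dimension $\rg G$. On the open dense $T_+/W$ this is Steinberg's theorem, but the dimension of a stabilizer is only upper semi-continuous, so it may a priori jump at the toric boundary of $V_T/W$. To rule this out I would stratify $V_G$ via its Vinberg structure (the abelianization map $V_G \to A_G$), identify how $V_T$ and the image of $\epsilon_+$ meet each stratum, and verify stratum by stratum --- using the explicit form of the product formula --- that the centralizer dimension does not grow. This step requires combining the explicit shape of $\epsilon_+$ with a precise understanding of the local geometry of $V_G$ along its boundary strata, and yields the monoid analog of the Kostant section that feeds into the dimension argument following Bezrukavnikov.
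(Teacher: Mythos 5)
Your first two steps follow the paper's route: the extension of $\chi_{+}$ is obtained exactly as in the paper by showing that $k[V_{T}]^{W}$ is generated by restrictions of traces of the representations $\rho_{(\omega_{i},\omega_{i})}$ (plus the functions $(\alpha_{i},0)$), and the extension of $\epsilon_{+}$ rests on the observation that the boundary coordinates enter the product formula only through characters $\omega_{i}-\lambda$ that are non-negative combinations of simple roots, hence polynomially. Be aware, though, that the generation statement for $k[V_{T}]^{W}$ is not free: in the paper it requires identifying the weight monoid of $V_{T}$ (normality of $V_{T}$ plus Premet and Curtis--Steinberg in characteristic $p$, under the hypothesis $\car(k)>3$), and the section property itself is only triangular ($\chi_{i}\circ\epsilon_{+}=a_{i}+{}$terms in $a_{j}$, $j\prec i$), so it holds after a change of coordinates rather than ``automatically''.

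The genuine gap is in the last step, which you yourself flag as the hardest. You correctly note that semicontinuity goes the wrong way (the centralizer dimension can only jump \emph{up} at the toric boundary), but your proposed remedy --- a stratum-by-stratum verification along the abelianization map --- is a plan, not an argument, and it misses the mechanism that makes the proof tractable. The paper reduces the entire verification to the \emph{single} most degenerate point $\epsilon_{+}(0)=we_{\emptyset}$ ($w$ a Coxeter element, $e_{\emptyset}$ the idempotent of the closed stratum): using the Vinberg--Rittatore description of the stabilizer of $e_{\emptyset}$, the centralizer of $we_{\emptyset}$ is computed explicitly to be $Z_{G}\,(U^{-}\cap wUw^{-1})$, of dimension $\ell(w)=r$. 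The propagation from this one point to all of $V_{T}/W$ is then done by the contracting action of the central torus $Z_{+}$: the locus $\{a:\dim J_{a}=r\}$ is open, $Z_{+}$-stable, and contains $0$, and since the closure of every $Z_{+}$-orbit in $V_{T}/W$ meets $0$, this open set is everything. Without the explicit boundary computation and without this contraction argument (or a worked-out substitute for it), the regularity of $\epsilon_{+}$ --- and hence the half of the theorem that actually feeds into the dimension formula --- remains unproved.
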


The existence of this section allows us to construct a regular centralizer $J$ by pulling-back the scheme of centralizers $I$ by $\eps_{+}$. To obtain that $J$ is commutative and smooth, we need more properties of the morphism $\chi_{+}^{reg}$ and in particular its smoothness.
\smallskip
\begin{thme}\label{4b}
The morphism $\chi_{+}^{reg}:V_{G}^{reg}\rightarrow V_{T}/W$ is smooth and its geometric fibers are $G$-orbits.

There exists a unique commutative group scheme $J$, smooth over $V_{T}/W$ with a map $\chi_{+}^{*}J\rightarrow I$, which is an isomorphism over $V_{G}^{reg}$.
\end{thme}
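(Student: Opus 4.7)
The plan is to adapt Ng\^o's construction of the regular centralizer from the Chevalley--Lie algebra setting to the Vinberg setting, using Theorem \ref{3b} as the key substitute for Steinberg's section. Three tasks must be accomplished: smoothness of $\chi_{+}^{reg}$ with $G$-orbit fibers, construction of the smooth commutative group scheme $J$, and extension of the morphism $\chi_{+}^{*}J \to I$ across the non-regular locus.

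First I would establish smoothness of $\chi_{+}^{reg}$ and the orbit description of its fibers. Consider the action map
$$\alpha: G \times (V_{T}/W) \to V_{G}^{reg}, \qquad (g,a) \mapsto g \cdot \eps_{+}(a) \cdot g^{-1}.$$
Conjugation invariance of $\chi_{+}$ (Theorem \ref{3b}) gives $\chi_{+}^{reg} \circ \alpha = \mathrm{pr}_{2}$. At a point $(1,a)$, the differential of $\alpha$ is the sum of the infinitesimal $G$-action at $\eps_{+}(a)$ and the differential $d\eps_{+}$; for the regular element $\eps_{+}(a)$, transversality of $\eps_{+}$ to the $G$-orbit (forced by the fact that $\eps_{+}$ is a section to the $G$-invariant morphism $\chi_{+}^{reg}$) together with a dimension count shows that these two pieces span the tangent space. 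Hence $\alpha$ is smooth, and smoothness of $\chi_{+}^{reg}$ follows by faithfully flat descent along $\alpha$. Every geometric fiber of $\chi_{+}^{reg}$ meets $\mathrm{im}(\eps_{+})$, hence contains a regular element; that two regular elements of $V_{G}$ with equal invariants are $G$-conjugate is the extension of Steinberg's conjugacy theorem to $V_{G}$ underlying Theorem \ref{3b}.

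Next, define $J := \eps_{+}^{*}I$. Since $\eps_{+}$ lands in the regular locus where the centralizer is commutative of constant rank, $J$ is a smooth commutative group scheme over $V_{T}/W$. Over the regular locus, the $G$-equivariant group scheme $I^{reg}$ descends along the smooth surjection $\chi_{+}^{reg}$ with geometrically connected ($G$-orbit) fibers to a group scheme on $V_{T}/W$ which $\eps_{+}$ identifies canonically with $J$; this yields the required isomorphism $\chi_{+}^{reg,*}J \xrightarrow{\sim} I^{reg}$.

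The main obstacle is extending this morphism $\chi_{+}^{*}J \to I$ from $V_{G}^{reg}$ to all of $V_{G}$. Following Ng\^o, this will be deduced from the codimension estimate $\mathrm{codim}_{V_{G}}(V_{G} \setminus V_{G}^{reg}) \geq 2$: combined with normality of the Vinberg monoid $V_{G}$ (which holds in the simply connected case) and the affineness of $I \to V_{G}$, Hartogs's lemma gives the unique extension, and compatibility with the group laws is automatic by density of the regular locus. On the open stratum $G_{+}$ the codimension estimate reduces to the classical fact that $G \setminus G^{reg}$ has codimension $\geq 3$ in a semisimple simply connected group; the delicate part is to verify that along each boundary stratum of $V_{G}$ (indexed by parabolic-type data in the Vinberg stratification), the regular locus remains dense, i.e., no boundary stratum is entirely contained in the complement of $V_{G}^{reg}$. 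This stratum-by-stratum analysis is where I expect the genuine technical difficulty to lie; once it is achieved, uniqueness of $(J, \chi_{+}^{*}J \to I)$ is immediate from the density of $V_{G}^{reg}$ and the separatedness of $I$.
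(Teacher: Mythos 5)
There is a genuine gap, and it sits exactly at the point the paper identifies as the main new difficulty of the Vinberg setting. Your argument rests twice on the premise that two regular elements of $V_{G}$ with the same image under $\chi_{+}$ are $G$-conjugate (``the extension of Steinberg's conjugacy theorem to $V_{G}$''), once to identify the fibers of $\chi_{+}^{reg}$ with orbits and once, implicitly, to make the single action map $\alpha:G\times(V_{T}/W)\rightarrow V_{G}^{reg}$ surjective so that smoothness and the descent of $I^{reg}$ can be read off from it. This premise is false on the boundary of $V_{G}$: the paper shows (via He's stratification of the wonderful compactification) that the regular locus of the nilpotent cone $\chi_{+}^{-1}(0)$ decomposes as $\coprod_{w}X_{\emptyset,w}$ over the Coxeter elements $w$ of $W$, each stratum being a separate conjugacy class containing the value at $0$ of a \emph{different} Steinberg section $\epsilon_{+}^{w}$. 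So the fiber of $\chi_{+}^{reg}$ over $0$ is a disjoint union of several $G$-orbits, your map $\alpha$ only hits the one open piece $V_{G}^{reg,w}$ attached to the chosen Coxeter element, and the descent of $I^{reg}$ along $\chi_{+}^{reg}$ cannot be performed globally because the fibers are not geometrically connected.

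The paper's actual route is organized around this failure. Smoothness and density of the regular locus in each fiber are first proved over $0$ (using the De Concini--Procesi compactification and He's dimension formulas for the strata $X_{I,w}$) and then propagated to all fibers by the contracting $Z_{+}$-action; every regular element is shown to be conjugate to $\epsilon_{+}^{w}(a)$ for \emph{some} Coxeter element $w$, which is what makes the differential of $\chi_{+}^{reg}$ surjective everywhere on $V_{G}^{reg}$. For $J$, one gets a priori one group scheme $J_{w}$ per Coxeter element by Ng\^o's descent applied to $V_{G}^{reg,w}$; the paper then proves $G_{+}^{reg}\cup V_{G}^{rs}\subset V_{G}^{reg,w}$ for every $w$ and that the complement of this locus in $\mathfrak{C}_{+}$ has codimension at least two, so that all the $J_{w}$ are canonically isomorphic by smoothness. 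None of this multi-section bookkeeping appears in your proposal, and without it both the orbit description of the fibers and the uniqueness of $J$ are unproved. Your final step (extending $\chi_{+}^{*}J\rightarrow I$ across a codimension-two locus by normality of $V_{G}$) does match the paper, but note that the relevant codimension bound is $\geq 2$ for $V_{G}\setminus V_{G}^{reg}$, obtained from the fiberwise density of the regular locus, not from the codimension-three estimate for $G\setminus G^{reg}$.
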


Let us now consider the organization of the paper. It splits in two parts, the first one concerns the proof of the theorems \ref{3b} and \ref{4b}, which are results of group theory and the second part deals with the computation of the dimension of Springer fibers.

In the first section, we prove the theorem \ref{3b}. We introduce the Vinberg's semigroup and the quotient by adjoint action $\chi_{+}$. Over the group of units $G_{+}$ of $V_{G}$, we have a section, or more exactly a family of sections constructed by Steinberg, for which we show that they extend to the Vinberg's semigroup. This allows us to construct the regular centralizer $J$.

In the second section, we obtain the properties of the theorem \ref{4b} on the morphism $\chi_{+}$ and the centralizer $J$. By using the action of the central torus $Z_{+}$ of $G_{+}$, we can reduce the study over the point zero, which is the nilpotent cone. The properties established for the most singular fiber then spread to the other fibers.

In the third section, we introduce the affine Springer fibers for groups and we make the link with the Vinberg's semigroup via the modular interpretation. These Springer fibers admit a distinguished open locus, named regular, which is an orbit under a Picards stack, coming from the regular centralizer.

Finally, in the last section, we show the theorem \ref{1b}. Following Kazhdan-Lusztig, we need to study the equidimensionnality of a corresponding flag variety.
It implies to study more deeply the nilpotents and quasi-unipotents elements of the Vinberg's semigroup. Once this result is obtained, it is sufficient to deduce the dimension of the regular open subset which have the same dimension of the whole Springer fiber and conclude about the dimension formula.

\section*{Introduction}
Soit $k$ un corps algébriquement clos.
On considère $G$ un groupe algébrique, connexe, semi-simple, simplement connexe sur $k$.
Soit $T$ un tore maximal de $G$ et $\mathfrak{g}$ l'algèbre de Lie de $G$.
On pose $F:=k((\pi))$ et $\mathcal{O}:=k[[\pi]]$.
\medskip
Kazhdan et Lusztig ont introduit dans \cite{KL} les fibres de Springer affines pour les algèbres de Lie. Ce sont les variétés de la forme
\begin{center}
$\mathfrak{X}_{\gamma}=\{g\in G(F)/G(\mathcal{O})\vert~\ad(g)^{-1}\gamma \in\mathfrak{g}(\mathcal{O})\}$
\end{center}
où $\gamma\in \mathfrak{g}(F)$.
Ils établissent que ce sont des $k$-schémas localement de type fini et de dimension finie si $\gamma$ est régulier semi-simple. Ils conjecturent également pour ces variétés une formule de dimension qui sera démontrée par Bezrukavnikov \cite{B}. Si l'on désigne par $\mathfrak{g}_{\gamma}$, le centralisateur de $\gamma$ dans $\mathfrak{g}$, la formule est la suivante:
\begin{center}
$\dime \mathfrak{X}_{\gamma}=\frac{1}{2}[\delta'(\gamma)-\defa(\gamma)]$
\end{center}
où $\delta'(\gamma)=\val(\det(\ad(\gamma):\mathfrak{g}(F)/\mathfrak{g}_{\gamma}(F)\rightarrow \mathfrak{g}(F)/\mathfrak{g}_{\gamma}(F)))$ et $\defa(\gamma)=\rg\mathfrak{g}-\rg_{F}(\mathfrak{g}_{\gamma}(F))$. Le premier est l'invariant discriminant et le second l'invariant galoisien qui mesure la chute du rang torique.

Dans ce travail, on s'intéresse aux fibres de Springer affines pour les groupes:
\begin{center}
$X_{\gamma}^{\la}=\{g\in G(F)/G(\mathcal{O})\vert~g^{-1}\gamma g\in G(\mathcal{O})\pi^{\lambda}G(\mathcal{O})\}$
\end{center}
avec $\lambda\in X_{*}(T)^{+}$ un cocaractère dominant et $\g\in G(F)$. Ces variétés ont été introduites par Kottwitz-Viehmann \cite{KV} dans leur article sur les fibres de Springer généralisées. 
Dans le cas $\lambda=0$, nous avons la variété:
\begin{center}
$X_{\gamma}^{0}=\{g\in G(F)/G(\mathcal{O})\vert~g^{-1}\gamma g\in G(\mathcal{O})\}$
\end{center}
et la formule de dimension ainsi que la preuve de Bezrukavnikov sont les mêmes.
Pour $\lambda$ général, on démontre:
\medskip
\begin{thm}\label{1}
Soit $\gamma\in G(F)$ régulier semi-simple. Alors:
\begin{enumerate}
\item $X_{\gamma}^{\lambda}$ est un schéma localement de type fini.
\item Si $X_{\gamma}^{\lambda}$ non vide, $\dime X_{\gamma}^{\lambda}=\left\langle \rho,\lambda\right\rangle+ \frac{1}{2}[\delta(\gamma)-\defa(\gamma)]$,

où $\delta(\gamma)=\val(\det(\Id-\ad(\gamma):\mathfrak{g}(F)/\mathfrak{g}_{\gamma}(F)\rightarrow \mathfrak{g}(F)/\mathfrak{g}_{\gamma}(F)))$.
\end{enumerate}
\end{thm}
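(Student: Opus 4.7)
The plan is to adapt the method of Bezrukavnikov \cite{B} to the group setting, with the Vinberg semigroup $V_{G}$ serving as the linearisation. Assertion (i), local finite typeness, follows by the standard argument: $X_{\gamma}^{\lambda}$ is exhausted by closed subschemes obtained by intersecting with bounded unions of affine Schubert cells, and inside each such cell the relation $g^{-1}\gamma g \in G(\co)\pi^{\lambda}G(\co)$ is closed of finite type.

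For the dimension in (ii), I would first lift $\gamma$ to $\tilde\gamma \in G_{+}(F)$ so that the integrality $g^{-1}\gamma g \in G(\co)\pi^{\lambda}G(\co)$ becomes $g^{-1}\tilde\gamma g \in V_{G}(\co)$. Setting $a := \chi_{+}(\tilde\gamma) \in (V_{T}/W)(F)$, I define the regular open subset $X_{\gamma}^{\lambda,reg} \subset X_{\gamma}^{\lambda}$ by requiring that the induced section land in $V_{G}^{reg}$ over the generic point of $\Spec\co$. By Theorem \ref{4b}, the regular centralizer pulls back to a smooth commutative group scheme $J_{a}$ on $\Spec\co$, and $X_{\gamma}^{\lambda,reg}$ becomes a torsor under the local Picard stack $\cP_{a}$ attached to $J_{a}$. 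A local computation, parallel to the Lie algebra case, then gives
$$\dime \cP_{a} = \langle \rho, \lambda\rangle + \tfrac{1}{2}[\delta(\gamma) - \defa(\gamma)],$$
where the discriminant $\delta(\gamma)$ appears through the length of the cokernel of $\Lie J_{a}(\co) \hookrightarrow \Lie J_{a}(F)$, the defect $\defa(\gamma)$ through the Galois-theoretic rank drop, and the extra term $\langle \rho, \lambda\rangle$ records the twist by $\pi^{\lambda}$ absorbed by the central factor in the embedding $G_{+} \hookrightarrow V_{G}$.

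The main obstacle is then to prove $\dime\bigl(X_{\gamma}^{\lambda}\setminus X_{\gamma}^{\lambda,reg}\bigr) < \dime X_{\gamma}^{\lambda,reg}$, which upgrades the dimension of the regular open subset to that of the whole $X_{\gamma}^{\lambda}$. Following Kazhdan-Lusztig, I would reduce this to an equidimensionality statement for an auxiliary affine flag-type variety parametrising pairs $(g,B)$ with $g\in X_{\gamma}^{\lambda}$ and $B$ a Borel of $G$ that is $\gamma$-adapted in a suitable sense. Outside the regular locus, one degenerates into fibres of $\chi_{+}$ whose associated section of $V_{G}$ meets the nilpotent or quasi-unipotent strata, so the crucial technical input is a dimension bound for the corresponding Springer fibres inside $V_{G}$ — a Vinberg analogue of Spaltenstein's theorem. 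This is the hardest step, and the fine study of nilpotent and quasi-unipotent elements of $V_{G}$ developed in Section 4 is precisely what is designed to furnish those bounds; once they are in hand, combining them with the torsor structure and the computation of $\dime\cP_{a}$ yields both equidimensionality and the stated formula.
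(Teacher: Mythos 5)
Your overall architecture coincides with the paper's: linearise via $V_{G}$, isolate a regular open subset that is a torsor under the local Picard stack $\cP(J_{a})$, and control the complement through the equidimensionality of an associated affine flag variety in the style of Kazhdan--Lusztig, the hard input being the structure of nilpotent and quasi-unipotent elements of $V_{G}$. However, two of your steps do not work as stated. First, the regular open subset must be cut out by a condition at the \emph{closed} point of $\Spec\co$, not at the generic point: since $\g$ is regular semisimple, the associated section lands in $V_{G}^{reg}$ generically in any case, so your condition would return all of $X_{\g}^{\la}$. The entire purpose of passing to $V_{G}$ is to give meaning to ``regular after reduction modulo $\pi$''.

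Second, and more seriously, the identity $\dime\cP(J_{a})=\langle\rho,\la\rangle+\frac{1}{2}[\delta(\g)-\defa(\g)]$ is not available as a self-contained ``local computation parallel to the Lie algebra case'': in the Lie algebra case that identity \emph{is} Bezrukavnikov's theorem, so invoking it here is circular; moreover the cokernel of $\Lie J_{a}(\co)\hookrightarrow\Lie J_{a}(F)$ is infinite-dimensional and cannot carry $\delta(\g)$. The paper obtains the number in two stages: for $\g$ split in $T(F)$ it computes $\dime X_{\g}^{\la}$ directly by intersecting with the $U(F)$-orbit and using the Mirkovic--Vilonen/GHKR formula $\dime(K\pi^{\la}K\pi^{-\nu}\cap U(F))=\langle\rho,\la-\nu\rangle$ (this is where $\langle\rho,\la\rangle$ actually enters); for general $\g$ it passes to the splitting field $\tilde{F}$ of degree $n$ and applies Bezrukavnikov's comparison $\dime\tilde{\co}_{g}=n[\dime\co_{g}+\frac{1}{2}\defa(\g)]$ of centralizer orbits, which is legitimate because the regular open subset is a single orbit under $G_{\g}(F)$. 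You would need to supply one of these mechanisms, or an honest independent computation of $\dime\cP(J_{a})$, to close the argument. Note also that the equidimensionality step requires a topological Jordan decomposition together with Kottwitz--Viehmann descent to reduce to a topologically nilpotent element, and a separate argument for the affine simple root $\alpha_{0}$, whose pro-nilpotent radical is not an ideal but for which one shows no lines of that type occur; none of this is visible in your sketch.
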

Une formule de dimension analogue pour les variétés de Deligne-Lusztig affines a déjà été établie par \cite{GHKR} et \cite{Vie}.
\smallskip

La preuve de Bezrukavnikov  utilise de manière cruciale un ouvert distingué appelé l'ouvert régulier. Il consiste en les éléments $g\in \kx_{\g}$ tels que $\ad(g)^{-1}(\g)$ soit régulier en réduction modulo $\pi$.
Dans le cas des groupes, nous nous trouvons confrontés à la double difficulté que la condition $g^{-1}\g g\in G(\co)\pi^{\la}G(\co)$ est non linéaire et que nous ne pouvons donner un sens à la réduction modulo $\pi$. Pour linéariser le problème, une manière possible de procéder, est de considérer une réprésentation fidèle $\rho: G\rightarrow\End(V)$ de telle sorte que $\rho(G(\co)\pi^{\la}G(\co))\subset\pi^{-N}\End (V)\otimes_{k}\co$, pour $N\in\mathbb{N}$.
On remarque alors que, quitte à ajouter un facteur central à $G$, qui agit par multiplication par $\pi$ dans $\End(V)$, on peut obtenir la condition d'intégralité souhaitée. 
Pour faire cela de manière uniforme pour tous les groupes, il existe une enveloppe naturelle, appelée le semi-groupe de Vinberg $V_{G}$, introduit par Vinberg \cite{Vi} en caractéristique nulle et étendu par Rittatore \cite{Ri} en caractéristique arbitraire.
Cette reformulation nous permet donc de pouvoir définir dans ce contexte un ouvert régulier.

Dans le cas des algèbres de Lie, l'ouvert régulier est un torseur sous la grassmannienne affine du centralisateur régulier de $\g$.
D'après Ngô \cite{N}, l'existence de ce centralisateur régulier vient de l'existence d'un schéma en groupes commutatifs $J$ lisse sur le quotient adjoint pour l'algèbre de Lie $\mathfrak{t}/W$ ainsi que de l'existence d'une flèche $\chi^{*}J\rightarrow I$, où $\chi$ est le morphisme de Chevalley $\mathfrak{g}\rightarrow\mathfrak{t}/W$ et $I$ le schéma des centralisateurs au-dessus de $\mathfrak{g}$. En outre, la flèche $\chi^{*}J\rightarrow I$ est un isomorphisme au-dessus de $\mathfrak{g}^{reg}$. Une des façons d'obtenir un tel schéma $J$ est de construire une section, dite de Kostant, au morphisme de Chevalley et de tirer $I$ par cette section.

Dans le cas qui nous concerne, on cherche donc à obtenir un morphisme de type Chevalley pour le semi-groupe de Vinberg $V_{G}$.
C'est un monoïde algébrique, i.e. un semigroupe avec unité, dont le groupe des inversibles  $G_{+}:=(T\times G)/Z_{G}$  est un ouvert dense. En particulier, le semi-groupe de Vinberg est une compactification partielle de $G_{+}$, affine et qui contient également la variété torique $V_{T}$, adhérence du tore maximal $T_{+}:=(T\times T)/Z_{G}$ de $G_{+}$.

On dispose grâce à Steinberg \cite{S} d'une flèche 
\begin{center}
$\chi_{+}:G_{+}\rightarrow T_{+}/W$
\end{center}
ainsi que d'une section à cette flèche (l'hypothèse de simple connexité étant ici indispensable pour l'existence d'une section).

\medskip
\begin{thm}\label{3}
Le morphisme de Steinberg se prolonge en une flèche,
\begin{center}
$\chi_{+}:V_{G}\rightarrow V_{T}/W$,
\end{center}
invariant par conjugaison par $G_{+}$.

Le morphisme $\chi_{+}$ admet une section $\epsilon_{+}:V_{T}/W\rightarrow V_{G}^{reg}$, à valeurs dans  l'ouvert régulier.
\end{thm}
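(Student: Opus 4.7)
Pour la premi�re assertion, le plan est de travailler au niveau des anneaux de fonctions et d'exploiter la structure affine de $V_{G}$ donn�e par Vinberg et Rittatore. L'alg�bre $\co(V_{G})$ admet une d�composition de type Peter--Weyl, index�e par les repr�sentations irr�ductibles de $G_{+}$ dont le plus haut poids satisfait une condition d'int�gralit� appropri�e, � savoir celles qui se prolongent au mono�de $V_{G}$. Les fonctions $G_{+}$-invariantes par conjugaison sont alors engendr�es par les caract�res $\chi_{\lambda}$ de ces repr�sentations, et leur restriction au tore $T_{+}$ fournit des fonctions $W$-invariantes qui se prolongent � $V_{T}$. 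On obtient ainsi une inclusion canonique $\co(V_{T})^{W} \hookrightarrow \co(V_{G})^{G_{+}}$, et par passage au spectre le morphisme cherch� $\chi_{+}: V_{G} \to V_{T}/W$. Sur l'ouvert dense $G_{+}$, ce morphisme co�ncide par construction avec celui de Steinberg et l'invariance par conjugaison est automatique.

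Pour la seconde assertion, on part de la construction explicite de Steinberg sur $G_{+}$, qui fournit une section $\epsilon: T_{+}/W \to G_{+}^{reg}$ donn�e par une formule du type $\epsilon(c_{1}, \ldots, c_{n}) = u_{\alpha_{1}}(c_{1}) \dot{s}_{1} \cdots u_{\alpha_{n}}(c_{n}) \dot{s}_{n}$, o� les $c_{i}$ sont les coordonn�es fondamentales sur $T_{+}/W$, les $u_{\alpha_{i}}$ des param�trages des sous-groupes radiciels simples, et les $\dot{s}_{i}$ des rel�vements des r�flexions simples dans le normalisateur de $T$. L'id�e est que cette formule, interpr�t�e comme un produit dans le mono�de $V_{G}$, admet une extension naturelle lorsque les coordonn�es parcourent $V_{T}/W$ tout entier. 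En effet, les g�n�rateurs fondamentaux de $\co(V_{T})^{W}$ sont pr�cis�ment les prolongements naturels des $c_{i}$, et la multiplication dans $V_{G}$ prolonge celle de $G_{+}$, de sorte que l'on peut poser la m�me formule sur $V_{T}/W$ pour d�finir $\epsilon_{+}$.

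Le point d�licat sera de montrer que l'image de $\epsilon_{+}$ reste effectivement dans l'ouvert r�gulier $V_{G}^{reg}$, y compris au bord. Dans $G_{+}$, Steinberg a �tabli que chaque �l�ment de la forme ci-dessus est r�gulier; il faut �tendre cette propri�t� aux strates fronti�res de $V_{G}$. Pour cela, on utilisera la stratification de $V_{G}$ par les orbites sous $G \times G$, index�es par les faces du c�ne de Weyl dominant, et on analysera strate par strate le comportement de la section. L'action du tore central $Z_{+}$, qui permet de d�former continuellement vers le c�ne nilpotent, jouera �galement un r�le crucial pour r�duire la v�rification de la r�gularit� � un calcul sur une fibre particuli�re, puis propager le r�sultat par invariance d'�chelle.
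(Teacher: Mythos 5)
Your overall strategy coincides with the paper's: the extension of $\chi_{+}$ is obtained by identifying $k[V_{T}]^{W}$ with the conjugation-invariant functions on $V_{G}$, and the section is obtained by extending Steinberg's explicit formula via the multiplication of the monoid, with regularity checked at the most degenerate point and then propagated by the $Z_{+}$-action together with semicontinuity of $\dime I_{g}$. But two steps of your sketch conceal the actual content. For the first assertion, the nontrivial direction is the surjectivity of the restriction map $k[V_{G}]^{G}\rightarrow k[V_{T}]^{W}$: one must show that the characters $\chi_{i}=\Tr\rho_{(\omega_{i},\omega_{i})}$ together with the functions $(\alpha_{i},0)$ generate $k[V_{T}]^{W}$. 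This requires the explicit description of the weight monoid of $V_{T}$ (generated by the $(\alpha_{i},0)$ and the $(\omega_{i},\lambda)$ with $\lambda\leq\omega_{i}$, which in characteristic $p>3$ uses Premet and Curtis--Steinberg) and a unitriangularity argument replacing the orbit sums $(\lambda,\Sym\lambda)$ by the $(\omega_{i},\chi_{\omega_{i}})$. Asserting the ``inclusion canonique'' does not dispense with this computation.

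More seriously, the formula you write for the section, $\prod u_{\alpha_{i}}(c_{i})\dot{s}_{i}$, involves only $r$ parameters, whereas $V_{T}/W\cong\mathbb{A}^{2r}$. Over $T_{+}/W=\mathbb{G}_{m}^{r}\times\mathbb{A}^{r}$ the section is $\epsilon_{+}(b,a)=\epsilon(a)\psi_{b}$, where $\psi_{b}$ lies in the antidiagonal torus $T_{\Delta}$ with $\alpha_{i}(\psi_{b})=b_{i}$. Steinberg's product $\epsilon(a)$ is already defined for every $a\in\mathbb{A}^{r}$; the whole extension problem concerns the degeneration of the toral factor $\psi_{b}$ as some $b_{i}\rightarrow 0$, namely that $T_{\Delta}$ closes up to $\overline{T}_{\Delta}\cong\mathbb{A}^{r}$ inside $V_{G}$ and that in each fundamental representation the coefficient $(\omega_{i}-\lambda)(\psi_{b})$ on a weight vector $v_{\lambda}$ is polynomial in $b$ because $\omega_{i}-\lambda$ is a nonnegative integral combination of simple roots. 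As written, your formula contains nothing that needs extending, so the argument would not get off the ground; once the factor $\psi_{b}$ is restored, one must still check that the extension is nonzero (evaluate on the highest weight vector) and remains a section (the trace $\chi_{i}\circ\epsilon_{+}$ equals $a_{i}$ plus a polynomial in the $a_{j}$ for $j\prec i$). Your reduction of regularity to a single fiber is exactly the paper's: at $0$ one has $\epsilon_{+}(0)=we_{\emptyset}$, whose centralizer is computed from the stabilizers of the idempotents to be $Z_{G}(U^{-}\cap wUw^{-1})$, of dimension $l(w)=r$, and the set where $\dime J_{a}=r$ is open and $Z_{+}$-stable, hence everything.
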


L'existence de cette section nous permet donc de construire le centralisateur régulier $J$ en tirant le schéma des centralisateurs $I$ par $\eps_{+}$. Pour obtenir que $J$ est un schéma en groupes lisse et commutatif, nous avons besoin d'établir des propriétés sur le morphisme $\chi_{+}^{reg}$ et en particulier sa lissité. Cela fait l'objet de l'énoncé suivant:
\medskip
\begin{thm}\label{4}
Le morphisme $\chi_{+}^{reg}:V_{G}^{reg}\rightarrow V_{T}/W$ est lisse et ses fibres géométriques sont des $G$-orbites.

Enfin, il existe un unique schéma en groupes commutatif $J$, lisse sur $V_{T}/W$, muni d'une flèche $\chi_{+}^{*}J\rightarrow I$ qui est un isomorphisme au-dessus de $V_{G}^{reg}$.
\end{thm}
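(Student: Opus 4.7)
Mon plan se d�compose naturellement en deux parties suivant la structure de l'�nonc�. J'�tablirai d'abord la lissit� de $\chi_{+}^{reg}$ et la description de ses fibres g�om�triques comme $G$-orbites; j'en d�duirai ensuite l'existence et les propri�t�s du centralisateur r�gulier $J$ par la m�thode de Ng�.

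Pour la premi�re partie, l'id�e cl� est d'exploiter l'action du tore central $Z_{+}$ de $G_{+}$ sur $V_{G}$, qui est $\chi_{+}$-�quivariante via une action correspondante sur $V_{T}/W$. Cette action est contractante vers l'origine: toute orbite de $Z_{+}$ dans $V_{T}/W$ a l'origine dans son adh�rence. Il suffit donc d'analyser en d�tail la fibre au-dessus de l'origine, qui joue le r�le de c�ne nilpotent du mono�de $V_{G}$: les propri�t�s �tablies pour cette fibre la plus singuli�re se propageront automatiquement aux autres par translation sous $Z_{+}$. Sur cette fibre singuli�re, la section $\eps_{+}$ fournie par le Th�or�me \ref{3} donne un �l�ment r�gulier distingu�, dont l'orbite sous $G$ par conjugaison est de dimension $\dim G-r$, o� $r=\dim T$. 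Ce nombre co�ncide avec la dimension attendue de la fibre, obtenue par le calcul dimensionnel $\dim V_{G}-\dim V_{T}/W=(\dim G+r)-2r=\dim G-r$. En combinant ce calcul avec un argument de connexit�, j'en d�duirai que le lieu r�gulier de la fibre nilpotente est exactement cette $G$-orbite. La lissit� de $\chi_{+}^{reg}$ suivra alors par le crit�re infinit�simal: en un point r�gulier, la diff�rentielle de $\chi_{+}$ est surjective car son noyau est pr�cis�ment l'espace tangent � la $G$-orbite, et les dimensions co�ncident.

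Pour la seconde partie, je suivrai la construction de Ng�. Je pose $J:=\eps_{+}^{*}I$ o� $I$ est le sch�ma des centralisateurs au-dessus de $V_{G}$. La premi�re partie assure que $I\vert_{V_{G}^{reg}}$ est un sch�ma en groupes commutatif (les centralisateurs des �l�ments r�guliers �tant ab�liens) et lisse de dimension $r$ au-dessus de $V_{G}^{reg}$. Comme $\chi_{+}^{reg}$ est lisse surjectif et admet $\eps_{+}$ pour section, $I\vert_{V_{G}^{reg}}$ descend par fpqc en un sch�ma en groupes sur $V_{T}/W$, n�cessairement isomorphe � $J$ par unicit� de la descente; en particulier, $J$ est lisse et commutatif, et on dispose d'un isomorphisme canonique $\chi_{+}^{*}J\simeq I$ au-dessus de $V_{G}^{reg}$. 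L'extension en une fl�che $\chi_{+}^{*}J\rightarrow I$ sur $V_{G}$ tout entier s'obtient par les arguments classiques, utilisant la commutativit� des fibres de $J$ et la normalit� de $V_{T}/W$.

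L'obstacle principal sera l'analyse pr�cise du c�ne nilpotent du mono�de: il faudra �tablir que son lieu r�gulier est irr�ductible et connexe, ce qui requiert une �tude d�taill�e des �l�ments nilpotents et quasi-unipotents de $V_{G}$, g�n�ralisant les r�sultats classiques sur le c�ne nilpotent de $\kg$. Cette analyse constitue le c\oe ur technique de la d�monstration et s'appuiera sur l'extension � $V_{G}$ des r�sultats de Steinberg sur les �l�ments unipotents.
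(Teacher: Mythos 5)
Your overall strategy (contraction by the central torus $Z_{+}$ to reduce to the fibre over $0$, analysis of the nilpotent cone, surjectivity of the differential along the Steinberg section, then a Ng\^o-style descent for $J$) is the same as the paper's. But there is a genuine gap at the heart of your first part: you claim that the regular locus of the nilpotent fibre is irreducible and connected, equal to the single $G$-orbit of $\epsilon_{+}(0)$. This is false. The paper shows, via the identification of $V_{G}^{0}/Z_{+}$ with the wonderful compactification and the Lusztig--He stratification, that $\mathcal{N}^{reg}$ decomposes as a disjoint union of strata $X_{\emptyset,w}$ indexed by \emph{all} Coxeter elements $w\in W$ (Prop. \ref{nilreg}); each is a separate conjugacy class, and the sections $\epsilon_{+}^{w'}$ attached to distinct Coxeter elements land in distinct components --- this is precisely the point where the monoid case differs from the Lie algebra case, where the regular nilpotent orbit is unique. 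Consequently your ``argument de connexit\'e'' cannot succeed, and the equality of dimensions between the orbit of $\epsilon_{+}(0)$ and the fibre does not give irreducibility, only that this orbit is one irreducible component among several. Your smoothness argument then only covers the points conjugate to the one section $\epsilon_{+}$; to treat all of $V_{G}^{reg}$ one must introduce the whole family of sections $\epsilon_{+}^{w}$ and show that their conjugates fill out a dense open of every fibre, as the paper does in Prop. \ref{stcoxeter}.

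The same issue undermines your construction of $J$: the map $G\times\mathfrak{C}_{+}\rightarrow V_{G}^{reg}$, $(g,a)\mapsto g\epsilon_{+}(a)g^{-1}$, is an open immersion modulo $J$ but is \emph{not} surjective, so fpqc descent along $\chi_{+}^{reg}$ does not apply directly; it only produces, for each Coxeter element $w$, a group scheme $J_{w}$ with $\chi_{+}^{*}J_{w}\cong I$ over the open subset $V_{G}^{reg,w}$. The paper must then prove (Prop. \ref{proJ}) that the $J_{w}$ are canonically isomorphic, by checking that $G_{+}^{reg}\cup V_{G}^{rs}$ is contained in every $V_{G}^{reg,w}$ and that its complement has codimension at least two, so that the isomorphisms glue by smoothness. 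This patching step is absent from your proposal and cannot be dispensed with; it is exactly the ``subtile diff\'erence'' with the situation of Ng\^o and Donagi--Gaitsgory that the paper signals.
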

\medskip

Nous passons maintenant en revue l'organisation de l'article. Il se décompose en deux parties, la première concerne la preuve des théorèmes \ref{3} et \ref{4},
qui sont des résultats de théorie des groupes et la deuxième partie concerne le calcul de dimension des fibres de Springer.
\medskip

Dans la première section, on démontre le théorème \ref{3}. Nous introduisons le semi-groupe de Vinberg ainsi que le quotient par l'action adjointe $\chi_{+}$. Au-dessus de l'ouvert des inversibles $G_{+}$ de $V_{G}$, on dispose d'une section, ou plus exactement d'une famille, construite par Steinberg, dont on montre qu'elle se prolonge au semi-groupe de Vinberg. Cela nous permet alors de pouvoir construire le centralisateur régulier $J$.
\medskip

Dans la deuxième section, on obtient les propriétés énoncées dans le théorème \ref{4} sur le morphisme $\chi_{+}$ et le centralisateur $J$. En utilisant l'action du tore central $Z_{+}$ de $G_{+}$, on se ramène à l'étude au-dessus du point zéro, qui s'identifie au cône nilpotent. Les propriétés que l'on démontre pour la fibre la plus `singulière' de $\chi_{+}$ se propagent ensuite aux autre fibres.
\medskip

Dans la troisième section, on introduit les fibres de Springer affines pour les groupes et on fait le lien avec le semi-groupe de Vinberg par l'intermédiaire de l'interprétation modulaire. Ces fibres de Springer admettent un ouvert distingué, dit régulier, qui est une orbite sous un champ de Picard, issu du centralisateur régulier.
\medskip

Enfin, dans la dernière section, on démontre le théorème \ref{1}. Suivant Kazhdan-Lusztig, nous avons besoin d'étudier l'équidimensionnalité d'une variété de drapeaux associée. Cela nécessite une étude approfondie des éléments nilpotents et quasi-unipotents du semi-groupe de Vinberg (sect.\ref{casgen}). Une fois ce résultat d'équidimensionnalité obtenu, cela suffit pour en déduire que la dimension de l'ouvert régulier est la même que celle de la fibre toute entière et conclure quant à la dimension des fibres de Springer affines en général.
\bigskip

Ce travail a fait l'objet d'innombrables navettes entre Gérard Laumon et Ngô Bao Châu qui, par leur relecture avisée et leur soutien ont contribué à améliorer de manière significative la qualité de ce travail, qu'ils en soient ici profondément remerciés. Je tiens également à exprimer ma reconnaissance envers Michel Brion pour ses utiles remarques concernant les subtilités de la caractéristique $p$.

Je remercie l'Université de Chicago pour m'avoir invité par deux fois ainsi que Zongbin Chen pour d'utiles remarques sur les fibres de Springer affines. Enfin, merci à Ivan Boyer et Paul Mercat pour la peine qu'ils ont pris à relire mon piètre LaTeX.

\section{Le semi-groupe de Vinberg et sa section}
\subsection{Rappels sur le semi-groupe de Vinberg}\label{rapsemi}
Soit $k$ un corps algébriquement clos.

Soit $G$  un groupe connexe, semi-simple, simplement connexe sur $k$, de rang $r$.
Soit $(B,T)$ une paire de Borel. On considère l'ensemble des poids fondamentaux $\omega_{1}$,$\dots$, $\omega_{r}$, l'ensemble des racines simples $\Delta=\{\alpha_{1},\dots, \alpha_{r}\}$ et $R$ l'ensemble des racines.

Si $\lambda$ est un cocaractère dominant de $T$, on note $\rho_{\lambda}$, la représentation irréductible de plus haut poids $\lambda$, d'espace vectoriel $V_{\lambda}$.
Enfin, on pose, $\chi_{\lambda}=\Tr(\rho_{\lambda})$.
Tous les résultats énoncés ici, qui concernent les propriétés générales du semi-groupe de Vinberg seront dûs à Vinberg en caractéristique nulle et à Rittatore en caractéristique $p$.

\begin{defi}
Un semi-groupe algébrique $S$ est un $k$-schéma de type fini muni d'un morphisme associatif:
$m:S\times S\rightarrow S$ qui est un morphisme de $k$-schémas.
\end{defi}
Un semi-groupe est dit \textit{irréductible} (resp. normal), si $S$ l'est en tant que $k$-schéma.
Un semi-groupe qui admet une unité pour le morphisme $m$ est appelé un \textit{monoïde}.
Nous parlons de \textit{monoïde algébrique} pour un semi-groupe algébrique qui est un monoïde.

Pour un monoïde algébrique $S$, on peut donc définir son groupe des inversibles:
\begin{center}
$G(S):=\{x\in S\vert ~\exists !~ y\in S, xy=yx=1\}$.
\end{center}
Si $G(S)$ est connexe réductif, on dit que $S$ est un monoïde \textit{réductif}.
Dans la suite, nous ne considérerons que des monoïdes algébriques irrréductibles et réductifs. 
Soient $S$, $S'$ deux semi-groupes. On appelle $\phi:S\rightarrow S'$ un morphisme de semi-groupes si c'est un morphisme de schémas et que:
\begin{center}
$\forall~ x,y\in S, \phi(xy)=\phi(x)\phi(y)$.
\end{center}
Si de plus, $S$ et $S'$ sont des monoïdes et $\phi(1_{S})=1_{S'}$, on parle de morphisme de monoïdes.
Soit $G_{+}:= (T\times G)/{Z_{G}}$ où le centre $Z_{G}$ de $G$ est plongé par:
\begin{center}
$\lambda.(t,g)=(\lambda t,\lambda^{-1}g)$.
\end{center}
Ce groupe admet un tore maximal $T_{+}=(T\times T)/{Z_{G}}$ dont le groupe des caractères $X^{*}(T_{+})$ s'identifie à un sous-réseau de  $X^{*}(T)\times X^{*}(T)$. On note $Z_{+}$ le centre de $G_{+}$, il s'identifie au tore $T$.
Nous avons un schéma fourre-tout: 
\begin{center}
$H_{G}:=\prod\limits_{1\leq i\leq r}\mathbb{A}^{1}_{\alpha_{i}}\times\prod\limits_{1\leq i\leq r}\End(V_{\omega_{i}})$
\end{center}
(resp $H_{G}^{0}:=\prod\limits_{1\leq i\leq r}\mathbb{A}^{1}_{\alpha_{i}}\times\prod\limits_{1\leq i\leq r}(\End(V_{\omega_{i}})-\{0\})$).
On considère alors l'immersion localement fermée:
$$ \begin{array}{ll}
\iota:&G_{+} \rightarrow H_{G} \\
   &(t,g)\rightarrow(\alpha_{i}(t),\omega_{i}(t)\rho_{\omega_{i}}(g))_{1\leq i\leq r}
\end{array}.$$
On définit alors $V_{G}$, la normalisation de l'adhérence de $G_{+}$ dans le schéma fourre-tout $H_{G}$ et $V_{G}^{0}$ l'image réciproque dans $V_{G}$ de l'adhérence de $G_{+}$ dans $H_{G}^{0}$.
On a les propriétés suivantes sur les adhérences:
\begin{prop}
\begin{itemize}
\item Le schéma $V_{G}$ est muni d'une structure de monoïde.  Son groupe des unités s'identifie à $G_{+}$ et c'est un schéma normal affine et intègre. On l'appelle le semi-groupe de Vinberg.
\item L'action de $G_{+}\times G_{+}$ sur $G_{+}$, donnée par la translation à gauche et à droite s'étend à $V_{G}$ et à $V_{G}^{0}$.
\end{itemize}
\end{prop}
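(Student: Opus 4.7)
The plan is to produce the monoid structure on $V_{G}$ by transporting it from an evident monoid structure on the ambient scheme $H_{G}$ through two steps: (i) take closure, (ii) normalize. First I would observe that $H_{G}=\prod_{i}\mathbb{A}^{1}_{\alpha_{i}}\times\prod_{i}\End(V_{\omega_{i}})$ carries a natural affine monoid structure (componentwise multiplication, with identity $(1,\Id)_{i}$), and check by direct computation that the locally closed immersion $\iota:G_{+}\to H_{G}$ is a morphism of monoids: this boils down to the identity $\omega_{i}(tt')\rho_{\omega_{i}}(gg')=\omega_{i}(t)\rho_{\omega_{i}}(g)\,\omega_{i}(t')\rho_{\omega_{i}}(g')$, which holds because $\omega_{i}(t')$ is a scalar commuting with $\rho_{\omega_{i}}(g)$.

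Next I would argue that the scheme-theoretic closure $\overline{G_{+}}$ of $\iota(G_{+})$ in $H_{G}$ is a closed submonoid. Continuity of the multiplication $m:H_{G}\times H_{G}\to H_{G}$, together with $m(G_{+}\times G_{+})\subset G_{+}$ and the fact that $G_{+}\times G_{+}$ is dense in $\overline{G_{+}}\times\overline{G_{+}}$ (here one uses that $G_{+}$, being a smooth connected variety, is geometrically integral so that products of closures coincide with closures of products), give $m(\overline{G_{+}}\times\overline{G_{+}})\subset\overline{G_{+}}$. Hence $\overline{G_{+}}$ is an affine integral monoid with $G_{+}$ as open dense subscheme.

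Then I would lift the structure to the normalization $V_{G}\to\overline{G_{+}}$. The universal property of normalization provides such a lift as soon as the source $V_{G}\times V_{G}$ is normal: in characteristic zero, a product of normal geometrically integral $k$-schemes is normal; in positive characteristic the key point, which is Rittatore's contribution, is that the map $V_{G}\times V_{G}\to\overline{G_{+}}\times\overline{G_{+}}\to\overline{G_{+}}$ factors through $V_{G}$ because $V_{G}\times V_{G}$ is geometrically normal (one reduces to the smooth locus containing $G_{+}\times G_{+}$ and uses Serre's criterion $(R_{1})+(S_{2})$, inherited from $V_{G}$ since reductive monoids are Cohen--Macaulay). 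One verifies by restriction to the open dense $G_{+}\times G_{+}$ that the lifted map is associative and has a two-sided identity, so $V_{G}$ is a monoid. It is affine (closed in affine $H_{G}$, then normalization preserves affineness since $\overline{G_{+}}$ is Noetherian and the integral closure is module-finite), normal and integral by construction. The identification of the unit group with $G_{+}$ comes from the observation that an element of $H_{G}$ is invertible iff each $\End(V_{\omega_{i}})$-component lies in $\GL(V_{\omega_{i}})$ and each $\mathbb{A}^{1}_{\alpha_{i}}$-component is nonzero; such elements of $\overline{G_{+}}$ already lie in $G_{+}$ by construction of $\iota$, and $G_{+}\subset V_{G}$ is preserved under normalization since it is smooth.

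Finally, for the second bullet, the left-right translation action of $G_{+}\times G_{+}$ on $G_{+}$ extends to $V_{G}$ simply by $(g,g')\cdot v=g\cdot v\cdot g'{}^{-1}$ inside the monoid $V_{G}$, the three-fold multiplication being defined by the first bullet. That this action preserves $V_{G}^{0}$ is immediate: the action of $G_{+}\times G_{+}$ on $H_{G}$ preserves $H_{G}^{0}$, since multiplying a nonzero endomorphism on either side by an element of $\GL(V_{\omega_{i}})$ keeps it nonzero, so preimages in $V_{G}$ of $\overline{G_{+}}\cap H_{G}^{0}$ are stable. The main obstacle in the whole argument is the lifting step in positive characteristic, where normality of $V_{G}\times V_{G}$ is not automatic and one has to invoke the structural results of Rittatore on reductive monoids.
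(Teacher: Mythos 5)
Your proof is correct and follows essentially the same route as the paper: the paper also observes that $\iota$ is a morphism of monoids, hence the closure $\overline{G_{+}}$ in $H_{G}$ is a monoid, and then transfers the structure and the two-sided translation action to the normalization; the only difference is that the paper black-boxes these last steps by citing a lemma of Renner (unique monoid structure on the normalization of an integral monoid, with the same unit group) and Rittatore's theorems on extending actions to closures and normalizations, whereas you unfold them via the universal property of normalization. One local remark: the one step where your direct argument wobbles is the normality of $V_{G}\times V_{G}$ in characteristic $p$ --- the parenthetical appeal to Serre's criterion ``inherited from $V_{G}$'' and to Cohen--Macaulayness of reductive monoids does not actually prove anything (and the latter is a hard theorem you should not need here). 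But the fact itself is standard in all characteristics over an algebraically closed field: $V_{G}\times_{k}V_{G}\to V_{G}$ is flat with geometrically normal fibres over a normal base, hence the product is normal; with that substitution your lifting step, and hence the whole argument, goes through.
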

\begin{proof}
Prouvons la première assertion. Tout d'abord, le morphisme $\iota$ est un morphisme de monoïdes, donc l'adhérence de $G_{+}$ dans $H_{G}$ a une structure de monoïde.
Maintenant, d'après un lemme de Renner \cite[Lem.1]{Ri}, si $S$ est un monoïde intègre, il existe une unique structure de monoïde sur sa normalisation $S_{norm}$ telle que $S_{norm}\rightarrow S$ soit un morphisme de monoïdes et telle que $G(S_{norm})=G(S)$. En particulier, en l'appliquant à $V_{G}$, on a le résultat souhaité.

Pour la deuxième assertion, à nouveau, par \cite[Thm. 3]{Ri}, nous avons que l'action de $G_{+}\times G_{+}$ s'étend à l'adhérence  de $G_{+}$ dans $H_{G}$. Puis, d'après \cite[Cor. 2]{Ri}, nous avons une action naturelle de $G_{+}\times G_{+}$ sur $V_{G}$ qui étend l'action par translation à gauche et à droite, compatible au morphisme de normalisation; en particulier, on obtient le résultat analogue pour $V_{G}^{0}$.
\end{proof}
On a la proposition suivante due à Vinberg en caractéristique zéro \cite[Th. 8]{Vi}, que nous étendons en caractéristique $p>0$.
\begin{prop}\label{concini}
Le schéma $V_{G}^{0}$ est lisse.  Il admet un quotient projectif lisse par le centre $Z_{+}$ de $G_{+}$, appelé la compactification de de Concini-Procesi (cf. \cite{dC-P}).
\end{prop}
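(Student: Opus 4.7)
Le plan consiste à réaliser $V_{G}^{0}$ comme un $Z_{+}$-torseur au-dessus de la compactification de de Concini-Procesi $\overline{G}_{ad}$. Puisque cette dernière est connue pour être lisse et projective en caractéristique arbitraire (\cite{dC-P} en caractéristique nulle, étendue ensuite par Strickland et de Concini-Springer au cas $p>0$), les deux assertions de l'énoncé en découleront simultanément.

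Je commencerais par analyser l'action du centre $Z_{+}\simeq T$ sur $H_{G}$: un élément $t\in Z_{+}$ agit par multiplication par $\alpha_{i}(t)$ sur le facteur $\mathbb{A}^{1}_{\alpha_{i}}$, et par le scalaire $\omega_{i}(t)$ sur le facteur $\End(V_{\omega_{i}})$, la représentation $\rho_{\omega_{i}}$ étant irréductible. Comme $G$ est simplement connexe, les poids fondamentaux $(\omega_{1},\dots,\omega_{r})$ forment une $\mathbb{Z}$-base de $X^{*}(T)$; donc $Z_{+}$ agit librement sur $\prod_{i}(\End(V_{\omega_{i}})-\{0\})$, a fortiori sur $H_{G}^{0}$, et donc sur $V_{G}^{0}$.

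Je considérerais ensuite la projection naturelle $p:H_{G}^{0}\rightarrow\prod_{i}\mathbb{P}(\End(V_{\omega_{i}}))$, constante sur les $Z_{+}$-orbites (les facteurs $\mathbb{A}^{1}_{\alpha_{i}}$ sont oubliés, les autres projectivisés). Restreinte à $G_{+}$, elle se factorise via $G_{+}\twoheadrightarrow G_{+}/Z_{+}=G_{ad}$, suivi du plongement projectif donné par les représentations fondamentales. En passant aux adhérences et en utilisant la normalité de $V_{G}$, j'en déduirais un morphisme $V_{G}^{0}/Z_{+}\rightarrow\overline{G}_{ad}$, où $\overline{G}_{ad}$ désigne l'adhérence de $G_{ad}$ dans $\prod_{i}\mathbb{P}(\End(V_{\omega_{i}}))$, c'est-à-dire la compactification de de Concini-Procesi.

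L'étape décisive -- et principal obstacle -- consisterait à vérifier que ce morphisme est un isomorphisme et que $V_{G}^{0}\rightarrow V_{G}^{0}/Z_{+}$ est effectivement un $Z_{+}$-torseur (et non seulement un quotient géométrique). Je procéderais par comparaison de cartes affines: $\overline{G}_{ad}$ admet un recouvrement canonique $G\times G$-équivariant par ouverts affines indexés par les sous-ensembles de $\Delta$, et $V_{G}^{0}$ hérite d'un recouvrement $G_{+}\times G_{+}$-équivariant analogue. Sur chaque carte, une description explicite à la Vinberg fournit une trivialisation $Z_{+}$-équivariante. En caractéristique nulle ceci remonte à Vinberg \cite{Vi}; en caractéristique positive on s'appuiera sur les résultats de Rittatore \cite{Ri} concernant les monoïdes réductifs normaux, ainsi que sur l'extension au cas $p>0$ de la construction de de Concini-Procesi. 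Une fois l'isomorphisme $V_{G}^{0}/Z_{+}\simeq\overline{G}_{ad}$ acquis, la lissité de $V_{G}^{0}$ résultera de celle de $\overline{G}_{ad}$ par la structure de torseur, ce qui conclura.
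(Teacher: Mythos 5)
Your proposal is correct in outline but runs the argument in the opposite direction from the paper. The paper proves smoothness of $V_{G}^{0}$ first and directly: it introduces the big-cell map $j\colon U^{-}\times Z_{+}\times\overline{T}_{\Delta}\times U\rightarrow V_{G}^{0}$, $(u_{-},z,t,u)\mapsto u_{-}ztu$, shows it is birational and quasi-finite between normal integral schemes (via the description of the stabilizers of the idempotents $e_{I}$), deduces from Zariski's Main Theorem that $j$ is an open immersion, and concludes since the source is smooth and the $G_{+}\times G_{+}$-translates of its image cover $V_{G}^{0}$; the identification of the $Z_{+}$-quotient with the wonderful compactification is then obtained afterwards, as in Vinberg. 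You instead take the smoothness and projectivity of $\overline{G}_{ad}$ in characteristic $p$ (Strickland, de Concini--Springer) as an external input and recover the smoothness of $V_{G}^{0}$ from the $Z_{+}$-torsor structure; your freeness argument via the fundamental weights, which uses simple connectedness, is correct. Note however that the ``decisive step'' you isolate --- the $Z_{+}$-equivariant local trivializations over the standard charts --- is exactly the big-cell computation the paper performs: the chart of $\overline{G}_{ad}$ attached to $\Delta$ is $U^{-}\times\overline{T}_{ad,\Delta}\times U$ and its preimage in $V_{G}^{0}$ is the image of $j$, so the two proofs meet at the same technical core. What your route buys is a conceptual packaging (torsor over a known smooth projective variety); what it costs is the reliance on the characteristic-$p$ theory of the wonderful compactification, plus two points you should make explicit before invoking the torsor structure: surjectivity of $V_{G}^{0}\rightarrow\overline{G}_{ad}$ (e.g.\ by matching the orbit stratifications indexed by subsets of $\Delta$ on both sides), and the fact that the induced bijective birational morphism onto the normal target is an isomorphism (again Zariski's Main Theorem).
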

$\rmq$ Nous reportons la preuve à la section \ref{dcp}.
\medskip
On définit  le schéma $V_{T}$ l'adhérence de $T_{+}$ dans $V_{G}$.
Comme on dispose également de l'adhérence $V_{T}^{\flat}$ de $T_{+}$ dans $H_{G}$, nous cherchons à les comparer.
Si on note $V_{G}^{\flat}$, l'adhérence de $G_{+}$ dans $H_{G}$ et $\zeta:V_{G}\rightarrow V_{G}^{\flat}$ la flèche de normalisation, la flèche $\zeta$ induit un morphisme:
\begin{center}
$\zeta_{T}:V_{T}\rightarrow V_{T}^{\flat}$.
\end{center}

D'après Drinfeld \cite{Dr}, nous avons la proposition suivante:
\begin{prop}\label{drin}
Supposons $\car(k)>3$, alors la flèche $\zeta_{T}$ est un isomorphisme.
\end{prop}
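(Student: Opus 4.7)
The plan is to apply Zariski's main theorem. First I would observe that $\zeta_{T}$ is finite and birational. Finiteness is automatic: $\zeta_{T}$ is the restriction of the finite normalization morphism $\zeta:V_{G}\rightarrow V_{G}^{\flat}$ to the closed subscheme $V_{T}$. Birationality follows from the fact that $\zeta$ is an isomorphism above the open dense subscheme $G_{+}$, and in particular above $T_{+}$, which is open and dense in both $V_{T}$ and $V_{T}^{\flat}$. Consequently, to prove that $\zeta_{T}$ is an isomorphism it suffices to show that $V_{T}^{\flat}$ is normal, since then the finite birational map into a normal integral scheme is forced to be an isomorphism.

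The next step is to recognise $V_{T}^{\flat}$ as an affine toric variety under $T_{+}$ and to describe it by a semigroup of characters. For $s\in T$ the endomorphism $\rho_{\omega_{i}}(s)$ is diagonal in the weight basis of $V_{\omega_{i}}$, so the image of $T_{+}$ under $\iota$ is contained in the product of the lines $\mathbb{A}^{1}_{\alpha_{i}}$ with the diagonal subspaces of $\End(V_{\omega_{i}})$, each of which is an affine space. The coordinate ring of $V_{T}^{\flat}$ is thus the sub-$k$-algebra of $k[T_{+}]=k[X^{*}(T_{+})]$ generated by the characters $(\alpha_{i},0)$ for $1\leq i\leq r$ and $(\omega_{i},\mu)$, where $\mu$ runs over the weights of $V_{\omega_{i}}$. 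Denoting by $M\subset X^{*}(T_{+})$ this finitely generated submonoid, we have $V_{T}^{\flat}=\Spec k[M]$, and by the standard theory of affine toric varieties $V_{T}^{\flat}$ is normal if and only if $M$ is saturated in $X^{*}(T_{+})$.

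The substantive step is therefore to verify that $M$ is saturated, and this is the content of Drinfeld's result in \cite{Dr}. It is precisely here that the hypothesis $\car(k)>3$ enters: in very small characteristics the simple modules $V_{\omega_{i}}$ can fail to contain all the weights of the corresponding Weyl modules (the phenomenon being concentrated at the bad primes $2,3$ in types $G_{2}$, $F_{4}$, $B_{n}$, $C_{n}$ and the exceptional types), and the resulting monoid $M$ would then be strictly smaller than its saturation. Once $\car(k)>3$ guarantees that all the relevant weights appear, saturation is checked case-by-case on the root system, and $V_{T}^{\flat}=\Spec k[M]$ is a normal affine toric variety; combined with the first step, this gives the isomorphism $\zeta_{T}$.

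The main obstacle is clearly the combinatorial saturation check in the last step: the reductions to Zariski's main theorem and to the monoid description are formal, whereas the verification that $M$ is saturated is an explicit root-system computation and is the reason for the characteristic hypothesis.
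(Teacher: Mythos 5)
Your proposal is correct and follows essentially the same route as the paper: reduce to the normality of $V_{T}^{\flat}$ via the finite birational map $\zeta_{T}$, describe $V_{T}^{\flat}$ by its weight monoid generated by the $(\alpha_{i},0)$ and $(\omega_{i},\mu)$, and use the hypothesis $\car(k)>3$ (via Premet and Curtis--Steinberg) to guarantee that this monoid coincides with the saturated characteristic-zero cone $C^{*}$. The only cosmetic difference is that the paper also records the normality of $V_{T}$ from Brion--Kumar, whereas you correctly observe that normality of the target $V_{T}^{\flat}$ alone suffices.
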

\begin{proof}
Tout d'abord, il résulte de \cite[Cor. 6.2.14]{BK}  que $V_{T}$ est normal.
Le semi-groupe $S$ des poids qui définit $V_{T}^{\flat}$ est engendré par les vecteurs $(\alpha_{i},0)$ et les $(\omega_{i},\lambda)$ pour $\lambda$ un poids de $V_{\omega_{i}}$. Comme $\car(k)>3$, d'après Premet \cite{Pr} et le théorème de Curtis-Steinberg \cite[Th. 3.3]{Bo}, le  semi-groupe $S$ est le même qu'en caractéristique nulle et en caractéristique nulle, $V_{T}^{\flat}$ est normal associé au cône:
\begin{center}
$C^{*}=\{(\lambda,\mu)\in X^{*}(T)\times X^{*}(T)\vert~\lambda\leq\mu\}$ 
\end{center}
En particulier, les schémas  $V_{T}$ et $V_{T}^{\flat}$ s'identifient naturellement via $\zeta_{T}$.
\end{proof}
De plus, si $\overline{Z}_{+}$ est l'adhérence de $Z_{+}$, le centre de $G_{+}$, dans $V_{G}$  en regardant l'algèbre $k[V_{G}]^{G\times G}$, elle s'identifie à $k[\overline{Z}_{+}/Z_{G}]$ et donne lieu à un morphisme de monoïdes, dit d'abélianisation (\cite[Th. 3]{Vi}  et \cite[Th. 20]{Ri}):
\begin{center}
$\alpha:V_{G}\rightarrow A_{G}:=\overline{Z}_{+}/Z_{G}:=\mathbb{A}^{r}$. 
\end{center}
Nous allons maintenant construire un autre morphisme, issu de l'action par conjugaison.
\subsection{Le morphisme de Steinberg étendu $\chi_{+}$}
On rappelle le théorème de Chevalley-Steinberg (\cite[VI.3.1-Ex1]{Bki} et \cite[Th. 6.1]{S} ):
\begin{thm}
On considère l'action adjointe de $G$ sur lui-même. On a l'isomorphisme suivant, qui s'obtient par restriction des fonctions:
\begin{center}
$\phi:k[G]^{G}\stackrel{\cong}{\rightarrow} k[T]^{W}$.
\end{center}
On obtient un morphisme $\chi:G\rightarrow T/W$, dit de Steinberg.
Si de plus, $G$ est simplement connexe, $T/W$ a une structure d'espace affine, donnée par les caractères $\chi_{\omega_{i}}$ pour $1\leq i\leq r$.
\end{thm}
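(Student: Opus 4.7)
The plan is to prove the three assertions in order: the ring isomorphism $\phi$, the induced morphism $\chi$, and the polynomial structure of $k[T]^{W}$ when $G$ is simply connected. First I would check that $\phi$ is well defined, which is immediate since $N_{G}(T)$ acts on $T$ through $W$, so any $G$-invariant polynomial restricts to a $W$-invariant one on $T$. Taking spectra then yields the morphism $\chi : G \rightarrow T/W$ as soon as the isomorphism is established.

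For injectivity of $\phi$, I would use density: if $f \in k[G]^{G}$ restricts to $0$ on $T$, then by $G$-invariance $f$ vanishes on every $G$-conjugate of $T$, hence on every semisimple element of $G$. Since the regular semisimple locus is open and dense in the irreducible variety $G$, this forces $f = 0$.

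For surjectivity, the natural candidates lifting $W$-invariant functions on $T$ are the trace characters $\chi_{\la} = \Tr(\rho_{\la})$ attached to the Weyl modules $V_{\la}$ of highest weight $\la$. Their restriction to $T$ is the formal character of $V_{\la}$, which is manifestly $W$-invariant and whose leading term in the dominance order is $e^{\la}$ with multiplicity one. When $G$ is simply connected, the fundamental weights $\omega_{i}$ form a $\mathbb{Z}$-basis of $X^{*}(T)$, and a theorem of Steinberg (extended to arbitrary base by Pittie and Demazure) asserts that $k[T]^{W}$ is a polynomial algebra on the $W$-orbit sums $m_{\omega_{i}}$. Combining this with the triangular leading-term computation of $\chi_{\omega_{i}}|_{T} = m_{\omega_{i}} + (\text{strictly lower terms})$ shows that the $\chi_{\omega_{i}}|_{T}$ themselves form a system of polynomial generators of $k[T]^{W}$. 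This simultaneously proves surjectivity of $\phi$ and gives the affine-space structure on $T/W$.

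The hard part will lie in the positive characteristic case. The Weyl modules $V_{\la}$ may fail to remain irreducible, but their formal characters, and therefore the leading-term computation, are insensitive to characteristic since they are computed by the Weyl character formula over $\mathbb{Z}$. The genuinely nontrivial input is the polynomiality of $k[T]^{W}$, which really requires the simply connected hypothesis: for a non simply connected semisimple group, $T/W$ is only a weighted affine quotient and in general fails to be a polynomial variety, so no family of $r$ fundamental characters can yield an isomorphism with $\mathbb{A}^{r}$.
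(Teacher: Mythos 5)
The paper does not actually prove this statement: it recalls it as the classical Chevalley--Steinberg theorem, citing Bourbaki (VI.3.1, Exerc. 1) and Steinberg (Th. 6.1). Your argument --- injectivity by density of the semisimple locus in $G$, surjectivity and the affine-space structure via the unitriangular relation between the restricted characters $\chi_{\omega_{i}}\vert_{T}$ and the orbit sums $m_{\omega_{i}}$, combined with the polynomiality of $\mathbb{Z}[X^{*}(T)]^{W}$ for the weight lattice of a simply connected group --- is exactly the proof contained in those references, and it is correct; your observation that the formal characters of the Weyl modules are computed over $\mathbb{Z}$, so the leading-term computation is insensitive to the characteristic, is the right way to dispose of the $p>0$ subtlety.
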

On dispose alors d'un morphisme:
\begin{center}
$\chi_{+}: G_{+}\rightarrow T_{+}/W=\mathbb{G}_{m}^{r}\times \mathbb{A}^{r}$,
\end{center}
provenant du théorème de Chevalley où $W$ agit trivialement sur le tore central.
Le morphisme $\chi_{+}$ est donné par: 
\begin{center}
$g_{+}=(t,g)\rightarrow (\alpha_{1}(t),..,\alpha_{r}(t),\chi_{1}(tg),..,\chi_{r}(tg))$
\end{center}
où  $\chi_{i}:=\Tr(\rho_{(\omega_{i},\omega_{i})})$.
Montrons que cette flèche se prolonge au semi-groupe de Vinberg.
\begin{prop}\label{chevalley}
L'application de restriction $\phi:k[V_{G}]^{G}\rightarrow k[V_{T}]^{W}$ est un isomorphisme de $k$-algèbres. De plus, $V_{T}/W$ est un espace affine de dimension $2r$, dont les coordonnées sont données par les $(\alpha_{i},0)$ et les $\chi_{i}=\Tr(\rho_{(\omega_{i}, \omega_{i})})$.
\end{prop}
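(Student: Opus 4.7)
The strategy is to lift the classical Chevalley--Steinberg theorem from the unit group $G_+$ to the Vinberg semigroup $V_G$, using the density of $G_+$ in $V_G$ and the normality of $V_G$. First, I note that $\alpha_i$ (the $i$-th $\mathbb{A}^1$-coordinate of the ambient space $H_G$) and $\chi_i = \Tr\rho_{(\omega_i,\omega_i)}$ (the trace of the $i$-th $\End(V_{\omega_i})$-factor) are regular $G$-invariant functions on $V_G$: the $\alpha_i$ depend only on the central $T$-factor, and the $\chi_i$ are conjugation-invariant because $\rho_{(\omega_i,\omega_i)}\colon V_G\to\End(V_{\omega_i})$ is a morphism of monoids. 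The restriction $\phi$ is therefore well-defined, and lands in $W$-invariants since $W=N_G(T)/T$ and $N_G(T)$-conjugation preserves $V_T$.

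For injectivity of $\phi$, I argue that $G\cdot V_T$ is dense in $V_G$: on $G_+$, the orbit $G\cdot T_+$ consists of pairs $(t,g^{-1}ug)$ with $t,u\in T$ and $g\in G$, hence of pairs whose $G$-component is semisimple; since semisimple elements are dense in $G$, $G\cdot T_+$ is dense in $G_+$, and as $G_+$ is dense in $V_G$, $G\cdot V_T$ is dense in $V_G$. A $G$-invariant function vanishing on $V_T$ then vanishes on $G\cdot V_T$ by equivariance, and hence on $V_G$.

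The main step is to identify $k[V_T]^W$ as the polynomial ring $k[\alpha_i,\chi_i]$. By Proposition~\ref{drin} (in characteristic $>3$), $V_T$ is the normal affine toric variety for $T_+$ whose semigroup of characters is generated by $(\alpha_i,0)$ and $(\omega_i,\mu)$ for $\mu$ a weight of $V_{\omega_i}$. On the open dense subtorus $T_+$, a direct computation using that $Z_G$ acts anti-diagonally on $k[T]\otimes k[T]^W$ and that the monomials $e^{\omega_i}\otimes\chi_{\omega_i}$ (which coincide with $\chi_i|_{T_+}$) are $Z_G$-invariant, yields $k[T_+]^W = k[\alpha_i^{\pm 1},\chi_i]$. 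Since $V_T$ is normal and the divisors $\{\alpha_i=0\}$ are nonempty in $V_T$ (visible via the surjective abelianization $V_T\to A_G=\mathbb{A}^r$), any $f\in k[V_T]^W\subset k[\alpha_i^{\pm 1},\chi_i]$ must have nonnegative $\alpha_i$-order for every $i$, so $f\in k[\alpha_i,\chi_i]$. This gives $k[V_T]^W=k[\alpha_i,\chi_i]$ and $V_T/W=\mathbb{A}^{2r}$.

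Surjectivity of $\phi$ then follows at once: given $g\in k[V_T]^W=k[\alpha_i,\chi_i]$, the substitution $f:=g(\alpha_i,\chi_i)$, using the regular functions $\alpha_i,\chi_i$ on $V_G$, defines an element of $k[V_G]^G$ restricting to $g$ on $V_T$. I expect the main obstacle to be the polynomial identification of $k[V_T]^W$, which hinges both on the toric computation of $k[T_+]^W$ (via $Z_G$-invariants) and on the normality argument on $V_T$ controlling the toric boundary through the non-invertibility of the $\alpha_i$.
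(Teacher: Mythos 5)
Your overall architecture (well-definedness of $\phi$, injectivity via density of the semisimple locus, reduction of surjectivity to the identification $k[V_T]^W=k[\alpha_i,\chi_i]$ using the weight-monoid description of $V_T$ from Proposition~\ref{drin}) is the same as the paper's, and your computation $k[T_+]^W=k[\alpha_1^{\pm1},\dots,\alpha_r^{\pm1},\chi_1,\dots,\chi_r]$ via $Z_G$-invariants of $k[T]\otimes k[T]^W$ is correct and is a nice way to package the algebraic independence. Where you diverge is the passage from $T_+$ to $V_T$, and that is where there is a genuine gap. You assert that any $f\in k[V_T]^W\subset k[\alpha_i^{\pm1},\chi_j]$ has nonnegative $\alpha_i$-order because $V_T$ is normal and the loci $\{\alpha_i=0\}$ are nonempty divisors. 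But the $\chi_j$ are not monomials on $T_+$: each $\chi_j$ is a sum of weight monomials $e^{(\omega_j,\mu)}$ with different orders of vanishing along a given boundary divisor $D$ of the toric variety $V_T$. Consequently, for $f=\sum_n c_n(\chi)\,\alpha^n$, the valuation $\mathrm{ord}_D(f)$ is \emph{not} read off from $\min\{n_i : c_n\neq0\}$; a priori the weight monomials coming from a term with $n_i<0$ could cancel against, or be dominated by, those coming from other terms, leaving only monomials with nonnegative order along every boundary divisor. The implication ``regular on $V_T$ $\Rightarrow$ no negative powers of $\alpha_i$'' is exactly the content that must be proved, and your appeal to normality does not prove it.

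The statement is true and the gap is fillable, but filling it essentially forces you back to the paper's argument: expand $f$ in the monomial basis of $k[T_+]$ and observe that for a multidegree $d$ maximal among those occurring in a fixed first coordinate $\lambda=\sum n_j\alpha_j+\sum d_j\omega_j$, the highest-weight monomial $e^{(\lambda,\,\sum d_j\omega_j)}$ survives with nonzero coefficient; since the saturated weight monoid of $V_T$ consists (for dominant second coordinate) of the pairs with $\lambda-\mu$ a nonnegative integral combination of simple roots, this monomial lies in $k[V_T]$ only if all $n_j\geq0$. This is precisely the triangularity used in the paper, which works directly with the $W$-orbit-sum basis of $k[V_T]^W$ indexed by the dominant part of the weight monoid, rewrites $(\omega_i,\chi_{\omega_i})$ as $(\omega_i,\mathrm{Sym}\,\omega_i)$ plus terms $(\omega_i-\mu,0)\cdot(\mu,\mathrm{Sym}\,\mu)$ for $\mu<\omega_i$, and then invokes Steinberg for algebraic independence. (An alternative patch closer to your spirit: show $k[V_T]$ is integral over $k[\alpha,\chi]$ using the characteristic polynomials of the $\rho_{(\omega_i,\omega_i)}$, and conclude $k[V_T]^W=k[\alpha,\chi]$ from normality of the polynomial ring together with equality of fraction fields; but this too requires an argument you have not supplied.)
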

\begin{proof}
L'injectivité résulte du fait que si $\phi(f)=\phi(g)$, alors $f=g$ sur l'ensemble $G_{+}^{ss}$ des éléments semi-simples qui est dense dans $G_{+}$ et donc dans 
$V_{G}$ et $f=g$ car $V_{G}$ affine.

Il reste à montrer que $\phi$ est surjective, en particulier, $k[V_{T}]^{W}$ est engendré par les caractères des représentations. Dans la proposition \ref{drin}, nous avons vu que le monoïde $C^{*}\cap X^{*}(T_{+})^{+}$ est engendré par les $(\alpha_{i}, 0)$ et $(\omega_{i},\omega_{i})$. Comme $W$ agit trivialement sur le premier facteur, on obtient que $k[V_{T}]^{W}$ est engendré par les éléments de la forme $(\gamma, 0)$ et $(\la, \Sym \la)$, pour $\gamma$ dans le module engendré par les racines simples et $\lambda\leq\omega_{i}$. Ici, nous avons posé $\Sym\la:=\sum\limits_{w\in W}w\la$.

Maintenant, les éléments $(\omega_{i},\chi_{\omega_{i}})$ sont égaux à $(\omega_{i}, \Sym(\omega_{i}))$ plus une somme d'éléments $(\omega_{i}, \Sym \mu$) pour $\mu<\omega_{i}$.
Or, comme $\mu<\omega_{i}$, la différence est une somme de racines simples et $(\omega_{i},\Sym \mu)=(\omega_{i}-\mu,0)+(\mu, \Sym \mu)$. On peut donc remplacer les $(\lambda, \Sym\lambda)$ par les $(\omega_{i},\chi_{\omega_{i}})$.
Enfin, d'après Steinberg \cite[Thm. 6.1]{S}, ces éléments sont algébriquement indépendants et forment une base.
\end{proof}
On obtient  un nouveau morphisme que l'on note $\chi_{+}$, étendant le précédent:
\begin{center}
$\chi_{+}:V_{G}\rightarrow V_{T}/W$.
\end{center}
Si $G$ est simplement connexe, Steinberg  construit dans \cite[§7]{S}  un morphisme:
\begin{center}
 $\epsilon:T/W\rightarrow G$ 
\end{center}
qui est une section au morphisme $\chi$.
Pour un $r$-uplet $(a_{1},.., a_{r})\in T/W:=\mathbb{A}^{r}$, elle se définit de la manière suivante:
\begin{center}
$\epsilon(a_{1},.., a_{r}):=\prod\limits_{i=1}^{r}x_{\alpha_{i}}(a_{i})n_{i}$,
\end{center}
où les $x_{\alpha_{i}}(a_{i})$ sont des éléments du groupe radiciel $U_{\alpha_{i}}$ et les $n_{i}$ sont des éléments du normalisateur $N_{G}(T)$ représentant les réflexions simples $s_{\alpha_{i}}$ de $W$.\\
Ainsi, $\epsilon(a)\in\prod\limits_{i=1}^{r}U_{i}n_{i}$ et en utilisant les relations de commutation on a que :
\begin{center}
$\prod\limits_{i=1}^{r}U_{i}n_{i}=U_{w}w$
\end{center}
où $w=s_{1}s_{2}...s_{r}$ et $U_{w}=U\cap wU^{-}w^{-1}$.
Steinberg établit que cette section est à valeurs dans l'ouvert $G^{reg}:=\{g\in G\vert~ \dim I_{g}=r\}$, où $I_{g}$ désigne le centralisateur de $g$. Il prouve également grâce à son critère différentiel, les propriétés suivantes \cite[Th. 8.1]{S} :
\begin{thm}
\begin{enumerate}
\item Le morphisme $\chi^{reg}:G^{reg}\rightarrow T/W$ est lisse.
\item Tout élément de $G^{reg}$ est conjugué à un élément de $\epsilon(T/W)$.
\end{enumerate}
\end{thm}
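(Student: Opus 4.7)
The plan is to invoke Steinberg's differential criterion: a morphism between smooth $k$-varieties is smooth at a point exactly when its differential there is surjective. For (i), I would show that at every $g \in G^{reg}$ the differential
$d\chi_g : T_g G \to T_{\chi(g)}(T/W)$
is surjective. Using left translation to identify $T_g G$ with $\kg$, the conjugacy orbit $G \cdot g$ is contained in the fiber $\chi^{-1}(\chi(g))$ and, by the regularity hypothesis $\dime I_g = r$, has dimension $\dime G - r$. Hence $\dime \Ker d\chi_g \geq \dime G - r$, so the image of $d\chi_g$ has dimension at most $r = \dime T/W$; it therefore suffices to establish the opposite inequality.

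To obtain the opposite inequality, I would compute $d\chi_{\omega_i}(g)(X) = \Tr(\rho_{\omega_i}(g)\, d\rho_{\omega_i}(X))$ for $X \in \kg$ and prove the linear independence of the resulting $r$ linear forms on $\kg$ at any regular $g$. For a regular semisimple $g$, diagonalization of $\rho_{\omega_i}(g)$ combined with the linear independence of the fundamental weights yields the conclusion; one then extends to an arbitrary regular $g$ either by a density argument (regular semisimple elements are dense in $G^{reg}$) or directly via Steinberg's computation using the Jordan decomposition. This gives surjectivity of $d\chi_g$ and completes (i).

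For (ii), I would combine (i) with the existence of the section $\epsilon$. Given $g \in G^{reg}$, set $a = \chi(g)$. By (i), $(\chi^{reg})^{-1}(a)$ is smooth of pure dimension $\dime G - r$; it contains both orbits $G \cdot g$ and $G \cdot \epsilon(a)$, each of which, by regularity, has dimension $\dime G - r$. Each orbit is therefore open in the fiber. Connectedness of the fiber, which follows from its smoothness together with the irreducibility of $(\chi^{reg})^{-1}(a)$ for simply connected $G$ (a classical fact about the Chevalley map), forces the two open orbits to coincide, so $g$ and $\epsilon(a)$ are conjugate.

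The main obstacle is the explicit differential computation and the linear independence statement for the $d\chi_{\omega_i}$ at arbitrary regular elements, which forms the heart of Steinberg's argument. Some care is also needed in positive characteristic to ensure that the fundamental representations retain their characteristic-zero weight structure and that the $\chi_{\omega_i}$ generate $k[T]^W$ as a polynomial algebra; this is secured by the Premet-based arguments already invoked for proposition \ref{drin} under the hypothesis $\car(k) > 3$.
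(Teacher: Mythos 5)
Note first that the paper does not prove this statement at all: it is quoted verbatim from Steinberg \cite[Th. 8.1]{S}, with only the remark that Steinberg proves it ``gr\^ace \`a son crit\`ere diff\'erentiel''. Your proposal is therefore a reconstruction of Steinberg's own argument rather than an alternative to anything in the paper, and in outline it is the right reconstruction: reduce (i) to the surjectivity of $d\chi_{g}$ at every regular $g$ via the differential criterion, and deduce (ii) from the section $\epsilon$ together with the fact that each regular orbit is open in its fibre.

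There is however one genuine gap in (i). Surjectivity of $d\chi_{g}$, i.e.\ the condition that the $r$ forms $d\chi_{\omega_{i}}(g)$ be linearly independent, is an \emph{open} condition on $g$. Knowing it on the dense subset of regular semisimple elements therefore gives no information on the complementary closed subset of $G^{reg}$ --- which is precisely where the regular unipotent elements and the mixed classes live, and where the whole difficulty of Steinberg's theorem is concentrated. The ``density argument'' you offer as a first option is thus not a proof; the second option (Jordan decomposition reducing to the regular unipotent case in the centralizer of the semisimple part, followed by the explicit computation of $d\chi$ on the cross-section $U_{w}w$) is not an alternative but the unavoidable core of the argument, and it is left entirely as a black box. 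In (ii), the logic is sound once one grants that each fibre $\chi^{-1}(a)$ is irreducible of dimension $\dime G-r$ for $G$ simply connected (Steinberg, \S 6): a regular orbit is then locally closed of full dimension, hence open, and two nonempty open subsets of an irreducible variety meet, so the orbit of $g$ meets and therefore equals that of $\epsilon(a)$. But the phrase ``connectedness \ldots follows from its smoothness'' is a non sequitur --- smoothness gives nothing here, and irreducibility of the fibre is the fact actually doing the work; you should cite it as such rather than derive it from smoothness.
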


En fait, pour chaque élément de Coxeter $w'\in W$, Steinberg définit une autre section $\epsilon_{+}^{w'}$ qui est conjuguée à la précédente \cite[Lem. 7.5]{S} . 
En revanche, on verra que sur le semi-groupe de Vinberg, elles ne le sont plus.
Nous donnons donc une définition, puisque par la suite, ces sections vont apparaître de manière cruciale.
Pour $w'\in W$ un élément de Coxeter, on écrit une décomposition réduite de $w'$: 
\begin{center}
$w'=s_{\alpha_{i_{1}}}...s_{\alpha_{i_{r}}}$.
\end{center}
Et la section $\epsilon_{+}^{w'}$ se définit de la même manière 
\begin{center}
$\prod\limits_{i=1}^{r}U_{i_{k}}n_{i_{k}}=U_{w'}w'$.
\end{center}
Les résultats établis par la suite pour l'élément de Coxeter particulier $w=s_{1}...s_{r}$, admettront la même démonstration pour un autre élément de Coxeter $w'\in W$.
L'objectif est maintenant d'étendre cette section au semi-groupe de Vinberg ainsi que les résultats précédemment établis par Steinberg.
\subsection{Prolongement de la section de Steinberg}
En s'inspirant de la formule de Steinberg, on considère donc:
\begin{center}
$(b,a):=(b_{1},.., b_{r}, a_{1},.., a_{r}) \in \mathbb{G}_{m}^{r}\times \mathbb{A}^{r}$.
\end{center}
Soit la flèche $\phi:T\rightarrow T_{+}$, $t\rightarrow (t,t^{-1})$. L'image $T_{\Delta}$, le tore antidiagonal est isomorphe à $T^{ad}$, et on a un isomorphisme canonique donné par:
\begin{center}
$\alpha_{\bullet}:T_{\Delta}\rightarrow\mathbb{G}_{m}^{r}$.
\end{center}
Soit alors $\psi$ l'isomorphisme inverse, nous optons pour la notation indiciaire. Ainsi pour $b\in \mathbb{G}_{m}^{r}$, on a: 
\begin{center}
$\forall~ 1\leq i\leq r, \alpha_{i}(\psi_{b})=b_{i}$ et $\psi_{b}\in T_{\Delta}$.
\end{center}
On définit $\epsilon_{+}:\mathbb{G}_{m}^{r}\times \mathbb{A}^{r}\rightarrow G_{+}$ par: 
\begin{center}
$\epsilon_{+}(b,a)=\epsilon(a)\psi_{b}$
\end{center}
où $\epsilon$ est la section de Steinberg pour le groupe $G$ définie par $\epsilon(a):=\prod\limits_{i=1}^{r}x_{i}(a_{i})n_{i}$.
Commençons par rappeler la manière dont agissent les différents éléments considérés. Soit $V_{\omega}$ une représentation irréductible de plus haut poids $\omega$. Elle admet une décomposition en espace de poids:
\begin{center}
$V_{\omega}=\bigoplus\limits_{\nu\leq\omega}V(\nu)$.
\end{center}
On a d'après  \cite[Lem. 7.15]{S} :
\begin{lem}\label{rep}
\begin{enumerate}
\item Si $U_{\alpha}=\{x_{\alpha}(c)\}$ désigne le groupe radiciel associé à $\alpha$, alors l'action de $U_{\alpha}$ sur un vecteur de poids $\mu$, $v_{\mu}$ est de la forme:
\begin{center}
$x_{\alpha}(c).v_{\mu}=\sum\limits_{k\geq 0}c^{k}v_{k}$ 
\end{center}
où $v_{k}\in V(\mu+k\alpha)$, est indépendant de $c$.
\item Pour tout $i,j\in \{1,\dots,r\}$, nous avons $s_{j}\omega_{i}=\omega_{i}-\delta_{ij}\alpha_{j}$ et pour $\lambda=\sum m_{i}\omega_{i}$, $s_{j}\lambda=\lambda-m_{j}
\alpha_{j}$.
Enfin, nous introduisons un ordre partiel strict sur l'ensemble $\{1,\dots,r\}$; si $i\neq j$, on écrit $j\prec i$ s'il existe un poids dominant $\lambda=\sum m_{i}\omega_{i}$ avec $\lambda<\omega_{i}$ et $m_{j}>0$.
\end{enumerate}
\end{lem}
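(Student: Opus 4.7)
The plan is to treat the two items separately, as each is a direct consequence of the standard structure theory of semisimple groups and their irreducible representations.

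For item (i), I would start from the fact that the root subgroup $U_{\alpha}$ is the one-parameter unipotent subgroup whose Lie algebra is the root space $\mathfrak{g}_{\alpha}$. In characteristic zero, picking a generator $X_{\alpha}$ of $\mathfrak{g}_{\alpha}$, one has $x_{\alpha}(c) = \exp(c X_{\alpha})$, so acting on a weight vector $v_{\mu}$ gives the finite series
\[
x_{\alpha}(c) \cdot v_{\mu} = \sum_{k \geq 0} \frac{c^{k}}{k!}\, X_{\alpha}^{k} v_{\mu}.
\]
Since $X_{\alpha}$ sends $V(\mu)$ into $V(\mu+\alpha)$, the coefficient $v_{k} := (1/k!) X_{\alpha}^{k} v_{\mu}$ lies in $V(\mu+k\alpha)$ and visibly does not depend on $c$. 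In characteristic $p > 0$, the exponential is not available, but one uses Chevalley's $\mathbb{Z}$-form of $V_{\omega}$ and the divided powers $X_{\alpha}^{(k)} = X_{\alpha}^{k}/k!$ which are integrally defined; by definition $x_{\alpha}(c) = \sum_{k} c^{k} X_{\alpha}^{(k)}$ acting on the integral lattice, and the same weight argument gives $v_{k} = X_{\alpha}^{(k)} v_{\mu} \in V(\mu+k\alpha)$. Finiteness of the sum is automatic since $V_{\omega}$ has only finitely many weights.

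For item (ii), the identity $s_{j}\omega_{i} = \omega_{i} - \delta_{ij}\alpha_{j}$ is immediate from the defining relation $\langle \omega_{i}, \alpha_{j}^{\vee}\rangle = \delta_{ij}$ combined with the reflection formula $s_{j}\lambda = \lambda - \langle \lambda,\alpha_{j}^{\vee}\rangle \alpha_{j}$. Extending linearly to $\lambda = \sum m_{i}\omega_{i}$ gives $s_{j}\lambda = \lambda - m_{j}\alpha_{j}$. The partial order $j \prec i$ is just a piece of notation and carries no content to prove.

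The only step that required any care was the characteristic $p$ case in item (i), and even there the argument reduces to invoking the Kostant $\mathbb{Z}$-form. In fact the entire lemma is essentially a rewriting of Steinberg's Lemma 7.15, so I expect no genuine obstacle.
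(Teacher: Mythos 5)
Your argument is correct: item (i) is the standard computation via the Chevalley $\mathbb{Z}$-form and divided powers $X_{\alpha}^{(k)}$, and item (ii) follows directly from $\langle\omega_{i},\alpha_{j}^{\vee}\rangle=\delta_{ij}$ and the reflection formula, the order $\prec$ being a mere definition. The paper gives no proof at all and simply cites Steinberg [Lem. 7.15], which is exactly the standard argument you reproduce, so there is nothing to add.
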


\begin{prop}\label{semisimple}
Le morphisme $\epsilon_{+}:T_{+}/W:=\mathbb{G}_{m}^{r}\times\mathbb{A}^{r}\rightarrow G_{+}$ est une section de $\chi_{+}$ et se prolonge  en un morphisme, noté de la même manière, $\epsilon_{+}:V_{T}/W\rightarrow V_{G}$, à valeurs dans $V_{G}^{0}$. 
\end{prop}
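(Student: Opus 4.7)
The plan is to first verify the section identity on $T_+/W$ by direct computation, then exhibit an explicit polynomial extension of $\iota \circ \epsilon_+$ to all of $V_T/W$, and finally lift this extension through the normalization $\zeta: V_G \to V_G^\flat$ using normality of $V_T/W$.

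For the section property, writing $\psi_b = (t_1, t_1^{-1}) \in T_\Delta$ with $\alpha_i(t_1) = b_i$, the product $\epsilon(a) \psi_b$ corresponds in $G_+ = (T \times G)/Z_G$ to the class of $(t_1, \epsilon(a) t_1^{-1})$, and $\chi_+$ evaluates to $(\alpha_i(t_1), \omega_i(t_1) \chi_{\omega_i}(\epsilon(a) t_1^{-1}))$. The first $r$ components are $b_i$; for the remaining, decomposing $V_{\omega_i} = \bigoplus_{\nu \leq \omega_i} V(\nu)$, the diagonal entry at $\nu$ of $\rho_{\omega_i}(\epsilon(a) t_1^{-1})$ equals $\nu(t_1)^{-1} [\rho_{\omega_i}(\epsilon(a))]_{\nu\nu}$. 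Since $\epsilon(a) = u w$ with $u \in U_w$ and $w = s_1 \cdots s_r$ the Coxeter element, the simple-root support of $U_w$ together with the action of $w$ on weights forces $[\rho_{\omega_i}(\epsilon(a))]_{\nu\nu} = 0$ for $\nu < \omega_i$; only the entry at $\nu = \omega_i$ survives, giving $\omega_i(t_1) \cdot \omega_i(t_1)^{-1} \cdot a_i = a_i$ by Steinberg's section property $\chi_{\omega_i}(\epsilon(a)) = a_i$.

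For the polynomial extension, I analyze $\iota(\epsilon_+(b,a))$ explicitly: the $i$-th endomorphism block is $\omega_i(t_1) \rho_{\omega_i}(\epsilon(a)) \rho_{\omega_i}(t_1^{-1})$, whose $(\mu, \nu)$-entry equals $(\omega_i - \nu)(t_1) [\rho_{\omega_i}(\epsilon(a))]_{\mu\nu}$. Since $\nu \leq \omega_i$, the difference $\omega_i - \nu = \sum_j c_j \alpha_j$ has all $c_j \geq 0$, so $(\omega_i - \nu)(t_1) = \prod_j b_j^{c_j}$ is a polynomial in the $b_j$ (Lemma \ref{rep}). Hence every matrix entry is polynomial in $(b, a)$, and $\iota \circ \epsilon_+$ extends to a morphism $V_T/W \cong \mathbb{A}^{2r} \to H_G$ (using Proposition \ref{chevalley}) whose image lies in $V_G^\flat$ by continuity. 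Since $V_T/W$ is normal and $\zeta$ is the finite birational normalization, the extension lifts uniquely to $\epsilon_+: V_T/W \to V_G$ by the universal property of normalization. To see the image lies in $V_G^0$, note that at any point of $V_T/W$ the column indexed by $\nu = \omega_i$ of the $i$-th block is scaled by $(\omega_i - \omega_i)(t_1) = 1$ and therefore coincides with the highest-weight column of $\rho_{\omega_i}(\epsilon(a))$, which is nonzero since $\epsilon(a) \in G$ is invertible.

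The principal difficulty is the first step, specifically the vanishing of the diagonal entries $[\rho_{\omega_i}(\epsilon(a))]_{\nu\nu}$ for $\nu < \omega_i$. This requires unpacking Steinberg's decomposition $\prod U_i n_i = U_w w$ and using that the Coxeter element moves each lower weight out of its weight space in a way incompatible with the support of $U_w$ in the positive roots. Once the section identity and the polynomial extension are in hand, the remaining steps — lifting through the normalization and checking non-vanishing on the boundary — are formal.
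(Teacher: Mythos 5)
Your second and third steps (the polynomial extension of the matrix entries via $(\omega_i-\nu)(\psi_b)=\prod_j b_j^{c_j}$, the lift through the normalization $\zeta$, and the non-vanishing of the highest-weight column) are correct and essentially the argument of the paper. The gap is in the first step. The claimed vanishing $[\rho_{\omega_i}(\epsilon(a))]_{\nu\nu}=0$ for every weight $\nu<\omega_i$ is false in general. Writing $\epsilon(a)=u\dot w$ with $u\in U_w$, the diagonal contribution at $\nu$ of $\rho_{\omega_i}(\dot w)$ can only be nonzero when $w\nu=\nu$; since a Coxeter element fixes no nonzero weight, this indeed kills the extremal contributions (which is why your claim holds in the minuscule and type $A$ cases), but it does \emph{not} kill the contribution of the zero weight space, nor more generally of a dominant weight $\lambda<\omega_i$. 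For instance, for $Sp_4$ and the quasi-minuscule representation $V_{\omega_2}$, the zero weight space is one-dimensional, $\dot w$ acts on it by a nonzero scalar $c$, and $\rho(u)v_0\in v_0+\sum_{\mu>0}V(\mu)$, so that $\chi_2(\epsilon(a))=a_2+c$ with $c\neq 0$; the same phenomenon occurs for both fundamental representations of $G_2$. Hence the identity $\chi_{\omega_i}(\epsilon(a))=a_i$, which you invoke as ``Steinberg's section property'', does not hold on the nose, and the route you yourself single out as the principal difficulty does not close.

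What is true, and what the paper proves following Steinberg, is weaker but sufficient: by Lemme \ref{rep}, the diagonal contribution of a weight $\lambda=\sum_j m_j\omega_j$ is $(\omega_i-\lambda)(\psi_b)\prod_j a_j^{m_j}$ up to a constant, nonzero only if all $m_j\geq 0$, i.e. $\lambda$ dominant, and for $\lambda<\omega_i$ dominant the indices $j$ with $m_j>0$ satisfy $j\prec i$. Therefore $\chi_i\circ\epsilon_+=a_i+P_i$ with $P_i$ a polynomial in $b$ and in the $a_j$ for $j\prec i$, so that $\chi_+\circ\epsilon_+$ is a triangular automorphism of $\mathbb{G}_m^r\times\mathbb{A}^r$ fixing the first $r$ coordinates; $\epsilon_+$ is a section only after composing with the inverse of this automorphism (the usual abuse of language already present in Steinberg's construction). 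Your proof needs this reparametrization step in place of the vanishing claim; with it, the rest of your argument goes through unchanged.
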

\begin{proof}
Fixons $i$ et considérons la représentation irréductible de plus haut poids $\omega_{i}$ et $\lambda\leq\omega_{i}$ un poids de la représentation. Un élément $g_{+}=(t,g)$ de $G_{+}$ agit par $\omega_{i}(t)\rho_{\omega_{i}}(g)$. Soit $v_{\la}$ un vecteur de poids $\lambda$ et regardons comment agit l'élément $\epsilon_{+}(b,a)$.  Nous avons la formule suivante:
\begin{equation}
\epsilon_{+}(b,a)v_{\lambda}=\epsilon(a)\psi_{b}v_{\lambda}=\omega_{i}(\psi_{b})\lambda(\psi_{b})^{-1}\epsilon(a)v_{\lambda}=(\omega_{i}-\lambda)(\psi_{b})\epsilon(a)v_{\lambda}.
\label{etoile}
\end{equation}
avec $\psi_{b}\in T_{\Delta}$.
Comme $\omega_{i}-\lambda$ est une somme de racines simples, on en déduit que $(\omega_{i}-\lambda)(\psi_{b})$ est polynômial en les $b_{i}$, donc $\epsilon_{+}$ se prolonge.

Montrons que le prolongement est non nul. On considère le vecteur de plus haut poids $v_{\omega_{i}}$. En ce cas, $\psi_{b}$ agit trivialement, on a alors la formule suivante:
\begin{center}
$\epsilon_{+}(b,a)v_{\omega_{i}}=\epsilon(a)v_{\omega_{i}}$.
\end{center}
Comme $\eps(a)\in G$, on a bien $\eps(a)v_{\omega_{i}}\neq0$.
Montrons que c'est une section. Du lemme $\ref{rep}$ et de \eqref{etoile}, on déduit la formule suivante:
\begin{center}
$\epsilon_{+}(b,a)v_{\lambda}=(\omega_{i}-\lambda)(\psi_{b})\sum\limits_{k_{j\geq 0}}a_{1}^{k_{1}}...a_{r}^{k_{r}}v_{(k_{1},...,k_{r})}$
\end{center}
où $v_{(k_{1},...,k_{r})}$ est un vecteur de poids $\lambda +\sum(k_{j}-m_{j})\alpha_{j}$ et est indépendant des $a_{j}$. Examinons la contribution à la trace, qui est non nulle que si $m_{j}=k_{j}\geq 0$. On distingue  deux cas:
\begin{itemize}
\item
$\lambda=\omega_{i}$ et la contribution est $a_{i}$.
\item
$\lambda\neq\omega_{i}$, et on a $\lambda<\omega_{i}$.
\end{itemize}
La formule montre que $a_{j}$ n'apparaît pas si $m_{j}=k_{j}=0$, donc seulement les $c_{j}$ avec $\omega_{j}$ dans le support de $\lambda$ (et donc $j\neq i$), ont une contribution non nulle.

Comme $\lambda$ est dominant et $\lambda<\omega_{i}$, nous avons $j\prec i$. Et donc le vecteur $v_{\lambda}$ contribue à la trace  par un polynôme en les $a_{j}$ avec $j\prec i$.
En conclusion, $\chi_{i}$ est un polynôme en les $a_{i}$, de la forme $a_{i}+$ termes en les $a_{j}$, pour $j\prec i$. Donc, à partir des $\chi_{i}$, nous récupérons bien les paramètres $a_{i}$, comme souhaité.
\end{proof}

Nous avons une action \textit{canonique} de $Z_{+}$ sur $V_{T}/W=\mathbb{A}^{r}\times\mathbb{A}^{r}$ donnée par:
\begin{center}
$z.(b_{1},..., b_{r}, a_{1},..., a_{r})=(\alpha_{1}(z)b_{1},...,\alpha_{r}(z)b_{r},\omega_{1}(z)a_{1},...,\omega_{r}(z)a_{r})$.
\end{center}
En général la section $\eps_{+}$ n'est jamais $Z_{+}$-équivariante.
Il nous faut donc tordre judicieusement l'action de $Z_{+}$ pour avoir une section équivariante. Une légère complication que l'on retrouve également dans le cas de la section de Kostant pour les algèbres de Lie, est qu'il faut extraire des racines.
Dans la suite, on pose $c:=\left|Z_{G}\right|$.

Pour $z\in Z_{+}$, on considère $\omega_{\bullet}(z):=(\omega_{1}(z),..,\omega_{r}(z))\in \mathbb{G}_{m}^{r}$. On a alors l'élément:
\begin{equation}
\psi_{\omega_{\bullet}(z)}:=(\lambda_{z},\lambda_{z}^{-1})\in T_{\Delta} ~~\text{tel que}~~  \alpha_{i}(\psi_{\omega_{\bullet}(z)})=\omega_{i}(z)^{c}, \text{pour tout}~ i=1,..,r.
\label{psibul}
\end{equation}
$\rmq$ Si on n'élève pas à la puissance $c$, l'élément $\psi_{\omega_{\bullet}(z)}$ n'est pas défini de manière univoque.
 
On construit donc par ce procédé un morphisme de groupes $\tau=\psi\circ\omega:Z_{+}\rightarrow T_{\Delta}$.
On considère maintenant la nouvelle action $*$ de $Z_{+}$ sur $G_{+}$ donnée par:
\begin{center}
$z*g=z^{c}\psi_{\omega_{\bullet}(z)}^{-1}g\psi_{\omega_{\bullet}(z)}$.
\end{center}
et on définit également sur $\kc$ une nouvelle action $*$ de $Z_{+}$ définie par :
\begin{center}
$z*(b, a)=z^{c}.(b, a)$.
\end{center}
Le morphisme de Steinberg $\chi_{+}$ reste bien $Z_{+}$-équivariant pour cette action.
On considère l'élément:
\begin{equation}
\psi_{\omega_{\bullet}(z)}:=(\lambda_{z},\lambda_{z}^{-1})\in T_{\Delta}~~ \text{tel que}~~ \alpha_{i}(\psi_{\omega_{\bullet}(z)})=\omega_{i}(z)^{c},  \text{pour tout}~ i=1,..,r.
\label{albul}
\end{equation}
Nous avons alors la proposition suivante concernant la section $\epsilon_{+}$.
\begin{prop}\label{action}
La section de Steinberg $\epsilon_{+}:V_{T}/W\rightarrow V_{G}^{0}$ est $Z_{+}$-équivariante pour l'action $*$ sur $G_{+}$ et sur $V_{T}/W$.
\end{prop}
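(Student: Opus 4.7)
The plan is to verify $\epsilon_+(z*x) = z*\epsilon_+(x)$ by direct computation in $G_+$. First I would unwind both sides using $\epsilon_+(b,a)=\epsilon(a)\psi_b$ together with the multiplicativity $\psi_{xy}=\psi_x\psi_y$ of the isomorphism $\psi$, together with the definitions of the two $*$-actions. The LHS expands as $\epsilon(\omega_\bullet(z)^c a)\cdot\psi_{\alpha_\bullet(z)^c}\cdot\psi_b$, while in the RHS $z^c\psi_{\omega_\bullet(z)}^{-1}\epsilon(a)\psi_b\psi_{\omega_\bullet(z)}$, the abelianness of $T_\Delta$ allows commuting $\psi_b$ past $\psi_{\omega_\bullet(z)}$. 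Cancelling the common right factor $\psi_b$ (invertible in $V_G$) reduces the claim to the case $b=1$: one must show
\[ \epsilon(\omega_\bullet(z)^c a)\cdot\psi_{\alpha_\bullet(z)^c} \;=\; z^c\cdot\psi_{\omega_\bullet(z)}^{-1}\,\epsilon(a)\,\psi_{\omega_\bullet(z)} \quad\text{in } G_+. \]

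Next I would compute the conjugate $\psi_{\omega_\bullet(z)}^{-1}\epsilon(a)\psi_{\omega_\bullet(z)}$ explicitly. Writing $\psi_{\omega_\bullet(z)}=[(\lambda_z,\lambda_z^{-1})]$ in $(T\times G)/Z_G$ with $\alpha_i(\lambda_z)=\omega_i(z)^c$, this conjugation amounts to the inner map $\epsilon(a)\mapsto\lambda_z\epsilon(a)\lambda_z^{-1}$ inside $G$. The pinning relations $\lambda x_{\alpha_i}(c)\lambda^{-1}=x_{\alpha_i}(\alpha_i(\lambda)c)$ rescale the $x_{\alpha_i}$-arguments by $\omega_i(z)^c$, producing the desired $\epsilon(\omega_\bullet(z)^c a)$ up to residual torus factors $s_i(\lambda_z)\lambda_z^{-1}\in T$ arising from the non-commutation $\lambda n_i=n_i s_i(\lambda)$. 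These torus factors then have to be matched against the central contribution $z^c$ together with the factor $\psi_{\alpha_\bullet(z)^c}=[(t_z^c,t_z^{-c})]$ on the LHS.

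The crux is that the exponent $c=|Z_G|$ in the defining relation $\alpha_i(\lambda_z)=\omega_i(z)^c$ is chosen precisely so that the residual Weyl-conjugation factors combine into $t_z^c$, i.e.\ into the $T$-component of $z^c\cdot\psi_{\alpha_\bullet(z)^c}$. This is the group analog of the root extraction appearing in the $\mathbb{G}_m$-equivariance of the Kostant section for Lie algebras, which the paper explicitly invokes as motivation. Without the $c$-th power the element $\psi_{\omega_\bullet(z)}$ would be defined only in $T^{ad}$, and the central twist $z^c$ would not have the correct normalization to absorb the $Z_G$-ambiguity in the quotient $(T\times G)/Z_G$.

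The hard part will be the explicit identification of the residual torus factor produced by pushing $\lambda_z$ through the Coxeter representative $n_w=n_1\cdots n_r$: at first sight the product of the contributions $s_i(\lambda_z)\lambda_z^{-1}$ (after commuting through the remaining $x_{\alpha_j}$'s and $n_j$'s) is not obviously equal to $t_z^c$. One systematic way is to test the identity in each representation $V_{(\omega_i,\omega_i)}$ using the weight expansion \eqref{etoile} already exploited in the proof of Proposition~\ref{semisimple}; another is to reduce the computation to rank one by factorizing through the $\alpha_i$-subgroups. Both routes ultimately rely on the combinatorial interaction between the fundamental weights $\omega_i$, the simple roots $\alpha_i$, and the Coxeter word, in which the exponent $c=|Z_G|$ plays the precise role required.
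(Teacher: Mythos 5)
Your initial reduction coincides exactly with the paper's: the identity is checked over $G_{+}$ (uniqueness of the extension to $V_{T}/W$ handles the boundary), $\epsilon_{+}(b,a)=\epsilon(a)\psi_{b}$ is unwound, and the common right factor $\psi_{b}$ is cancelled using that $T_{\Delta}$ is abelian, leaving the identity
$\epsilon(\omega_{\bullet}(z)^{c}a)\,\psi_{\alpha_{\bullet}(z)^{c}}=z^{c}\,\psi_{\omega_{\bullet}(z)}^{-1}\epsilon(a)\psi_{\omega_{\bullet}(z)}$ in $G_{+}$. Up to this point everything is correct and is what the paper does (with the simplification $c=1$).

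The gap is that this decisive identity is never actually established. Your primary route --- conjugating $\epsilon(a)=\prod_{i}x_{\alpha_{i}}(a_{i})n_{i}$ by $\lambda_{z}$ via the pinning relations --- stalls exactly where you say it does: each $n_{i}$ emits a residual coroot factor of the form $\alpha_{i}^{\vee}(\omega_{i}(z)^{c})^{\pm1}$, which must then be pushed through the remaining $x_{\alpha_{j}}$'s (perturbing their arguments by $\omega_{i}(z)^{\pm c\langle\alpha_{j},\alpha_{i}^{\vee}\rangle}$, thereby spoiling the clean rescaling $a_{j}\mapsto\omega_{j}(z)^{c}a_{j}$) and through the remaining $n_{j}$'s; you neither show that the perturbed arguments recombine into $\omega_{\bullet}(z)^{c}a$ nor that the accumulated torus element matches $z^{c}\psi_{\alpha_{\bullet}(z)^{c}}$. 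Writing that the product ``is not obviously equal'' and that $c$ ``plays the precise role required'' asserts the conclusion rather than proving it. The fallback you mention in one sentence --- evaluating both sides on each weight vector $v_{\mu}$ of each $V_{\omega_{i}}$ via Lemma~\ref{rep} and \eqref{etoile} --- is precisely the paper's proof; its crux is the elementary identity $\bigl[\mu+\sum_{j}(k_{j}-m_{j})\alpha_{j}\bigr](\lambda_{z})=\mu(\lambda_{z})\,\mu(z)^{-1}\prod_{j}\omega_{j}(z)^{k_{j}}$ for $\mu=\sum_{j}m_{j}\omega_{j}$, which makes the two weight expansions agree term by term. To turn your sketch into a proof you must either carry out that representation-theoretic computation or complete the combinatorial bookkeeping of the residual coroot factors; as written, neither is done.
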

\begin{proof}
Il nous faut prouver l'identité $\epsilon_{+}(z^{c}.(b,a))=z*\epsilon_{+}(b,a)$. Pour simplifier, nous supposerons $c=1$, la preuve étant exactement la même.
Il nous suffit de démontrer cette identité au-dessus de $G_{+}$. En rappelant que $\epsilon_{+}(b,a)=\epsilon(a)\psi_{b}$ où $\epsilon$ est la section de Steinberg pour $G$ et $\psi_{b}\in T_{\Delta}$, l'identité se réduit à:
\begin{center}
$\epsilon(z.a)\psi_{\alpha_{\bullet}(z)b}=z\psi_{\omega_{\bullet}(z)}^{-1}\epsilon(a)\psi_{b}\psi_{\omega_{\bullet}(z)}$.
\end{center}
qui se simplifie en:
\begin{center}
$\epsilon(z.a)\psi_{\alpha_{\bullet}(z)}=z\psi_{\omega_{\bullet}(z)}^{-1}\epsilon(a)\psi_{\omega_{\bullet}(z)}$.
\end{center}
Considérons la représentation fondamentale de poids $\omega_{i}$.
Soit $\mu\leq\omega_{i}$ un poids de la représentation et un vecteur de poids $\mu$, $v_{\mu}$.
On regarde l'action de $\psi_{\alpha_{\bullet}(z)}$ sur ce vecteur.
Si on écrit $\psi_{\alpha_{\bullet}(z)}:=(\gamma_{z},\gamma_{z}^{-1})$, on obtient:
\begin{equation}
\psi_{\alpha_{\bullet}(z)}v_{\mu}=(\omega_{i}-\mu)(\gamma_{z})v_{\mu}=(\omega_{i}-\mu)(z)v_{\mu} 
\label{precp}
\end{equation}
car $\omega_{i}-\mu$ est combinaison linéaire de racines simples et par \eqref{albul}.
En utilisant le lemme $\ref{rep}$ et d'après \eqref{precp}, on déduit que:
\begin{center}
$(\epsilon(z.a)\psi_{\alpha_{\bullet}(z)}).v_{\mu}=(\omega_{i}-\mu)(z)\sum\limits_{k_{j\geq 0}}(\omega_{1}(z)a_{1})^{k_{1}}...(\omega_{r}(z)a_{r}^{k_{r}})v_{(k_{1},...,k_{r})}$,
\end{center}
où $v_{(k_{1},...,k_{r})}$ est un vecteur de poids $\mu +\sum\limits_{j=1}^{r}(k_{j}-m_{j})\alpha_{j}$ et $\mu=\sum\limits_{j=1}^{r} m_{j}\omega_{j}$.
D'autre part, nous avons également:
\begin{equation}
z\psi_{\omega_{\bullet}(z)}^{-1}\epsilon(a)\psi_{\omega_{\bullet}(z)}v_{\mu}=\omega_{i}(z)\mu(\la_{z}^{-1})\sum\limits_{k_{j\geq 0}}[\mu +\sum\limits_{j=1}^{r}(k_{j}-m_{j})\alpha_{j}](\lambda_{z})a_{1}^{k_{1}}...a_{r}^{k_{r}}v_{(k_{1},...,k_{r})}.
\label{membg}
\end{equation}
où l'on rappelle que $\psi_{\omega_{\bullet}(z)}=(\lambda_{z},\lambda_{z}^{-1})$.
Maintenant, comme $\mu=\sum\limits_{j=1}^{r}m_{j}\omega_{j}$, en utilisant \eqref{psibul}, nous avons l'égalité suivante: 
\begin{center}
$[\mu+\sum(k_{j}-m_{j})\alpha_{j}](\la_{z})=\mu(\la_{z})\mu(z^{-1})\prod\limits_{j=1}^{r}k_{j}\alpha_{j}(\la_{z})=\mu(\la_{z})\mu(z^{-1})\prod\limits_{j=1}^{r}\omega_{j}(z)^{k_{j}}$.
\end{center}
En injectant cette identité dans \eqref{membg}, nous avons:
\begin{center}
$z\psi_{\omega_{\bullet}(z)}^{-1}\epsilon(a)\psi_{\omega_{\bullet}(z)}v_{\mu}=\omega_{i}(z)\mu(\la_{z}^{-1})\sum\limits_{k_{j\geq 0}}[\mu +\sum\limits_{j=1}^{r}(k_{j}-m_{j})\alpha_{j}](\lambda_{z})a_{1}^{k_{1}}...a_{r}^{k_{r}}v_{(k_{1},...,k_{r})}$.
\end{center}
Et donc, les deux membres sont égaux, ce qu'on voulait.
\end{proof}
On a une description plus agréable du prolongement. Soit le diagramme commutatif suivant:
$$\xymatrix{T_{\Delta}\ar[d]^{(\alpha,0)}\ar[rrr]^-{((\alpha,0),\omega\rho_{\omega})}&&&\prod\limits_{1\leq i\leq r}\mathbb{A}^{1}_{\alpha_{i}}\times\prod\limits_{1\leq i\leq r}\End(V_{\omega_{i}})\ar[d]^{p_{1}}\\\mathbb{G}_{m}^{r}\ar[rrr]&&&\mathbb{A}^{r}}$$
où la flèche verticale de gauche est un isomorphisme et la flèche horizontale du bas, l'injection canonique.

Pour chaque $i$, la matrice $\omega_{i}(t)\rho_{\omega_{i}}(t^{-1})$ est un polynôme en les $\alpha_{i}(t)$, et ces polynômes sont indépendants de $t$, on obtient alors que l'isomorphisme entre $T_{\Delta}$ et $\mathbb{G}_{m}^{r}$ se prolonge en un morphisme:
\begin{center}
$p_{1}:\overline{T}_{\Delta}\rightarrow\mathbb{A}^{r}$, où $\overline{T}_{\Delta}$ est l'adhérence de $T_{\Delta}$ dans $V_{G}$.
\end{center}
Les éléments de $T_{\Delta}$ sont entièrement déterminés par la composante suivant $\prod\limits_{1\leq i\leq r}\mathbb{A}^{1}_{\alpha_{i}}$, puisque leur seconde composante est une matrice dont les coefficients sont des polynômes en les $\alpha_{i}$. Cette flèche est donc un isomorphisme.
Enfin, on remarque que $\overline{T}_{\Delta}$ est en fait contenu dans $V_{G}^{0}$ puisque le coefficient de la matrice $\omega_{i}(t)\rho_{\omega_{i}}(t^{-1})$ en le vecteur de plus haut poids est 1.

On continue de noter $\psi$ l'isomorphisme inverse.
On commence par rappeler la proposition suivante, due à Vinberg \cite[Th. 6-7]{Vi}  et Rittatore \cite[Th. 21]{Ri}:
\begin{prop}\label{strates}
\begin{enumerate}
\item On a une décomposition de $V_{G}^{0}$ en $G_{+}\times G_{+}$-orbites indexées par les sous-ensembles $J$ de $\Delta$.
\item Dans chaque orbite $\mathcal{O}_{J}$, on a un idempotent distingué $e_{J}\in \overline{T}_{\Delta}$.
\item Pour $J\subset \Delta$ et $V_{\lambda}$ la représentation irréductible de poids $\lambda$ de $G$, on considère $V_{J,\lambda}$ le sous-espace engendré par les sous-espaces de poids qui sont dans $\lambda+D_{J}$, $D_{J}$ le module engendré par les $\{\alpha_{j}, j\in J\}$.

Alors, $\rho_{\lambda}(e_{J})=p_{J}^{\lambda}$ où $p_{J}^{\lambda}$ désigne la projection sur le sous-espace vectoriel $V_{J,\lambda}$.\\
En particulier, l'orbite $\mathcal{O}_{J}$ s'envoie sur la strate $\mathbb{G}_{m}^{J}\times\mathbb{A}^{r}$ par $\chi_{+}$.
\item On a également une décomposition de $V_{G}$ de $G_{+}\times G_{+}$-orbites indexées par certaines paires $(J,K)$ de $\Delta\times \Delta$. A nouveau dans chaque orbite $\mathcal{O}_{J,K}$, on a un idempotent distingué $e_{J,K}$ de $V_{T}$ et dans le cas où $J$ est vide, $\rho_{\lambda}(e_{\emptyset,K})=p_{\emptyset}^{\lambda}$ si et seulement si $\lambda\in C_{K}$ et zéro sinon. Ici, $C_{K}$ désigne le cône engendré par les $(\omega_{k},\omega_{k}), k\in K$.
\end{enumerate}

\end{prop}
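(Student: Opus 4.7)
The strategy is to lean on Vinberg and Rittatore for the existence of the orbit decomposition and of the distinguished idempotents, and to concentrate on an explicit construction of the $e_{J}$ (and $e_{\emptyset,K}$) inside $\overline{T}_{\Delta}$ which makes the action on each $V_{\lambda}$ transparent. Since $V_{G}^{0}$ has been shown to be smooth in Proposition \ref{concini} and its central quotient is the De Concini--Procesi compactification of $G_{+}/Z_{+}$, the general Luna--Vust/colored-fan theory produces a bijection between $G_{+}\times G_{+}$-orbits and closed $T_{+}$-orbits in $\overline{T}_{\Delta}$, themselves indexed by faces of a suitable cone. I would quote this as the input giving parts (i) and (ii).

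For the explicit construction of $e_{J}$, I would proceed as follows. Fix $J\subset\Delta$ and choose a cocharacter $\mu_{J}\in X_{*}(T)$ with $\langle\alpha_{j},\mu_{J}\rangle=0$ for $j\in J$ and $\langle\alpha_{j},\mu_{J}\rangle>0$ for $j\notin J$. Composing with $\psi:\mathbb{G}_{m}^{r}\xrightarrow{\sim} T_{\Delta}\subset\overline{T}_{\Delta}\subset V_{G}^{0}$ and using that $\overline{T}_{\Delta}$ is complete in the relevant directions (this is where smoothness of $V_{G}^{0}$ is used), I would set
\begin{equation*}
e_{J}:=\lim_{t\to 0}\psi(\alpha_{\bullet}(\mu_{J}(t)))\in\overline{T}_{\Delta}.
\end{equation*}
Idempotency follows from the multiplicativity of $\psi$ and the fact that the limit is a fixed point of the scaling action induced by $\mu_{J}$.

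For part (iii), the key is the weight-space formula \eqref{etoile} from the proof of Proposition \ref{semisimple}: on a weight-$\nu$ vector $v_{\nu}$ of $V_{\omega_{i}}$ with $\nu\leq\omega_{i}$, an element of $T_{\Delta}$ determined by $b\in\mathbb{G}_{m}^{r}$ acts by the scalar $(\omega_{i}-\nu)(\psi_{b})$. Writing $\omega_{i}-\nu=\sum_{k}n_{k}\alpha_{k}$ with $n_{k}\geq 0$, one has $(\omega_{i}-\nu)(\psi(\alpha_{\bullet}(\mu_{J}(t))))=\prod_{k}t^{n_{k}\langle\alpha_{k},\mu_{J}\rangle}$, which tends to $1$ if $n_{k}=0$ for every $k\notin J$, i.e.\ if $\nu\in\omega_{i}+D_{J}$, and to $0$ otherwise. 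Passing to the limit shows that $\rho_{\omega_{i}}(e_{J})$ is exactly the projector $p_{J}^{\omega_{i}}$ onto $V_{J,\omega_{i}}$, and the same argument works for arbitrary dominant $\lambda$. The image of $\mathcal{O}_{J}$ under $\chi_{+}$ is then forced to lie in $\mathbb{G}_{m}^{J}\times\mathbb{A}^{r}$ because $\alpha_{i}(e_{J})=1$ for $i\in J$ and $0$ for $i\notin J$, while the $\chi_{i}$ coordinates remain free under the residual action of $G_{+}\times G_{+}$ on the orbit.

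The main obstacle is part (iv), the extension to $V_{G}$. Here the combinatorics is genuinely more delicate: $V_{G}$ is affine but no longer smooth, so the orbit parameter space is not just $2^{\Delta}$ but the set of pairs $(J,K)$ cut out by the cone $C^{*}=\{(\lambda,\mu)\in X^{*}(T)^{2}\mid\lambda\leq\mu\}$ of Proposition \ref{drin}; not every $(J,K)$ actually occurs. I would produce the idempotents $e_{J,K}$ by a two-parameter limit along a cocharacter of $T_{+}$ which degenerates separately in the $\alpha$-directions (controlled by $J$) and in the $\omega$-directions (controlled by $K$), and verify which pairs $(J,K)$ yield distinct nonzero limits by inspection on the generating weights $(\alpha_{i},0)$ and $(\omega_{i},\omega_{i})$ of $k[V_{T}]$. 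The case $J=\emptyset$ in the statement is the tractable one: the cocharacter degenerates only in the $\omega$-directions, and the weight-space analysis of part (iii) then shows that $\rho_{\lambda}(e_{\emptyset,K})$ is nonzero precisely when every weight of $V_{\lambda}$ survives the limit, which translates into $\lambda\in C_{K}$; in that case the surviving space is $V_{\emptyset,\lambda}=kv_{\lambda}$ (the highest-weight line), giving the projector $p_{\emptyset}^{\lambda}$.
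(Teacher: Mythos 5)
The paper does not prove this proposition at all: it is stated as a recalled result, attributed to Vinberg (Th.\ 6--7) and Rittatore (Th.\ 21), and used as a black box in everything that follows. Your proposal therefore cannot be compared to an internal argument; what it amounts to is a reconstruction of Vinberg's own proof, and as such the outline is sound. The construction of $e_{J}$ as $\lim_{t\to 0}$ of a one-parameter subgroup with $\langle\alpha_{j},\mu_{J}\rangle=0$ on $J$ and $>0$ off $J$, combined with the observation that $\psi_{b}$ acts on a weight-$\nu$ vector of $V_{\lambda}$ by the monomial $(\lambda-\nu)(\psi_{b})=\prod_{k}b_{k}^{n_{k}}$, is exactly the right mechanism and correctly identifies $\rho_{\lambda}(e_{J})$ with the projector $p_{J}^{\lambda}$; note that the existence of the limit needs only $\overline{T}_{\Delta}\cong\mathbb{A}^{r}$ (established in the paper just before the proposition), not the smoothness of $V_{G}^{0}$, and that the limit lands in $V_{G}^{0}$ because the highest-weight coefficient of $\omega_{i}(t)\rho_{\omega_{i}}(t^{-1})$ is identically $1$.

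Two caveats. First, in part (iv) you write that $\rho_{\lambda}(e_{\emptyset,K})$ is nonzero precisely when \emph{every} weight of $V_{\lambda}$ survives the limit; that cannot be what you mean, since it would make $e_{\emptyset,K}$ act invertibly. The correct criterion, which your own next clause implicitly uses, is that the \emph{highest} weight $(\lambda,\lambda)$ lies in the face of the cone $C^{*}$ attached to $(\emptyset,K)$, i.e.\ $\lambda\in C_{K}$; for $J=\emptyset$ no lower weight $(\lambda,\nu)$ with $\nu<\lambda$ can survive, which is why the image is the rank-one projector $p_{\emptyset}^{\lambda}$. Second, the genuinely hard content of part (iv) --- determining which pairs $(J,K)$ actually index nonempty orbits (Vinberg's ``essential pairs'', cited later in the paper as \cite[D\'ef.\ 4]{Vi}) and that the two-parameter limits exhaust the idempotents of $V_{T}$ --- is left as a plan (``verify by inspection''). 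Since the paper itself only asserts the decomposition for ``certaines paires'', this matches the precision of the statement, but it is the one place where your sketch would need real work to become a proof rather than a citation.
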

De cette proposition, on déduit une stratification de $\overline{T}_{\Delta}=\coprod\limits_{J}T_{\Delta}e_{J}$ et chaque strate $T_{\Delta}e_{J}$ s'envoie sur $\mathbb{G}_{m}^{J}\times\mathbb{A}^{r}$.
\begin{lem}
Soit $(b,a)\in \mathbb{A}^{r}\times\mathbb{A}^{r}$, on considère l'élément $\psi_{b}$ de $\overline{T}_{\Delta}$, donné par l'isomorphisme ci-dessus, alors nous avons:
\begin{center}
$\epsilon_{+}(b,a)=\epsilon(a)\psi_{b}$,
\end{center}
où l'on fait la multiplication dans le semi-groupe de Vinberg.

En particulier si on note $C:=\epsilon_{+}(\{1\}\times\mathbb{A}^{r})$, alors $\epsilon_{+}(\mathbb{A}^{r}\times\mathbb{A}^{r})=C\overline{T}_{\Delta}$ et\\ $\epsilon_{+}(\mathbb{G}_{m}^{J}\times\mathbb{A}^{r})=CT_{\Delta}e_{J}$.
\end{lem}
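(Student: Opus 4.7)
The plan is to reduce the lemma to a density argument plus bookkeeping with the stratification of Proposition \ref{strates}. The identity $\epsilon_{+}(b,a) = \epsilon(a)\psi_{b}$ is already the \emph{definition} of $\epsilon_{+}$ on $G_{+}$, so the content of the first assertion is that when one extends $\psi$ from $\mathbb{G}_{m}^{r}$ to $\mathbb{A}^{r}$ via the isomorphism $p_{1}\colon \overline{T}_{\Delta}\xrightarrow{\sim} \mathbb{A}^{r}$ constructed just before the statement, the expression $\epsilon(a)\psi_{b}$ computed via the multiplication in $V_{G}$ agrees with the extension of $\epsilon_{+}$ produced in Proposition \ref{semisimple}.

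First I would note that the right hand side is a morphism of $k$-schemes $\mathbb{A}^{r}\times\mathbb{A}^{r}\to V_{G}$: it is the composite
\[
\mathbb{A}^{r}\times\mathbb{A}^{r}\xrightarrow{(\psi,\epsilon)} \overline{T}_{\Delta}\times G \hookrightarrow V_{G}\times V_{G}\xrightarrow{m} V_{G},
\]
each arrow being a morphism (the first uses that $p_{1}$ is an isomorphism, the last that $V_{G}$ is an algebraic monoid). The left hand side is also a morphism to $V_{G}$ by Proposition \ref{semisimple}. By construction both morphisms coincide on the dense open $\mathbb{G}_{m}^{r}\times\mathbb{A}^{r}\subset\mathbb{A}^{r}\times\mathbb{A}^{r}$, where they are equal to $\epsilon(a)\psi_{b}$ inside $G_{+}$. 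Since $V_{G}$ is separated, the two morphisms coincide on all of $\mathbb{A}^{r}\times\mathbb{A}^{r}$, giving the first identity.

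For the two consequences, specialising $b=(1,\dots,1)$ yields $\psi_{b}=1_{G_{+}}$, hence $C=\{\epsilon(a)\mid a\in\mathbb{A}^{r}\}$; the identity then gives $\epsilon_{+}(\mathbb{A}^{r}\times\mathbb{A}^{r})=C\cdot\overline{T}_{\Delta}$ as the product in $V_{G}$, using that $b\mapsto\psi_{b}$ realises $\overline{T}_{\Delta}$ as the image of $\{1\}\times\mathbb{A}^{r}\hookrightarrow V_{G}$ via the extended $\psi$. For the refined statement I would combine this with Proposition \ref{strates}: the decomposition $V_{G}^{0}=\coprod_{J}\mathcal{O}_{J}$ restricts to a stratification $\overline{T}_{\Delta}=\coprod_{J} T_{\Delta}e_{J}$, and the $\chi_{+}$-image of $\psi_{b}$ is $(\alpha_{\bullet}(\psi_{b}),0)=(b,0)$. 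Hence $\psi_{b}\in T_{\Delta}e_{J}$ exactly when $b\in\mathbb{G}_{m}^{J}$, and multiplying on the left by $C$ gives $\epsilon_{+}(\mathbb{G}_{m}^{J}\times\mathbb{A}^{r})=CT_{\Delta}e_{J}$.

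There is no substantive obstacle in this argument; it is purely a coherence check between the two descriptions of the extension of the section. The only point where one must be careful is that the symbol $\psi_{b}$ appearing in the right hand side of the identity really is the unique extension obtained from $p_{1}$, which is forced by the fact that the polynomial formulas $\omega_{i}(t)\rho_{\omega_{i}}(t^{-1})$ in the coordinates $b_{i}=\alpha_{i}(t)$ (computed in the proof of Proposition \ref{semisimple}) define both simultaneously.
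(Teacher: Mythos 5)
Your argument is correct and is essentially the paper's own proof: the paper likewise observes that the two morphisms agree by definition over the dense open $\mathbb{G}_{m}^{r}\times\mathbb{A}^{r}$ and concludes by uniqueness of the extension, then deduces the two consequences from the stratification $\overline{T}_{\Delta}=\coprod_{J}T_{\Delta}e_{J}$ of Proposition \ref{strates}. Your write-up merely makes explicit the separatedness/density justification and the identification of $\psi_{b}\in T_{\Delta}e_{J}$ with $b\in\mathbb{G}_{m}^{J}$, which the paper leaves implicit.
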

\begin{proof}
Au-dessus de $\mathbb{G}_{m}^{r}\times\mathbb{A}^{r}$, les deux sections sont les mêmes par définition, donc par unicité du prolongement, l'égalité reste vraie sur $\mathbb{A}^{r}\times\mathbb{A}^{r}$. La suite du lemme vient de la description en strates rappelée ci-dessus.
\end{proof}

$\rmq$ Dans le cas $A_{r}$, le calcul de la section de Steinberg nous donne :
  \begin{center}
$\epsilon_{+}(\alpha_{\bullet},a_{\bullet})= \begin{pmatrix}
  a_{1}& -\alpha_{1}a_{2}& \alpha_{1}\alpha_{2}a_{3}&\cdots&(-1)^{r-1}\prod\limits_{i=1}^{r-1}\alpha_{i}a_{r}&(-1)^{r}\prod\limits_{i=1}^{r}\alpha_{i}\\ 1&0&0&\cdots&0&0\\0&\alpha_{1}&0&\dots&0&0\\\vdots&&\ddots&\\\vdots&&&\ddots\\0&0&\dots&\dots
&\prod\limits_{i=1}^{r-1}\alpha_{i}&0\\ 
  \end{pmatrix}.$
  \end{center}
\subsection{Construction du centralisateur régulier}
Dans la suite, on note $\mathfrak{C}_{+}:=V_{T}/W$. Soit le schéma en groupes $I$ sur $V_{G}$ des paires
\begin{center}
$I:=\{(g,\gamma)\in G\times V_{G}\vert~ g\gamma g^{-1}=\gamma\}$.
\end{center}
On définit le centralisateur régulier par $J=\epsilon_{+}^{*}I$. 
Nous voulons montrer que ce schéma en groupes est lisse commutatif de dimension $r$. On commence par montrer que la section tombe dans l'ouvert régulier. Le procédé utilisé sert pour toute la suite de l'article; à savoir, démontrer une propriété pour la fibre la plus singulière et la propager à toutes les autres. 
On rappelle le résultat suivant qui est un corollaire de \cite[VI B. 4, Prop. 4.1]{SGA3}. 
\begin{prop}\label{semicont}
Soient $a, a' \in\mathfrak{C}_{+}$ tels que $a\in\overline{\{a'\}}$, alors $\dime J_{a}\geq \dime J_{a'}$.
En particulier, en appliquant l'inégalité au point générique, on a:  
\begin{center}
$\forall~ a\in \mathfrak{C}_{+}, \dime J_{a}\geq r$.
\end{center}
Ainsi, l'ensemble $\{a\in\mathfrak{C}_{+}\vert~ \dime J_{a}=r\}$ est ouvert.
\end{prop}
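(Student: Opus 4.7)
La strat�gie est d'appliquer directement le th�or�me de semi-continuit� sup�rieure de la dimension des fibres d'un sch�ma en groupes de type fini \cite[VI B. 4, Prop. 4.1]{SGA3}. Pour ce faire, je v�rifierais d'abord que $J=\eps_{+}^{*}I$ est bien un sch�ma en groupes de type fini sur $\kc$. Cela r�sulte du fait que $I\subset G\times V_{G}$ est un sch�ma en groupes (ferm�) de type fini sur $V_{G}$, comme sch�ma des centralisateurs de l'action de $G$ sur $V_{G}$, et que $J$ est obtenu par changement de base via la section $\eps_{+}:\kc\rightarrow V_{G}$ (qui est un morphisme de type fini, puisque $\kc$ est un espace affine et $V_{G}$ est affine).

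Une fois que l'on sait que $J\rightarrow \kc$ est un sch�ma en groupes de type fini, la proposition cit�e de SGA3 donne imm�diatement l'in�galit� $\dime J_{a}\geq \dime J_{a'}$ pour toute sp�cialisation $a\in\overline{\{a'\}}$, ce qui est le premier point de l'�nonc�. L'ouverture de l'ensemble $\{a\in\kc\vert~\dime J_{a}=r\}$ d�coule aussi directement de cette semi-continuit� sup�rieure, \`a condition d'avoir �tabli que $r$ est la valeur minimale de la fonction $a\mapsto\dime J_{a}$.

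Il reste donc �tablir la minoration uniforme $\dime J_{a}\geq r$ en l'appliquant au point g�n�rique $\eta$ de $\kc$. Par construction, la section $\eps_{+}$ tombe dans l'ouvert r�gulier $V_{G}^{reg}$ (proposition \ref{semisimple}), et au-dessus de l'ouvert $\{(b,a)\in\kc\vert~b_{i}\neq 0~\forall i\}\cong T_{+}/W$, le morphisme $\chi_{+}$ co�ncide avec le morphisme de Chevalley-Steinberg classique, pour lequel au point g�n�rique l'image $\eps_{+}(\eta)\in G_{+}$ est un �l�ment r�gulier semi-simple. Son centralisateur est alors un tore maximal de dimension $r$, d'o� $\dime J_{\eta}=r$. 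En combinant avec l'in�galit� de sp�cialisation, on obtient $\dime J_{a}\geq r$ pour tout $a\in\kc$.

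Le seul point qui demande une v�rification est la dimension au point g�n�rique, mais c'est imm�diat puisque l'�l�ment $\eps_{+}(\eta)$ est clairement semi-simple r�gulier (son polyn�me caract�ristique est � coefficients g�n�riques), et je ne vois pas d'obstacle majeur dans la preuve; il s'agit essentiellement de d�rouler la semi-continuit� sup�rieure pour un sch�ma en groupes de type fini, une fois reconnue la nature de $J$.
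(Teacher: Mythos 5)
Votre preuve est correcte et suit exactement la m\^eme voie que l'article, qui se contente d'invoquer \cite[VI B. 4, Prop. 4.1]{SGA3} sans d\'etailler davantage ; vous ne faites qu'expliciter les v\'erifications de routine (type fini de $J$, dimension $r$ de la fibre g\'en\'erique via la r\'egularit\'e semi-simple de $\eps_{+}(\eta)$). Rien \`a redire.
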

Nous avons besoin d'une description plus précise des strates avant d'aller plus avant, on utilise pour cela la proposition tirée de Vinberg \cite[Th. 7]{Vi}  et étendue par Rittatore \cite[Th. 21]{Ri} :
\begin{prop}\label{stabet}
Pour un sous-ensemble $J\subset\Delta$, soit l'orbite $\mathcal{O}_{J}$, on considère les sous-groupes paraboliques $P_{J}$ et $P_{J}^{-}$.
On a une décomposition de Lévi $P_{J}=L_{J}R_{u}(P_{J})$ et on note $\delta$ (resp $\delta_{-}$) la projection de $P_{J}$ sur $L_{J}$ (resp $P_{J}^{-}$ sur $L_{J}$).
L'orbite $\mathcal{O}_{J}$ a un idempotent distingué $e_{J}$. Son stabilisateur $H_{J}$ s'identifie à :
\begin{center}
$H_{J}=\{(x,y)\in P_{J}\times P_{J}^{-}\vert~ \delta(x)\delta_{-}(y)^{-1}\in T_{J,\Delta}\}$, où $T_{J,\Delta}=\{t\in T_{\Delta}\vert~\alpha_{j}(t)=1, j\in J\}$.
\end{center}
\end{prop}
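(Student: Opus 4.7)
The plan is to unpack the stabilizer equations directly using the realization of $V_G$ inside $H_G = \prod_i \mathbb{A}^1_{\alpha_i} \times \prod_i \End(V_{\omega_i})$. By Proposition \ref{strates}, the idempotent $e_J \in \overline{T}_\Delta$ corresponds to the point whose $i$-th scalar coordinate is $1$ if $i \in J$ and $0$ otherwise, and whose $i$-th matrix coordinate is the projector $p_J^{\omega_i}$ onto $V_{J, \omega_i}$. Writing an element of $G_+ \times G_+$ as $(x, y) = ((s_1, g_1), (s_2, g_2))$ modulo $Z_G$ and setting $\sigma := s_1 s_2^{-1}$, the stabilizer conditions become $\alpha_j(\sigma) = 1$ for every $j \in J$ (from the nonzero scalar coordinates) together with the matrix identities $\omega_i(\sigma) \, \rho_{\omega_i}(g_1) \, p_J^{\omega_i} \, \rho_{\omega_i}(g_2)^{-1} = p_J^{\omega_i}$ for every $i$.

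Setting $A := \omega_i(\sigma) \rho_{\omega_i}(g_1)$ and $B := \rho_{\omega_i}(g_2)$, the identity $A \, p_J^{\omega_i} = p_J^{\omega_i} \, B$ forces $A$ to preserve $\Ima(p_J^{\omega_i}) = V_{J, \omega_i}$, $B$ to preserve $\Ker(p_J^{\omega_i}) = V_{J, \omega_i}^c$ (the sum of weight spaces outside $\omega_i + D_J$), and the induced maps on $V_{J, \omega_i} \cong V_{\omega_i}/V_{J, \omega_i}^c$ to agree. A weight-diagram computation shows that for $i \notin J$, $V_{J, \omega_i}$ reduces to the highest weight line $\langle v_{\omega_i}\rangle$, whose stabilizer in $G$ is the maximal parabolic $P_{\Delta \setminus \{i\}}$; intersecting over $i \notin J$ gives precisely $P_J$, and $P_J$ automatically preserves $V_{J, \omega_i}$ for $i \in J$ as well. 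Thus $x \in P_J$, and the symmetric argument forces $y \in P_J^-$.

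For the compatibility I would use that $R_u(P_J)$ acts trivially on every $V_{J, \omega_i}$: for $\alpha \in R^+ \setminus R_J^+$ and a weight $\mu \in \omega_i + D_J$ of $V_{\omega_i}$, every $\mu + k\alpha$ with $k \geq 1$ falls outside the weight diagram of $V_{\omega_i}$, so $U_\alpha$ acts as the identity on $V_{J, \omega_i}$. Dually, $R_u(P_J^-)$ acts trivially on $V_{\omega_i}/V_{J, \omega_i}^c$. The matrix identity then reduces to $\omega_i(\sigma)\, \rho_{\omega_i}(\delta^G(g_1)) = \rho_{\omega_i}(\delta^G_-(g_2))$ on $V_{J, \omega_i}$ for every $i$, where $\delta^G, \delta^G_-$ denote the Levi projections inside $G$. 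Irreducibility of the $L_J$-representations $V_{J, \omega_i}$ forces $\delta^G(g_1)\, \delta^G_-(g_2)^{-1}$ into the connected center $Z(L_J^G)^0 = \bigcap_{j \in J} \ker \alpha_j \subset T$, and matching the prescribed scalars $\omega_i(\sigma)^{-1}$ on all the $V_{J, \omega_i}$ pins it down to $\sigma^{-1}$; together with the $T$-part $s_1 s_2^{-1} = \sigma$ and $\alpha_j(\sigma) = 1$ for $j \in J$, this yields $\delta(x)\, \delta_-(y)^{-1} = (\sigma, \sigma^{-1}) \in T_{J, \Delta}$. The main obstacle is bookkeeping the central $T$-factor of $G_+$: the scalar $\omega_i(\sigma)$ entangles the $T$-part of $\sigma$ with the $G$-part Levi factors, and it is precisely the cancellation between these two contributions that lands $\delta(x)\,\delta_-(y)^{-1}$ on the antidiagonal $T_\Delta$ rather than in a generic subtorus of $T_+$.
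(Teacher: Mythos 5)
The paper does not prove this proposition at all: it is quoted from Vinberg \cite[Th. 7]{Vi} and Rittatore \cite[Th. 21]{Ri}, so there is no internal argument to compare yours against. Your reconstruction is essentially the standard Vinberg-style computation in the defining representations, and it is correct in its main lines: the scalar coordinates give $\alpha_j(\sigma)=1$ for $j\in J$; the identity $Ap_J^{\omega_i}=p_J^{\omega_i}B$ is indeed equivalent to $A$ preserving $V_{J,\omega_i}$, $B$ preserving the complementary sum of weight spaces, and the induced maps agreeing; and the observation that $V_{J,\omega_i}$ is the highest weight line for $i\notin J$ (which follows from $\langle\omega_i,\alpha_j^{\vee}\rangle=0$ for $j\in J$, e.g. via $\langle\omega_i+\mu,\omega_i-\mu\rangle=-\Vert\beta\Vert^{2}<0$ for $\mu=\omega_i-\beta$, $\beta\in D_J\setminus\{0\}$) correctly pins $x$ and $y$ into $P_J$ and $P_J^{-}$ after intersecting the maximal parabolics.

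Two points deserve tightening. First, the appeal to \emph{irreducibility} of $V_{J,\omega_i}$ as an $L_J$-module is both unnecessary and delicate in characteristic $p$ (the sum of weight spaces of the irreducible $G$-module $V_{\omega_i}$ lying in $\omega_i+D_J$ need not be $L_J$-irreducible there). You do not need Schur: the identity you derive already says outright that $l:=\delta^{G}(g_2)^{-1}\delta^{G}(g_1)$ acts on each $V_{J,\omega_i}$ by the explicit scalar $\omega_i(\sigma)^{-1}$; what you then need is only that $\bigoplus_i V_{J,\omega_i}$ is a faithful $L_J$-module (true because the $\omega_i$ generate $X^{*}(T)$ for $G$ simply connexe), which places $l$ in the centre of $L_J$, i.e. in $\bigcap_{j\in J}\Ker\alpha_j\subset T$ --- the full centre, not the connected centre, which is what matches the possibly disconnected $T_{J,\Delta}$. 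Evaluating $\omega_i(l)=\omega_i(\sigma)^{-1}$ for all $i$ then gives $l=\sigma^{-1}$ as you say. Second, you should state explicitly that all these steps are reversible, so that every pair $(x,y)\in P_J\times P_J^{-}$ with $\delta(x)\delta_{-}(y)^{-1}\in T_{J,\Delta}$ does stabilize $e_J$; the equality of the two groups, not just one inclusion, is what the paper uses later (e.g. in Lemme \ref{cent0}). With these adjustments your argument is a complete and correct proof of the cited statement.
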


\begin{prop}\label{centreg}
On définit l'ouvert régulier $V_{G}^{reg}:=\{g\in V_{G}\vert~\dime I_{g}=r\}\subset V_{G}^{0}$.
La section $\epsilon_{+}:\mathfrak{C}_{+}\rightarrow V_{G}^{0}$ est à valeurs dans $V_{G}^{reg}$, i.e. le centralisateur régulier est de dimension $r$.
\end{prop}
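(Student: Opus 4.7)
La strat�gie consiste en deux �tapes. On se ram�ne d'abord, au moyen de la semi-continuit� sup�rieure des dimensions et de l'action contractante du tore central $Z_{+}$ sur $\kc=\mathbb{A}^{2r}$, au seul calcul de $\dime J_{(0,0)}$. On effectue ensuite ce calcul explicitement en exploitant la description des stabilisateurs d'idempotents obtenue � la proposition \ref{stabet}.

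\textbf{R�duction au point z�ro.} D'apr�s la proposition \ref{semicont}, on a d�j� $\dime J_{a}\geq r$ pour tout $a\in\kc$. Comme $G$ est simplement connexe, les poids fondamentaux $\omega_{i}$ forment une base de $X^{*}(T)$, et l'on dispose donc d'un cocaract�re $\mu:\mathbb{G}_{m}\to Z_{+}$ tel que tous les $\alpha_{i}$ et tous les $\omega_{i}$ soient strictement positifs sur $\mu$. Par cons�quent, l'origine $(0,0)$ appartient � l'adh�rence de l'orbite $Z_{+}\cdot a$ de tout point $a\in\kc$, si bien que la semi-continuit� sup�rieure des dimensions de fibres du sch�ma en groupes $J\to\kc$ donne
\begin{equation*}
\dime J_{(0,0)}\geq\dime J_{a},\quad\forall~ a\in\kc.
\end{equation*}
Il suffit donc d'�tablir l'�galit� $\dime J_{(0,0)}=r$.

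\textbf{Calcul au point z�ro.} Gr�ce au lemme pr�c�dent, on a $\eps_{+}(0,0)=\eps(0)e_{\emptyset}=ne_{\emptyset}$, o� $n=n_{1}\cdots n_{r}$ rel�ve l'�l�ment de Coxeter $w=s_{1}\cdots s_{r}$ et $e_{\emptyset}$ est l'idempotent distingu� de la strate la plus ferm�e $\mathcal{O}_{\emptyset}\subset V_{G}^{0}$. La condition de conjugaison $h\cdot(ne_{\emptyset})=(ne_{\emptyset})\cdot h$ dans $V_{G}$ se reformule, par translation par $n$, comme l'appartenance de $(n^{-1}hn,h)$ au stabilisateur $H_{\emptyset}$ de $e_{\emptyset}$. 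La proposition \ref{stabet} appliqu�e � $J=\emptyset$ identifie $H_{\emptyset}$ au sous-groupe d�fini par les conditions $x\in B$, $y\in B^{-}$ et $\delta(x)\delta_{-}(y)^{-1}\in T_{\Delta}$. Le calcul de la dimension du centralisateur se ram�ne alors � celui d'une intersection de type Bruhat $B^{-}\cap nBn^{-1}=T\cdot(U^{-}\cap nUn^{-1})$, de dimension $r+\ell(w)=2r$ pour $w$ un �l�ment de Coxeter, coup�e par la condition torique dans $T_{\Delta}$, qui rigidifie pr�cis�ment $r$ param�tres et donne une dimension finale �gale � $r$.

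\textbf{Obstacle principal.} La difficult� centrale est la derni�re �tape: il faut � la fois contr�ler l'intersection Bruhat $B^{-}\cap nBn^{-1}$ sp�cifique � l'�l�ment de Coxeter $w$, et v�rifier que la contrainte portant sur les composantes toriques dans $T_{\Delta}$ rigidifie exactement $r$ degr�s de libert�, sans en rajouter. C'est l� qu'intervient de fa�on d�cisive l'hypoth�se de simple connexit� de $G$, qui garantit l'identification $\kc\simeq\mathbb{A}^{2r}$ et la description explicite de $T_{\Delta}$ via les $\alpha_{i}$, rendant le d�compte de dimensions possible.
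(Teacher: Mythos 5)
Votre d\'emonstration est correcte et suit essentiellement la m\^eme d\'emarche que celle de l'article : r\'eduction au point $0$ au moyen de l'action contractante de $Z_{+}$ et de la semi-continuit\'e (proposition \ref{semicont}), puis calcul du centralisateur de $we_{\emptyset}$ via la description des stabilisateurs d'idempotents (proposition \ref{stabet}), qui donne $Z_{G}(U^{-}\cap wUw^{-1})$ de dimension $l(w)=r$. La seule diff\'erence est de r\'edaction : l'\'etape que vous signalez comme obstacle principal est explicit\'ee dans l'article, o\`u la condition torique force $t=w^{-1}tw$, donc $\alpha_{i}(t)=1$ pour tout $i$ et $t\in Z_{G}$, un \'el\'ement de Coxeter n'ayant pas de point fixe de dimension positive dans $T$.
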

On commence d'abord par montrer la proposition pour le point le plus singulier:
\begin{lem}\label{cent0}
L'élément $\epsilon_{+}(0)$ appartient à $V_{G}^{reg}$.
\end{lem}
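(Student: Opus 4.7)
\medskip
La strat�gie est de calculer explicitement $\dime I_{\epsilon_{+}(0)}$ et de le majorer par $r$; l'in�galit� $\dime I_{\epsilon_{+}(0)}\geq r$ fournie par la proposition \ref{semicont} permettra alors de conclure. Identifions d'abord l'�l�ment~: par d�finition, $\epsilon_{+}(0,0)=\epsilon(0)\psi_{0}$, o� $\epsilon(0)=\prod_{i=1}^{r}x_{\alpha_{i}}(0)n_{i}=n_{1}\cdots n_{r}$ rel�ve l'�l�ment de Coxeter $w=s_{1}\cdots s_{r}$ dans $N_{G}(T)$ (on note encore $w$ ce rel�vement). Par ailleurs, $\psi_{0}\in\overline{T}_{\Delta}$ v�rifie $\alpha_{i}(\psi_{0})=0$ pour tout $i$; compte tenu de la stratification $\overline{T}_{\Delta}=\coprod_{J}T_{\Delta}e_{J}$ et de l'isomorphisme $\overline{T}_{\Delta}\cong\mathbb{A}^{r}$ (la strate $J=\emptyset$ se r�duisant � un point), on en d�duit $\psi_{0}=e_{\emptyset}$. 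Ainsi $\epsilon_{+}(0)=w\cdot e_{\emptyset}\in\mathcal{O}_{\emptyset}$.

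\medskip
Soit $g\in G$. La condition $g\,\epsilon_{+}(0)\,g^{-1}=\epsilon_{+}(0)$ se r��crit $(w^{-1}gw)\,e_{\emptyset}\,g^{-1}=e_{\emptyset}$, c'est-�-dire $(w^{-1}gw,g)\in H_{\emptyset}$, le stabilisateur de $e_{\emptyset}$ pour l'action bilat�re de $G_{+}\times G_{+}$ sur $V_{G}$. Par la proposition \ref{stabet} avec $J=\emptyset$, on a $H_{\emptyset}=\{(x,y)\in B_{+}\times B_{+}^{-}\mid \delta(x)\delta_{-}(y)^{-1}\in T_{\Delta}\}$. En utilisant le plongement $G\hookrightarrow G_{+}$, $g\mapsto[1,g]$, ces conditions se traduisent par \textbf{(i)} $w^{-1}gw\in B$, \textbf{(ii)} $g\in B^{-}$, et \textbf{(iii)} en �crivant $g=tu$ avec $t\in T$ et $u\in U^{-}$, la classe de $(w^{-1}tw)\,t^{-1}$ dans $T_{+}=(T\times T)/Z_{G}$ via $s\mapsto[1,s]$ appartient � $T_{\Delta}$, ce qui �quivaut � $(w^{-1}tw)\,t^{-1}\in Z_{G}^{2}$.

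\medskip
Les conditions \textbf{(i)} et \textbf{(ii)} imposent $g\in B^{-}\cap wBw^{-1}=T\cdot(U^{-}\cap wUw^{-1})$, de dimension $2r$, car $w$ est un �l�ment de Coxeter de longueur $r$ et $\{\alpha>0\mid w\alpha<0\}$ est de cardinal $r$. Pour \textbf{(iii)}, l'endomorphisme $\phi_{w}\colon T\to T$, $t\mapsto(w^{-1}tw)\,t^{-1}$ a pour noyau $T^{w}$, fini puisque $w$ est r�gulier (aucun vecteur fixe non nul sur $\kt$); c'est donc une isog�nie, et $\phi_{w}^{-1}(Z_{G}^{2})$ est un ensemble fini. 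La composante torique $t$ est ainsi contrainte dans un ensemble de dimension $0$ tandis que la composante unipotente $u$ parcourt librement $U^{-}\cap wUw^{-1}$, d'o� $\dime I_{\epsilon_{+}(0)}\leq r$, puis l'�galit� recherch�e. L'�tape d�licate est la condition \textbf{(iii)}~: il faut manipuler avec soin le quotient $T_{+}=(T\times T)/Z_{G}$ et son sous-tore antidiagonal $T_{\Delta}$ afin de garantir que la contrainte torique r�sultante est bien de dimension $0$ et non positive-dimensionnelle.
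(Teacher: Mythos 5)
Votre preuve est correcte et suit pour l'essentiel la m�me route que celle du texte~: identification de $\epsilon_{+}(0)=we_{\emptyset}$, utilisation de la description du stabilisateur de $e_{\emptyset}$ (Prop. \ref{stabet}) pour ramener le centralisateur � $B^{-}\cap wBw^{-1}$ avec une condition finie sur la partie torique, puis conclusion en dimension $r$. Les seules d�viations, sans cons�quence, sont que la condition d'appartenance � $T_{\Delta}$ force en fait $(w^{-1}tw)t^{-1}=1$ (et non seulement l'appartenance � un ensemble fini du type $Z_{G}^{2}$), ce qui permet au texte d'identifier exactement $J_{0}=Z_{G}(U^{-}\cap wUw^{-1})$ de dimension $l(w)=r$, et que vous invoquez la proposition \ref{semicont} pour la minoration l� o� le texte obtient directement la dimension exacte.
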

\begin{proof}
On a  vu que la section tombait dans $V_{G}^{0}$. On a $\epsilon_{+}(0)=we_{\emptyset}$, où $w=s_{1}\dots s_{r}$. Soit alors $g\in G$ un élément du centralisateur, alors
\begin{center}
$gwe_{\emptyset}g^{-1}=we_{\emptyset}$.
\end{center}
De la description du stabilisateur des $e_{I}$ dans la proposition $\ref{strates}$, on en déduit que:
\begin{center}
$g\in B^{-}\cap wBw^{-1}$ et que $\delta_{-}(g)=\delta(w^{-1}gw)\in T_{+}$. 
\end{center}
Regardons la seconde condition.
On pose alors $g_{1}=w^{-1}gw\in B$, on écrit $g_{1}=tu$ avec $t\in T$, $u\in U$. Etudions alors ce que vaut $\delta(wg_{1}w^{-1})$:
\begin{center}
$wg_{1}w^{-1}=wtw^{-1}ww^{-1}=[\Ad(w)t][\Ad(w)u]$.
\end{center}
Comme l'élément $w$ agit sur les groupes radiciels $U_{\alpha}$ par $wU_{\alpha}w^{-1}=U_{w\alpha}$, on obtient que $wuw^{-1}$ ne contribue pas dans la projection sur le tore.

Regardons donc l'autre partie. L'égalité $\delta_{-}(g_{1})=\delta(wg_{1}w^{-1})$ impose que $t=t_{w}$  ce qui donne que pour tout $i$, $\alpha_{i}(t)=1$ et $t\in Z_{G}$, qui est fini. Le centralisateur s'identifie alors à:
\begin{center}
$J_{0}=Z_{G}(U^{-}\cap wUw^{-1})$
\end{center}
Maintenant, on sait que $U^{-}\cap wUw^{-1}$  s'identifie au produit des groupes radiciels $U_{\alpha}$, avec $\alpha>0$ et $w\alpha<0$ et est de dimension $l(w)=r$.
\end{proof}
Passons à la preuve de la proposition \ref{centreg}, la méthode de démonstration sera ensuite réutilisée systématiquement.

\begin{proof}
On regarde l'ensemble $U:=\{a\in\mathfrak{C}_{+}\vert~ \dime J_{a}=r\}$. C'est un ouvert d'après la proposition $\ref{semicont}$ et d'après ci-dessus, il contient $0$. Il est de plus $Z_{+}$-équivariant. Montrons que $U=\mathfrak{C}_{+}$. Supposons par l'absurde que le complémentaire de $U$ soit non vide.
Soit $a$ un point du complémentaire. On considère l'adhérence de sa $Z_{+}$-orbite que l'on note $F$. Celle-ci contient $0$ qui est dans $U$, donc on en déduit
que c'est aussi le cas du point générique de $F$ et comme $U$ est $Z_{+}$-équivariant, on obtient $a\in U$, une contradiction.
\end{proof}
$\rmq\label{principe}$ L'argument ci-dessus montre que toute propriété (P) sur les fibres, vérifiée par $\chi_{+}^{-1}(0)$, qui est ouverte et qui est $Z_{+}$-équivariante, se propage à toutes les fibres $\chi_{+}^{-1}(a)$.
Cette remarque sera utilisée de manière systématique par la suite.

Il nous faut encore obtenir que la flèche $\chi_{+}^{reg}$ est lisse. Nous allons étudier plus en détail la fibre au-dessus de 0 et ensuite utiliser la remarque ci-dessus.

\section{Propriétés du morphisme de Steinberg étendu $\chi_{+}$}
\subsection{Etude du cône nilpotent}
\begin{prop}
Soit la flèche $\chi_{+}:V_{G}\rightarrow\mathfrak{C}_{+}$.
Alors, $\mathcal{N}:=\chi_{+}^{-1}(0)$ est inclus dans $\{g\in V_{G} \vert~\forall~ \lambda, \rho_{\lambda}(g) ~\text{est nilpotente}\}$, en particulier dans les représentations de dimension un, $\rho_{\lambda}(g)$ est nulle.
\end{prop}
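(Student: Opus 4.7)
The plan is to reduce the claim for arbitrary $g\in\chi_+^{-1}(0)$ to a computation at the single point $\epsilon_+(0)$, exploiting the fact that the coefficients of the characteristic polynomial of $\rho_\lambda(g)$ are $G$-invariant and therefore factor through $\chi_+$. First I would observe that for any representation $\rho_\lambda\colon V_G\to\End(V_\lambda)$ extending an irreducible non-trivial representation of $G_+$, each coefficient of $\det(tI-\rho_\lambda(g))$ is a regular function on $V_G$ invariant under $G_+$-conjugation; since $Z_+$ is central in $G_+$, this coincides with $G$-invariance, and Proposition \ref{chevalley} then places these coefficients in $k[V_G]^G=k[\mathfrak{C}_+]$. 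Consequently they are constant on the fiber $\chi_+^{-1}(0)$.

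It then remains to evaluate at the point $\epsilon_+(0)=w\cdot e_\emptyset$ furnished by the Steinberg section, where $w=s_1\cdots s_r$ is the Coxeter element. By Proposition \ref{strates}, $\rho_\lambda(e_\emptyset)=p_\emptyset^\lambda$ is the projection onto the highest-weight line $\langle v_\lambda\rangle$, whence
\[
\rho_\lambda(\epsilon_+(0))=\rho_\lambda(w)\,p_\emptyset^\lambda
\]
has rank at most one and sends $v_\lambda$ to $v_{w\lambda}$. Its trace, the coefficient of $v_\lambda$ in $v_{w\lambda}$, vanishes unless $w\lambda=\lambda$; but a Coxeter element has no fixed vector in the reflection representation, so $w\lambda\neq\lambda$ as soon as the $G$-component of $\lambda$ is non-zero, which is precisely the case in which $V_\lambda$ has dimension greater than one. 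A rank-one endomorphism of trace zero is nilpotent (its square already vanishes), so the characteristic polynomial is $t^{\dim V_\lambda}$; propagating along the fiber via the factoring through $\mathfrak{C}_+$ yields the nilpotency of $\rho_\lambda(g)$ for every $g\in\chi_+^{-1}(0)$.

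The statement about one-dimensional representations is then immediate, since a nilpotent endomorphism of a one-dimensional space is zero. More directly: as $G$ is semi-simple and simply connected, $G_+/[G_+,G_+]\cong T^{\ad}$, so any non-trivial one-dimensional representation extending to $V_G$ is a monomial in the coordinate functions $(\alpha_i,0)\colon V_G\to\mathbb{A}^1$, which vanish on $\chi_+^{-1}(0)$ by the very definition of the fiber.

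The main technical point is the identification $k[V_G]^G\cong k[\mathfrak{C}_+]$ from Proposition \ref{chevalley}, together with checking that $G_+$-conjugation invariance coincides with $G$-invariance on $V_G$. Once these ingredients are in hand, everything reduces to the explicit matrix calculation at $\epsilon_+(0)$ and the classical fact that a Coxeter element acts without fixed vectors on the reflection representation.
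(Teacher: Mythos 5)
Your proof is correct, but it follows a genuinely different route from the paper's. The paper works directly with an arbitrary element of the fibre: since the strata of $V_{G}$ map onto $\mathbb{G}_{m}^{J}\times\mathbb{A}^{r}$ under $\chi_{+}$, any $\gamma$ with $\chi_{+}(\gamma)=0$ lies in $\coprod_{J}\mathcal{O}_{\emptyset,J}$ and may be written, up to conjugation, as $ge_{\emptyset,J}$; Proposition \ref{strates} then shows that $\rho_{\omega_{i}}(\gamma)$ annihilates every weight vector other than $v_{\omega_{i}}$ and that the remaining diagonal entry is the coordinate $a_{i}=\chi_{i}(\gamma)=0$, so one gets a rank-one traceless, hence nilpotent, endomorphism. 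You instead note that the coefficients of $\det(tI-\rho_{\lambda})$ lie in $k[V_{G}]^{G}$, hence by Proposition \ref{chevalley} are pulled back from $\mathfrak{C}_{+}$ and are constant along the fibre, which reduces the whole statement to the single point $\epsilon_{+}(0)=we_{\emptyset}$ of Lemma \ref{cent0}; there the same rank-one computation applies, the vanishing of the trace now coming from the classical fact that a Coxeter element fixes no non-zero weight. Both arguments ultimately rest on Proposition \ref{strates} for the action of $e_{\emptyset}$. Your version treats all representations extending to $V_{G}$ uniformly (the paper only writes out the fundamental and one-dimensional ones) and isolates cleanly where the Coxeter element enters; the paper's version has the advantage of locating $\mathcal{N}$ inside the strata $\mathcal{O}_{\emptyset,J}$, a description it exploits immediately afterwards in Proposition \ref{nilstrat}, and of not needing the section $\epsilon_{+}$ at all. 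Your remark on the one-dimensional representations, via the coordinates $(\alpha_{i},0)$, agrees with the paper's.
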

\begin{proof}
Soit $\gamma\in V_{G}$ tel que $\chi_{+}(\gamma)=0$ alors $\gamma$ est dans l'union des strates $\coprod\limits_{J\subset\Delta}\mathcal{O}_{\emptyset,J}$. Il s'écrit donc $\gamma=g_{1}e_{\emptyset,J}g_{2}$. Quitte à conjuguer, on l'écrit sous la forme $ge_{\emptyset,J}$. De plus, on peut se ramener à $J=\Delta$ puisqu'il résulte de la description de la proposition $\ref{strates}$ (iii), que pour $J\neq\Delta$, $e_{\emptyset,J}$ est l'endomorphisme nul si on regarde à travers une représentation $\rho_{\lambda}$ avec $\lambda\notin C_{J}$.

Regardons comment cet élément agit  sur la représentation irréductible de plus haut poids $\omega_{i}$. Soit $v_{\lambda}$ un vecteur de poids $\lambda$. Alors, toujours par la proposition $\ref{strates}$ (iii), on déduit que:
\begin{center}
$ge_{\emptyset}v_{\lambda}=0$ si $\lambda\neq\omega_{i}$
\end{center}
et $ge_{\emptyset}v_{\omega_{i}}=a_{i}v_{\omega_{i}}+$ (termes liés à d'autres vecteurs).
Or, comme $\chi_{+}(\gamma)=0$, on en déduit que pour tout $i$, $a_{i}=0$. Ainsi, on obtient que $\rho_{\omega_{i}}(\gamma)$ est une matrice nilpotente pour tout $i$. De plus, pour les représentations de dimension un, le fait que $\gamma$ est dans la strate indexée par le vide, nous assure que  c'est l'endomorphisme nul pour de telles représentations.
Comme $V_{G}$ est fermé dans un certain $\End(V)$, $\mathcal{N}=\Nil(V)\cap V_{G}$ où $\Nil(V)$ est le cône nilpotent. Comme il est fermé, on a le résultat analogue pour $\mathcal{N}$.
\end{proof}
On s'intéresse à l'ouvert du cône nilpotent :
\begin{center}
$\mathcal{N}^{0}=\mathcal{N}\cap V_{G}^{0}$.
\end{center}
En effet, le lien avec la compactification magnifique de de Concini-Procesi (cf. Prop. $\ref{concini}$) nous permet de déterminer sa structure. Nous allons avoir besoin d'introduire un certain nombre d'objets, en particulier la compactification magnifique.

Pour un sous-ensemble $I\subset \Delta$, considérons $W_{I}$ le sous-groupe de $W$ engendré par les réflexions simples $s_{i}$, $i\in I$. Soit $W^{I}$ l'ensemble des représentants de longueur minimale de $W/W_{I}$.

On note $X$ la compactification magnifique de de Concini Procesi. Dans \cite{dC-P}, on établit les propriétés suivantes concernant la compactification:
\begin{enumerate}\label{dcp}
\item
C'est une $G_{ad}\times G_{ad}$ variété projective lisse qui contient $G_{ad}$ comme ouvert dense.
\item
Les $G_{ad}\times G_{ad}$-orbites de $X$ sont indexées par les sous-ensembles $I\subset \Delta$.
\item
On a $X_{\Delta}=G_{ad}$ et $X_{\emptyset}$ est l'orbite fermée.
Les orbites $X_{I}$ admettent une description similaire aux orbites de $V_{G}^{0}$.
\end{enumerate}
Nous sommes en mesure de démontrer le lien entre $V_{G}^{0}$ et la compactification magnifique annoncé dans la proposition \ref{concini}:
\begin{prop}
Le schéma $V_{G}^{0}$ est lisse et le quotient  de $V_{G}^{0}$ par l'action par homothétie du centre $Z_{+}$ s'identifie à la compactification magnifique $X$.
\end{prop}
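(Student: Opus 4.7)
Le plan est de construire un morphisme explicite $\pi:V_{G}^{0}\to X$ prolongeant la projection canonique $G_{+}\to G_{+}/Z_{+}=G_{ad}$ et de montrer que $\pi$ fait de $V_{G}^{0}$ un $Z_{+}$-torseur au-dessus de $X$. Comme $X$ est lisse (de Concini--Procesi) et $Z_{+}$ est un tore, la lissit� de $V_{G}^{0}$ et l'identification $V_{G}^{0}/Z_{+}\simeq X$ s'ensuivent toutes les deux.

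Pour la construction de $\pi$, on part du plongement de $V_{G}^{0}$ (apr�s normalisation) dans $H_{G}^{0}=\prod_{i}\mathbb{A}^{1}_{\alpha_{i}}\times\prod_{i}(\End(V_{\omega_{i}})\setminus\{0\})$. En projetant sur le facteur $\End$ puis en projectivisant chaque facteur, on obtient un morphisme $V_{G}^{0}\to\prod_{i}\mathbb{P}(\End(V_{\omega_{i}}))$. Sa restriction � $G_{+}$ envoie $(t,g)$ sur $([\omega_{i}(t)\rho_{\omega_{i}}(g)])_{i}=([\rho_{\omega_{i}}(g)])_{i}$; comme $Z_{G}\subset T$ agit par scalaires dans chaque $V_{\omega_{i}}$, ceci ne d�pend que de $g$ modulo $Z_{G}$ et co�ncide avec le plongement classique de $G_{ad}$. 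Puisque $X$ est, par l'une de ses constructions standard, l'adh�rence de $G_{ad}$ dans $\prod_{i}\mathbb{P}(\End(V_{\omega_{i}}))$, et que $V_{G}^{0}$ est irr�ductible avec $G_{+}$ dense, le morphisme se factorise en $\pi:V_{G}^{0}\to X$.

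Je montrerais ensuite que $\pi$ est un $Z_{+}$-torseur. L'action de $Z_{+}$ sur $V_{G}^{0}\subset H_{G}$ multiplie chaque facteur $\End(V_{\omega_{i}})$ par le scalaire $\omega_{i}(z)\in k^{*}$, donc devient triviale apr�s projectivisation, et $\pi$ est $Z_{+}$-invariant. Sur l'ouvert $G_{+}$, $\pi$ est le quotient $G_{+}\to G_{ad}$, qui est un $Z_{+}$-torseur. Pour les autres strates, j'utiliserais les Propositions \ref{strates} et \ref{stabet}: l'orbite $\mathcal{O}_{J}\subset V_{G}^{0}$ s'envoie sur l'orbite analogue $X_{J}\subset X$, et la comparaison des stabilisateurs des idempotents $e_{J}$ et de leur image dans $X$ (tous deux d�crits par le type parabolique de la Proposition \ref{stabet}) montre que le noyau est exactement $Z_{+}$. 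Ainsi $\mathcal{O}_{J}\to X_{J}$ est un $Z_{+}$-torseur pour chaque $J\subset\Delta$; la torsorialit� strate par strate, combin�e � la $Z_{+}$-invariance et � la surjectivit�, fournit globalement un $Z_{+}$-torseur.

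Enfin, puisque $X$ est lisse et que $\pi$ est un $Z_{+}$-torseur (donc lisse) sous un groupe lisse, $V_{G}^{0}$ est lisse et le quotient $V_{G}^{0}/Z_{+}$ s'identifie canoniquement � $X$. La difficult� principale est la v�rification torsorielle sur les orbites non ouvertes, c'est-�-dire l'appariement de la description orbite/stabilisateur de $V_{G}^{0}$ issue de Vinberg--Rittatore avec celle de $X$ issue de de Concini--Procesi; une fois que les stabilisateurs se correspondent bijectivement modulo $Z_{+}$, l'�nonc� global suit. Une strat�gie de rechange serait d'invoquer la structure �tale-locale connue de $X$ (produit d'un espace affine et d'une vari�t� torique associ�e � la chambre de Weyl positive) et de la relever le long de $\pi$, ce qui donnerait la lissit� de mani�re plus directe.
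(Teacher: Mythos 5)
Your strategy runs in the opposite direction from the paper's, and the step on which everything rests is not justified. You want to first show that $\pi:V_{G}^{0}\rightarrow X$ is a $Z_{+}$-torseur and then deduce the smoothness of $V_{G}^{0}$ from that of $X$. But what your argument actually establishes is only that $\pi$ is a $Z_{+}$-invariant surjection whose restriction to each $G_{+}\times G_{+}$-orbit $\mathcal{O}_{J}$ is a $Z_{+}$-torseur onto $X_{J}$. Being a torsor stratum by stratum over a stratification into locally closed pieces does not imply being a torsor globally: for that you need $\pi$ to be flat (equivalently, fppf-locally trivial), and flatness cannot be read off from the fibers alone. A priori $V_{G}^{0}$ is only normal (as an open subscheme of the normalization $V_{G}$), normality does not give Cohen--Macaulayness, so ``miracle flatness'' over the smooth base $X$ is not available, and nothing in the orbit/stabilizer bookkeeping controls how $V_{G}^{0}$ looks \emph{transversally} to a boundary orbit --- which is exactly where a singularity could sit. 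Your fallback strategy (lifting the \'etale-local product structure of $X$ along $\pi$) presupposes precisely the local triviality of $\pi$ that is in question. Secondary point: in characteristic $p$ the identification of the closure of $G_{ad}$ in $\prod_{i}\mathbb{P}(\End(V_{\omega_{i}}))$ with the (normal, smooth) wonderful compactification requires care about the weight structure of the $V_{\omega_{i}}$, of the same kind as in the proposition \ref{drin}.

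The paper supplies exactly the missing local input, and in the logically prior position: it exhibits the chart $j:U^{-}\times Z_{+}\times\overline{T}_{\Delta}\times U\rightarrow V_{G}^{0}$, $(u_{-},z,t,u)\mapsto u_{-}ztu$, shows it is birational over $G_{+}$ and quasi-finite between normal integral schemes (the quasi-finiteness being the stabilizer computation of the proposition \ref{stabet} that you allude to), and concludes by the Main Theorem of Zariski that $j$ is an open immersion; since the source is smooth and the $G_{+}\times G_{+}$-translates of its image meet every orbit, $V_{G}^{0}$ is smooth, and only \emph{then} is the quotient by $Z_{+}$ identified with $X$. To salvage your route you would either have to prove the flatness of $\pi$ first --- for instance by combining the Cohen--Macaulayness of $V_{G}$ (Brion--Kumar, quoted later in the paper) with the equidimensionality of the fibers --- or prove the big-cell statement directly, which is what the paper does.
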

La preuve suit celle de Vinberg en utilisant la proposition \ref{strates}, étendue par Rittatore en caractéristique $p$:
\begin{proof}
On considère la flèche:
\begin{center}
$j: U^{-}\times Z_{+}\times\overline{T}_{\Delta}\times U\rightarrow V_{G}^{0}$
\end{center}
donnée par $(u_{-},z,t,u)\mapsto u_{-}ztu$.
La flèche $j$ est birationnelle au-dessus de $G_{+}$ entre schémas normaux intègres.
Montrons qu'elle est quasi-finie.
On doit prouver qu'une égalité:
\begin{center}
$u_{-}zt_{1}e_{I}u=t_{2}e_{I}, u_{-}\in U^{-}, z\in Z_{+}, t_{1},t_{2}\in T_{\Delta}$
\end{center}
implique $u_{-}=u=z=1$ et $t_{1}=t_{2}$.
Cela revient alors de la description des stabilisateurs \ref{stabet}, de la même manière que \cite[Prop. 14]{Vi}.
Ainsi, par le Main Theorem de Zariski, on en déduit que $j$ est une immersion ouverte. Comme de plus, son image contient des représentants de toutes les $G_{+}\times G_{+}$-orbites de $V_{G}^{0}$, on obtient la lissité.
L'assertion sur le quotient s'obtient alors de la même manière que \cite[Prop. 15]{Vi} .
\end{proof}

Pour un cocaractère $\lambda\in X_{*}(T)$, comme $X$ est projective, on peut définir $\lambda(0)$ par le critère valuatif.
\begin{lem}
Il existe un unique point base $h_{I}\in X_{I}$ tel que pour tout cocaractère $\lambda$ vérifiant $\lambda(\alpha)=0$ pour $\alpha\in I$ et $\lambda(\alpha)>0$ pour $\alpha\in \Delta-I$, on a $\lambda(0)=h_{I}$. De plus, on a
$X_{I}=(G_{ad}\times G_{ad}).h_{I}$.
\end{lem}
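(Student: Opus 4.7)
Mon plan consiste \`a exhiber explicitement $h_I$ comme l'image de l'idempotent $e_I\in\overline{T}_\Delta$ fourni par la proposition~\ref{strates}, puis \`a v\'erifier les deux assertions par un calcul direct dans l'adh\'erence du tore antidiagonal.

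Je rel\`everai d'abord $\la$ en un cocaract\`ere de $T_\Delta$ via l'inverse de l'isomorphisme $\alpha_\bullet:T_\Delta\xrightarrow{\sim}\mathbb{G}_m^r$, utilisant l'identification $T_\Delta\cong T^{ad}$. Comme $\overline{T}_\Delta\subset V_G^0$ s'identifie \`a $\mathbb{A}^r$ par les coordonn\'ees $(\alpha_1,\ldots,\alpha_r)$, et que $\alpha_i(\la(t))=t^{\langle\alpha_i,\la\rangle}$, les hypoth\`eses sur $\la$ entra\^inent que $\alpha_i(\la(t))\to 1$ si $i\in I$ et $\alpha_i(\la(t))\to 0$ si $i\in\Delta-I$ quand $t\to 0$. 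La limite dans $\overline{T}_\Delta$ sera alors l'idempotent dont les coordonn\'ees valent $1$ sur $I$ et $0$ ailleurs, qui appartient \`a $\overline{T}_\Delta\cap\mathcal{O}_I$ et co\"incide, par unicit\'e, avec $e_I$ de la proposition~\ref{strates}. Le crit\`ere valuatif appliqu\'e au sch\'ema projectif $X$ garantira alors l'existence de $\la(0)\in X$, qui sera l'image $h_I$ de $e_I$ par la projection $V_G^0\to V_G^0/Z_+=X$.

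Cette description fournira simultan\'ement l'existence et l'unicit\'e du point $h_I$ : il ne d\'epend que du sous-ensemble $I$, et non du cocaract\`ere $\la$ satisfaisant les hypoth\`eses. L'\'egalit\'e $X_I=(G_{ad}\times G_{ad})\cdot h_I$ r\'esultera alors de ce que $X_I$ est une unique $G_{ad}\times G_{ad}$-orbite, combin\'e avec l'appartenance $h_I\in X_I$ \'etablie ci-dessus.

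Le point le plus d\'elicat me semble \^etre la compatibilit\'e entre la stratification de $V_G^0$ par les $G_+\times G_+$-orbites $\mathcal{O}_J$ et celle de $X$ par les $G_{ad}\times G_{ad}$-orbites $X_I$ : il faut s'assurer que l'image de $e_I\in\mathcal{O}_I$ tombe bien dans la strate index\'ee par le m\^eme $I$ dans $X$. Cette compatibilit\'e d\'ecoule toutefois de ce que $Z_+$ pr\'eserve chaque orbite $\mathcal{O}_J$ et que $G_+$ agit sur $X$ \`a travers son quotient $G_+/Z_+\cong G_{ad}$, donc les $G_{ad}\times G_{ad}$-orbites de $X$ s'obtiennent comme images des $G_+\times G_+$-orbites de $V_G^0$.
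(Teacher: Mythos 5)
Your strategy---realizing $h_I$ as the image under $\sigma:V_G^{0}\rightarrow X$ of the Vinberg idempotent $e_I$ and computing the limit inside $\overline{T}_{\Delta}\cong\mathbb{A}^{r}$---is exactly the one the paper gestures at: the paper itself gives no proof, only a reference to De Concini--Springer together with the remark that the $h_I$ are the images of the $e_I$. The skeleton of your argument is sound: the limit you exhibit depends only on $I$ and not on $\lambda$, it is the idempotent of $\overline{T}_{\Delta}$ with coordinates $1$ on $I$ and $0$ elsewhere, and once one knows $h_I\in X_I$ the equality $X_I=(G_{ad}\times G_{ad})\cdot h_I$ is automatic since $X_I$ is by definition a single orbit. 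Your last paragraph also correctly isolates, and correctly resolves, the compatibility of the two orbit stratifications.

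There is, however, an unaddressed sign in the lifting step, and as written you track the wrong curve. With the paper's conventions, $T_{\Delta}$ is the image of $t\mapsto(t,t^{-1})$ and $Z_{+}$ is the image of $T\times\{1\}$, so the projection $G_{+}\rightarrow G_{+}/Z_{+}\cong G_{ad}$ sends $(t,t^{-1})$ to the class of $t^{-1}$: the composite $T_{ad}\stackrel{\sim}{\rightarrow}T_{\Delta}\hookrightarrow G_{+}\rightarrow G_{ad}$ is \emph{inversion}. The lift $\tilde{\lambda}(t)=(\lambda(t),\lambda(t)^{-1})$, which is the one for which your formula $\alpha_i(\tilde{\lambda}(t))=t^{\langle\alpha_i,\lambda\rangle}$ holds, therefore projects to the curve $(-\lambda)(t)$ in $X$, and the limit you compute is $\lambda(\infty)$, not $\lambda(0)$. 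Equivalently, $\sigma(\overline{T}_{\Delta})$ is the affine chart of the complete toric variety $\overline{T_{ad}}\subset X$ on which the $-\alpha_i$ are regular, i.e. the chart adapted to \emph{antidominant} cocharacters; for $\lambda$ with $\langle\alpha_i,\lambda\rangle>0$ on $\Delta-I$ the point $\lambda(0)$ is instead the limit of $(-w_0\lambda(t),\lambda(t))$, whose limiting matrices are projections onto the weight spaces near the lowest weight rather than the highest one. The two candidate base points differ by a Weyl twist and both generate $X_I$, so the second assertion survives, but as a proof of the statement as written the computation either needs the sign condition reversed or the limit redone in the correct chart; since the identification $h_I=\sigma(e_I)$ is what the paper actually uses afterwards, you must pin this convention down explicitly.
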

On renvoie à \cite[sect. 3]{DS} et \cite[p. 73]{Sp1}  pour la preuve de ce lemme. Les $h_{I}$ sont les images des idempotents $e_{I}$ de $V_{G}^{0}$ de Vinberg.
Suivant Lusztig \cite[12.3]{L2}, , nous allons décrire une certaine partition de $X$.
Pour $I\subset \Delta$ et $w\in W^{I}$, on pose:
\begin{center}
$X_{I,w}=G_{ad}.[I,w,1]$
\end{center}
où $[I,w,1]:=(B_{ad}\times B_{ad}).w h_{I}$ où l'action de $B_{ad}\times B_{ad}$ est donnée par la multiplication à gauche et à droite et $G_{ad}$ agit par conjugaison.

\begin{thm}\label{he1}[Lusztig-He]
\begin{enumerate}
\item
$X_{I}$ est l'union disjointe des $X_{I,w}$, $w\in W^{I}$.
\item
$X_{I,w}$ est localement fermé et irréductible de dimension $\dime G -l(w)-\left|\Delta-I\right|$.
\end{enumerate}
\end{thm}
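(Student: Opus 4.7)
The plan is to analyze $X_{I,w}$ by combining the Bruhat decomposition of $G_{ad}$ with the description of the stabilizer of the base point $h_I$ in $G_{ad}\times G_{ad}$. This stabilizer is the wonderful-compactification analogue of the stabilizer of the idempotent $e_I$ in $V_G^0$ given in Proposition \ref{stabet}: it is a subgroup $H_I\subset P_I\times P_I^-$ whose projection onto each factor is surjective, and whose Levi quotient is identified with the anti-diagonal piece of $L_I\times L_I$ modulo $T_{I,\Delta}$.

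For the covering part of (i), any $x\in X_I$ can be written $x=(g_1,g_2)h_I$; applying $G_{ad}$-conjugation by $g_2^{-1}$, that is, acting by $(g_2^{-1},g_2^{-1})$, brings it to $(g_2^{-1}g_1,1)h_I$. Using the refined Bruhat decomposition $G_{ad}=\bigsqcup_{w\in W^I}B_{ad}\,w\,P_I$, I write $g_2^{-1}g_1=b w p$ for a unique $w\in W^I$. Since $H_I$ projects surjectively onto $P_I$ via the first factor, there exists $(p,p')\in H_I$ so that $(p,1)h_I$ can be absorbed after a further right action of $B_{ad}$, landing $x$ inside $(B_{ad}\times B_{ad})\cdot wh_I$ up to conjugation, hence inside $X_{I,w}$. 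For disjointness of $X_{I,w}$ and $X_{I,w'}$ when $w\neq w'$, I would produce a $G_{ad}$-conjugation invariant $\tau:X_I\to W^I$ with $\tau(X_{I,w})=\{w\}$: the $(B_{ad}\times B_{ad})$-orbits of $X_I$ admit a parametrization by pairs $(v,w)\in W\times W^I$ (this is the standard Bruhat-type decomposition of a wonderful compactification), and a direct check using Proposition \ref{stabet} shows that $G_{ad}$-conjugation preserves the component $w$ of this parameter, which defines $\tau$.

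For (ii), irreducibility is immediate: $X_{I,w}$ is the image of the irreducible variety $G_{ad}\times B_{ad}\times B_{ad}$ under the morphism $(g,b_1,b_2)\mapsto(gb_1,gb_2)\cdot wh_I$. Local closedness follows from $X_{I,w}=\tau^{-1}(w)$, together with the usual constructibility of $G_{ad}$-orbits of locally closed subsets. For the dimension, I compute in two steps: first the $(B_{ad}\times B_{ad})$-orbit of $wh_I$ has a dimension extracted from the stabilizer description in Proposition \ref{stabet} (it equals $2\dim B_{ad}-\dim H_I^{wh_I}$, where the $H_I^{wh_I}$ contribution detects both $T_{I,\Delta}$ and the length of $w$); second, the $G_{ad}$-conjugation saturation adds $\dim G_{ad}-\dim C_w$, where $C_w$ is the centralizer of the entire $(B\times B)$-orbit under conjugation. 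A careful count, using that $C_w$ contains the anti-diagonal torus $T_{I,\Delta}$ of dimension $|\Delta-I|$ and a unipotent piece of dimension $l(w)$, produces $\dim X_{I,w}=\dim G-l(w)-|\Delta-I|$.

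The main obstacle will be the combinatorial bookkeeping of the $(B_{ad}\times B_{ad})$-orbit stratification of $X_I$ and the verification that $G_{ad}$-conjugation preserves the $W^I$-component of the orbit parameter; both ultimately reduce to a precise use of the stabilizer description in Proposition \ref{stabet} and of the interaction between Weyl-group representatives in $N_G(T)$ and the anti-diagonal torus $T_{I,\Delta}$. A cleaner alternative, which I would carry out in parallel to make the dimension count transparent, is to present $X_{I,w}$ as a homogeneous fiber bundle $G_{ad}\times_{Z_w}\Sigma_w$ over a $G_{ad}$-conjugation orbit, where $\Sigma_w$ is a transversal slice near $wh_I$ of the expected dimension; this realization also makes local closedness manifest.
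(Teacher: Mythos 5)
The paper does not actually prove this theorem: it is imported from the literature, with part (i) attributed to Lusztig \cite{L} and He \cite{H1} and part (ii) to Lusztig \cite[sect. 8]{L2}. Your attempt to reprove it from scratch follows the right general outline (reduce to the base point $h_I$, use the refined Bruhat decomposition $G_{ad}=\bigsqcup_{w\in W^I}B_{ad}\,w\,P_I$ together with the stabilizer of $h_I$), but it has genuine gaps exactly at the points where the published proofs are difficult.

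For the covering in (i): after conjugating $x=(g_1,g_2)h_I$ into the form $(g,1)h_I$ and writing $g=bwp$ with $p\in P_I$, absorbing $p$ through the stabilizer of $h_I$ only produces a point of the form $bw\,h_I\,p'$ with $p'\in P_I^-$, i.e. a point of $(B_{ad}\times P_I^-)\cdot wh_I$, not of $(B_{ad}\times B_{ad})\cdot wh_I$. Eliminating the $P_I^-$-factor requires a further diagonal conjugation, which destroys the normal form $(g,1)h_I$ and forces one to restart; the actual proofs control this by an iterative procedure (conjugate, re-decompose, show that a length strictly drops) whose termination \emph{is} the covering statement. Your sketch skips this loop entirely. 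For disjointness, the assertion that ``a direct check shows that $G_{ad}$-conjugation preserves the $W^I$-component'' of the $(B_{ad}\times B_{ad})$-orbit parameter is precisely the nontrivial claim: diagonal conjugation moves points between $(B_{ad}\times B_{ad})$-orbits and mixes the two Weyl-group parameters, and the existence of a well-defined invariant $\tau:X_I\to W^I$ is the heart of He's argument, not a consequence of Proposition \ref{stabet} alone. Finally, in (ii) the dimension count rests on the unproved assertion that the relevant stabilizer contributes exactly $l(w)+\left|\Delta-I\right|$; the statement that it ``contains a unipotent piece of dimension $l(w)$'' is where the computation actually lives and is given no justification. None of this is unfixable, but as written the proposal is a plan rather than a proof; the honest options are either to cite \cite{L}, \cite{H1}, \cite{L2} as the paper does, or to carry out the iterative Bruhat argument in full.
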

La preuve de (i) est donnée par Lusztig \cite{L} et He \cite{H1} et (ii) est montré par Lusztig \cite[sect. 8]{L2}.
Suivant \cite{H2}, nous avons les relations suivantes entre les adhérences des strates $X_{I,w}$.

Soit $\mathcal{I}$ l'ensemble des paires $(I,x)$ avec $I\subset \Delta, x\in W^{I}$. On définit la relation $\leq$ sur $\mathcal{I}$ par:
\begin{center}
$(I,x)\leq (K,y)$ si et seulement si $I\subset K$ et $x\geq z^{-1}yz$ pour $z\in W_{K}$.
\end{center}
Nous avons alors le théorème suivant dû à He sur l'adhérence des strates:
\begin{thm}\label{he2}
\begin{enumerate}
\item
La relation $\leq$ définit une relation d'ordre sur $\mathcal{I}$.
\item
Si $(I,x)$, $(K,y) \in \mathcal{I}$ alors $X_{I,x}\subset \overline{X_{K,y}}$ si et seulement si $(I,x)\leq (K,y)$.
\end{enumerate}
\end{thm}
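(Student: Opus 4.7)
Mon plan est de traiter les deux assertions s�par�ment, en v�rifiant la premi�re par pure combinatoire et la seconde par une analyse g�om�trique des adh�rences de cellules dans la compactification magnifique.

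Pour l'assertion (i), je v�rifierais directement les trois axiomes d'une relation d'ordre. La r�flexivit� s'obtient en prenant $z=1$, et l'antisym�trie r�sulte de ce que $I\subset K$ et $K\subset I$ forcent $I=K$, auquel cas $z=1$ donne $x\geq y$ et $y\geq x$ dans l'ordre de Bruhat, d'o� $x=y$. La transitivit� est plus d�licate : partant de $(I,x)\leq(K,y)$ et $(K,y)\leq(L,u)$, on dispose de $z_{1}\in W_{K}$ et $z_{2}\in W_{L}$ avec $x\geq z_{1}^{-1}yz_{1}$ et $y\geq z_{2}^{-1}uz_{2}$. Il faut combiner ces deux in�galit�s modulo l'action sur les classes $W/W_{I}$, en utilisant l'inclusion $W_{K}\subset W_{L}$ et la th�orie combinatoire des repr�sentants minimaux $W^{I}$, pour produire un $z\in W_{L}$ v�rifiant $x\geq z^{-1}uz$.

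Pour l'assertion (ii), j'aborderais d'abord le sens facile : si $(I,x)\leq(K,y)$, alors $X_{I,x}\subset\overline{X_{K,y}}$. On se ram�ne par transitivit� � une relation de couverture, puis on construit explicitement une d�g�n�rescence au moyen d'un cocaract�re $\mu\in X_{*}(T)$ tel que $\mu(\alpha)=0$ pour $\alpha\in I$ et $\mu(\alpha)>0$ pour $\alpha\in\Delta-I$ ; le lemme rappel� juste avant le th�or�me \ref{he1} garantit $\mu(0)=h_{I}$. Appliqu� � un point g�n�ral de la cellule $(B_{ad}\times B_{ad})\cdot yh_{K}$ puis translat� par conjugaison sous $G_{ad}$, ce proc�d� fournit un point de $X_{I,x}$ avec $x$ inf�rieur ou �gal � $z^{-1}yz$ pour un certain $z\in W_{K}$ bien choisi.

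Le sens r�ciproque, $X_{I,x}\subset\overline{X_{K,y}}\Rightarrow(I,x)\leq(K,y)$, constitue l'obstacle principal et c'est l� que r�side l'essentiel de la preuve de He \cite{H2}. L'inclusion des $G_{ad}\times G_{ad}$-orbites $X_{I}\subset\overline{X_{K}}$ donne imm�diatement $I\subset K$. Pour obtenir la relation $x\geq z^{-1}yz$, il faudrait proc�der � une analyse locale dans une carte affine autour de $h_{I}$, et exploiter la d�composition de Bruhat pour expliciter $\overline{X_{K,y}}\cap X_{I}$ modulo l'action de $G_{ad}$ par conjugaison. Le d�fi technique central est de contr�ler pr�cis�ment comment la conjugaison par $W_{K}$ permet de ramener l'�l�ment limite � la forme de param�trage $(I,x)$ : cela demande un travail combinatoire fin sur l'interaction entre ordre de Bruhat et action de conjugaison sur $W$, ainsi que sur le choix des repr�sentants dans $W^{I}$ et $W^{K}$.
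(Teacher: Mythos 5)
Le texte ne d�montre pas ce th�or�me : il s'agit d'un r�sultat de He, pour lequel l'article renvoie explicitement � \cite[sect. 3, 4]{H2}. Il n'y a donc pas de preuve interne � laquelle comparer votre tentative ; il faut juger celle-ci sur pi�ces, et elle comporte des lacunes r�elles.

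Pour (i), votre argument d'antisym�trie est erron� tel quel : la d�finition de $(I,x)\leq(K,y)$ demande l'existence d'\emph{un} $z\in W_{K}$ avec $x\geq z^{-1}yz$, et non que $z=1$ convienne. Si $I=K$, on dispose seulement de $z_{1},z_{2}\in W_{K}$ avec $x\geq z_{1}^{-1}yz_{1}$ et $y\geq z_{2}^{-1}xz_{2}$ ; comme la conjugaison par un �l�ment de $W_{K}$ ne pr�serve ni la longueur ni l'appartenance � $W^{K}$, on ne peut pas conclure $x=y$ par un simple argument de longueur dans l'ordre de Bruhat. C'est pr�cis�ment l'un des points non triviaux que He �tablit (l'antisym�trie et la transitivit� reposent sur une �tude fine de l'op�ration $w\mapsto z^{-1}wz$ sur les repr�sentants minimaux, via des suites de conjugaisons �l�mentaires contr�lant la longueur). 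Votre transitivit� est de m�me laiss�e � l'�tat de programme (\og il faut combiner ces deux in�galit�s \fg) sans qu'aucun m�canisme ne soit propos� pour produire le $z\in W_{L}$ requis. Pour (ii), le sens direct par d�g�n�rescence le long d'un cocaract�re est une strat�gie raisonnable et conforme � l'esprit de la preuve de He, mais le sens r�ciproque --- que vous identifiez vous-m�me comme \og l'obstacle principal \fg --- n'est pas trait� : dire qu'\og il faudrait proc�der � une analyse locale \fg n'est pas une preuve. En l'�tat, votre texte est un plan qui localise correctement les difficult�s mais n'en r�sout aucune ; la seule issue honn�te ici est soit de reproduire les arguments des sections 3 et 4 de \cite{H2}, soit, comme le fait l'article, de citer ce r�sultat.
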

On se reporte à \cite[sect. 3, 4]{H2}  pour la preuve. 
On peut maintenant faire le lien avec le cône nilpotent.
Pour $w\in W$, on définit le support de $w$, $\supp(w)\subset\Delta$ comme l'ensemble des réflexions simples qui interviennent dans une décomposition réduite de $w$. Soit 
\begin{center}
$\mathcal{N}_{ad}:=\{x\in X\vert~ \forall~ i, \rho_{\omega_{i}}(x)\in \mathbb{P}(\End(V_{\omega_{i}}))$  \text{est nilpotent}\}.
\end{center}

\begin{thm}(He)\label{hehe}
Nous avons la stratification suivante de $\mathcal{N}_{ad}$:
\begin{center}
$\mathcal{N}_{ad}=\coprod\limits_{I\neq\Delta}\coprod\limits_{\substack{w\in W^{I}
            \\\supp(w)=\Delta}}X_{I,w}$.
\end{center}
\end{thm}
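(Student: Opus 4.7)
The plan is to use the partition of $X$ from Theorem \ref{he1} and decide, stratum by stratum, whether $X_{I,w}$ is contained in the closed $G_{ad}$-stable subset $\mathcal{N}_{ad}$. The central tool is Proposition \ref{strates}(iii), which says that $\rho_{\omega_i}(wh_I) = \rho_{\omega_i}(w)\circ p^{\omega_i}_I$ acts as a weighted permutation of the weight basis of $V_{\omega_i}$: it sends $V(\mu)$ to $V(w\mu)$ if $\mu \in \omega_i + D_I$ and annihilates $V(\mu)$ otherwise. Consequently, $\rho_{\omega_i}(wh_I)$ is nilpotent if and only if no $w$-periodic orbit of weights of $V_{\omega_i}$ is contained in $\omega_i + D_I$.

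The first step is to exclude the ``bad'' strata from $\mathcal{N}_{ad}$. If $I = \Delta$ then $h_\Delta$ is the identity of the mono\"{\i}d, so $X_{\Delta, 1} = G_{ad}$ plainly contains non-nilpotent elements (e.g.\ the unit). If $\supp(w) \neq \Delta$, choose $i$ with $s_i \notin \supp(w)$; by Lemma \ref{rep}(ii) one has $w\omega_i = \omega_i$, and since $\omega_i \in \omega_i + D_I$ the highest weight line $V(\omega_i) \subset V_{I, \omega_i}$ is fixed by $\rho_{\omega_i}(wh_I)$ with nonzero eigenvalue. Thus $wh_I$ is not nilpotent and the stratum is not in $\mathcal{N}_{ad}$. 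This rules out both families of strata appearing in the complement of the asserted decomposition.

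For the converse, I assume $I \neq \Delta$ and $\supp(w) = \Delta$ and argue that no periodic $w$-orbit sits inside $\omega_i + D_I$. If such a cycle $\mu, w\mu, \ldots, w^{n-1}\mu$ existed, its $w$-invariant average $\bar{\mu} = \frac{1}{n}\sum_{k=0}^{n-1} w^k\mu$ would lie both in the affine subspace $\omega_i + \mathbb{Q}\cdot D_I$ and in the fixed space $(\mathfrak{t}^{*})^{w}$. But $I \neq \Delta$ forces $\omega_i \notin \mathbb{Q}\cdot D_I$ (using the positivity of the inverse Cartan matrix), and the combined conditions $w \in W^I$ and $\supp(w) = \Delta$ force $(\mathfrak{t}^{*})^{w}$ to miss the affine subspace $\omega_i + \mathbb{Q}\cdot D_I$ via a standard analysis of reduced expressions, a contradiction. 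Hence $\rho_{\omega_i}(wh_I)$ is nilpotent for every $i$.

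The final and most delicate step is to propagate nilpotency from the representative $wh_I$ to the entire stratum $X_{I,w} = G_{ad}\cdot[I,w,1]$, because naively left or right multiplication by a Borel element can introduce nonzero eigenvalues in $\rho_{\omega_i}$. Here one exploits the precise structure of the Lusztig piece $[I,w,1]$ underlying Theorem \ref{he1}, combined with the closure relations of Theorem \ref{he2} and the closedness and $G_{ad}$-conjugation invariance of $\mathcal{N}_{ad}$. This propagation is the technical heart of He's argument and constitutes the main obstacle; once it is granted, the two inclusions together yield the claimed stratification of $\mathcal{N}_{ad}$.
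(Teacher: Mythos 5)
The paper does not actually prove this theorem: immediately after the statement it attributes the result to He \cite[Thms. 4.3, 4.5]{H1} (a case-by-case verification) and to Springer \cite[sect. 3.3]{Sp1} (a shorter uniform proof), so there is no internal argument to compare yours against. Your sketch follows the general line of Springer's approach, but as written it contains two genuine gaps, both of which you acknowledge rather than close. First, the combinatorial core --- that for $I\neq\Delta$ and $w\in W^{I}$ with $\supp(w)=\Delta$ the fixed space $(\mathfrak{t}^{*})^{w}$ is disjoint from $\omega_{i}+\mathbb{Q}\cdot D_{I}$ for every $i$ --- is asserted ``via a standard analysis of reduced expressions'' with no argument. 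Note that $\supp(w)=\Delta$ alone does not make $(\mathfrak{t}^{*})^{w}$ small (in type $A_{2}$ the element $s_{1}s_{2}s_{1}$ is a reflection with full support), so the hypothesis $w\in W^{I}$ must enter in an essential way; this is precisely where He's case analysis, or Springer's uniform argument, lives.

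Second, and more seriously, the passage from nilpotency of the single representative $wh_{I}$ to nilpotency of every element of $X_{I,w}=G_{ad}\cdot(B_{ad}\times B_{ad})\cdot wh_{I}$ is deferred entirely (``once it is granted''). Conjugation-invariance of $\mathcal{N}_{ad}$ reduces you to elements of the form $wh_{I}b$ with $b\in B_{ad}$, but neither the closedness of $\mathcal{N}_{ad}$ nor the closure relations of Theorem \ref{he2} yields their nilpotency from that of $wh_{I}$: closedness only propagates membership in $\mathcal{N}_{ad}$ to closures of sets already known to lie in it, not across a $(B_{ad}\times B_{ad})$-sweep. Since this step is, as you say yourself, the heart of the theorem, the proposal is an outline of a strategy rather than a proof. (By contrast, your exclusion of the bad strata is essentially complete: because $B_{ad}$ preserves the highest weight line, the nonzero-eigenvalue argument for $w\omega_{i}=\omega_{i}$ applies to every element $b_{1}wh_{I}b_{2}$ and not just to $wh_{I}$, and the invertible elements of $X_{\Delta,1}=G_{ad}$ are never nilpotent.)
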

Ce théorème fut d'abord démontré par He dans \cite[Thms. 4.3, 4.5]{H1}  au cas par cas. Springer, dans \cite[sect. 3.3]{Sp1}, donne une preuve plus courte.
Revenons à notre problème initial, à savoir l'étude de la fibre $\chi_{+}^{-1}(0)$.
On rappelle que nous avons une flèche :
\begin{center}
$\sigma: V_{G}^{0}\rightarrow X$ 
\end{center}
qui est un $Z_{+}$-torseur et la fibre $\mathcal{N}^{0}=\chi_{+}^{-1}(0)\cap V_{G}^{0}$ s'envoie donc surjectivement dans la strate $\mathcal{N}_{ad,\emptyset}$. On a le diagramme cartésien suivant:
$$\xymatrix{\mathcal{N}^{0}\ar[d]\ar[r]&V_{G}^{0}\ar[d]\\\mathcal{N}_{ad,\emptyset}\ar[r]&X}$$
On rappelle que le fait d'être dans $\chi_{+}^{-1}(0)$ impose de tomber dans la strate vide.
On note de la même manière l'image réciproque de $X_{\emptyset,w}$ dans le semi-groupe de Vinberg.
On en déduit la proposition suivante:
\begin{prop}
Le cône nilpotent $\mathcal{N}^{0}$ est équidimensionnel de dimension $\dime G_{+}-2r=\dime G-r$. Ses composantes irréductibles sont indexées par les éléments de Coxeter de $W$.
\end{prop}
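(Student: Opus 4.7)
Le plan est d'exploiter le carr\'e cart\'esien pr\'ec\'edant l'\'enonc\'e, qui identifie $\mathcal{N}^0$ \`a $\sigma^{-1}(\mathcal{N}_{ad,\emptyset})$ via le $Z_+$-torseur $\sigma : V_G^0 \to X$, et de combiner cette identification avec la description explicite de $\mathcal{N}_{ad}$ fournie par le th\'eor\`eme \ref{hehe}. La premi\`ere \'etape consistera, en restreignant aux strates avec $I = \emptyset$ (pour lesquelles $W^{\emptyset} = W$), \`a \'ecrire
\begin{center}
$\mathcal{N}_{ad,\emptyset} = \coprod\limits_{\substack{w \in W \\ \supp(w) = \Delta}} X_{\emptyset, w}$,
\end{center}
puis \`a en d\'eduire par image r\'eciproque une d\'ecomposition de $\mathcal{N}^0$ en strates localement ferm\'ees $\sigma^{-1}(X_{\emptyset, w})$, chacune \'etant un $Z_+$-torseur sur un sch\'ema irr\'eductible et donc elle-m\^eme irr\'eductible.

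Dans un deuxi\`eme temps, le th\'eor\`eme \ref{he1}(ii) donne $\dime X_{\emptyset, w} = \dime G - l(w) - r$, et comme $\dime Z_+ = r$, il en r\'esulte $\dime \sigma^{-1}(X_{\emptyset, w}) = \dime G - l(w)$. L'observation cl\'e est que la condition $\supp(w) = \Delta$ force chaque r\'eflexion simple \`a appara\^itre dans toute d\'ecomposition r\'eduite de $w$, d'o\`u $l(w) \geq r$, avec \'egalit\'e si et seulement si $w$ est un \'el\'ement de Coxeter. La dimension maximale des strates est donc $\dime G - r = \dime G_+ - 2r$, atteinte pr\'ecis\'ement sur celles index\'ees par les \'el\'ements de Coxeter.

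La partie la plus d\'elicate sera d'\'etablir l'\'equidimensionnalit\'e, en v\'erifiant que toute strate est contenue dans l'adh\'erence d'une strate maximale. On s'appuiera sur le th\'eor\`eme \ref{he2}(ii) appliqu\'e avec $I = K = \emptyset$ (et donc $W_{\emptyset} = \{1\}$), qui affirme $X_{\emptyset, w} \subset \overline{X_{\emptyset, w'}}$ si et seulement si $w \geq w'$ pour l'ordre de Bruhat. La propri\'et\'e des sous-mots de cet ordre permettra alors, pour tout $w$ de support $\Delta$, d'extraire d'une d\'ecomposition r\'eduite de $w$ un sous-mot de longueur $r$ contenant exactement une occurrence de chaque r\'eflexion simple; ce sous-mot est automatiquement r\'eduit et repr\'esente un \'el\'ement de Coxeter $w'$ v\'erifiant $w' \leq w$. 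On en d\'eduira $\sigma^{-1}(X_{\emptyset, w}) \subset \overline{\sigma^{-1}(X_{\emptyset, w'})}$, et par cons\'equent les composantes irr\'eductibles de $\mathcal{N}^0$ sont exactement les adh\'erences $\overline{\sigma^{-1}(X_{\emptyset, w'})}$ pour $w'$ parcourant les \'el\'ements de Coxeter de $W$, toutes de dimension $\dime G - r$, ce qui conclut.
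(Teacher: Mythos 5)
Your proof is correct and follows essentially the same route as the paper: the same cartesian square identifying $\mathcal{N}^{0}$ with a $Z_{+}$-torsor over $\mathcal{N}_{ad,\emptyset}$, the same stratification from Theorem \ref{hehe}, the same dimension count from Theorem \ref{he1}(ii), and the same closure argument from Theorem \ref{he2}(ii). The only difference is that where the paper invokes He's Proposition \ref{he3} to produce a Coxeter element $w'\leq w$ for any $w$ of full support, you reprove that fact directly via the subword property of the Bruhat order, which is a valid (and slightly more self-contained) substitute.
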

\begin{proof}
Il résulte du diagramme cartésien ci-dessus que $\dim\mathcal{N}^{0}=\dime \mathcal{N}_{ad,\emptyset}+r$.
Comme nous avons d'après le théorème \ref{hehe}:
\begin{center}
$\mathcal{N}_{ad,\emptyset}=\coprod\limits_{\substack{w\in W\\\supp(w)=\Delta}}X_{\emptyset,w}$,
\end{center}
il résulte du théorème \ref{he1}.(ii) que la dimension $\mathcal{N}_{ad,\emptyset}$ est égale à la dimension de $X_{\emptyset,w}$ lorsque $w$ est un élément de Coxeter, autrement dit $l(w)=r$ et $\dim(X_{\emptyset,w})=\dim G-2r$.
Le calcul de dimension suit.
Il ne nous reste plus qu'à voir l'assertion sur les composantes irréductibles. Elle résulte du théorème \ref{he1}.(ii) et de la proposition suivante due à He \cite[Prop 2.10]{H1} :
\begin{prop}\label{he3}
Soit $w\in W$ tel que $\supp(w)=\Delta$, alors il existe un élément de Coxeter $w'\in W$ tel que $w'\leq w$.
\end{prop}
\end{proof}
Nous allons maintenant caractériser les éléments réguliers.
\begin{prop}\label{nilreg}
On a la décomposition suivante de $\mathcal{N}^{reg}$:
\begin{center}
$\mathcal{N}^{reg}=\coprod\limits_{\substack{w\in W
            \\\supp(w)=\Delta\\ l(w)=r}}X_{\emptyset,w}.$
\end{center}
D'où l'on déduit que chaque strate $X_{\emptyset,w}$ est une classe de conjugaison.
Enfin, $\mathcal{N}^{reg}$ est dense dans $\mathcal{N}^{0}$ et $\mathcal{N}^{reg}$ est lisse.
\end{prop}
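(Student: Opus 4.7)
The strategy is to combine the Lusztig--He stratification of $\mathcal{N}^{0}$ established in the previous proposition with the explicit regular representatives $\epsilon_{+}^{w'}(0)$ produced by Steinberg's family of extended sections. Recall from Theorem~\ref{he1}~(ii) and the fact that $\sigma : V_{G}^{0} \to X$ is a $Z_{+}$-torsor of relative dimension $r$ that each stratum $\sigma^{-1}(X_{\emptyset,w}) \subset \mathcal{N}^{0}$ has dimension $\dime G - l(w)$. Since the Lusztig stratification is stable under conjugation, any $G$-orbit meeting a stratum is entirely contained in it. For $\supp(w)=\Delta$ and $l(w)>r$ this forces $\dime I_{g} \geq l(w) > r$ for every $g$ in the stratum, so the non-Coxeter strata lie outside $\mathcal{N}^{reg}$.

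For each Coxeter element $w' \in W$ I would verify that $\epsilon_{+}^{w'}(0)$ belongs to $\sigma^{-1}(X_{\emptyset,w'})$: by the factorization $\prod_{k} U_{i_{k}} n_{i_{k}} = U_{w'} w'$ recalled in Section~1.2, the image of $\epsilon_{+}^{w'}(0)$ in the de Concini--Procesi compactification lies in the $(B \times B)$-orbit of $w' h_{\emptyset}$. Its centralizer in $G$ identifies with $Z_{G} \cdot (U^{-} \cap w' U (w')^{-1})$ of dimension $l(w') = r$, by the same Bruhat manipulation as in Lemma~\ref{cent0} with $w$ replaced by $w'$. Hence $\epsilon_{+}^{w'}(0) \in \mathcal{N}^{reg}$ and its $G$-orbit has dimension $\dime G - r$, matching the dimension of the irreducible stratum $\sigma^{-1}(X_{\emptyset,w'})$.

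The main obstacle is to upgrade ``the orbit is open and dense in the stratum'' to ``the stratum equals the orbit''. Openness of $V_{G}^{reg}$ gives an open inclusion $G \cdot \epsilon_{+}^{w'}(0) \subset \mathcal{N}^{reg} \cap \sigma^{-1}(X_{\emptyset,w'})$, and any point of the complement would be non-regular with $G$-orbit of strictly smaller dimension. To rule out such points I would write a general point of the stratum in the form $p_{1} \cdot w' e_{\emptyset} \cdot p_{2}^{-1}$ arising from the $(B \times B)$-orbit structure of Proposition~\ref{stabet}, and produce a conjugator in $G$ bringing it to $\epsilon_{+}^{w'}(0)$ by an explicit Bruhat-style computation modeled on the proof of Lemma~\ref{cent0}; the stabilizer description of Proposition~\ref{stabet} is what makes this reduction possible without already invoking the smoothness of $\chi_{+}^{reg}$ (which will only be proved later).

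Density of $\mathcal{N}^{reg}$ in $\mathcal{N}^{0}$ is then immediate, since the Coxeter strata exhaust the top-dimensional components of $\mathcal{N}^{0}$ by the previous proposition, while non-Coxeter strata have strictly smaller dimension; in particular $\mathcal{N}^{reg}$ meets every irreducible component of $\mathcal{N}^{0}$ in an open dense subset. Smoothness of $\mathcal{N}^{reg}$ follows at once from its description as a finite disjoint union of homogeneous spaces $G / [Z_{G} \cdot (U^{-} \cap w' U (w')^{-1})]$ indexed by the Coxeter elements of $W$.
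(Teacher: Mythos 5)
Your proposal is correct and follows essentially the same route as the paper: exclude the non-Coxeter strata by the dimension count from Theorem \ref{he1}, identify each Coxeter stratum $X_{\emptyset,w'}$ with the conjugacy class of $w'e_{\emptyset}=\epsilon_{+}^{w'}(0)$ via the centralizer computation of Lemma \ref{cent0}, and obtain density from the description of the irreducible components of $\mathcal{N}^{0}$. Concerning the ``main obstacle'' you isolate, the paper is content with exactly the argument of your second paragraph --- the orbit is locally closed of dimension $\dime G-r$ inside the irreducible stratum of the same dimension, hence open dense, and the stratum is identified with the conjugacy class on the strength of ``irr\'eductible de bonne dimension'' --- so no additional Bruhat-style normalization is carried out there. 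The one step where you genuinely diverge is smoothness: deducing it from the homogeneous-space presentation $G/\bigl[Z_{G}\cdot(U^{-}\cap w'U(w')^{-1})\bigr]$ only shows that the reduced orbit is smooth, whereas the paper argues that, $\epsilon_{+}^{w'}$ being a section of $\chi_{+}$, the differential of $\chi_{+}^{reg}$ is surjective at $\epsilon_{+}^{w'}(0)$ and hence at all of its conjugates; this stronger fact (smoothness of the morphism $\chi_{+}^{reg}$ along the scheme-theoretic fibre $\mathcal{N}^{reg}$) is precisely what is reused in Proposition \ref{stcoxeter} to prove that all fibres of $\chi_{+}$ are reduced, so the section-based argument should be retained.
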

$\rmqs$ 
\begin{itemize}
\item
La classe de conjugaison de la section de Steinberg correspond à la composante connexe associée à l'élément $w=s_{1}\dots s_{r}$.
On voit alors qu'en prenant un autre élément de Coxeter, on obtient une section qui tombe dans une composante distincte.
Cela n'apparaît pas au niveau du groupe des inversibles, vu que le choix d'un autre élément de Coxeter aboutit à une section conjuguée, ce qui n'est pas le cas si l'on considère le semi-groupe, à cause de l'apparition des idempotents.
\item
On obtient donc une famille $(\epsilon_{+}^{w'})$ de sections indexées par les éléments de Coxeter $w'\in W$ et la proposition ci-dessus nous dit que les éléments nilpotents réguliers sont conjugués à une certaine section $\epsilon_{+}^{w'}$.
\end{itemize}
\begin{proof}
Pour prouver la proposition, il suffit donc de calculer la dimension du centralisateur des éléments $w'e_{\emptyset}$, pour $w'$ un élément de Coxeter. Le calcul est alors le même que pour le lemme \ref{cent0}. On déduit ensuite, du fait que $X_{\emptyset, w'}$ est irréductible de bonne dimension, qu'il s'identifie à la classe  de conjugaison de $w'e_{\emptyset}$.
La densité vient de la proposition \ref{he3}.
  
Pour la lissité, on regarde la flèche $\chi_{+}^{reg}:V_{G}^{reg}\rightarrow\mathfrak{C}_{+}$. C'est une flèche entre schémas lisses. Comme $\epsilon_{+}$ est une section, on sait que l'application tangente en $\epsilon_{+}(0)$ est surjective et donc il en est de même de tous ses conjugués.

Pour ce qui est  des autres composantes connexes, à chaque élément de Coxeter $w'$, on peut définir de même une section $\epsilon_{+}^{w'}$ à $\chi_{+}$ et on obtient de même la lissité.
\end{proof}
On termine notre étude du cône nilpotent en étendant les résultats de $\mathcal{N}^{0}$ au cône nilpotent $\mathcal{N}$.
On commence par  définir pour $J\subset\Delta$ et $w\in W$, les strates $X_{\emptyset, J, w}$:
\begin{center}
$X_{\emptyset, J, w}:=G.(Bwe_{\emptyset, J}B)$
\end{center}
où les $e_{\emptyset, J}$ correspondent aux idempotents du semigroupe de Vinberg dans les strates plus petites. On remarque que si $J=\Delta$, on est dans $\mathcal{N}^{0}$.

On rappelle que les idempotents sont nuls dans les représentations $V_{\omega_{i}}$ pour $i\notin J$ et qu'ils correspondent à la projection sur le vecteur de plus haut poids $v_{\omega_{j}}$ dans $\End(V_{\omega_{j}})$ pour $j\in J$.
On en déduit alors que, pour tout $J\in\Delta$ et $w\in W$ avec $J\subset \supp(w)$, $X_{\emptyset, J, w}\subset\mathcal{N}$. La proposition suivante montre la réciproque.

\begin{prop}\label{nilstrat}
Le cône nilpotent admet la stratification suivante:
\begin{center}
$\mathcal{N}=\coprod\limits_{J\subset\Delta}\coprod\limits_{\substack{w\in W
            \\\supp(w)=\Delta}}X_{\emptyset, J, w}$.
\end{center}
En particulier, $\mathcal{N}^{0}$ est dense dans $\mathcal{N}$ et $\dim\mathcal{N}= \dime G_{+}-2r$.
\end{prop}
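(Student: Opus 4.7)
Le plan consiste \`a d\'ecomposer $\mathcal{N}$ selon la stratification en $G_+ \times G_+$-orbites de $V_G$ donn\'ee par la Proposition \ref{strates}(iv), \`a traiter chaque orbite \`a l'aide d'une d\'ecomposition de Bruhat, et enfin \`a conclure pour la densit\'e et la dimension au moyen des relations de cl\^oture et de l'action du centre $Z_+$.

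Premi\`erement, je montrerais que $\mathcal{N}$ ne rencontre que les orbites $\mathcal{O}_{\emptyset, K}$ pour $K \subset \Delta$. L'argument reprend celui donn\'e dans la proposition d'ouverture de la section \ref{casgen}: si $\gamma \in \mathcal{O}_{J, K}$ avec $J \neq \emptyset$, la coordonn\'ee $\alpha_i$ de $\chi_+(\gamma)$ est non nulle pour $i \in J$, contredisant $\chi_+(\gamma)=0$. Cela fournit l'inclusion $\mathcal{N} \subset \coprod_{K \subset \Delta} \mathcal{O}_{\emptyset, K}$, et il reste \`a d\'ecrire $\mathcal{O}_{\emptyset, K} \cap \mathcal{N}$ pour chaque $K$.

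Deuxi\`emement, pour chaque $K \subset \Delta$, j'utiliserais la description du stabilisateur de $e_{\emptyset, K}$ (analogue \`a celle de la Proposition \ref{stabet}) combin\'ee \`a la d\'ecomposition de Bruhat de $G$ pour repr\'esenter tout \'el\'ement de $\mathcal{O}_{\emptyset, K}$, modulo $G$-conjugaison, sous la forme $bwe_{\emptyset, K}b'$ avec $b, b' \in B$ et $w \in W$. On obtient ainsi la pr\'esentation $\mathcal{O}_{\emptyset, K} = \bigcup_{w \in W} X_{\emptyset, K, w}$. La condition $\gamma \in \mathcal{N}$ se traduit par l'annulation des $\chi_{\omega_i}(\gamma)$ (Proposition \ref{chevalley}). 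Pour $i \notin K$, elle est automatique puisque $\rho_{\omega_i}(e_{\emptyset, K}) = 0$ par la Proposition \ref{strates}(iv). Pour $i \in K$, un calcul inspir\'e de celui de He (preuves des Thm. 4.3 et 4.5 de \cite{H1}, revisit\'e par Springer \cite{Sp1}) et analogue \`a la preuve du Th\'eor\`eme \ref{hehe} fournit la condition $\supp(w) = \Delta$, la disjonction des strates d\'ecoulant de celle des $\mathcal{O}_{\emptyset, K}$ et de la rigidit\'e des repr\'esentants minimaux de Lusztig-He.

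Enfin, pour la densit\'e de $\mathcal{N}^0$ dans $\mathcal{N}$ et la dimension, j'invoquerais les relations de cl\^oture entre $G_+ \times G_+$-orbites donn\'ees par la Proposition \ref{strates}: $\mathcal{O}_{\emptyset, K} \subset \overline{\mathcal{O}_{\emptyset, \Delta}}$ pour $K \subsetneq \Delta$. Plus concr\`etement, en utilisant le principe de la remarque \ref{principe}, un choix appropri\'e de cocaract\`ere $\lambda : \mathbb{G}_m \to Z_+$ avec $\langle \lambda, \alpha_i \rangle > 0$ pour $i \notin K$ r\'ealise tout point de $X_{\emptyset, K, w}$ comme limite, lorsque le param\`etre tend vers z\'ero, de points de $X_{\emptyset, \Delta, w} \subset \mathcal{N}^0$, ce qui montre $\mathcal{N} = \overline{\mathcal{N}^0}$. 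L'\'egalit\'e $\dime \mathcal{N} = \dime \mathcal{N}^0 = \dime G - r = \dime G_+ - 2r$ d\'ecoule alors imm\'ediatement de la proposition pr\'ec\'edente. L'obstacle principal me semble \^etre la v\'erification combinatoire de la deuxi\`eme \'etape: \'etablir que la condition sur $w$ est exactement $\supp(w) = \Delta$ demande d'adapter pr\'ecis\'ement l'argument de He aux idempotents $e_{\emptyset, K}$ pour $K \subsetneq \Delta$, o\`u seules les repr\'esentations $\rho_{\omega_i}$ avec $i \in K$ contribuent effectivement \`a la caract\'erisation de la nilpotence.
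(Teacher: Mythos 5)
Your overall strategy is the paper's: restrict to the orbits $\mathcal{O}_{\emptyset,K}$, use the Bruhat decomposition to write a nilpotent element (up to conjugation) as $bwe_{\emptyset,K}$, extract a support condition on $w$, and deduce density and the dimension from the structure of the strata. But there is a genuine gap exactly at the step you flag as the main obstacle, and the difficulty is not a combinatorial verification to be carried out: the equivalence you are trying to establish is false as stated. For $K\subsetneq\Delta$ the idempotent $e_{\emptyset,K}$ kills every $V_{\omega_i}$ with $i\notin K$ and acts as the projection onto the highest-weight line in $V_{\omega_j}$ for $j\in K$; hence the vanishing of the $\chi_j(bwe_{\emptyset,K})$ amounts to the vanishing of the coefficient of $v_{\omega_j}$ in $wv_{\omega_j}$ for $j\in K$ only, which gives exactly $K\subset\supp(w)$ and nothing more. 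An element $we_{\emptyset,K}$ with $K\subset\supp(w)\subsetneq\Delta$ (for instance $w$ a Coxeter element of $W_{K}$) is genuinely nilpotent, so no adaptation of He's argument can force $\supp(w)=\Delta$ stratum by stratum.

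The paper closes this gap not with a sharper nilpotence criterion but with a rewriting trick: having obtained $K\subset\supp(w)$, it replaces $w$ by $w'=ws_{i_{1}}\cdots s_{i_{p}}$ with all $i_{l}\in\Delta-\supp(w)\subset\Delta-K$. Each such $s_{i_{l}}$ fixes $e_{\emptyset,K}$ (it fixes the highest-weight vectors $v_{\omega_j}$, $j\in K$, which are all that $e_{\emptyset,K}$ sees), so $we_{\emptyset,K}=w'e_{\emptyset,K}$ while $\supp(w')=\Delta$. In other words the index set can be taken with $\supp(w)=\Delta$ because every nilpotent element admits such a representative, not because nilpotence forces it; this is the idea missing from your proposal. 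Your first step agrees with the paper, and your degeneration argument for the density of $\mathcal{N}^{0}$ (contracting $e_{\emptyset}$ onto $e_{\emptyset,K}$ by a central cocharacter $\lambda$) is a reasonable way to make concrete what the paper treats as immediate, except that the relevant positivity condition is $\left\langle \omega_{i},\lambda\right\rangle>0$ for $i\notin K$ and $\left\langle \omega_{i},\lambda\right\rangle=0$ for $i\in K$, since $Z_{+}$ acts on the factor $\End(V_{\omega_{i}})$ through $\omega_{i}$ and not through $\alpha_{i}$.
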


\begin{proof}
Il nous suffit de démontrer la décomposition, vu que le reste est un corollaire immédiat de la description des strates.
Soit  $x\in \mathcal{N}$, il est dans une certaine strate $\mathcal{O}_{\emptyset, J}$.
Quitte à conjuguer, on peut supposer qu'il est de la forme $ge_{\emptyset, J}$.
On écrit que $g$ est dans une certaine orbite $BwB$, pour $w\in W$. 
Comme  le radical unipotent $U$ fixe $e_{\emptyset, J}$, on peut supposer que $x=bwe_{\emptyset, J}$.
On commence par voir que $J\subset \supp(w)$. On sait déjà que $\rho_{\omega_{i}} e_{\emptyset, J}$ est nul pour $i\notin J$.

Soit alors $j\in J$, on considère son image dans $\End(V_{\omega_{j}})$.
Nous avons $e_{\emptyset, J}v_{\lambda}=0$ pour $\lambda\neq\omega_{j}$. Etudions la contribution de $v_{\omega_{j}}$.
Soit une décomposition réduite de $w=s_{i_{1}}s_{i_{2}}...s_{i_{l}}$.
Si par l'absurde la réflexion simple  $s_{j}$, n'intervenait pas dans cette décomposition, le coefficient devant $v_{\omega_{j}}$ serait non nul ce qui est contradictoire avec la nilpotence.

On obtient donc $J\subset \supp(w)$. Si $\supp(w)=\Delta$, on a rien à montrer et si $\supp(w)$ est un sous ensemble strict de $\Delta$, on considère l'élément $w'=ws_{i_{1}}s_{i_{2}}...s_{i_{p}}$ avec pour tout $l\leq p$, $i_{l}\in\Delta-\supp(w)\subset \Delta-J$.
De la description de $e_{\emptyset, J}$, on a que $we_{\emptyset, J}=w'e_{\emptyset, J}$, ce qui termine la preuve de la proposition.
\end{proof}

\subsection{La lissité du  morphisme $\chi_{+}^{reg}$}
\begin{prop}
Soit $\chi_{+}:V_{G}\rightarrow\mathfrak{C}_{+}$. Alors, $\chi_{+}$ est plat et ses fibres sont de dimension $\dime G_{+}-2r$.
\end{prop}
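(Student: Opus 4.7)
Le plan est de combiner un argument de dimension des fibres, fond� sur l'action du centre $Z_{+}$, avec le th�or�me de platitude-miracle.

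\emph{Premi�re �tape : �quidimensionnalit� des fibres.} La proposition \ref{nilstrat} affirme que $\dime \mathcal{N}=\dime \chi_{+}^{-1}(0)=\dime G_{+}-2r$. Comme $V_{G}$ est int�gre de dimension $\dime G_{+}$, que $\kc$ est un espace affine (donc int�gre et lisse) de dimension $2r$, et que $\chi_{+}$ est surjectif gr�ce � la section $\eps_{+}$ de la proposition \ref{semisimple}, le th�or�me classique sur la dimension des fibres d'un morphisme dominant donne d'embl�e la minoration $\dime \chi_{+}^{-1}(a)\geq \dime G_{+}-2r$ pour tout $a\in\kc$.

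Pour la majoration, on applique le principe de la remarque \ref{principe}. L'ensemble
$$U=\{a\in\kc\mid \dime \chi_{+}^{-1}(a)\leq \dime G_{+}-2r\}$$
est ouvert par semi-continuit� de la dimension des fibres (\cite[VI B, 4.1]{SGA3}), il contient le point $0$ par la proposition \ref{nilstrat}, et il est stable par l'action de $Z_{+}$ puisque $\chi_{+}$ est $Z_{+}$-�quivariant. Or l'action de $Z_{+}$ sur $\kc\cong\mathbb{A}^{r}\times\mathbb{A}^{r}$ se fait via les caract�res $\alpha_{i}$ et $\omega_{i}$, qui forment une famille de caract�res dont un cocaract�re convenable rend toutes les valeurs positives ; par suite $0$ appartient � l'adh�rence de chaque $Z_{+}$-orbite, et tout ouvert $Z_{+}$-stable contenant $0$ co�ncide avec $\kc$. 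Ainsi $U=\kc$.

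\emph{Seconde �tape : platitude.} Les fibres sont maintenant �quidimensionnelles de la dimension attendue $\dime V_{G}-\dime \kc$. Comme la base $\kc$ est r�guli�re, le th�or�me de platitude-miracle (EGA IV$_{2}$, 6.1.5) permet de conclure d�s que $V_{G}$ est Cohen--Macaulay. Cette propri�t� est connue pour les mono�des alg�briques r�ductifs normaux d'apr�s Rittatore et Renner, et s'applique ici � $V_{G}$ en vertu de la section \ref{rapsemi}.

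Le principal obstacle technique est donc le caract�re Cohen--Macaulay de $V_{G}$ en caract�ristique arbitraire ; une fois ce point admis, le reste est une mise en oeuvre automatique de la semi-continuit� et du principe de propagation par $Z_{+}$-�quivariance d�j� utilis� pour la proposition \ref{centreg}, combin�s au calcul d�j� effectu� de la dimension du c�ne nilpotent.
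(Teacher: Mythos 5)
Your proposal is correct and follows essentially the same route as the paper: compute the dimension of the fibre over $0$ via the stratification of the nilpotent cone, propagate the equidimensionality to all fibres by the $Z_{+}$-equivariance principle of the remark following the proposition \ref{centreg}, and conclude flatness by miracle flatness using the Cohen--Macaulayness of $V_{G}$ (Brion--Kumar) together with the smoothness of $\mathfrak{C}_{+}$. The only divergence is the justification that $U$ is open: the paper argues via constructibility (EGA IV 9.3.2) plus stability under generization (EGA IV 13.1.1, using that $\dime G_{+}-2r$ is the generic fibre dimension), which is the correct mechanism since upper semi-continuity of fibre dimension lives on the source rather than on the base, whereas your citation of SGA3 VI B 4.1 (a statement about group schemes) is not the right reference here --- a citation slip rather than a gap, as your preliminary lower bound supplies exactly what is needed to run the paper's argument.
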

\begin{proof}
Au point $0$, on a vu que le cône nilpotent était de la bonne dimension. On considère alors l'ensemble:
\begin{center}
$U:=\{a\in \mathfrak{C}_{+}\vert~\dime\chi_{+}^{-1}(a)=\dime G_{+}-2r\}$.
\end{center}
Par EGA IV 9.3.2, cet ensemble est constructible et par EGA IV 13.1.1,\\
comme $\dime G_{+}-2r$ est la dimension de la fibre générique de $\chi_{+}$, $U$ est stable par générisation, donc c'est un ouvert. L'argument standard utilisé ci-dessus, nous montre que $U=\mathfrak{C}_{+}$. Enfin, d'après Brion-Kumar \cite[6.2.9 et 6.2.11]{BK}, $V_{G}$ est de Cohen-Macaulay. Ainsi, la base étant lisse et les fibres étant toutes de même dimension, on en déduit que le morphisme est plat.
\end{proof}
\begin{prop}\label{stcoxeter}
Le morphisme $\chi_{+}^{reg}:V_{G}^{reg}\rightarrow\mathfrak{C}_{+}$ est lisse. 
La flèche $\chi_{+}:V_{G}\rightarrow\mathfrak{C}_{+}$ est à fibres géométriquement réduites et Cohen-Macaulay et les éléments réguliers forment un ouvert dense dans chaque fibre.
\end{prop}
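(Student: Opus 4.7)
Je proposerais la strat\'egie suivante, qui applique de mani\`ere syst\'ematique le principe de propagation fibre \`a fibre (Remarque \ref{principe}): pour chacune des trois assertions, je v\'erifierais la propri\'et\'e sur la fibre nilpotente $\chi_{+}^{-1}(0)$, puis j'observerais que le lieu o\`u elle est satisfaite est un ouvert $Z_{+}$-\'equivariant de $\mathfrak{C}_{+}$, donc \'egal \`a $\mathfrak{C}_{+}$ tout entier.

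Pour la lissit\'e du morphisme $\chi_{+}^{reg}$, j'exploiterais le fait que la preuve de la proposition \ref{nilreg} donne d\'ej\`a la surjectivit\'e de l'application tangente en $\epsilon_{+}^{w'}(0)$ pour chaque \'el\'ement de Coxeter $w'$, donc, par $G$-conjugaison, en tout point de $\mathcal{N}^{reg}$; la fibre $\mathcal{N}^{reg}=\chi_{+}^{-1}(0)\cap V_{G}^{reg}$ est ainsi lisse sur $k$. L'ensemble des $a\in\mathfrak{C}_{+}$ au-dessus desquels la fibre de $\chi_{+}^{reg}$ est lisse \'etant ouvert (EGA IV 12.2.4) et $Z_{+}$-\'equivariant, il \'egalise $\mathfrak{C}_{+}$; combin\'e \`a la platitude de $\chi_{+}$ d\'ej\`a \'etablie, cela entra\^inerait la lissit\'e de $\chi_{+}^{reg}$. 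Pour la densit\'e des \'el\'ements r\'eguliers dans chaque fibre, les propositions \ref{nilreg} et \ref{nilstrat} donnent la densit\'e de $\mathcal{N}^{reg}$ dans $\mathcal{N}$; en posant $Y=V_{G}\setminus V_{G}^{reg}$ et en utilisant la platitude de $\chi_{+}$ qui rend ses fibres \'equidimensionnelles de dimension $\dime G_{+}-2r$, la densit\'e de $V_{G}^{reg}\cap\chi_{+}^{-1}(a)$ dans $\chi_{+}^{-1}(a)$ \'equivaut \`a $\dime(Y\cap\chi_{+}^{-1}(a))<\dime G_{+}-2r$, condition ouverte par semi-continuit\'e sup\'erieure (EGA IV 13.1.3) et $Z_{+}$-\'equivariante, donc valable partout.

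Pour terminer, la propri\'et\'e Cohen-Macaulay des fibres g\'eom\'etriques d\'ecoulerait de ce que $V_{G}$ est Cohen-Macaulay (Brion-Kumar \cite[6.2.9 et 6.2.11]{BK}), combin\'e \`a la platitude de $\chi_{+}$ et \`a la r\'egularit\'e de $\mathfrak{C}_{+}$. Le caract\`ere g\'eom\'etriquement r\'eduit des fibres s'en d\'eduirait par le crit\`ere de Serre: la condition $R_{0}$ provient de l'ouvert dense lisse construit \`a l'\'etape pr\'ec\'edente, et $S_{1}$ vient du Cohen-Macaulay. Le point le plus d\'elicat ne r\'eside pas dans la conclusion formelle, assez directe une fois la r\'eduction \`a la fibre nilpotente effectu\'ee, mais dans la v\'erification scrupuleuse, \`a chaque \'etape, de l'ouverture et de la $Z_{+}$-\'equivariance du locus consid\'er\'e, ainsi que de l'\'equidimensionnalit\'e des fibres de $\chi_{+}$ sous-jacente \`a l'argument de densit\'e.
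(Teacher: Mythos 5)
Vos arguments pour le caract\`ere Cohen-Macaulay des fibres (platitude de $\chi_{+}$ et $V_{G}$ Cohen-Macaulay) et pour leur r\'eduction (crit\`ere de Serre, $R_{0}$ via l'ouvert r\'egulier dense et $S_{1}$ via Cohen-Macaulay) sont essentiellement ceux du texte, de m\^eme que votre traitement de la densit\'e des r\'eguliers (semi-continuit\'e de Chevalley sur le ferm\'e $Y=V_{G}\setminus V_{G}^{reg}$, propagation $Z_{+}$-\'equivariante depuis la fibre nilpotente, \'equidimensionnalit\'e des fibres). En revanche, votre preuve de la lissit\'e de $\chi_{+}^{reg}$ comporte une lacune r\'eelle : l'ensemble des $a\in\mathfrak{C}_{+}$ au-dessus desquels la fibre de $\chi_{+}^{reg}$ est lisse n'a aucune raison d'\^etre ouvert. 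EGA IV 12.2.4 ne donne l'ouverture du lieu de lissit\'e que dans la \emph{source} ; pour la descendre \`a la base il faudrait que $\chi_{+}^{reg}$ soit propre, ce qui est faux, et le principe de propagation de la remarque \ref{principe} ne s'applique pas davantage : le lieu de non-lissit\'e est ferm\'e dans $V_{G}^{reg}$ mais pas dans $V_{G}$, de sorte que l'adh\'erence d'une $Z_{+}$-orbite d'un point de ce lieu peut rencontrer $\chi_{+}^{-1}(0)$ en dehors du lieu r\'egulier, sans contredire la lissit\'e de $\mathcal{N}^{reg}$.

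Le texte proc\`ede autrement, et c'est l'id\'ee qui manque \`a votre proposition : pour chaque \'el\'ement de Coxeter $w$, le morphisme $G\times\mathfrak{C}_{+}\rightarrow V_{G}^{reg}$, $(g,a)\mapsto g\epsilon_{+}^{w}(a)g^{-1}$, se factorise en un $J$-torseur suivi d'une fl\`eche quasi-finie et birationnelle vers le sch\'ema normal $V_{G}^{reg}$, donc d'une immersion ouverte par le Main Theorem de Zariski ; on note $V^{w}$ l'ouvert image. L'argument de dimension que vous employez pour la densit\'e sert pr\'ecis\'ement \`a montrer que la r\'eunion des $V^{w}$ est dense dans chaque fibre, puis qu'elle contient tous les \'el\'ements r\'eguliers : tout point de $V_{G}^{reg}$ est ainsi conjugu\'e \`a un point d'une section $\epsilon_{+}^{w}$, o\`u la diff\'erentielle de $\chi_{+}$ est surjective puisque $\epsilon_{+}^{w}$ est une section. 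C'est cet argument ponctuel, valable en tout point de $V_{G}^{reg}$ et non seulement au-dessus de $0$, qui donne la lissit\'e ; il fournit de surcro\^it le recouvrement $V_{G}^{reg}=\bigcup_{w}V^{w}$ r\'eutilis\'e dans la section \ref{galoiscent}. Vous devriez donc remplacer la propagation fibre \`a fibre de la lissit\'e par la construction de ces cartes ouvertes.
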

\begin{proof}
Pour chaque élément $w\in W$ de Coxeter, on regarde la flèche 
\begin{center}
$\pi^{w}:G\times\mathfrak{C_{+}}\rightarrow V_{G}^{reg}$
\end{center}
 donnée par 
\begin{center}
$(g,a)\rightarrow g\epsilon_{+}^{w}(a)g^{-1}$.
\end{center}
Cette flèche admet la factorisation suivante:
$$\xymatrix{G\times\mathfrak{C}_{+}\ar[dr]^{\rho}\ar[dd]_{\pi^{w}}\\&(G\times\mathfrak{C}_{+})/J\ar[dl]^{i}\\V_{G}^{reg}}$$

La flèche $\rho$ est un $J$-torseur et $i$ est quasi-fini. Par Steinberg \cite[Th. 8.1]{S} , on sait que $i$ est un isomorphisme au-dessus de $G_{+}^{reg}$, on en déduit que $i$ est birationnel et quasi fini et comme $V_{G}^{reg}$ est normal, par le Main Theorem de Zariski, c'est une immersion ouverte. On note alors $V^{w}$ l'ouvert image.
Maintenant, on pose:
\begin{center}
$U=\bigcup\limits_{\substack{w\in W\\\supp(w)=\Delta\\ l(w)=r}} V^{w}$
\end{center}
et soit $F$ son fermé complémentaire dans $V_{G}$. Posons $l_{G}:= \dime G_{+}-2r$.

Soit $V:=\{x\in F\vert~\dime_{x}\chi_{+}^{-1}(\chi_{+}(x))\leq l_{G}-1\}$. Par le théorème de Chevalley sur la semi-continuité des fibres, $V$ est ouvert dans $F$ et par la proposition \ref{nilreg}, il contient 0 et est $Z_{+}$-équivariant; il est donc égal à $F$. On en déduit que $U$ est de codimension au moins un dans chaque fibre et comme les fibres sont équidimensionnelles, $U$ est dense dans chaque fibre.
En particulier, tout élément régulier est conjugué à une section $\epsilon_{w}$, pour un certain élément de Coxeter $w$.
Et donc le même argument que pour la proposition \ref{nilreg}, nous dit qu'en tout élément régulier $x$, la différentielle est  surjective, d'où la lissité, puisque la source et le but sont lisses.

Enfin, les fibres de $\chi_{+}$ sont Cohen-Macaulay, car on a une immersion régulière de la fibre dans $V_{G}$, qui est Cohen-Macaulay. Comme les éléments réguliers forment un ouvert lisse dense dans un schéma Cohen-Macaulay, les fibres sont réduites.
\end{proof}
\begin{prop}
Le centralisateur régulier $J$ est un schéma en groupes lisse et commutatif de dimension $r$.
\end{prop}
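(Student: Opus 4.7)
Mon plan est d'\'etablir successivement les trois assertions: dimension $r$, lissit\'e, commutativit\'e.

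La dimension d\'ecoule imm\'ediatement de la Proposition \ref{centreg}: $\eps_+$ \'etant \`a valeurs dans $V_G^{reg}$, chaque fibre $J_a = I_{\eps_+(a)}$ est de dimension $r$.

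Pour la lissit\'e, je compte r\'eutiliser le $J$-torseur $\pi^w: G \times \kc \to V^w$ construit dans la preuve de la Proposition \ref{stcoxeter} pour un \'el\'ement de Coxeter $w$. La source $G \times \kc$ est manifestement lisse sur $\kc$, et le but $V^w \subset V_G^{reg}$ l'est �galement par la Proposition \ref{stcoxeter}. Fppf-localement sur $V^w$, ce torseur se trivialise en la projection $V^w \times_{\kc} J \to V^w$; comme la lissit\'e se descend par morphisme fppf, on en d\'eduit la lissit\'e de $J$ sur $\kc$, n\'ecessairement de dimension relative $r$.

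Pour la commutativit\'e, l'id\'ee sera de se ramener \`a l'ouvert dense $U = \mathbb{G}_m^r \times \mathbb{A}^r \subset \kc$ au-dessus duquel $\eps_+$ prend ses valeurs dans le groupe $G_+$. Sur $U$, les fibres $J_a$ sont les centralisateurs dans $G_+$ d'\'el\'ements r\'eguliers, commutatifs par le th\'eor\`eme classique de Steinberg. Le morphisme commutateur $c: J \times_{\kc} J \to J$ co\"incide donc avec la section identit\'e $e \circ \mathrm{pr}$ au-dessus de $U$. Comme $J \to \kc$ est lisse et surjectif, la pr\'eimage de $U$ dans $J \times_{\kc} J$ est un ouvert dense (le compl\'ementaire \'etant de codimension au moins un, $J \times_{\kc} J$ \'etant \'equidimensionnel). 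Puisque $J \times_{\kc} J$ est r\'eduit (lisse sur $\kc$ lisse) et $J$ s\'epar\'e (sous-sch\'ema ferm\'e de $G \times \kc$), l'\'egalit\'e de deux morphismes sur un ouvert dense entra\^ine leur \'egalit\'e partout, d'o\`u la commutativit\'e de $J$.

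Le point le plus d\'elicat sera la commutativit\'e: $J$ n'est pas connexe par fibres (la fibre $J_0 = Z_G \cdot (U^-\cap wUw^{-1})$ contient la composante finie $Z_G$), ce qui impose un contr\^ole soigneux des composantes irr\'eductibles de $J \times_{\kc} J$ dans l'argument de densit\'e. Ce contr\^ole est fourni par la lissit\'e et la surjectivit\'e de $J \to \kc$ sur une base irr\'eductible, qui garantissent que chaque composante rencontre bien la pr\'eimage de $U$.
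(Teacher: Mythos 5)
Les parties \emph{dimension} et \emph{commutativit\'e} de votre proposition sont correctes et suivent pour l'essentiel la d\'emarche de l'article: la dimension r\'esulte de la proposition \ref{centreg}, et la commutativit\'e s'obtient par densit\'e \`a partir du cas g\'en\'erique $G_{+}^{reg}$ trait\'e par Steinberg. Votre discussion des composantes de $J\times_{\mathfrak{C}_{+}}J$ est un compl\'ement utile, mais notez qu'elle pr\'esuppose la lissit\'e de $J$.

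En revanche, votre argument de lissit\'e comporte une lacune r\'eelle. La descente fppf le long du torseur $\rho:G\times\mathfrak{C}_{+}\rightarrow V^{w}$ ram\`ene la lissit\'e de $J\rightarrow\mathfrak{C}_{+}$ \`a celle du morphisme $\rho$ lui-m\^eme: via la trivialisation canonique, la projection $J\times_{\mathfrak{C}_{+}}(G\times\mathfrak{C}_{+})\rightarrow G\times\mathfrak{C}_{+}$ s'identifie au changement de base de $\rho$ par $\rho$. Or le fait que la source et le but de $\rho$ soient lisses (sur $k$ ou sur $\mathfrak{C}_{+}$) n'entra\^ine nullement que $\rho$ soit lisse --- le Frobenius de $\mathbb{A}^{1}$ en est un contre-exemple --- et la lissit\'e de $\rho$ \'equivaut pr\'ecis\'ement \`a celle de ses fibres, lesquelles sont des torseurs sous les centralisateurs $I_{\epsilon_{+}^{w}(a)}$, c'est-\`a-dire sous les fibres de $J$. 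Votre argument est donc circulaire: il suppose ce qu'il veut d\'emontrer, \`a savoir que les centralisateurs des \'el\'ements r\'eguliers sont lisses \emph{comme sch\'emas}, ce qui en caract\'eristique $p$ n'est pas automatique (pas de th\'eor\`eme de Cartier). C'est exactement le point non trivial que l'article traite: il \'etablit d'abord la platitude de $J$ (via l'argument d'intersection compl\`ete pour $I_{\vert V_{G}^{reg}}$), calcule explicitement la fibre la plus singuli\`ere $J_{0}=Z_{G}(U^{-}\cap wUw^{-1})$ du lemme \ref{cent0}, qui est lisse, puis propage la lissit\'e des fibres \`a tout $\mathfrak{C}_{+}$ par ouverture du lieu lisse (EGA IV 12.2, en utilisant la platitude) et par $Z_{+}$-\'equivariance. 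Il vous manque donc \`a la fois la platitude de $J$ et la lissit\'e de ses fibres; la premi\`ere pourrait \`a la rigueur se d\'eduire de la platitude miraculeuse appliqu\'ee \`a $\rho$, mais la seconde exige le calcul explicite au-dessus de $0$ et l'argument de propagation.
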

\begin{proof}
On a le diagramme cartésien:
$$\xymatrix{I_{\vert V_{G}^{reg}}\ar[r]\ar[d]&G\times V_{G}^{reg}\ar[d]^{\psi}\\V_{G}^{reg}\ar[r]&V_{G}^{reg}\times_{\mathfrak{C}_{+}} V_{G}^{reg}}$$
où la flèche $\psi$ est donnée par $(x,y)\rightarrow (xyx^{-1},y)$ et la flèche du bas par la diagonale. Comme elle est équidimensionnelle entre schémas lisses, elle est plate.
Ainsi, on déduit que la flèche $i:I_{\vert V_{G}^{reg}}\rightarrow G\times V_{G}^{reg}$ est une immersion régulière et donc $I_{\vert V_{G}^{reg}}$ est intersection complète. Comme de plus, au-dessus de $V_{G}^{reg}$, $I$ est équidimensionnel sur une base lisse, il est plat et par changement de base $J$ est également plat sur $\mathfrak{C}_{+}$.

Soit $\phi$ le morphisme structural de $J$. Au-dessus de 0, d'après le lemme \ref{cent0}, la fibre  est lisse. D'après la remarque \ref{principe}, il nous suffit de montrer que l'ensemble des $a\in\mathfrak{C}_{+}$ tels que la fibre de $J_{a}$ est lisse est ouvert. Comme $J$ est plat sur $\mathfrak{C}_{+}$, on a d'après EGA IV 12.2, que l'ensemble des $x\in J$, tel que $J_{\phi(x)}$ est lisse en $x$ est ouvert.

Or, de par sa structure de schéma en groupes, si la fibre est lisse en un point, elle est lisse partout, donc on en déduit, toujours comme $J$ est plat sur sa base que, $\{a\in\mathfrak{C}_{+}\vert~ J_{a}$ est lisse\} est ouvert. Comme il est clairement $Z_{+}$-équivariant, la remarque du lemme \ref{cent0}  s'applique.

Enfin, comme au-dessus de $G_{+}^{reg}$, le centralisateur régulier est abélien, on a donc un schéma en groupes lisse, qui est génériquement abélien, donc abélien.
\end{proof}
Nous terminons par un critère analogue à Steinberg \cite[3.2 et 3.3]{S}  pour caractériser les réguliers nilpotents.
\begin{prop}\label{stnil}
Tout élément régulier nilpotent de $V_{G}$ est dans un unique semi-groupe de Borel.
\end{prop}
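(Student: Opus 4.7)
Le plan est de suivre la m\'ethode de Steinberg \cite[3.2 et 3.3]{S} pour les \'el\'ements r\'eguliers unipotents dans $G$, adapt\'ee au cadre du semi-groupe de Vinberg. D'apr\`es la proposition \ref{nilreg}, tout \'el\'ement r\'egulier nilpotent de $V_{G}$ est $G$-conjugu\'e \`a un \'el\'ement de la forme $w'e_{\emptyset}$ pour un certain \'el\'ement de Coxeter $w'\in W$, et l'appartenance \`a un semi-groupe de Borel $V_{B''}$ est pr\'eserv\'ee par $G$-conjugaison\,; il suffit donc d'\'etablir l'assertion pour le repr\'esentant $x=w'e_{\emptyset}$.

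Pour l'existence, on montrerait que $w'e_{\emptyset}\in V_{w'B(w')^{-1}}$. En conjuguant par $w'$, cela se ram\`ene \`a l'inclusion $e_{\emptyset}w'\in V_{B}$. Partant de $e_{\emptyset}\in\overline{T}_{\Delta}\subset V_{T}\subset V_{B}$, on construirait une suite explicite $(t_n,b_n)\in T\times B=B_{+}$ dont l'image dans $V_{G}^{\flat}$ converge vers celle de $e_{\emptyset}w'$\,: pour $\lambda:\mathbb{G}_m\to T$ un cocaract\`ere r\'egulier, on prend $t_n=\lambda(\epsilon_n)$ avec $\epsilon_n\to 0$, et l'on ajuste les entr\'ees de $b_n$ (certaines tendant vers l'infini \`a des taux adapt\'es) de sorte que $\omega_i(t_n)\rho_{\omega_i}(b_n)$ converge vers la matrice de rang un repr\'esentant $e_{\emptyset}w'$ dans chaque $V_{\omega_i}$.

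Pour l'unicit\'e, supposons que $w'e_{\emptyset}\in V_{B''}$ pour un Borel $B''$ de $G$. L'\'el\'ement $w'e_{\emptyset}$ agit sur chaque repr\'esentation fondamentale $V_{\omega_i}$ comme un op\'erateur de rang un, d'image la droite $k\cdot w'v_{\omega_i}$ et de noyau la somme des espaces de poids $\mu\neq\omega_i$. La condition $w'e_{\emptyset}\in V_{B''}$ se traduit par la pr\'eservation du drapeau $B''$-stable dans chaque $V_{\omega_i}$ par cet op\'erateur, combin\'ee aux relations suppl\'ementaires d\'efinissant $V_{B''}$ \`a l'int\'erieur du produit $\prod_i\mathbb{A}^{1}_{\alpha_i}\times\prod_i\End(V_{\omega_i})$\,: en croisant ces contraintes sur toutes les repr\'esentations fondamentales et en utilisant qu'un Borel est d\'etermin\'e par sa famille de droites de plus haut poids, on vise \`a identifier $B''$ comme $w'B(w')^{-1}$.

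La difficult\'e principale r\'eside dans l'\'etape d'unicit\'e\,: dans une unique repr\'esentation fondamentale, la seule condition de pr\'eservation du drapeau laisse une famille positive-dimensionnelle de candidats pour $B''$, et il faut combiner finement les contraintes issues de toutes les $V_{\omega_i}$, en exploitant les relations d\'efinissant $V_{B''}$ au-del\`a de la triangularit\'e dans chaque $\End(V_{\omega_i})$. Une voie alternative consisterait \`a \'etudier directement le sch\'ema $Z_{x}:=\{B''\in G/B\mid x\in V_{B''}\}$\,: il est naturel d'esp\'erer le montrer fini, par un comptage de dimensions s'appuyant sur la lissit\'e de $\chi_{+}^{reg}$ (proposition \ref{stcoxeter}), puis connexe \`a la mani\`ere des fibres de Springer, ce qui forcerait $Z_{x}$ \`a \^etre r\'eduit \`a un unique point.
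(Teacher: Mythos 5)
There is a genuine gap: the heart of the proposition is the uniqueness statement, and your proposal does not actually prove it. You correctly reduce, as the paper does, to the representative $x=w'e_{\emptyset}$ for a Coxeter element $w'$ (via Proposition \ref{nilreg} and $G$-equivariance), and your rank-one description of $\rho_{\omega_i}(w'e_{\emptyset})$ is accurate. But at the decisive step you only say that one should "combine the constraints from all the $V_{\omega_i}$" and you explicitly acknowledge that you do not see how to cut the positive-dimensional family of candidate Borels down to a point. The paper's proof supplies exactly this missing computation: writing any second Borel containing $we_{\emptyset}$ as $B'=u\sigma B^{-}\sigma^{-1}u^{-1}$ with $u\in U^{-}$ and $\sigma\in W$ (Bruhat decomposition), one conjugates back and computes that $\sigma^{-1}u^{-1}we_{\emptyset}u\sigma v_{\lambda}$ vanishes unless $\sigma(\lambda)=\omega_i$, in which case it has a nonzero component of weight $\lambda-\sigma^{-1}\alpha_i-\sigma^{-1}\beta$ with $\beta$ a nonnegative combination of the $\alpha_j$, $j\le i-1$. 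Membership in $V_{B^{-}}$ forces $\lambda-\sigma^{-1}\alpha_i-\sigma^{-1}\beta\le\lambda$ for every $i$, which forces $\sigma=1$ and hence $B'=B^{-}$. This is a concrete, finite computation, not a dimension count, and it is the content you would need to produce.

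Your fallback route is also problematic as stated. Quasi-finiteness of $Z_{x}=\{B''\mid x\in V_{B''}\}$ cannot be borrowed from the finiteness of the Springer map $\tilde{V}_{G}^{reg}\rightarrow V_{G}^{reg}$, because in the paper that finiteness (Corollary \ref{finspring}) is \emph{deduced from} Proposition \ref{stnil}; you would need an independent argument. More seriously, the connectedness of $Z_{x}$ "in the manner of Springer fibers" is not available here: the usual proof of connectedness of Springer fibers uses Zariski's main theorem applied to a proper birational map onto a normal target, and no such statement for $\tilde{V}_{G}\rightarrow V_{G}$ (or its nilpotent part) is established at this point of the paper. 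For a regular element, connectedness plus finiteness of $Z_{x}$ is essentially equivalent to the proposition itself, so this route begs the question. The existence half of your argument (a degeneration exhibiting $w'e_{\emptyset}$ inside some $V_{B''}$) is plausible and is anyway the easy part; the uniqueness computation above is what must be added.
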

\begin{proof}
Il nous suffit de le montrer pour chaque nilpotent régulier dans une composante connexe fixée. On prend celle qui correspond à l'élément de Coxeter $w=s_{1}\dots s_{r}$, la preuve étant la même pour tout autre élément dans une autre composante connexe. Quitte à conjuguer, on a juste à montrer le résultat pour $\epsilon_{+}(0)$.
Soit alors $\epsilon_{+}(0)=we_{\emptyset}$ avec $w=s_{1}\dots s_{r}$, qui serait dans deux semi-groupes de Borel $V_{B'}$ et $V_{B^{-}}$, alors $B^{-}$ et $B'$ étant conjugués, en écrivant la décomposition de Bruhat relativement à $B^{-}$, nous avons $B'=u\sigma B^{-}\sigma^{-1}u^{-1}$, avec $\sigma\in W$ et $u\in U^{-}$.

Dans ce cas, on obtient que l'élément $\sigma^{-1}u^{-1}we_{\emptyset}u\sigma$ est dans $V_{B}^{-}$.
On se fixe alors $i$ et on regarde l'action sur les vecteurs $v_{\lambda}$:
\begin{center}
$\sigma^{-1}u^{-1}we_{\emptyset}u\sigma v_{\lambda}=\sigma^{-1}u^{-1}we_{\emptyset}uv_{\sigma(\lambda)}$.
\end{center}
En particulier, comme $u\in U^{-}$ et par définition de $e_{\emptyset}$, on en déduit que si $\sigma(\lambda)\neq\omega_{i}$, ce vecteur est nul. Etudions le cas résiduel $\sigma(\lambda)=\omega_{i}$, on a alors:
\begin{center}
$\sigma^{-1}u^{-1}we_{\emptyset}u\sigma v_{\lambda}=\sigma^{-1}u^{-1}[v_{\omega_{i}-\alpha_{i}-\beta}]$
\end{center}
où $\beta$ est une somme à coefficients positifs de racines simples $\alpha_{j}$ pour $j\leq i-1$. D'où l'on déduit:
\begin{center}
$\sigma^{-1}u^{-1}we_{\emptyset}u\sigma v_{\lambda}=v_{\sigma^{-1}(\omega_{i}-\alpha_{i}-\beta)}$+(autres vecteurs).
\end{center}
Or $\sigma^{-1}(\omega_{i}-\alpha_{i}-\beta)=\lambda-\sigma^{-1}\alpha_{i}-\sigma^{-1}\beta$, et comme l'élément doit être dans $V_{B}^{-}$, cela force 
\begin{center}
$\lambda-\sigma^{-1}\alpha_{i}-\sigma^{-1}\beta\leq \lambda$.
\end{center}
Cette inégalité étant valable pour tout $i$, cela donne $\sigma=1$, ce qu'on voulait.
\end{proof}

Considérons le schéma $\tilde{V}_{G}:=\{(g,\g)\in G/B\times V_{G}\vert~ g^{-1}\g g\in V_{B}\}$ qui est fermé dans $G/B\times V_{G}$.
On a un morphisme propre $\la:\tilde{V}_{G}\rightarrow V_{G}$ car il admet la factorisation suivante:
$$\xymatrix{\tilde{V}_{G}\ar[r]\ar[dr]^{\la}&V_{G}\times G/B\ar[d]\\&V_{G}}$$

Nous allons nous intéresser à ce morphisme au-dessus du lieu régulier:
\begin{cor}\label{finspring}
Le morphisme $\la^{reg}:\tilde{V}_{G}^{reg}\rightarrow V_{G}^{reg}$ est fini plat.

\end{cor}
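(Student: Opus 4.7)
The plan is to identify $\lambda^{reg}$ as the pullback of a finite flat morphism arising from the quotient $V_{T} \to V_{T}/W = \kc$. Finiteness of $\lambda^{reg}$ will come from properness together with quasi-finiteness, and flatness will come from a Cartesian square over the regular locus.

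First I would establish finiteness. The morphism $\lambda$ is already proper by the factorization through the projection $V_{G} \times G/B \to V_{G}$ exhibited just before the statement. For quasi-finiteness over $V_{G}^{reg}$, Proposition \ref{stnil} shows that every regular nilpotent element lies in a unique Borel sub-semigroup, so the geometric fiber of $\lambda$ at such a point is a single point. The locus $U \subset V_{G}^{reg}$ over which $\lambda$ has finite fibers is open by upper semicontinuity of fiber dimension, contains the regular nilpotent locus $\mathcal{N}^{reg}$, and is $Z_{+}$-stable. The propagation principle of the remark after Proposition \ref{centreg} then forces $U = V_{G}^{reg}$, so $\lambda^{reg}$ is quasi-finite; combined with properness, it is finite.

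For flatness, I would exploit the commutative square
\begin{equation*}
\xymatrix{\tilde{V}_{G} \ar[r]^{\pi} \ar[d]_{\lambda} & V_{T} \ar[d] \\ V_{G} \ar[r]^{\chi_{+}} & \kc}
\end{equation*}
where $\pi$ is built by extending the Levi-type projection $B_{+} \to T_{+}$ to a morphism $V_{B} \to V_{T}$ (by taking closures in the fourre-tout $H_{G}$) and then applying it to $g^{-1}\gamma g \in V_{B}$. Commutativity is exactly the definition of $\chi_{+}$ by restriction to $V_{T}$ followed by the $W$-quotient, as proved in Proposition \ref{chevalley}. Next I would verify that this square becomes Cartesian upon restriction to the regular locus: the natural map $\tilde{V}_{G}^{reg} \to V_{G}^{reg} \times_{\kc} V_{T}$ is obvious from the construction, while its inverse associates to a pair $(\gamma, t)$ the unique Borel sub-semigroup containing the regular element $\gamma$ whose $T$-projection is $t$. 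Finally, $V_{T} \to \kc$ is finite (it is the affine GIT quotient by the finite group $W$) and flat by miracle flatness, since $V_{T}$ is a normal affine toric variety, hence Cohen-Macaulay, and $\kc = \mathbb{A}^{2r}$ is smooth by Proposition \ref{chevalley}. Base change along the Cartesian square then yields $\lambda^{reg}$ finite flat.

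The main obstacle is verifying the Cartesian property over $V_{G}^{reg}$. The forward map $\tilde{V}_{G}^{reg} \to V_{G}^{reg} \times_{\kc} V_{T}$ is immediate from the construction of $\pi$. The converse requires showing that for every regular $\gamma \in V_{G}$ and every lift $t$ of $\chi_{+}(\gamma)$ in $V_{T}$, there is exactly one Borel sub-semigroup containing $\gamma$ whose $T$-projection equals $t$. This is the appropriate extension, from regular nilpotents (Proposition \ref{stnil}) to all regular elements, of Steinberg's uniqueness statement, and is the Vinberg-semigroup analogue of the classical description of the Grothendieck--Springer map as a $W$-torsor over the regular locus. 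Once it is in place, finite flatness of $\lambda^{reg}$ follows formally.
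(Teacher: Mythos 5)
Your finiteness argument is essentially the paper's: properness from the stated factorization, quasi-finiteness from Proposition \ref{stnil} over the fibre at $0$ propagated by the $Z_{+}$-equivariance principle, and Zariski's Main Theorem to conclude. The final deduction of flatness by base change along the square (using that $V_{T}\rightarrow\kc$ is finite and flat by miracle flatness) is also sound, and is an acceptable variant of the paper's route, which instead applies miracle flatness directly to $\tilde{V}_{G}^{reg}\rightarrow V_{G}^{reg}$ after identifying $\tilde{V}_{G}^{reg}$ with the Cohen--Macaulay scheme $V_{G}^{reg}\times_{\kc}V_{T}$.

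The gap is in the one step you yourself flag as the main obstacle: you assert that the square is Cartesian over the regular locus but defer the verification to an unproved uniqueness statement (``for every regular $\gamma$ and every lift $t$ of $\chi_{+}(\gamma)$ in $V_{T}$ there is exactly one Borel sub-semigroup containing $\gamma$ with $T$-projection $t$''), and the method you propose --- exhibiting an inverse pointwise --- would at best give a bijection on geometric points, which does not yield the scheme-theoretic isomorphism $\tilde{V}_{G}^{reg}\cong V_{G}^{reg}\times_{\kc}V_{T}$ needed for base change of flatness (over the discriminant locus the fibres of $V_{T}\rightarrow\kc$ are non-reduced, so a naive point count cannot work). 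The paper closes this gap without any such uniqueness statement: since $\chi_{+}^{reg}$ is smooth, $V_{G}^{reg}\times_{\kc}V_{T}$ is smooth over $V_{T}$, hence normal; the natural map $\iota:\tilde{V}_{G}^{reg}\rightarrow V_{G}^{reg}\times_{\kc}V_{T}$ is finite (because $\la^{reg}$ is, by the first half of the proof) and birational (it is an isomorphism over $G_{+}^{rs}$); so Zariski's Main Theorem forces $\iota$ to be an isomorphism. You should replace your pointwise construction by this argument; note that it uses the finiteness of $\la^{reg}$ you have already established, so the order of the two halves of the proof matters.
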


\begin{proof}
Comme on sait déjà que $\la^{reg}$ est propre par changement de base, il nous suffit de vérifier, en vertu du Main Theorem de Zariski, qu'elle est quasi-finie.
Il résulte alors de la proposition \ref{stnil} qu'au-dessus de $0\in\kc$, la fibre est réduite à un point, donc par semi-continuité de la dimension des fibres, on en déduit, que le morphisme est partout quasi-fini.

Passons à la platitude, considérons le diagramme commutatif suivant:
$$\xymatrix{\tilde{V}_{G}^{reg}\ar[r]\ar[d]_{\la}&V_{T}\ar[d]^{\theta}\\V_{G}^{reg}\ar[r]&\kc}$$
Ce diagramme est en fait cartésien. En effet, la flèche $\chi_{+}^{reg}$ est lisse, donc $V_{G}^{reg}\times_{\kc}V_{T}$ est lisse au-dessus de $V_{T}$, donc normal et Cohen-Macaulay.
La flèche $\iota:\tilde{V}_{G}^{reg}\rightarrow V_{G}^{reg}\times_{\kc}V_{T}$ est finie car $\tilde{V}_{G}^{reg}\rightarrow V_{G}^{reg}$ l'est. De plus, $\iota$ est birationnelle car c'est un isomorphisme au-dessus de $G_{+}^{rs}$, il résulte alors du Main Theorem de Zariski que $\iota$ est un isomorphisme.

On en déduit alors que $\tilde{V}_{G}^{reg}$ est également Cohen-Macaulay et fini surjectif sur un schéma lisse, donc plat.
\end{proof}
Nous avons également la réciproque à la proposition \ref{stnil}:
\begin{prop}\label{stnilb}
Soit $x\in V_{G}$ un élément nilpotent irrégulier, alors il est dans une infinité de semi-groupes de Borel.
\end{prop}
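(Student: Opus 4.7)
The approach is to mirror the analysis of Proposition \ref{stnil} in the reverse direction: for each irregular nilpotent $x\in V_G$, exhibit a positive-dimensional family of Borel subsemigroups $V_{B'}$ containing $x$. Denoting $B_x=\{gB\in G/B\mid g^{-1}xg\in V_B\}$ (the projective variety of Borels whose associated subsemigroup contains $x$), I aim to show that $\dim B_x\geq 1$ for every irregular nilpotent $x$; equivalently, $B_x$ is infinite.

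By $G$-equivariance of $B_x$ under conjugation and the stratification $\mathcal{N}=\coprod_{J\subseteq\Delta,\,\supp(w)=\Delta}X_{\emptyset,J,w}$ of Proposition \ref{nilstrat}, the function $x\mapsto\dim B_x$ is constant on each stratum, so it suffices to check $\dim B_{x_{J,w}}\geq 1$ at a representative $x_{J,w}=we_{\emptyset,J}$ for each irregular stratum---that is, for each $(J,w)$ with $\supp(w)=\Delta$ and $(J,w)$ not of the form $(\Delta,c)$ for a Coxeter element $c$ of length $r$. Following the blueprint of Proposition \ref{stnil}, I would parametrize candidate Borels via the Bruhat decomposition, writing $B'=u\sigma B^{-}\sigma^{-1}u^{-1}$ for $\sigma\in W$ and $u\in U^{-}\cap \sigma U\sigma^{-1}$, and translate $x_{J,w}\in V_{B'}$ into $\sigma^{-1}u^{-1}we_{\emptyset,J}u\sigma\in V_{B^{-}}$. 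Testing this condition against weight vectors $v_{\lambda}$ in each fundamental representation $V_{\omega_{i}}$ produces, as in the proof of Proposition \ref{stnil}, a system of algebraic constraints on the pair $(u,\sigma)$.

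The main obstacle is a clean dimension count showing that these constraints admit a positive-dimensional solution set in the irregular case, whereas in the regular case of Proposition \ref{stnil} they pin $(u,\sigma)$ down completely. The mechanism is visible: if $J\subsetneq\Delta$, then by Proposition \ref{strates}(iv), $\rho_{\omega_{i}}(e_{\emptyset,J})=0$ for every $i\notin J$, so the constraints coming from those fundamental representations are vacuous and one immediately frees a positive-dimensional family of pairs $(u,\sigma)$; if instead $J=\Delta$ but $l(w)>r$, the leading-weight inequality that in the proof of Proposition \ref{stnil} forced $\sigma=1$ becomes non-strict, freeing a positive-dimensional locus of acceptable $\sigma$'s (or, after fixing $\sigma$, of $u$'s). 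Organizing these two mechanisms into a uniform accounting that yields positive dimension for every non-regular pair $(J,w)$---ideally by induction on $|\Delta\setminus J|+(l(w)-r)$, or equivalently by matching the codimension drop of $X_{\emptyset,J,w}$ inside $\mathcal{N}$ against the corresponding fiber-dimension jump of $\lambda$---is the principal technical point. As a global consistency check, one expects $\dim \lambda^{-1}(\mathcal{N})=\dim G/B+\dim(V_B\cap\mathcal{N})=\dim G-r=\dim\mathcal{N}$, which combined with Corollary \ref{finspring} and the fact that $\lambda^{-1}(0)=G/B$ is positive-dimensional already forces the locus $\{x\in\mathcal{N}\mid\dim B_x\geq 1\}$ to be a nonempty closed $G$-stable subset of $\mathcal{N}\setminus\mathcal{N}^{reg}$; the above stratum-by-stratum argument then identifies it with the full complement of $\mathcal{N}^{reg}$.
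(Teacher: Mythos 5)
Your setup (stratify $\mathcal{N}$, use conjugation-equivariance, test the condition $x\in V_{B'}$ against weight vectors as in Proposition \ref{stnil}) matches the general shape of the paper's argument, but the proof has a genuine gap at its center: you never actually produce the positive-dimensional family of Borel subsemigroups. The two ``mechanisms'' you invoke are asserted rather than established, and the second one cannot work as stated: the acceptable $\sigma$ range over the \emph{finite} Weyl group, so relaxing the leading-weight inequality can never by itself yield a positive-dimensional locus; the freedom must come from unipotent directions, and you do not identify which ones. The paper's key observation, which is absent from your proposal, is that for a stratum representative $we_{\emptyset}$ with $\supp(w)=\Delta$ and $l(w)=r+1$, the centralizer computation of Lemma \ref{cent0} now produces a one-dimensional torus $T_{\beta}=\{t\in T\mid wtw^{-1}=t\}$ attached to a root $\beta$ with $w\beta=\beta$; this places $we_{\emptyset}$ in the Levi $L_{\beta}=Z_{G_{+}}(T_{\beta})$ and yields the explicit one-parameter family $u\sigma_{\beta}V_{B}^{-}\sigma_{\beta}^{-1}u^{-1}$, $u\in U_{-\beta}$, of Borel subsemigroups containing it. Without this (or an equivalent construction) the ``dimension count'' you defer to is exactly the content of the proposition.

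The second missing ingredient is the passage from the single case $l(w)=r+1$ to all deeper strata. You propose an induction on $|\Delta\setminus J|+(l(w)-r)$ handled stratum by stratum, which would require redoing the explicit analysis for every $(J,w)$; the paper instead observes that the fiber dimension of $\lambda:\tilde{V}_{G}\rightarrow V_{G}$ is upper semicontinuous (Chevalley), so it suffices to know that every irregular nilpotent lies in the closure of some $X_{\emptyset,w}$ with $l(w)=r+1$. That closure statement is obtained from He's order relation (Theorem \ref{he2}) together with Premet's combinatorial lemma: any $w$ with $\supp(w)=\Delta$ and $l(w)\geq r+2$ dominates some $w'$ of the same support with $l(w')=r+1$. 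Your ``global consistency check'' shows only that the locus $\{x\in\mathcal{N}\mid \dim B_{x}\geq 1\}$ is a nonempty closed $G$-stable subset of the irregular locus, which does not identify it with the whole complement of $\mathcal{N}^{reg}$. Finally, note that the strata $X_{\emptyset,J,w}$ are not single conjugacy classes in the irregular case, so reducing to the representatives $we_{\emptyset,J}$ alone is not automatic; one must also treat elements $nwe_{\emptyset,J}$ with $n\in U$ (the paper at least flags this case as analogous).
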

\begin{proof}
Il nous suffit de montrer l'assertion pour $x\in V_{G}^{0}$. On rappelle que nous avons la stratification suivante pour $\mathcal{N}^{0}$:
\begin{center}
$\mathcal{N}^{0}=\coprod\limits_{\substack{w\in W \\\supp(w)=\Delta}} X_{\emptyset,w}$
\end{center}
et que  d'après la proposition $\ref{nilreg}$, les éléments réguliers correspondent aux éléments de longueur $r$.
En particulier, si $x\in\mathcal{N}^{0}$ est irrégulier,  il est dans une strate $X_{\emptyset, w}$ pour $l(w)\geq r+1$.
On commence donc par démontrer la proposition pour $x\in X_{\emptyset,w}$ avec $l(w)=r+1$.
Pour démontrer le résultat, quitte à conjuguer, on peut supposer que:
\begin{center}
$x=nwe_{\emptyset}$
\end{center}
avec $n\in U$ et $w\in W$ tel que $l(w)=r+1$ et $\supp(w)=\Delta$.
L'assertion va résulter du calcul du centralisateur et du fait que l'élément considéré va être  dans un certain Lévi.
Regardons le cas de l'élément $we_{\emptyset}$, le cas d'un élément de la forme $nwe_{\emptyset}$ étant analogue.

En reprenant le calcul du lemme $\ref{cent0}$, on obtient que le centralisateur de $I_{we_{\emptyset}}$ s'identifie alors à $Z_{G}T_{\beta}U_{w}^{-}$ où $U_{w}^{-}=U^{-}\cap wU w^{-1}$ et $T_{\beta}:=\{t\in T\vert~wtw^{-1}=t\}$ est un tore de dimension 1 engendré par une certaine coracine $\check{\beta}$ avec $w\beta=\beta$.
Soit alors $L_{\beta}:=Z_{G_{+}}(T_{\beta})$, alors $we_{\emptyset}$ est dans  $L_{\beta}$.

Et de l'égalité $w\beta=\beta$, on en déduit que pour tout $u\in U_{-\beta}$:
\begin{center}
$we_{\emptyset}\in u\sigma_{\beta}V_{B}^{-}\sigma_{\beta}^{-1}u^{-1}$.
\end{center}
Ainsi, $we_{\emptyset}$ est dans une infinité de semi-groupes de Borel.
Il nous suffit désormais de montrer que l'adhérence des strates $X_{\emptyset,w}$ pour $l(w)=r+1$ consiste en tous les éléments nilpotents irréguliers.
En effet, d'après le théorème de Chevalley, la dimension des fibres de $\la:\tilde{V}_{G}\rightarrow V_{G}$ ne peut qu'augmenter, donc on aura le résultat pour les autres éléments.
En vertu de la relation d'adhérence entre les strates (cf. Thm. \ref{he2}), il nous suffit de démontrer le lemme suivant:
\begin{lem}(Premet)
Soit $w\in W$ tel que $\supp(w)=\Delta$ et $l(w)\geq r+2$, alors il existe $w'\in W$ de même support que $w$ et tel que $l(w')=r+1$.
\end{lem}
La preuve qui suit est due à Premet:
\medskip

Posons $l=l(w)$ et considérons $\Red(w)$ l'ensemble de toutes les expressions réduites de $w$. Etant donné $\textbf{r}=(i_{1},\dots, i_{l})\in \Red(w)$, on note $k(\textbf{r})$ le plus petit $k\leq l$ tel que $i_{k}$ apparaît dans $\textbf{r}$ plus de deux fois.
Soit $\textbf{t}=(j_{1},\dots, j_{l})\in \Red(w)$ tel que $k(\textbf{t})\geq k(\textbf{r})$ pour tout $\textbf{r}\in \Red(w)$.
On écrit alors $w=s_{j_{1}}\dots s_{j_{l}}$ et on considère $w'\in W$ obtenu à partir de $w$ en supprimant $s_{j_{k}}$ de l'expression réduite de $w$ où
$k=k(\textbf{t})$.
De par le choix de $k$, $w'$ a le même support que $w$ et sa longueur a diminué de $1$. En itérant le procédé, on fait diminuer la longueur jusqu'à $r+1$ ce qu'on voulait.
\end{proof}

On termine le paragraphe par une étude du discriminant sur le semi-groupe de Vinberg.
On considère la fonction $\mathfrak{D}_{+}=(2\rho,\mathfrak{D})$ sur $k[T_{+}]$ où $\mathfrak{D}=\prod\limits_{\alpha\in R}(1-\alpha(t))$ est la fonction discriminant sur $k[T]$.
Comme $W$ agit trivialement sur le premier facteur, on a que $\mathfrak{D}_{+}\in k[T_{+}]^{W}$ et de plus on a la propriété suivante:
\begin{center}
$t\in T_{+}^{rs}\Longleftrightarrow \mathfrak{D}_{+}(t)\neq 0$.
\end{center}
Nous allons étendre cette fonction à $V_{T}$:
\begin{lem}\label{prongdisc}
La fonction $\mathfrak{D}_{+}$ s'étend en une fonction de $k[V_{T}]^{W}$. 
\end{lem}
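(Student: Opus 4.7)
The plan is to expand $\mathfrak{D}_+$ as a sum of characters of $T_+$ and to verify that each character appearing lies in the semigroup of characters extending to regular functions on $V_T$, which is described in the proof of Proposition \ref{drin}.

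Recall from Proposition \ref{drin} that $V_T$ is the spectrum of the semigroup algebra associated to the cone generated by the $(\alpha_i,0)$ and the $(\omega_i,\lambda)$ for $\lambda$ a weight of $V_{\omega_i}$. In particular, a character $(\mu,\nu)\in X^*(T_+)$ extends to a regular function on $V_T$ as soon as $\mu - \nu$ is a non-negative integral combination of simple roots. Expanding the product gives
\[
\mathfrak{D}_+(t_1,t_2) = 2\rho(t_1)\prod_{\alpha\in R}(1-\alpha(t_2)) = \sum_{S\subset R}(-1)^{|S|}(2\rho,\sigma_S)(t_1,t_2),
\]
where $\sigma_S := \sum_{\alpha\in S}\alpha$ and $(2\rho,\sigma_S)$ denotes the character of $T\times T$ equal to $2\rho$ on the first factor and to $\sigma_S$ on the second. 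Each such pair descends to a character of $T_+$ since $2\rho$ and $\sigma_S$ both lie in the root lattice.

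It then remains to check that $2\rho - \sigma_S$ is a non-negative integral combination of simple roots for every $S\subset R$. Writing $2\rho = \sum_{\alpha>0}\alpha$ and splitting $S$ into its positive and negative parts, one rewrites
\[
2\rho - \sigma_S = \sum_{\substack{\alpha>0 \\ \alpha\notin S}}\alpha \;+\; \sum_{\substack{\alpha>0 \\ -\alpha\in S}}\alpha,
\]
which is manifestly a non-negative sum of positive roots, hence a non-negative integral combination of simple roots. Thus every character appearing in the expansion of $\mathfrak{D}_+$ lies in the semigroup defining $V_T$, and so $\mathfrak{D}_+$ itself extends to a regular function on $V_T$.

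The $W$-invariance of the extension is automatic: $T_+$ is dense in the integral scheme $V_T$, and $\mathfrak{D}_+$ is already $W$-invariant on $T_+$ (since $W$ acts trivially on the central factor and $\mathfrak{D}$ is the usual Weyl-invariant discriminant on $T$). I do not foresee any genuine obstacle; the argument is a direct computation once one combines the explicit form of $\mathfrak{D}_+$ with the cone description of $V_T$ from Proposition \ref{drin}.
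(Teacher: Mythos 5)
Your strategy --- expand $\mathfrak{D}_+$ into characters of $T_+$ and test each one against the weight semigroup of $V_T$ --- is a genuinely different route from the paper's: the paper instead restricts $\mathfrak{D}_+$ to $Z_+\overline{T}_\Delta$, where it becomes $(-1)^{\left|R^+\right|}\prod_{\alpha>0}(1-\alpha(t))^2$, visibly polynomial in the coordinates $\alpha_i(t)$ of $\overline{T}_\Delta\cong\mathbb{A}^r$, and then propagates the extension to $V_T^0$ by $W$-invariance and to all of $V_T$ by normality of $V_T$ together with the fact that $V_T\setminus V_T^0$ has codimension at least two. However, your extension criterion is false as stated, and this is a genuine gap. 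The characters regular on $V_T$ form the saturation of the semigroup generated by the $(\alpha_i,0)$ and the $(\omega_i,\lambda)$ for $\lambda$ a weight of $V_{\omega_i}$; a pair $(\mu,\nu)$ lies in the associated cone if and only if $\mu-w\nu$ is a non-negative integral combination of simple roots for \emph{every} $w\in W$ (equivalently, $\nu$ lies in the convex hull of $W\mu$), not merely for $w=1$. (The displayed formula for $C^*$ in Proposition \ref{drin} is itself imprecise on this point, which may have misled you.) Concretely, for $G=SL_2$ one has $V_T\cong\Spec k[u,v]$ with $u=(\omega,\omega)$ and $v=(\omega,-\omega)$, and the character $(0,-\alpha)=v/u$ satisfies your condition, since $0-(-\alpha)=\alpha\geq 0$, yet it is not regular on $V_T$.

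Fortunately the gap is repairable from what you already computed. Since $w\sigma_S=\sigma_{wS}$ for every $w\in W$ and $S\subset R$, the correct membership condition for $(2\rho,\sigma_S)$ is that $2\rho-\sigma_{S'}\geq 0$ for \emph{every} subset $S'\subset R$, and that is exactly the family of inequalities your final computation establishes (the split of $S'$ into positive and negative parts works uniformly in $S'$). So every character in your expansion does lie in the correct cone, and the lemma follows once you state and verify the Weyl-orbit form of the criterion. Two minor remarks: by routing the argument through Proposition \ref{drin} you import the hypothesis $\car(k)>3$, which the paper's own proof of this lemma does not need; and your density argument for the $W$-invariance of the extension is fine.
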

\begin{proof}
On commence par étendre la fonction à $Z_{+}\times \overline{T}_{\Delta}$.
Soit $t_{+}=(t,t^{-1})\in T_{\Delta}$, alors nous avons:
\begin{center}
$\mathfrak{D}_{+}(t_{+})=2\rho(t)\prod\limits_{\alpha>0}(1-\alpha(t^{-1}))(1-\alpha(t))$
\end{center}
Comme $2\rho=\sum\limits_{\alpha>0}\alpha$, nous obtenons:
\begin{equation}
\mathfrak{D}_{+}(t_{+})=(-1)^{\left|R^{+}\right|}\prod\limits_{\alpha>0}(1-\alpha(t))^{2}
\label{extdisc}
\end{equation}
où $R^{+}$ est l'ensemble des racines positives.
En particulier, on obtient que $\mathfrak{D}_{+}$ se prolonge à $Z_{+}\times \overline{T}_{\Delta}$. Maintenant, comme $\mathfrak{D}_{+}$ est $W$-invariante, on obtient dans un premier temps, que $\mathfrak{D}_{+}$ s'étend à $V_{T}^{0}$, l'adhérence de $T_{+}$ dans $V_{G}^{0}$.
Maintenant, comme $V_{T}$ est affine et normal et que la codimension du complémentaire de $V_{T}^{0}$ dans $V_{T}$ est au moins deux, on obtient que $\mathfrak{D}_{+}$ se prolonge.
\end{proof}

Il ne nous reste plus qu'à voir le critère de régularité.
\begin{prop}\label{cardisc}
Soit $t_{+}\in V_{T}$, alors nous avons l'équivalence:
\begin{center}
$t_{+}\in V_{T}^{rs}\Longleftrightarrow \mathfrak{D}_{+}(t_{+})\neq 0$.
\end{center}
\end{prop}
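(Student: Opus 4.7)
L'approche consiste \`a r\'eduire, via la stratification de $V_T$ par les orbites $T_\Delta e_J$ de la proposition \ref{strates}, le calcul de $\mathfrak{D}_+$ sur chaque strate \`a l'aide de la formule explicite du lemme \ref{prongdisc}, puis \`a identifier strate par strate l'annulation de $\mathfrak{D}_+$ avec la non-r\'egularit\'e semi-simple.

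D'abord, l'\'equivalence sur l'ouvert dense $T_+\subset V_T$ est classique (c'est celle rappel\'ee au d\'ebut de la preuve du lemme \ref{prongdisc}). La $W$-invariance de la situation et la normalit\'e de $V_T$, combin\'ees au fait que $V_T\setminus V_T^0$ est de codimension au moins $2$, nous ram\`enent ensuite au cas d'un point $te_J\in Z_+\cdot\overline{T}_\Delta$ avec $t\in T_\Delta$ et $J\subsetneq\Delta$.

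Pour calculer $\mathfrak{D}_+(te_J)$, je noterais que $e_J$ \'etant idempotent, $\alpha_i(e_J)\in\{0,1\}$, avec $\alpha_i(e_J)=1$ si $i\in J$ et $\alpha_i(e_J)=0$ sinon, d'apr\`es la description \ref{strates}(iii) de l'image de $\mathcal{O}_J$ sur $\mathbb{A}^r$. Par multiplicativit\'e, pour $\alpha=\sum m_i\alpha_i$ une racine positive, on a $\alpha(te_J)=\alpha(t)$ si $\supp(\alpha)\subset J$, et $0$ sinon. La formule \eqref{extdisc} du lemme \ref{prongdisc} se sp\'ecialise donc en
$$\mathfrak{D}_+(te_J)=(-1)^{|R^+|}\prod_{\substack{\alpha>0\\\supp(\alpha)\subset J}}(1-\alpha(t))^2,$$
produit portant sur les racines positives du L\'evi $L_J$ associ\'e \`a $J$, et qui est non nul si et seulement si $t$ est r\'egulier dans $L_J$.

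Enfin, pour identifier $V_T^{rs}\cap T_\Delta e_J$ \`a cette m\^eme condition, j'utiliserais la proposition \ref{stabet}: le stabilisateur de $e_J$ dans $G\times G$ y est d\'ecrit comme $H_J\subset P_J\times P_J^-$, ce qui force le centralisateur dans $G$ de $e_J$ \`a valoir $L_J=P_J\cap P_J^-$; le centralisateur de $te_J$ est alors $C_{L_J}(t)$, dont la dimension vaut $r=\rg L_J$ exactement lorsque $t$ est $L_J$-r\'egulier. L'obstacle principal r\'eside dans ce calcul explicite du centralisateur via \ref{stabet}, qui demande une manipulation soigneuse des projections $\delta,\delta_-$ sur le facteur de L\'evi; une fois cette identification acquise, l'\'equivalence recherch\'ee suit imm\'ediatement.
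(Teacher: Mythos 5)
Votre argument suit une route r\'eellement diff\'erente de celle de l'article. L'article ne d\'emontre directement que l'implication $t_{+}\in V_{T}^{rs}\Rightarrow\mathfrak{D}_{+}(t_{+})\neq 0$ (via \eqref{extdisc} : si $\mathfrak{D}_{+}(t_{+})=0$, une racine $\alpha$ vaut $1$ sur $t_{+}$ et le centralisateur contient celui de $T_{\alpha}=\Ker\alpha$) ; la r\'eciproque s'obtient par un argument global de codimension : le ferm\'e non r\'egulier semi-simple $F$ est le compl\'ementaire du lieu \'etale de $V_{T}\rightarrow\mathfrak{C}_{+}$, donc un diviseur \'equidimensionnel de codimension un, tandis que $F\cap\{\mathfrak{D}_{+}\neq 0\}$ \'evite $T_{+}$ et les points g\'en\'eriques des strates de codimension un $\co_{i}$ (car $e_{\emptyset}$ est r\'egulier semi-simple), donc est de codimension au moins deux, donc vide. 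Votre v\'erification strate par strate des deux implications sur chaque $T_{+}e_{J}$ est essentiellement correcte sur $V_{T}^{0}$ : la sp\'ecialisation de \eqref{extdisc} en $te_{J}$ et son identification avec la $L_{J}$-r\'egularit\'e de $t$ sont justes, et le calcul de centralisateur via \ref{stabet} aboutit bien --- on trouve $C_{G}(te_{J})=\{g\in L_{J}\mid t^{-1}gtg^{-1}\in T_{J,\Delta}\}$, dont l'alg\`ebre de Lie co\"incide avec celle de $C_{L_{J}}(t)$ puisque $\Ad(t^{-1})X-X$ n'a pas de composante sur $\mathfrak{t}\supset\Lie T_{J,\Delta}$ --- m\^eme si ce point, que vous signalez vous-m\^eme, reste \`a r\'ediger.

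Le trou r\'eel est la r\'eduction \`a $V_{T}^{0}$. La normalit\'e de $V_{T}$ et la codimension $\geq 2$ de $V_{T}\setminus V_{T}^{0}$ servent \`a \emph{d\'efinir} le prolongement de $\mathfrak{D}_{+}$ (lemme \ref{prongdisc}), mais ne ram\`enent nullement l'\'equivalence ponctuelle \`a l'ouvert $V_{T}^{0}$ : un ouvert non vide peut parfaitement rencontrer un ferm\'e de codimension deux. Or la proposition porte sur tout $t_{+}\in V_{T}$. Sur $V_{T}\setminus V_{T}^{0}$, aucun point n'est r\'egulier semi-simple (par la d\'efinition m\^eme $V_{G}^{reg}\subset V_{G}^{0}$), il vous reste donc \`a d\'emontrer que $\mathfrak{D}_{+}$ s'annule identiquement sur $V_{T}\setminus V_{T}^{0}$, c'est-\`a-dire sur les strates associ\'ees aux idempotents $e_{J,K}$ avec $K\neq\Delta$ ; cela ne figure nulle part dans votre argument et ne d\'ecoule pas de la formule \eqref{extdisc}, qui n'est \'etablie que sur $Z_{+}\overline{T}_{\Delta}\subset V_{G}^{0}$. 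C'est pr\'ecis\'ement ce que l'argument de diviseur de l'article r\`egle gratuitement ; il vous faudrait soit le reprendre, soit calculer la valeur du prolongement de $\mathfrak{D}_{+}$ sur ces strates profondes.
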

\begin{proof}
Soit $t_{+}\in V_{T}^{rs}$, en particulier $t_{+}\in V_{T}\cap V_{G}^{0}=V_{T}^{0}$. Quitte à conjuguer par un élément de $W$, on peut supposer que $t\in Z_{+}\overline{T}_{\Delta}$.
Si nous avons $\mathfrak{D}_{+}(t)=0$, alors il résulte de l'égalité \eqref{extdisc} qu'il existe $\alpha\in R$ tel que $\alpha(t_{+})=1$.
En particulier, en considérant le tore $T_{\alpha}=\Ker \alpha\subset T_{+}$, son centralisateur dans $G$ est de dimension strictement plus grande que $r$ et donc $t_{+}\in\overline{T}_{\alpha}$ ne peut être régulier semisimple.

Soit alors $F$ le fermé complémentaire de $V_{T}^{rs}$ dans $V_{T}$. On sait que $F$ est un diviseur, comme c'est le complémentaire du lieu étale de la flèche $V_{T}\rightarrow\kc$, en particulier équidimensionnel.
Soit $U\subset V_{T}$ le lieu où $\mathfrak{D}_{+}$ ne s'annule pas et $F_{1}$ son fermé complémentaire.
On vient de voir que $V_{T}^{rs}\subset U$.

Supposons par l'absurde que $V_{T}^{rs}$ soit strictement inclus dans $U$ et considérons le fermé $F_{2}=U-V_{T}^{rs}$.
Nous avons alors $F_{1}\cap F_{2}=\emptyset$ et $F=F_{1}\cup F_{2}$.
Lorsque l'on intersecte $V_{T}^{rs}$ et $U$ avec $T_{+}$, les ouverts sont les mêmes.

De plus, il résulte de la proposition \ref{stabet} que l'élément $e_{\emptyset}$ est régulier semisimple (si $ge_{\emptyset} g^{-1}=e_{\emptyset}$, alors $g\in B\cap B^{-}$).
Pour $1\leq i\leq r$, si $\co_{i}$ est la strate de codimension un de $T_{+}$, nous avons que:
\begin{center}
$e_{\emptyset}\in\bigcap\limits_{i=1}^{r}\co_{i}$. 
\end{center}
En particulier, pour tout $1\leq i\leq r$, $V_{T}^{rs}\cap\co_{i}$ est un ouvert non vide de $\co_{i}$. En particulier, les points génériques des strates $\co_{i}$ sont dans $V_{T}^{rs}$ d'où l'on déduit que $F_{2}$ est de codimension au moins deux.

Or, comme $F$ est équidimensionnel, on obtient que $F_{2}=\emptyset$, une contradiction.
\end{proof}

\subsection{Une construction alternative du centralisateur}\label{galoiscent}

Dans ce paragraphe nous donnons une interprétation alternative du centralisateur régulier d'après Donagi-Gaitsgory \cite{DG} et Ngô \cite[sect. 2.4]{N}.
Toutefois, il y a une subtile différence avec le cas de Ngô et de Donagi-Gaitsgory, dans la mesure où les fibres du morphisme de Steinberg ne sont pas intègres. En particulier, on voit qu'au point 0, les sections de Steinberg pour deux éléments de Coxeter donnent des éléments réguliers non conjugués. Néanmoins, nous allons voir que les centralisateurs restent isomorphes.
Nous supposons de plus que la caractéristique du corps est première à l'ordre du groupe de Weyl.
Dans la preuve de la proposition \ref{stcoxeter}, pour chaque élément de Coxeter $w\in W$ nous  avions introduit la flèche:
\begin{center}
$G\times\kc\rightarrow V_{G}^{reg}$
\end{center}
définie par $(g,a)\mapsto g\eps_{+}^{w}(a)g^{-1}$ dont nous avions vu qu'elle était d'image ouverte.
On note $V_{G}^{reg,w}\subset V_{G}^{reg}$ l'ouvert image. En particulier, pour chaque élément de Coxeter $w\in W$, nous avons un centralisateur régulier $J_{w}$ lisse et de dimension $r$ qui sont a priori différents deux à deux. On commence par la proposition suivante:
\begin{prop}\label{proJ}
Il existe un unique schéma en groupes $J$, lisse et commutatif de dimension $r$ sur $\mathfrak{C}_{+}$ muni d'un isomorphisme $G_{+}$-équivariant:
\begin{center}
$\chi_{+}^{*}J_{V_{G}^{reg}}\stackrel{\cong}{\rightarrow} I_{V_{G}^{reg}}$.
\end{center}
De plus, cet isomorphisme se prolonge en une flèche de $\chi_{+}^{*}J\rightarrow I$.
\end{prop}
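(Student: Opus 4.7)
The plan is to define $J$ by pullback along the Steinberg section $\epsilon_{+}$, transport the centralizer structure via $G$-conjugation, and obtain uniqueness by faithfully flat descent along $\chi_{+}^{reg}$.

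First I would set $J := \epsilon_{+}^{*}I$. The results already established in this section show this is a smooth commutative group scheme of dimension $r$ over $\mathfrak{C}_{+}$: smoothness follows from the smoothness of $\chi_{+}^{reg}$ (theorem \ref{4}) together with the fact that $\epsilon_{+}$ lands in $V_{G}^{reg}$, and commutativity from the fact that generically the centralizers are tori, combined with flatness over $\mathfrak{C}_{+}$ and density of the regular semisimple locus.

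Next I would construct the $G_{+}$-equivariant isomorphism $\chi_{+}^{*}J|_{V_{G}^{reg}} \cong I|_{V_{G}^{reg}}$. For a regular element $\gamma$ with $\chi_{+}(\gamma) = a$, proposition \ref{stcoxeter} produces $g \in G$ and a Coxeter element $w \in W$ with $\gamma = g\, \epsilon_{+}^{w}(a)\, g^{-1}$; I then send $J_{a} = I_{\epsilon_{+}(a)} \to I_{\gamma}$ by $x \mapsto g x g^{-1}$. Different choices of $g$ differ by an element of the centralizer of $\epsilon_{+}^{w}(a)$, i.e., of $J_{a}$, which acts trivially by commutativity, so the map is well-defined. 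The subtlety flagged at the start of the subsection is that for different Coxeter classes $w$ the sections $\epsilon_{+}^{w}(a)$ are not conjugate in $V_{G}$ (already at $a = 0$, after proposition \ref{nilreg}), even though they yield isomorphic centralizers; to see that $J$ is intrinsic rather than depending on a choice of $w$, I would spread out the construction from the regular semisimple open locus $V_{G}^{rs}$, where every centralizer is canonically a maximal torus identified with $T$ up to the action of $W$, and thereby obtain a canonical, $w$-independent description.

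For the extension of $\chi_{+}^{*}J \to I$ from $V_{G}^{reg}$ to all of $V_{G}$, I would use that $I \hookrightarrow G \times V_{G}$ is a closed immersion into a scheme affine over $V_{G}$. By proposition \ref{stcoxeter} the fibers of $\chi_{+}$ are Cohen-Macaulay, equidimensional, with the regular locus open dense in each fiber; hence $V_{G} \setminus V_{G}^{reg}$ has codimension at least two in $V_{G}$. Since $V_{G}$ is normal and Cohen-Macaulay and the target is affine over $V_{G}$, a Hartogs-type argument extends the morphism uniquely from $V_{G}^{reg}$ to $V_{G}$.

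For uniqueness, if $J'$ is another candidate, the two $G_{+}$-equivariant isomorphisms over $V_{G}^{reg}$ combine into $\chi_{+}^{*}J|_{V_{G}^{reg}} \simeq \chi_{+}^{*}J'|_{V_{G}^{reg}}$. Since $\chi_{+}^{reg}$ is smooth and surjective with geometrically connected fibers (theorem \ref{4}), it is faithfully flat of a good descent type, and I would descend the isomorphism to obtain $J \simeq J'$ over $\mathfrak{C}_{+}$. I expect the main obstacle to be the coherent formulation of the Donagi--Gaitsgory / Ng\^o style Galois-descent picture in the presence of the non-integrality of the fibers of $\chi_{+}$ at $0$: the cleanest remedy is to perform the descent along the generically \'etale cover $V_{T} \to \mathfrak{C}_{+}$ and first identify $J$ with a $W$-invariant subscheme of $\pi_{*}(T \times V_{T})$ over the regular semisimple locus, then extend this identification using normality and the codimension-two argument above.
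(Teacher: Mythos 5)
Your overall architecture matches the paper's: on each open $V_{G}^{reg,w}$ swept out by a Coxeter section, the isomorphism $\chi_{+}^{*}J_{w}\to I$ is obtained by conjugation and faithfully flat descent \`a la Ng\^o (well-definedness coming from commutativity), and the final extension of $\chi_{+}^{*}J\to I$ across $V_{G}\setminus V_{G}^{reg}$ is a Hartogs argument in codimension two. The genuine gap is exactly at the step you yourself flag as the subtlety: the identification of the regular centralizers $J_{w}=(\epsilon_{+}^{w})^{*}I$ attached to the different Coxeter elements, without which your map $I_{\epsilon_{+}(a)}\to I_{\gamma}$, $x\mapsto gxg^{-1}$, is not even defined when $\gamma$ is conjugate to $\epsilon_{+}^{w}(a)$ for a non-standard $w$. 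You propose to obtain a canonical, $w$-independent description over the regular semisimple locus and then ``extend this identification using normality and the codimension-two argument above.'' But the codimension-two statement you have available concerns $V_{G}\setminus V_{G}^{reg}$ inside $V_{G}$; the complement relevant here is $\mathfrak{C}_{+}\setminus\mathfrak{C}_{+}^{rs}$, which is the discriminant divisor and has codimension \emph{one}. An isomorphism of smooth group schemes defined only off a divisor does not extend by normality, and the difficulty cannot be resolved from the regular semisimple locus alone, since it lives entirely over the discriminant divisor: at $a=0$ the elements $\epsilon_{+}^{w}(0)$ and $\epsilon_{+}^{w'}(0)$ are non-conjugate regular nilpotents.

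The paper supplies the missing ingredient by comparing $J_{w}$ and $J_{w'}$ over $G_{+}^{reg}\cup V_{G}^{rs}$ rather than over $V_{G}^{rs}$ alone. Over $G_{+}^{reg}$ Steinberg's theorem makes all Coxeter sections conjugate, over $V_{G}^{rs}$ the centralizers are conjugate tori, and the key geometric input, taken from the proof of Proposition \ref{cardisc}, is that the generic point of each codimension-one boundary stratum $\mathcal{O}_{i}$ is regular semisimple; hence the complement of $G_{+}^{reg}\cup V_{G}^{rs}$ in $V_{G}^{reg}$, and therefore of its image in $\mathfrak{C}_{+}$ by smoothness of $\chi_{+}^{reg}$, has codimension at least two. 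Only then does the Hartogs-type extension of $J_{w}\cong J_{w'}$ apply. To repair your argument you need to add this use of $G_{+}^{reg}$ together with the regular semisimplicity of the generic boundary points; the remaining steps of your proposal (the definition $J=\epsilon_{+}^{*}I$, the descent on each $V_{G}^{reg,w}$, the codimension-two extension of $\chi_{+}^{*}J\to I$, and uniqueness by fppf descent along $\chi_{+}^{reg}$) are in line with the paper.
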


\begin{proof}
Pour chaque élément de Coxeter $w\in W$, en reprenant exactement la preuve par descente fidèlement plate  de \cite[Lem. 2.1.1]{N}, nous obtenons un unique schéma en groupe commutatif lisse $J_{w}$ qui est isomorphe à $I_{V_{G}^{reg,w}}$.
Soient donc deux éléments de Coxeter distincts $w,w'\in W$, nous allons montrer que les schémas en groupes lisses $J_{w}$ et $J_{w'}$ sont isomorphes. Comme ils sont lisses, il suffit de montrer l'isomorphisme sur un ouvert dont le complémentaire est de codimension au moins deux de $\kc$.
Cela résulte alors de la proposition suivante:
\begin{lem}
Pour tout élément de  Coxeter, nous avons $G_{+}^{reg}\cup V_{G}^{rs}\subset V_{G}^{reg,w}$, où $V_{G}^{rs}$ est le lieu où le centralisateur est un tore.
\end{lem}
\begin{proof}
D'après Steinberg, au-dessus de $G_{+}^{reg}$, on sait que tout élément est conjugué à $\eps_{+}(\kc)$ et de plus au-dessus de $V_{G}^{rs}$ le centralisateur étant un tore et tous les tores étant conjugués, on obtient que tout élément de $V_{G}^{rs}$ est également conjugué à un élément de $\eps_{+}(\kc^{rs})$, en particulier $G_{+}^{reg}\cup V_{G}^{rs}\subset V_{G}^{reg,w}$.
\end{proof}
Comme pour tout $1\leq i\leq r$, nous avons vu dans la preuve de la proposition \ref{cardisc} que $V_{G}^{rs}\cap\mathcal{O}_{i}$ était de codimension un dans $\mathcal{O}_{i}$, nous obtenons que 
la codimension du fermé complémentaire de $G_{+}^{reg}\cup V_{G}^{rs}$ est déjà de codimension deux, donc également son image dans $\kc$ par lissité de la flèche $V_{G}^{reg}\rightarrow\kc$ et donc nous obtenons que $J_{w}$ et $J_{w'}$ sont naturellement isomorphes.

Nous déduisons donc qu'il existe un unique schéma en groupes lisse $J$ sur $\kc$ tel que:
\begin{center}
$\chi_{+}^{*}J_{V_{G}^{reg}}\stackrel{\cong}{\rightarrow} I_{V_{G}^{reg}}$.
\end{center}
Il ne reste donc qu'à montrer que l'isomorphisme se prolonge en une flèche de $\chi_{+}^{*}J\rightarrow I$.
Comme la codimension du fermé complémentaire de $V_{G}^{reg}$ dans $V_{G}$ est de codimension au moins deux, il s'ensuit que la flèche se prolonge en un morphisme :
\begin{center}
$\chi_{+}^{*}J\rightarrow I$
\end{center}
au-dessus de $V_{G}$.
\end{proof}

\begin{prop}
$[\chi_{+}]:[V_{G}^{reg}/G]\rightarrow\mathfrak{C}_{+}$ est une gerbe liée par le centralisateur $J$ et neutre.
\end{prop}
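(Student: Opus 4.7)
\medskip
\noindent\textbf{Plan de preuve:}
Mon approche consisterait � v�rifier s�par�ment les trois assertions contenues dans l'�nonc�: que $[\chi_{+}]$ est bien une gerbe au-dessus de $\kc$, que son lien s'identifie canoniquement � $J$, et qu'elle est neutre, c'est-�-dire qu'elle admet une section globale.

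Pour �tablir que $[\chi_{+}]:[V_{G}^{reg}/G]\rightarrow\kc$ est une gerbe, je m'appuierais sur les deux propri�t�s cl�s d�j� �tablies dans la proposition \ref{stcoxeter}: d'une part la lissit� du morphisme $\chi_{+}^{reg}:V_{G}^{reg}\rightarrow\kc$, d'autre part le fait que ses fibres g�om�triques sont exactement des $G$-orbites. La premi�re propri�t�, combin�e � l'existence de la section $\eps_{+}$, garantit que le morphisme de champs $[\chi_{+}]$ est lisse et surjectif. La seconde propri�t� se traduit en disant que deux objets quelconques de $[V_{G}^{reg}/G]$ au-dessus du m�me point g�om�trique $a\in\kc(\bar{k})$ sont isomorphes, autrement dit que la fibre g�om�trique $[V_{G}^{reg}/G]\times_{\kc}\Spec(\bar{k})$ n'a, �  isomorphisme pr�s, qu'un seul objet. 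Ces deux �l�ments suffisent � v�rifier la d�finition d'une gerbe au sens de Giraud.

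Pour l'identification du lien, j'utiliserais la proposition \ref{proJ}: on y dispose d'un isomorphisme $G$-�quivariant $\chi_{+}^{*}J\vert_{V_{G}^{reg}}\stackrel{\cong}{\rightarrow}I\vert_{V_{G}^{reg}}$. Comme le faisceau d'inertie de $[V_{G}^{reg}/G]$ est pr�cis�ment la descente du sch�ma des centralisateurs $I\vert_{V_{G}^{reg}}$ par l'action par conjugaison, et comme $J$ provient d�j� de $\kc$, cet isomorphisme descend et identifie le faisceau d'inertie de $[V_{G}^{reg}/G]$ au tir� en arri�re de $J$ depuis $\kc$. La commutativit� de $J$ (proposition pr�c�dant \ref{stnil}) �vite toute complication due aux automorphismes ext�rieurs et fait de $J$ un lien canonique.

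Enfin, la neutralit� d�coule directement de la section de Steinberg �tendue $\eps_{+}:\kc\rightarrow V_{G}^{reg}$ construite dans la proposition \ref{semisimple} et dont on sait, gr�ce � la proposition \ref{centreg}, qu'elle tombe dans l'ouvert r�gulier: sa compos�e avec la projection canonique $V_{G}^{reg}\rightarrow[V_{G}^{reg}/G]$ fournit une section globale de $[\chi_{+}]$, donc un objet distingu� qui trivialise la gerbe. La seule v�rification d�licate me semble �tre la compatibilit� fine de l'isomorphisme de la proposition \ref{proJ} avec la descente d�finissant le lien au sens strict, mais la commutativit� de $J$ et l'�quivariance de l'isomorphisme rendent cette �tape essentiellement formelle.
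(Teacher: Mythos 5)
Your proof follows the same route as the paper's: smoothness of $\chi_{+}^{reg}$ from Proposition \ref{stcoxeter}, identification of the inertia of $[V_{G}^{reg}/G]$ with $a^{*}J$ via the isomorphism $\chi_{+}^{*}J\rightarrow I$ of Proposition \ref{proJ}, and neutrality from the Steinberg section $\eps_{+}$ (Propositions \ref{semisimple} and \ref{centreg}). You merely make explicit what the paper leaves implicit, and on the band and on neutrality your argument is correct and matches the text.

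The one place where you go beyond the paper is also where your argument breaks: to obtain local connectedness of the fibers you invoke the claim that the geometric fibers of $\chi_{+}^{reg}$ are $G$-orbits, and you conclude that any two objects of $[V_{G}^{reg}/G]$ over the same geometric point of $\kc$ are isomorphic. This is contradicted by Proposition \ref{nilreg} and the remark following it: the fiber $(\chi_{+}^{reg})^{-1}(0)=\mathcal{N}^{reg}$ is the disjoint union of the strata $X_{\emptyset,w}$ over the Coxeter elements $w$, each of which is a separate conjugacy class, and the sections $\eps_{+}^{w}$ attached to two distinct Coxeter elements give non-conjugate regular elements. Hence over $0\in\kc$ two objects need not be isomorphic, even locally, and the local-connectedness axiom of a gerbe in the strict sense only holds over the locus where the fiber is a single orbit (which contains $G_{+}^{reg}$ and the regular semisimple locus). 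The paper's own proof is silent on this point, so the tension is already present in the text --- it is precisely the ``subtile diff\'erence'' with Ng\^o and Donagi--Gaitsgory signalled at the start of the section on the alternative construction of the centralizer --- but the specific justification you supply for local connectedness is the step that fails, and it would need to be repaired either by restricting the base or by weakening the sense given to the word ``gerbe''.
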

\begin{proof}
Nous avons vu que le morphisme était lisse. De par la caractérisation du centralisateur régulier, le faisceau des automorphismes d'un élément $(E,\phi_{+})\in [V_{G}^{reg}/G](S)$ au-dessus d'un point $a:S\rightarrow\mathfrak{C}_{+}$ est canoniquement isomorphe à $a^{*}J$. La neutralité vient de l'existence de $\epsilon_{+}$.
\end{proof}

\section{Les fibres de Springer affines et leur interprétation modulaire}
\subsection{Grassmannienne affine}

Soit $F=k((\pi))$ un corps local, d'anneau d'entiers $\mathcal{O}$ et de corps résiduel $k$ algébriquement clos. Soit $G$  connexe semisimple simplement connexe. Soit $K=G(\co)$.
On note $X_{*}(T)^{+}$ l'ensemble des caractères dominants.
Sur $G(F)$, on a la décomposition de Cartan:
\begin{center}
$G(F)=\coprod\limits_{\lambda\in X_{*}(T)^{+}}K\pi^{\lambda}K$ 
\end{center} 
où  $\lambda$ un cocaractère dominant.
Soit $\overline{K\pi^{\la}K}$ l'adhérence de $K\pi^{\lambda}K$ dans $G(F)$, nous avons la stratification suivante:
\begin{center}
$\overline{K\pi^{\la}K}=\coprod\limits_{\mu\leq\la}K\pi^{\mu}K$.
\end{center}
Soit $\rho_{\omega_{i}}$ la représentation irréductible de plus haut poids $\omega_{i}$. 
Dans la section \ref{rapsemi}, nous avons introduit le semi-groupe de Vinberg $V_{G}$ ainsi que son ouvert lisse $V_{G}^{0}$. Le lien entre le semi-groupe de Vinberg et la grassmannienne affine apparaît ici.
\begin{lem}\label{cont} 
Un élément $g\in G(F)$ appartient à l'orbite $K\pi^{\lambda}K$ (resp. $\overline{K\pi^{\la}K}$) si et seulement si pour tout cocaractère dominant $\omega\in X^{*}(T)^{+}$, le plus grand des ordres des pôles des coefficients de la matrice $\rho_{\omega}(g)$ est égal à $\left\langle \omega,-w_{0}\lambda\right\rangle$, où $w_{0}$ est l'élément long du groupe de Weyl. De plus, l'élément $g_{+}=(\pi^{-w_{0}\la},g)$ est dans $V_{G}^{0}(\mathcal{O})$ (resp. $V_{G}(\mathcal{O})$).
\end{lem}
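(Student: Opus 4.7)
Le plan est d'introduire, pour chaque poids dominant $\omega$, l'entier $v_{\omega}(g) := \min_{i,j} v(\rho_{\omega}(g)_{ij})$, valuation minimale des coefficients de la matrice $\rho_{\omega}(g)$, de sorte que l'�nonc� revient � comparer $v_{\omega}(g)$ et $-\langle\omega, -w_{0}\la\rangle = \langle w_{0}\omega, \la\rangle$. L'observation-cl� est la $K\times K$-invariance de $v_{\omega}$ : la restriction de $\rho_{\omega}$ � $K$ est � valeurs dans $\GL(V_{\omega})(\co)$, donc la multiplication � gauche ou � droite par un �l�ment de $K$ se fait par une matrice inversible sur $\co$ et pr�serve la valuation minimale des coefficients (chaque coefficient de $M$ et de $kM$ est combinaison $\co$-lin�aire des coefficients de l'autre).

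Le sens direct en d�coule imm�diatement via Cartan : pour $g = k_{1}\pi^{\la}k_{2}$, on se ram�ne � $g = \pi^{\la}$, dont l'image par $\rho_{\omega}$ est diagonale de coefficient $\pi^{\langle\mu,\la\rangle}$ sur l'espace de poids $\mu$ ; le minimum est atteint au plus bas poids $\mu = w_{0}\omega$, ce qui donne $v_{\omega}(\pi^{\la}) = \langle w_{0}\omega, \la\rangle$, et donc le plus grand ordre de p�le vaut $\langle\omega, -w_{0}\la\rangle$. La r�ciproque est alors directe : par Cartan, $g \in K\pi^{\mu}K$ pour un unique $\mu \in X_{*}(T)^{+}$, et l'�galit� $\langle\omega_{i}, -w_{0}\mu\rangle = \langle\omega_{i}, -w_{0}\la\rangle$ pour tous les poids fondamentaux force $\mu = \la$. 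Le cas de l'adh�rence s'en d�duit en utilisant $\overline{K\pi^{\la}K} = \coprod_{\mu\leq\la} K\pi^{\mu}K$ et l'�quivalence $\mu \leq \la \Leftrightarrow \langle\omega, -w_{0}\mu\rangle \leq \langle\omega, -w_{0}\la\rangle$ pour tout $\omega$ dominant (en utilisant que $-w_{0}$ permute les coracines simples, donc pr�serve le c�ne dominant).

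Pour la clause ``de plus'', on calcule l'image de $g_{+} = (\pi^{-w_{0}\la}, g)$ par l'immersion $\iota: G_{+} \hookrightarrow H_{G}$ de la Section \ref{rapsemi}. Les premi�res composantes $\alpha_{i}(\pi^{-w_{0}\la}) = \pi^{\langle\alpha_{i}, -w_{0}\la\rangle}$ appartiennent � $\co$ puisque $-w_{0}\la$ est dominant ; les secondes sont $\pi^{\langle\omega_{i}, -w_{0}\la\rangle}\rho_{\omega_{i}}(g)$, dont les coefficients sont de valuation au moins $\langle\omega_{i}, -w_{0}\la\rangle + v_{\omega_{i}}(g) \geq 0$ par le cas de l'adh�rence. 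On obtient donc $\iota(g_{+}) \in H_{G}(\co)$, et en fait $\iota(g_{+}) \in V_{G}^{\flat}(\co)$, o� $V_{G}^{\flat}$ d�signe l'adh�rence non normalis�e de $G_{+}$ dans $H_{G}$. Comme le point g�n�rique $g_{+}(F)$ appartient � $G_{+}(F) \subset V_{G}(F)$ et que $\zeta: V_{G} \to V_{G}^{\flat}$ est fini (avec $\co$ normal), la propri�t� universelle de la normalisation fournit un unique rel�vement $g_{+} \in V_{G}(\co)$. Lorsque $g \in K\pi^{\la}K$, l'�galit� $v_{\omega_{i}}(g) = \langle w_{0}\omega_{i}, \la\rangle$ garantit qu'au moins un coefficient de $\pi^{\langle\omega_{i}, -w_{0}\la\rangle}\rho_{\omega_{i}}(g)$ est de valuation nulle, donc sa r�duction modulo $\pi$ est un endomorphisme non nul de $V_{\omega_{i}}$ pour chaque $i$ ; l'image tombe alors dans $H_{G}^{0}$ et le rel�vement dans $V_{G}^{0}(\co)$.

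Le principal point d�licat est l'�tape de rel�vement de $V_{G}^{\flat}(\co)$ � $V_{G}(\co)$ via la finitude de $\zeta$ et la normalit� de $\co$ ; une fois la $K\times K$-invariance de $v_{\omega}$ acquise, les autres v�rifications sont essentiellement formelles.
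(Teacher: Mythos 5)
Votre preuve est correcte et suit essentiellement la m\^eme d\'emarche que celle de l'article : invariance \`a gauche et \`a droite par $K$ de l'ordre maximal des p\^oles, r\'eduction \`a $\pi^{\lambda}$ via la d\'ecomposition de Cartan, d\'etermination de $\lambda$ par les entiers $\left\langle \omega_{i},-w_{0}\lambda\right\rangle$, puis int\'egralit\'e de $\iota(g_{+})$ composante par composante. Vous \^etes simplement plus explicite que l'article sur le dernier point, o\`u le passage de $V_{G}^{\flat}(\mathcal{O})$ \`a $V_{G}(\mathcal{O})$ par finitude de la normalisation et normalit\'e de $\mathcal{O}$ remplace avantageusement l'appel lapidaire \`a la \og continuit\'e \fg{} du texte.
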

\begin{proof}
Le plus grand des ordres des pôles est invariant à gauche et à droite par $K$, en particulier il nous suffit de regarder celui de $\pi^{\la}$, cet ordre est égal à $\left\langle \omega,-w_{0}\lambda\right\rangle$. Inversement, les entiers $\left\langle \omega,-w_{0}\lambda\right\rangle$ déterminent uniquement $\lambda$.

Enfin, comme l'élément $g_{+}$  est dans $G_{+}(F)$ et que pour tout $\omega\in X^{*}(T)^{+}$, $\rho_{\omega}(g_{+})\in \End V_{\omega}(\mathcal{O})$, la continuité nous donne le résultat voulu.
La preuve pour $\overline{K\pi^{\la}K}$ est analogue.
\end{proof}
D'après \cite[Prop. 2]{H}, on peut donner une structure d'ind-schéma sur $k$ au quotient $G(F)/K$ qui s'écrit comme limite inductive de variétés projectives, munies d'immersions fermées les unes dans les autres.

De plus, d'après \cite[Prop. 2]{H}, ce quotient représente le foncteur $\textbf{Q}$ qui à toute $k$-algèbre $R$, associe le groupoïde $\textbf{Q}(R)$ des $G$-torseurs $E$ sur $\Spec(\mathcal{O})\hat{\times} R$ muni d'une trivialisation sur $\Spec (F)\hat{\times} R$. Ici, $\Spec(\mathcal{O})\hat{\times} R$  désigne la complétion $\pi$-adique de $\Spec (\mathcal{O})\times R$ et $\Spec (F)\hat{\times} R$ est l'ouvert complémentaire de $\{\pi\}\times R$ dans $\Spec(\mathcal{O})\hat{\times} R$.

Quant aux strates, elles admettent également une interprétation modulaire.
Pour $1\leq i\leq r$, soit $(\rho_{i},V_{i})$ la représentation irréductible de plus haut poids $\omega_{i}$.
Soit $E$ un $G$ torseur, on peut donc pousser le $G$-torseur par la représentation $\rho_{i}$. On obtient alor un fibré vectoriel noté $\rho_{i}(E)$.
 
Pour un cocaractère dominant $\lambda$, on considère le sous-foncteur $\textbf{Q}^{\lambda}$, qui à toute $k$-algèbre $R$, associe le groupoïde des $G$-torseurs $E$ sur $\Spec (\mathcal{O})\hat{\times} R$ munis d'une trivialisation générique telle que pour tout $i=1,\dots,r,$ la trivialisation fournit une application injective de fibrés vectoriels:
\begin{center}
$\rho_{i}(E)\rightarrow V_{i}(\left\langle \omega_{i},-w_{0}\lambda\right\rangle)$
\end{center}
dont les fibres résiduelles sont non nulles et $V_{i}(\left\langle \omega_{i},-w_{0}\lambda\right\rangle):=V_{i}\otimes_{R[[\pi]]} \pi^{\left\langle \omega_{i},w_{0}\lambda\right\rangle}R[[\pi]]$. D'après le lemme précédent, les $k$-points de ce foncteur sont  $\Gr_{\lambda}=K\pi^{\lambda}K/K$.
Si l'on  n'impose pas que les fibres résiduelles soient non nulles, on trouve l'espace
$\overline{\Gr}_{\lambda}=\coprod\limits_{\mu\leq\lambda}\Gr_{\mu}$.

La strate $\Gr_{\lambda}$ est lisse, mais ce n'est plus du tout le cas de son adhérence, qui est toutefois projective.

\subsection{Une interprétation modulaire}
On s'intéresse à une variante de la fibre de Springer affine:
\begin{center}
$\{g\in G(F)/K\vert ~g^{-1}\gamma g \in K\pi^{\lambda}K\}$ 
\end{center}
pour $\gamma\in G(F)$. 
Nous voulons  voir cette nouvelle fibre de Springer affine, comme une solution à un problème d'espace de modules.

On rappelle que le semi-groupe de Vinberg $V_{G}$ s'obtenait comme la normalisation de l'adhérence de 
\begin{center}
$G_{+}=(T\times G)/Z_{G}$ dans $\prod\limits_{i=1}^{r} \End (V_{\omega_{i}})\times\prod\limits_{i=1}^{r}\mathbb{A}^{1}_{\alpha_{i}}$.
\end{center}
La donnée de $-w_{0}\lambda\in X_{*}(T)^{+}=T(F)/T(\mathcal{O})$ fournit un $T$-torseur $T_{-w_{0}\lambda}$ muni d'une trivialisation générique. Cela revient également à pousser le $\mathbb{G}_{m}$-torseur $\pi^{-1}\co$  par $-w_{0}\lambda:\mathbb{G}_{m}\rightarrow T$.
De plus, pour chaque $i=1,\dots,r$, la racine $\alpha_{i}:T\rightarrow\mathbb{G}_{m}$ permet de tordre la droite $\mathbb{A}^{1}_{\alpha_{i}}$ par le $T$-torseur $T_{-w_{0}\lambda}$. On pose $b_{i}:=1_{(\left\langle \alpha_{i}, -w_{0}\lambda\right\rangle)}$ la section unité. 
Posons $X=\Spec(\mathcal{O})$. On considère l'espace caractéristique $\mathfrak{C}_{+}:=V_{T}/W$ où $V_{T}$ est l'adhérence dans le semi-groupe de Vinberg de $T_{+}$.
Nous avons également introduit dans la section \ref{rapsemi} le morphisme d'abélianisation,
\begin{center}
$\alpha:V_{G}\rightarrow A_{G}=\mathbb{A}^{r}$,
\end{center}
où $r=\rg G$.
Le morphisme d'abélianisation consiste en le quotient, au sens des invariants, de $V_{G}$ par l'action de $G\times G$ par translation à gauche et à droite (cf. sect. \ref{rapsemi}).
Le $T$-torseur $T_{-w_{0}\lambda}$  avec les sections $(b_{1},\dots,b_{r})$ obtenues en poussant le $T$-torseur par les racines simples revient alors à la donnée d'une flèche:
\begin{center}
$h_{-w_{0}\la}:X\rightarrow[A_{G}/Z_{+}]$,
\end{center}
on rappelle que $Z_{+}$ s'identifie au tore et on fait agir $Z_{+}$ par les racines simples sur $A_{G}$.
Le morphisme de Steinberg étant $Z_{+}$-équivariant, on a une flèche: 
\begin{center}
$\chi_{+}:[V_{G}/Z_{+}]\rightarrow [\mathfrak{C}_{+}/Z_{+}]$.
\end{center}
La section de Steinberg va nous permettre d'obtenir une section à ce morphisme. Comme nous l'avons vu dans la proposition \ref{action}, quitte à extraire une racine $c$-ième où $c=\left|Z_{G}\right|$, on peut faire en sorte que la section $\eps_{+}$ soit équivariante pour l'action tordue de $Z_{+}$. 

Soit le champ quotient $[\kc/Z_{+}]$ pour l'action naturelle de $Z_{+}$ sur $\kc$.
Etant donné un $T$-torseur $E$ sur un $k$-schéma $X$, il s'écrit comme une somme de fibrés en droites $E=\bigoplus\limits_{i=1}^{r}L_{i}$. 
Nous appelerons une racine $c$-ième de $E$, tout $T$-torseur $E'$ sur $X$ tel que $E'=\bigoplus\limits_{i=1}^{r}L_{i}'$ avec $(L_{i}')^{\otimes c}=L_{i}$.
On suppose donc $\la=c\la'$.
Le morphisme de Steinberg $\chi_{+}$ étant $Z_{+}$-équivariant pour les actions canoniques, il induit un morphisme sur les champs quotients:
\begin{center}
$[\chi_{+}]:[V_{G}/(G\times Z_{+})]\rightarrow[\kc/Z_{+}]$.
\end{center}
La proposition est alors la suivante :
\begin{prop}\label{racine}
Soit $S$ un $k$-schéma muni d'un $T$-torseur, $h_{-w_{0}\la}:S\rightarrow BT$ le morphisme vers le classifiant du tore $T$ associé. Soit $a:S\rightarrow[\kc/Z_{+}]$, alors la section de Steinberg et le choix d'une racine $c$-ième $h_{-w_{0}\la'}$ de $h_{-w_{0}\la}$ définit une section:
\begin{center}
$\eps_{+}(a)^{\la'}:\kc\rightarrow[V_{G}/(G\times Z_{+})]$.
\end{center}
au morphisme $[\chi_{+}]$.
\end{prop}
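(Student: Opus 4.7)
The plan is to unpack the stacky data on both sides, use the equivariance of the Steinberg section proved in Proposition \ref{action}, and descend through the $G$-quotient.

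First, I would interpret the morphism $a:S\to[\kc/Z_{+}]$ concretely as a pair $(P_{a},\tilde{a})$ where $P_{a}\to S$ is a $Z_{+}$-torseur and $\tilde{a}:P_{a}\to\kc$ is $Z_{+}$-equivariant for the canonical action on $\kc$. The $T$-torseur associated with $h_{-w_{0}\la}$ is identified, via $Z_{+}\cong T$, with (part of) $P_{a}$, and the hypothesis $\la=c\la'$ together with the existence of a $c$-th root $h_{-w_{0}\la'}$ provides another $Z_{+}$-torseur $P'\to S$ equipped with a morphism $\pi:P'\to P_{a}$ equivariant for the $c$-th power homomorphism $Z_{+}\to Z_{+}$, i.e.\ a trivialization of $c_{*}P'\cong P_{a}$.

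Next, I would form the composite $\epsilon_{+}\circ\tilde{a}\circ\pi:P'\to V_{G}$. By Proposition \ref{action}, the Steinberg section intertwines the action $z\cdot x=z^{c}x$ on $\kc$ with the twisted action $z\ast g=z^{c}\psi_{\omega_{\bullet}(z)}^{-1}g\,\psi_{\omega_{\bullet}(z)}$ on $V_{G}$. Since $\pi$ precisely encodes the $c$-th power, the composite $P'\to V_{G}$ becomes $Z_{+}$-equivariant for the canonical action on $P'$ and the $\ast$-action on $V_{G}$. The crucial observation is that the $\ast$-action differs from the canonical scaling of $Z_{+}$ on $V_{G}$ only by conjugation by $\psi_{\omega_{\bullet}(z)}\in T_{\Delta}\subset G_{+}$, which lies in the $G$-factor of the stack quotient $[V_{G}/(G\times Z_{+})]$.

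Consequently, viewed modulo the $G$-action, the map $P'\to V_{G}$ descends to a morphism $\epsilon_{+}(a)^{\la'}$ lifting $a$ through $[\chi_{+}]$. The section property $[\chi_{+}]\circ\epsilon_{+}(a)^{\la'}=a$ then reduces to the identity $\chi_{+}\circ\epsilon_{+}=\mathrm{Id}$ proved in Propositions \ref{chevalley} and \ref{semisimple}, together with the $G$-invariance of $\chi_{+}$ which ensures the descended map is well-defined on the $[\cdot/(G\times Z_{+})]$-quotient.

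The main obstacle is purely one of bookkeeping at the level of torsors: one must carefully track the three intertwined pieces, namely the canonical $Z_{+}$-action on $\kc$, the twisted $\ast$-action on $V_{G}$, and the $c$-th power homomorphism, and verify that the $c$-th root torsor $P'$ exactly absorbs the discrepancy introduced by the conjugation factor $\psi_{\omega_{\bullet}(z)}$. Once this compatibility is made explicit, factoring out the $G$-action yields the announced section with no further work, and the construction is visibly functorial in $(a,h_{-w_{0}\la'})$.
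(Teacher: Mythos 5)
Your proposal is correct and follows essentially the same route as the paper: both arguments rest on the $Z_{+}$-equivariance of $\epsilon_{+}$ for the $\ast$-actions established in Proposition \ref{action}, use the $c$-th root of the torseur to reconcile the canonical action with the $c$-th power action, and absorb the conjugation twist by $\psi_{\omega_{\bullet}(z)}\in T_{\Delta}$ into the $G$-factor of the quotient stack. The only difference is expository — you phrase the descent at the level of torseurs $P'\to P_{a}$ where the paper phrases it via the modified quotient stacks $[\cdot]^{[c]}$ — and these are two presentations of the same construction.
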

\begin{proof}
On note $[\chi_{+}]^{[c]}:[V_{G}/(G\times Z_{+})]^{[c]}\rightarrow[\kc/Z_{+}]^{[c]}$ le morphisme obtenu en élevant à la puissance $c$ les actions canoniques de $Z_{+}$ sur $V_{G}$ et sur $\kc$.
Nous avons vu en vertu de la proposition \ref{action} que nous avons une section,
\begin{center}
$[\mathfrak{C}_{+}/Z_{+}]^{[c]}\rightarrow[V_{G}^{reg}/(Z_{+}\times Z_{+}^{\tau})]^{[c]}$
\end{center}
où le premier $Z_{+}$ agit par homothétie et le deuxième par conjugaison par les éléments $\tau(z)$ avec $\tau:Z_{+}\rightarrow T_{\Delta}$ construit dans la proposition \ref{action}. En composant alors par ce morphisme $\tau$, nous obtenons une flèche:
\begin{center}
$[\mathfrak{C}_{+}/Z_{+}]^{[c]}\rightarrow[V_{G}^{reg}/(Z_{+}\times G)]^{[c]}$,
\end{center}
qui est donc une section au morphisme de Steinberg:
\begin{center}
$[\chi_{+}]^{[c]}:[V_{G}^{reg}/(Z_{+}\times G)]^{[c]}\rightarrow[\mathfrak{C}_{+}/Z_{+}]^{[c]}$, 
\end{center}
ce qu'on voulait.
\end{proof}
Nous pouvons désormais donner une interprétation modulaire de la fibre de Springer affine. Pour chaque flèche $h_{a}$ qui rend le diagramme :
$$\xymatrix{X\ar[dr]_{h_{-w_{0}\lambda}}\ar[r]^-{h_{a}}&[\mathfrak{C}_{+}/Z_{+}]\ar[d]^{p_{1}}\\&[\mathbb{A}^{r}/Z_{+}]}$$
commutatif, on a un point :
\begin{center}
$[\epsilon_{+}](a)\in[V_{G}^{reg}/(G\times Z_{+})]$.
\end{center}
Nous faisons alors la définition suivante:
\begin{defi}\label{sophist}
On définit la fibre de Springer affine $\mathcal{M}_{\lambda}(a)$ (resp. $\mathcal{M}_{\lambda}^{reg}(a)$) comme le foncteur dont le groupoïde des $R$-points  pour une $k$-algèbre $R$ est:
$$\xymatrix{X\hat{\times}R\ar[ddrr]_{h_{-w_{0}\lambda}}\ar[rr]^{h_{E,\phi_{+}}}\ar[drr]^{h_{a}}&&[V_{G}^{0}/G\times Z_{+}]\ar[d]^{\chi_{+}}\\&&[\mathfrak{C}_{+}/Z_{+}]\ar[d]^{p_{1}}\\&&[\mathbb{A}^{r}/Z_{+}]}$$ 
muni d'un isomorphisme entre la restriction de $h_{E,\phi_{+}}$ à $X^{\bullet}\hat{\times} R$ et la section de Steinberg $[\epsilon_{+}]^{\la'}(a)\in[V_{G}^{reg}/G]$, (resp. les morphismes $h_{E,\phi_{+}}$ qui se factorisent par $[V_{G}^{reg}/G\times Z_{+}])$.
\end{defi}
\begin{lem}
Si on regarde les $k$-points de $\mathcal{M}_{\lambda}(a)$, en négligeant les nilpotents, il s'agit de l'ensemble suivant:
\begin{center}
$\{g\in G(F)/K\vert ~g^{-1}\gamma_{0} g \in V_{G}^{0}(\mathcal{O})\}$.
\end{center}
où $\gamma_{0}=\epsilon_{+}(a)$. Celui-ci étant non vide par définition.
\end{lem}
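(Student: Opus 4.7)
Le plan consiste \`a d\'eployer la d\'efinition modulaire au niveau des $k$-points. Par d\'efinition \ref{sophist}, un $k$-point de $\mathcal{M}_{\lambda}(a)$ est la donn\'ee d'un $G$-torseur $E$ sur $X=\Spec(\co)$, d'un $Z_{+}$-torseur fix\'e par la contrainte $h_{-w_{0}\la'}$ fournie par la proposition \ref{racine}, d'une section $\phi_{+}$ du $V_{G}^{0}$-fibr\'e tordu associ\'e, et d'une trivialisation g\'en\'erique sur $\Spec(F)$ identifiant $\phi_{+}$ avec l'\'el\'ement de Steinberg $\gamma_{0}=\eps_{+}(a)\in V_{G}^{reg}(F)$.

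Premi\`erement, j'utiliserais le th\'eor\`eme de Steinberg $H^{1}(\co,G)=0$, valable car $G$ est semi-simple simplement connexe et le corps r\'esiduel est alg\'ebriquement clos. Cela assure que $E$ est trivialisable sur $X$. Le $Z_{+}$-torseur \'etant d\'etermin\'e par la donn\'ee de $\la$ et de la racine $c$-i\`eme $\la'$, il n'apporte pas d'information suppl\'ementaire au niveau des $k$-points.

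Deuxi\`emement, apr\`es choix d'une trivialisation de $E$, la section $\phi_{+}$ devient un \'el\'ement de $V_{G}^{0}(\co)$ et la trivialisation g\'en\'erique se r\'ealise comme un \'el\'ement $g\in G(F)$ satisfaisant $g\phi_{+}g^{-1}=\gamma_{0}$, soit $\phi_{+}=g^{-1}\gamma_{0}g$. Un changement de trivialisation de $E$ par $k\in K=G(\co)$ remplace $g$ par $gk$, ce qui fournit une classe bien d\'efinie $[g]\in G(F)/K$. L'int\'egralit\'e de $\phi_{+}$ se traduit exactement en $g^{-1}\gamma_{0}g\in V_{G}^{0}(\co)$.

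R\'eciproquement, tout $g\in G(F)$ satisfaisant $g^{-1}\gamma_{0}g\in V_{G}^{0}(\co)$ produit un $k$-point en prenant le $G$-torseur trivial, la section $g^{-1}\gamma_{0}g$ et la trivialisation g\'en\'erique $g$. La non-vacuit\'e r\'esulte du choix $g=1$, puisque $\gamma_{0}=\eps_{+}(a)\in V_{G}^{reg}(\co)\subset V_{G}^{0}(\co)$ d'apr\`es la proposition \ref{centreg}. Le seul point d\'elicat attendu est l'ajustement de l'\'equivariance sous $Z_{+}$ et de l'extraction de racine $c$-i\`eme de la proposition \ref{racine}, afin d'assurer la compatibilit\'e de la section de Steinberg avec l'ab\'elianisation; une fois ces choix fix\'es, la bijection est imm\'ediate.
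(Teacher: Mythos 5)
Votre preuve est correcte et suit essentiellement la même démarche que celle du texte : on trivialise le $G$-torseur $E$ sur $\Spec(\co)$ (ce que le texte fait implicitement via l'interprétation modulaire de la grassmannienne affine, et que vous justifiez par l'annulation de $H^{1}(\co,G)$ pour $G$ simplement connexe), la trivialisation générique fournit la classe $g\in G(F)/K$, et l'intégralité de $\phi_{+}$ se traduit en $g^{-1}\gamma_{0}g\in V_{G}^{0}(\co)$. Vous explicitez en outre la réciproque et la non-vacuité via $g=1$, que le texte laisse au lecteur.
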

$\rmq$ La condition que $h_{a}$ soit au-dessus de $h_{-w_{0}\lambda}$, nous donne que $\gamma_{0}=(\pi^{-w_{0}\la},\gamma)$, pour un certain $\gamma\in K\pi^{\lambda}K$. 
Nous verrons par la suite que si $a\in\mathfrak{C}_{+}(\mathcal{O})\cap\mathfrak{C}_{+}(F)^{rs}$, cet ensemble
est un schéma localement de type fini dont nous calculerons la dimension.
\begin{proof}
Soit $(E,\phi_{+})\in\mathcal{M}_{\lambda}(a)(k)$ avec un isomorphisme générique $\beta$ avec la section de Steinberg $(E_{0},\g_{0})$ où $E_{0}$ est le torseur trivial.
La donnée de $(E,\beta)$ nous fournit un élément $g\in G(F)/K$. Pour obtenir une section sur $\Spec(\co)$, l'isomorphisme avec la section de Steinberg nous donne $g^{-1}\g_{0}g\in V^{0}_{G}(\co)$.
\end{proof}
\subsection{Symétries d'une fibre de Springer affine}
On rappelle que $I:=\{(g,\g)\in G\times V_{G}\vert~ g\g g^{-1}=\g\}$.
Dans la section \ref{galoiscent}, nous avons défini le centralisateur régulier $J$,  dont nous avons vu qu'il était muni d'un morphisme:
\begin{center}
$\chi_{+}^{*}J\rightarrow I$, 
\end{center}
qui est un isomorphisme au-dessus de $V_{G}^{reg}$. Nous formons le carré cartésien suivant:
$$\xymatrix{\kcd\ar[d]\ar[r]&[\mathfrak{C}_{+}/Z_{+}]\ar[d]^{p_{1}}\\X\ar[r]^-{h_{-w_{0}\la}}&[\mathbb{A}^{r}/Z_{+}]}$$
On se donne alors une section $h_{a}:X\rightarrow\kcd$, nous avons l'image réciproque $J_{a}=h_{a}^{*}J$.
\begin{defi}\label{centr}
Considérons  le groupoïde de Picard $P(J_{a})$ au-dessus de $\Spec(k)$ qui associe à toute $k$-algèbre $R$, le groupoïde des $J_{a}$-torseurs sur $R[[\pi]]$, munis d'une trivialisation sur $R((\pi))$.
\end{defi}
On définit une action du champ $P(J_{a})$ sur $\mathcal{M}_{\lambda}(a)$. En effet si $(E,\phi_{+})\in\mathcal{M}_{\lambda}(a)(R)$, on a un morphisme de faisceaux:
\begin{center}
$J_{a}\rightarrow\underline{\Aut}(E,\phi_{+})$ 
\end{center}
qui se déduit de la flèche $\chi_{+}^{*}J\rightarrow I$.
Celui-ci permet de tordre $(E,\phi_{+})$ par un $J_{a}$-torseur sur $X\hat{\times} R$ trivialisé sur $X^{\bullet}\hat{\times} R$.
On a alors la proposition suivante: 
\begin{prop}\label{picardloc}
$[\chi_{+}]:[V_{G}^{reg}/(G\times Z_{+})]\rightarrow[\mathfrak{C}_{+}/Z_{+}]$ est une gerbe liée par le centralisateur $J$ et neutre.

En particulier, la fibre de Springer $\mathcal{M}_{\lambda}^{reg}(a)$ est un espace principal homogène sous $\mathcal{P}(J_{a})$. Dans la suite, c'est ce qu'on appellera l'orbite régulière.
\end{prop}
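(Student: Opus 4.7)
\medskip

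\noindent\textbf{Plan de preuve.} On d�compose la premi�re assertion en trois v�rifications: (i) $[\chi_+]$ est une gerbe, (ii) elle est li�e par $J$, (iii) elle est neutre.

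Pour (i), il s'agit de voir que le morphisme admet localement des sections pour la topologie fppf et que deux objets d'une m�me fibre sont localement isomorphes. Les deux points r�sultent directement du Th�or�me \ref{4}, qui assure que $\chi_+^{reg}:V_G^{reg}\to V_T/W$ est lisse et dont les fibres g�om�triques sont des $G$-orbites. Apr�s passage aux champs quotients par $G\times Z_+$, la lissit� se pr�serve et la condition sur les fibres devient exactement la connexit� locale au sens des gerbes. Pour (ii), la description du faisceau des automorphismes est la propri�t� cl�: pour $(E,\phi_+)\in[V_G^{reg}/(G\times Z_+)](S)$ au-dessus de $a:S\to[\kc/Z_+]$, $\underline{\Aut}(E,\phi_+)$ s'identifie au sch�ma en groupes des centralisateurs $\phi_+^*I$. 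L'isomorphisme $G$-�quivariant $\chi_+^*J\stackrel{\cong}{\rightarrow}I$ au-dessus de $V_G^{reg}$ fourni par la Proposition \ref{proJ} permet alors d'identifier ce faisceau canoniquement � $a^*J$. Pour (iii), il suffit d'exhiber une section globale du morphisme de champs $[\chi_+]$: c'est pr�cis�ment l'objet de la Proposition \ref{racine}, qui produit une section $\eps_+^{\la'}:[\kc/Z_+]\to[V_G^{reg}/(G\times Z_+)]$ apr�s avoir tordu l'action canonique par le caract�re $\tau$ et extrait une racine $c$-i�me, ces ajustements �tant n�cessaires pour rendre la section de Steinberg $Z_+$-�quivariante.

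La deuxi�me assertion se d�duit formellement de la premi�re. Par la D�finition \ref{sophist}, la fibre $\mathcal{M}_\lambda^{reg}(a)$ s'obtient comme produit 2-cart�sien du morphisme $a:X\to[\kc/Z_+]$ avec $[\chi_+]$. Comme $[\chi_+]$ est une gerbe neutre li�e par $J$, son tir� en arri�re le long de $a$ est une gerbe neutre li�e par $a^*J$, dont le $k$-champ des sections globales munies de leur trivialisation g�n�rique est pr�cis�ment un torseur sous le champ de Picard $\mathcal{P}(J_a)$ de la D�finition \ref{centr}. Explicitement, l'action se fait par torsion: si $(E,\phi_+)\in\mathcal{M}_\lambda^{reg}(a)(R)$ et $T$ est un $J_a$-torseur sur $R[[\pi]]$ trivialis� sur $R((\pi))$, on pousse $T$ par le morphisme $J_a\to\underline{\Aut}(E,\phi_+)$ et on tord $(E,\phi_+)$ par le torseur ainsi obtenu.

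L'obstacle principal n'est pas tant la v�rification abstraite des axiomes de gerbe, d�j� amorc�e par les Th�or�mes \ref{3} et \ref{4}, que la compatibilit� soigneuse des diverses actions de $Z_+$ intervenant dans la construction: action canonique sur $V_G$ et $\kc$, action tordue rendant $\eps_+$ �quivariante, et action via la donn�e du torseur $h_{-w_0\la}$ sur $X$. C'est pr�cis�ment pour cette raison que la racine $c$-i�me $\la'$ a d� �tre introduite, de sorte que la section de Steinberg se rel�ve au champ $[V_G/(G\times Z_+)]$ ad�quat.
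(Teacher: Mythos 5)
Votre preuve est correcte et suit essentiellement la m\^eme d\'emarche que celle de l'article: lissit\'e de $\chi_{+}^{reg}$ et fibres qui sont des $G$-orbites pour la propri\'et\'e de gerbe, identification du faisceau des automorphismes \`a $a^{*}J$ via la caract\'erisation du centralisateur r\'egulier pour le lien, et section de Steinberg (tordue, apr\`es extraction d'une racine $c$-i\`eme) pour la neutralit\'e. Vous explicitez simplement davantage la d\'eduction de la seconde assertion par changement de base, que l'article laisse implicite.
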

\begin{proof}
Nous avons vu que le morphisme était lisse. De par la caractérisation du centralisateur régulier, le faisceau des automorphismes d'un élément $(E,\phi_{+})\in [V_{G}^{reg}/(G\times Z_{+})](S)$ au-dessus d'un point $a:S\rightarrow[\mathfrak{C}_{+}/Z_{+}]$ est canoniquement isomorphe à $a^{*}J$. La neutralité vient de l'existence de $\epsilon_{+}$.
\end{proof}
De même, on considère le schéma $V_{T}^{\la}$ qui s'obtient de la même manière en considérant la flèche $\alpha_{\vert V_{T}}:V_{T}\rightarrow\ab^{r}$.
On considère alors la flèche finie plate surjective, génériquement étale:
\begin{center}
$\theta:V_{T}^{\la}\rightarrow\kcd$.
\end{center}
Nous avons introduit le diviseur discriminant $\mathfrak{D}_{+}=2\rho.\mathfrak{D}\subset\kcd$ dans la proposition \ref{prongdisc} dont le lieu de non-annulation s'identifie au lieu régulier semisimple.
On le tire alors sur la base $\kcd$ en un diviseur noté $\mathfrak{D}_{\la}$.
Pour $a\in\kcd(\co)\cap\kc^{rs}(F)$, on pose $d(a):=\val(a^{*}\mathfrak{D}_{\la})$.
Nous allons donner une formule pour $d(a)$. Soit $t_{+}=(\pi^{-w_{0}\la},t)\in V_{T}^{\la}(\overline{F})$ tel que $\theta(t_{+})=a$ et où $\overline{F}$ est la clôture algébrique de $F$ et $\overline{\co}$ son anneau d'entiers.
Par le critère valuatif, on a que $t_{+}\in V_{T}^{\la}(\overline{\co})$ et on déduit:
\begin{equation}
d(a)=\left\langle 2\rho,\la\right\rangle+\val(\det(\Id-\ad(t):\mathfrak{g}(F)/\mathfrak{g}_{t}(F)\rightarrow\mathfrak{g}(F)/\mathfrak{g}_{t}(F))).
\label{calculvin}
\end{equation}

\subsection{Le théorème principal de dimension}
On suppose $G$ semisimple simplement connexe avec $F=k((\pi))$ et $k$ algébriquement clos.
On note $G^{rs}\subset G$ l'ouvert constitué des éléments réguliers semisimples.
On considère la fibre de Springer:
\begin{center}
$X_{\gamma}^{\lambda}=\{g\in G(F)/K\vert ~g^{-1}\gamma g\in K\pi^{\lambda}K\}$.
\end{center}
On pose $d(\gamma):=\val(\det_{F}(\Id-\ad(\gamma):\mathfrak{g}(F)/\mathfrak{g}_{\gamma}(F)\rightarrow\mathfrak{g}(F)/\mathfrak{g}_{\gamma}(F)))$ et  $\defa(\gamma)=\rg G- \rg_{F}G_{\gamma}(F)$ où $\rg_{F}$ désigne le rang du plus grand sous-tore déployé du centralisateur $G_{\g}(F)$  de $\g$ et $\mathfrak{g}_{\gamma}(F)$ son algèbre de Lie.
\begin{thm}\label{dim}
Soit $\gamma\in G(F)^{rs}$. On suppose $X_{\gamma}^{\lambda}$ non vide.
\begin{enumerate} 
\item $X_{\gamma}^{\lambda}$ est  un schéma localement de type fini.
\item $\dim X_{\gamma}^{\lambda}=\left\langle \rho,\lambda\right\rangle + \frac{1}{2}[d(\gamma)-\defa(\gamma)]$.
\end{enumerate}
\end{thm}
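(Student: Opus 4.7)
The plan is to pass to the modular interpretation of Definition \ref{sophist} and then follow the Bezrukavnikov--Kazhdan--Lusztig strategy: reduce the computation to the regular open substack, which by construction is a torsor under a local Picard stack whose dimension is accessible through the discriminant. Concretely, set $\gamma_+ = (\pi^{-w_0\lambda}, \gamma) \in V_G(F)$ and $a = [\chi_+(\gamma_+)] \in \mathfrak{C}_+^\lambda(\mathcal{O}) \cap \mathfrak{C}_+(F)^{rs}$. The lemma following Definition \ref{sophist} identifies the $k$-points of $X_\gamma^\lambda$ with those of $\mathcal{M}_\lambda(a)$, and by Lemma \ref{cont} this moduli space sits as a closed sub-ind-scheme of the affine Grassmannian $G(F)/K$, bounded by the cocharacter $\lambda$; this gives assertion (i).

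For (ii), I start from Proposition \ref{picardloc}: the regular open substack $\mathcal{M}_\lambda^{reg}(a)$, which is non-empty because it contains the point coming from the Steinberg section $\epsilon_+$, is a torsor under the local Picard stack $\mathcal{P}(J_a)$. Since $J$ is smooth and commutative over $\mathfrak{C}_+$ by Theorem \ref{4}, the standard Ng\^o-type computation expresses the dimension of $\mathcal{P}(J_a)$ in terms of the discriminant valuation: one obtains $\dim \mathcal{P}(J_a) = \tfrac12(d(a) - \defa(\gamma))$, where $d(a) = \val(a^*\mathfrak{D}_\lambda)$. Plugging in the explicit formula \eqref{calculvin}, namely $d(a) = \langle 2\rho,\lambda\rangle + \delta(\gamma)$, one obtains $\langle \rho,\lambda\rangle + \tfrac12(\delta(\gamma) - \defa(\gamma))$ for the dimension of the regular open substack, matching the claimed formula.

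The principal obstacle is the density step, namely proving $\dim \mathcal{M}_\lambda(a) = \dim \mathcal{M}_\lambda^{reg}(a)$. Following Kazhdan--Lusztig this reduces to an equidimensionality statement for the affine flag variety parametrising pairs $(g, B')$ with $g \in X_\gamma^\lambda$ and $g^{-1}\gamma g \in V_{B'}(\mathcal{O})$, or equivalently for the fibres of the Vinberg analogue $\tilde{V}_G \to V_G$ of the Grothendieck simultaneous resolution introduced before Corollary \ref{finspring}. Over the regular locus Corollary \ref{finspring} provides what is needed (finite flatness of $\tilde{V}_G^{reg} \to V_G^{reg}$); the irregular strata are controlled using the stratification of the nilpotent cone of $V_G$ from Propositions \ref{nilstrat} and \ref{nilreg}, combined with the Steinberg-type dichotomy of Propositions \ref{stnil} and \ref{stnilb}: regular nilpotent elements lie in a unique Borel semigroup whereas irregular ones lie in infinitely many. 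This dichotomy produces the codimension estimate needed to rule out excess contribution from each non-regular stratum, and propagating the estimate from the most singular fibre of $\chi_+$ to arbitrary fibres relies on the $Z_+$-equivariance principle recorded in the remark after Proposition \ref{centreg}.
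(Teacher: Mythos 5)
Your architecture is the right one and matches the paper's in outline (regular open as a torsor under $\mathcal{P}(J_a)$, density of the regular locus via equidimensionality of an affine flag variety, the Steinberg dichotomy of Propositions \ref{stnil} and \ref{stnilb} to compare $\dim\mathcal{B}_{\gamma_+}$ with $\dim X_\gamma^{\lambda,reg}$ and $\dim X_\gamma^\lambda$). But two steps that you treat as available off the shelf are precisely where the paper has to work, and as written they are gaps.

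First, the identity $\dim\mathcal{P}(J_a)=\tfrac12(d(a)-\defa(\gamma))$ is not a ``standard Ng\^o-type computation'' that can simply be cited here: in the Lie algebra case it rests on an explicit comparison between $\mathrm{Lie}(J_a)(\mathcal{O})$, the N\'eron model of the centralizer torus, and the discriminant, and none of that has been redone for the Vinberg monoid. The paper never proves this identity directly. Instead it computes $\dim X_\gamma^\lambda=\langle\rho,\lambda\rangle+\tfrac12 d(\gamma)$ in the split case by intersecting with $U(F)$-orbits and invoking the Mirkovic--Vilonen/GHKR formula for $\dim(K\pi^\lambda K\pi^{-\nu}\cap U(F))$ (Section \ref{split}), and then obtains the $\defa(\gamma)$ term by passing to the splitting field $\tilde F$ and using Bezrukavnikov's comparison $\dim\tilde{\mathcal{O}}_g=n[\dim\mathcal{O}_g+\tfrac12\defa(\gamma)]$ of centralizer-orbit dimensions (Proposition \ref{dimspring2}), which applies because the regular open subset is a single orbit. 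If you want to keep your route you must actually establish the local formula for $\dim\mathcal{P}(J_a)$ in the monoid setting; otherwise you should substitute the split-case-plus-Galois-descent argument.

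Second, you correctly single out the equidimensionality of $\mathcal{B}_{\gamma_+}$ as the principal obstacle, but the mechanism you propose for it (codimension estimates on the strata of the nilpotent cone, propagated by $Z_+$-equivariance) is not an argument for equidimensionality and is not what the paper does. The nilpotent-cone stratification and the $Z_+$-equivariance principle serve to establish the properties of $\chi_+^{reg}$ and the dichotomy \ref{stnil}/\ref{stnilb}; equidimensionality of $\mathcal{B}_{\gamma_+}$ is proved by the Kazhdan--Lusztig chain-of-lines method, and making that method work in the group case requires: (a) a reduction, via the topological Jordan decomposition and Kottwitz--Viehmann descent (Theorem \ref{kv}), to a topologically unipotent part (handled by the original Theorem \ref{kl}) and a topologically nilpotent part in a sub-monoid attached to an idempotent; (b) for each finite simple root $\alpha$, the construction of a section $s_\alpha$ of a bundle built from the Rees quotient $R/R_\alpha$ of pro-nilpotent radical ideals, whose vanishing detects containment of a line of type $\alpha$; (c) the exclusion of lines of type $\alpha_0$ (Proposition \ref{inclnil}); and (d) the connectivity-by-chains statement (Proposition \ref{un}). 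None of this appears in your sketch, so the density step $\dim\mathcal{M}_\lambda(a)=\dim\mathcal{M}_\lambda^{reg}(a)$ remains unproved.
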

On démontre ce théorème dans la section \ref{finrefdim}.

Regardons comment une fibre de Springer est reliée au semi-groupe de Vinberg. Considérons l'élément $\gamma_{+}=(\pi^{-w_{0}\la}, \gamma)\in G_{+}(F)$ et la fibre de Springer:
\begin{center}
$X_{\gamma_{+}}=\{g\in G(F)/G(\mathcal{O})\vert ~g^{-1}\gamma_{+}g\in V_{G}^{0}(\mathcal{O})\}$.
\end{center}
Nous avons le lemme suivant:
\begin{lem}\label{renorm}
La variété $X_{\gamma}^{\lambda}$ s'identifie canoniquement à  $X_{\gamma_{+}}$.
\end{lem}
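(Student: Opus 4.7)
The plan is to reduce the identification of the two moduli problems to the criterion for membership in $K\pi^{\lambda}K$ provided by Lemma \ref{cont}. The core observation is that the first factor of $G_{+}=(T\times G)/Z_{G}$ is central in $G_{+}$, so conjugation by any $g\in G(F)$ leaves the $T$-component of $\gamma_{+}$ untouched.

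More precisely, first I would note the tautological identity
\begin{center}
$g^{-1}\gamma_{+}g=(\pi^{-w_{0}\lambda},g^{-1}\gamma g)$
\end{center}
for every $g\in G(F)$. Then, applying Lemma \ref{cont} to $h:=g^{-1}\gamma g$, the condition $h\in K\pi^{\lambda}K$ translates exactly into $(\pi^{-w_{0}\lambda},h)\in V_{G}^{0}(\mathcal{O})$. Combining the two statements, $g$ defines a point of $X_{\gamma}^{\lambda}$ if and only if it defines a point of $X_{\gamma_{+}}$, and this equivalence is manifestly compatible with the right action of $K=G(\mathcal{O})$, so the identity on $G(F)/K$ gives a bijection between $k$-points.

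To upgrade this bijection to a canonical isomorphism of functors on $k$-algebras $R$, I would repeat the same two steps over $\Spec(\mathcal{O})\hat{\times}R$. Since the centrality of $T$ in $G_{+}$ is an identity of group schemes, it persists after base change; the integrality of the coefficients of $\rho_{\omega_{i}}(\gamma_{+})$ on $\Spec(\mathcal{O})\hat{\times}R$ is again controlled by the analogue of Lemma \ref{cont} (which is a statement about matrix coefficients and can be checked fibrewise or directly in the ind-scheme picture of Section~2).

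Since the statement is essentially a rewriting exercise made possible by the centrality of $T$ in $G_{+}$ together with the embedding criterion of Lemma \ref{cont}, there is no serious obstacle: the only point one must verify carefully is that the integrality condition used to define $V_{G}^{0}(\mathcal{O})$ coincides with the Cartan-type bound $\leq \langle \omega_{i},-w_{0}\lambda\rangle$ on pole orders that defines $K\pi^{\lambda}K$, which is precisely the content of Lemma \ref{cont}.
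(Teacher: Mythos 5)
Your proposal is correct and follows essentially the same route as the paper: the paper likewise rewrites $X_{\gamma}^{\lambda}$ via the conditions $\rho_{\omega_{i}}(g)^{-1}\pi^{\langle\omega_{i},-w_{0}\lambda\rangle}\rho_{\omega_{i}}(\gamma)\rho_{\omega_{i}}(g)\in \M_{n_{i}}(\mathcal{O})\cap\GL_{n_{i}}(F)$ and recognizes this, via Lemma \ref{cont}, as the condition $g^{-1}\gamma_{+}g\in V_{G}^{0}(\mathcal{O})$, the centrality of the $T$-factor being implicit in the fact that the scalar $\pi^{\langle\omega_{i},-w_{0}\lambda\rangle}$ commutes with conjugation. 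Your explicit remark on the equivalence between the exact Cartan stratum $K\pi^{\lambda}K$ and landing in $V_{G}^{0}$ (rather than $V_{G}$) is exactly the point Lemma \ref{cont} supplies.
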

\begin{proof}
La variété $X_{\gamma}^{\lambda}$ peut s'interpréter de la manière suivante:
\begin{center}
$X_{\gamma}^{\lambda}=\{g\in G(F)/K\vert~\forall i, \rho_{\omega_{i}}(g)^{-1}\pi^{\left\langle \omega_{i},-w_{0}\lambda\right\rangle}\rho_{\omega_{i}}(\gamma)\rho_{\omega_{i}}(g)\in \M_{n_{i}}(\mathcal{O})\cap \GL_{n_{i}}(F)\}$.
\end{center}
Ainsi, nous reconnaissons en l'élément $\pi^{\left\langle \omega_{i},-w_{0}\lambda\right\rangle}\rho_{\omega_{i}}$ l'image de $\gamma_{+}$ par $\rho_{(\omega_{i},\omega_{i})}$ et il résulte du lemme \ref{cont} que $\gamma_{+}$ est dans $V_{G}^{0}(\mathcal{O})$.
\end{proof}
On a le corollaire du théorème \ref{dim}:
\begin{cor}
Soit $a\in\kcd(\co)\cap\kc^{rs}(F)$, on considère la fibre de Springer $\cm_{\la}(a)=X_{\eps_{+}(a)}$ introduite dans la définition \ref{sophist}, alors nous avons:
\begin{center}
$\dim\cm_{\la}(a)=\frac{d(a)-c(a)}{2}$.
\end{center}
où $d(a)$ est la valuation du discriminant sur la base $\kcd$ et $c(a)=\rg(T)-\rg_{F}(J_{a}(F))$.
\end{cor}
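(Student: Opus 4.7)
The plan is to reduce directly to Theorem \ref{dim} via the Vinberg reformulation of Lemma \ref{renorm}, and then match the two dimension formulas by comparing the discriminant and the defect on the two sides.

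First, choose a regular semisimple lift $t_+ = (\pi^{-w_0\la}, t) \in V_T^{\la}(\overline{\co})$ of $a$ under the finite flat map $\theta: V_T^\la \to \kcd$, as in formula \eqref{calculvin}. Writing $\eps_+(a) \in V_G^{reg}(\co)$ as $\eps_+(a) = (\pi^{-w_0\la}, \g)$ with $\g \in G(F)$, Lemma \ref{renorm} identifies $\cm_\la(a) = X_{\eps_+(a)}$ with the affine Springer fiber $X_\g^\la = \{g \in G(F)/K \mid g^{-1}\g g \in K\pi^\la K\}$. The element $\g$ is regular semisimple in $G(F)$: regularity follows from $\eps_+(a) \in V_G^{reg}$, and semisimplicity from $a \in \kc^{rs}(F)$, so $\g$ is conjugate (over $\overline{F}$) to the $T$-component $t$ of $t_+$. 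Theorem \ref{dim} then applies and yields
\begin{equation*}
\dim \cm_\la(a) = \left\langle \rho,\la\right\rangle + \tfrac{1}{2}\bigl[d(\g) - \defa(\g)\bigr].
\end{equation*}

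Next, I would identify the two terms with $d(a)$ and $c(a)$. For the discriminant, formula \eqref{calculvin} reads
\begin{equation*}
d(a) = \left\langle 2\rho,\la\right\rangle + \val\bigl(\det(\Id - \ad(t) : \kg(F)/\kg_t(F) \to \kg(F)/\kg_t(F))\bigr).
\end{equation*}
Since $\g$ is conjugate to $t$ over $\overline{F}$, the valuation of the determinant is unchanged, so the second summand is exactly $d(\g)$, giving $d(a) = \langle 2\rho,\la\rangle + d(\g)$. For the defect, Proposition \ref{proJ} provides a canonical isomorphism $\chi_+^* J \stackrel{\cong}{\to} I$ over $V_G^{reg}$; pulling back by $\eps_+(a)$ identifies $J_a$ with the centralizer $I_\g = G_\g$. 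Hence $\rg_F(J_a(F)) = \rg_F(G_\g(F))$, which gives $c(a) = \rg(T) - \rg_F(J_a(F)) = \defa(\g)$.

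Substituting these two identifications into the formula from Theorem \ref{dim} gives
\begin{equation*}
\dim \cm_\la(a) = \left\langle \rho,\la\right\rangle + \tfrac{1}{2}\bigl[(d(a) - \left\langle 2\rho,\la\right\rangle) - c(a)\bigr] = \tfrac{d(a) - c(a)}{2},
\end{equation*}
as asserted. The only non-routine verification is the matching of centralizers $J_a \cong G_\g$, which is already the content of Proposition \ref{proJ}; beyond that, the corollary is a direct algebraic manipulation of the two discriminant formulas.
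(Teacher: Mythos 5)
Your proof is correct and follows essentially the same route as the paper's, which simply writes $\eps_{+}(a)=(\pi^{-w_{0}\la},\g)$ and invokes Theorem \ref{dim} together with formula \eqref{calculvin}. The only material you add is the explicit bookkeeping $d(a)=\left\langle 2\rho,\la\right\rangle+d(\g)$ and the identification $c(a)=\defa(\g)$ via the isomorphism $J_{a}\cong I_{\g}$ over the regular locus, both of which the paper leaves implicit.
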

\begin{proof}
En effet, l'élément $\eps_{+}(a)$ s'écrit $(\pi^{-w_{0}\la}, \gamma)$ pour un certain $\g\in G(F)^{rs}$. On applique alors la formule de dimension du théorème \ref{dim} ainsi que l'équation \eqref{calculvin} qui exprime le discriminant de la base $\kcd$ en fonction du discriminant sur $G$ et de $\la$.
\end{proof}
On veut savoir quand est-ce que la fibre $X_{\g}^{\la}$ est non vide. On commence par quelques rappels sur les points de Newton. Soit $T$ un tore sur $F$, soit $a\in T(\bar{F})$, on définit l'élément $\nu_{a}\in X_{*}(T)_{\mathbb{Q}}$ en imposant que:
\begin{center}
$\forall~ \lambda\in X^{*}(T)$, $\left\langle \lambda,\nu_{a} \right\rangle=\val(\lambda(a))$.
\end{center}
On obtient donc une flèche surjective $\nu_{T}:T(\bar{F})\rightarrow X_{*}(T)_{\mathbb{Q}}$ et en prenant les invariants sous Galois, nous obtenons une flèche:
\begin{center}
$\nu:T(F)\rightarrow X_{*}(A_{T})_{\mathbb{Q}}$,
\end{center}
où $A_{T}$ est le plus grand sous-tore déployé sur $F$ de $T$.
Soit $\gamma\in G(F)^{rs}$ régulier semi-simple. En ce cas, son centralisateur est un tore maximal $T$ défini sur $F$ et on pose: 
\begin{center}
$\nu_{\gamma}:=\nu_{T}(\gamma)$.
\end{center}
Notons $[\nu_{\gamma}]$, sa classe de conjugaison sous $W$, représentée par un élément dominant de $X_{*}(A_{T})_{\mathbb{Q}}$, on l'appelle le point de Newton de $\gamma$.
\begin{thm}\label{vacuite}
Soit $\gamma\in G(F)^{rs}$. Alors $X_{\gamma}^{\lambda}$ est non vide si et seulement si $[\nu_{\gamma}]\leq\lambda$.
\end{thm}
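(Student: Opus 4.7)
La preuve se d\'ecompose en deux directions.

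\emph{N\'ecessit\'e.} Supposons $X_\g^\la$ non vide et choisissons $g\in X_\g^\la$; posons $\g' = g^{-1}\g g\in K\pi^\la K$. Le point de Newton \'etant un invariant de conjugaison sous $G(F)$, nous avons $[\nu_\g]=[\nu_{\g'}]$, et il suffit donc d'\'etablir une in\'egalit\'e de type Mazur: tout \'el\'ement de $K\pi^\la K$ a un point de Newton inf\'erieur \`a $\la$ dans l'ordre de dominance. Pour chaque repr\'esentation fondamentale $\rho_{\omega_i}$, la matrice $\rho_{\omega_i}(\g')$ a les m\^emes diviseurs \'el\'ementaires que $\rho_{\omega_i}(\pi^\la)$, \`a savoir les $\pi^{\langle\mu,\la\rangle}$ pour $\mu$ poids de $V_{\omega_i}$ (dont le maximum vaut $\pi^{\langle\omega_i,\la\rangle}$), tandis que la plus grande valuation des valeurs propres de $\rho_{\omega_i}(\g')$ vaut $\langle\omega_i,[\nu_\g]\rangle$. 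L'in\'egalit\'e de Mazur appliqu\'ee au premier segment (\guillemotleft~le polygone de Hodge est en dessous du polygone de Newton~\guillemotright) livre $\langle\omega_i,[\nu_\g]\rangle\leq\langle\omega_i,\la\rangle$ pour tout $i$. Par dualit\'e entre poids fondamentaux et coracines simples (valable car $G$ est simplement connexe), ceci \'equivaut \`a $[\nu_\g]\leq\la$.

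\emph{Suffisance.} Supposons $[\nu_\g]\leq\la$ et posons $\g_+ := (\pi^{-w_0\la},\g)\in G_+(F)$ puis $a := \chi_+(\g_+)\in\kc(F)$. La r\'egularit\'e semi-simple de $\g$ entra\^ine $a\in\kc^{rs}(F)$. Pour v\'erifier que $a\in\kcd(\co)$, les $r$ premi\`eres coordonn\'ees $\alpha_i(\pi^{-w_0\la}) = \pi^{\langle\alpha_i,-w_0\la\rangle}$ sont enti\`eres par construction, et pour les $r$ suivantes on calcule $\val(\chi_i(\g_+)) = \langle\omega_i,-w_0\la\rangle + \val(\chi_{\omega_i}(\g))$. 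La valuation de la trace $\chi_{\omega_i}(\g)$, somme des valeurs propres de $\g$ sur $V_{\omega_i}$ de valuations $\langle\mu,[\nu_\g]\rangle$, est minor\'ee par le minimum $\langle w_0\omega_i,[\nu_\g]\rangle$; l'hypoth\`ese $[\nu_\g]\leq\la$ combin\'ee \`a l'antidominance de $w_0\omega_i$ livre $\langle w_0\omega_i,[\nu_\g]\rangle \geq \langle w_0\omega_i,\la\rangle$, ce qui donne $\val(\chi_i(\g_+))\geq 0$. Par la proposition \ref{semisimple}, l'\'el\'ement $\eps_+(a)$ est dans $V_G^0(\co)$, r\'egulier semi-simple de caract\'eristique $a$. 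Comme $G$ est simplement connexe et $k$ alg\'ebriquement clos, l'annulation de $H^1(F,T)$ pour tout $F$-tore maximal $T$ de $G$ (cons\'equence du th\'eor\`eme de Hilbert 90 appliqu\'e aux extensions totalement ramifi\'ees de $F=k((\pi))$) assure que $\g_+$ et $\eps_+(a)$, r\'eguliers semi-simples de m\^eme caract\'eristique, sont $G(F)$-conjugu\'es. Il existe donc $g\in G(F)$ tel que $g^{-1}\g_+ g = \eps_+(a)\in V_G^0(\co)$, ce qui, par le lemme \ref{renorm}, \'equivaut \`a $g^{-1}\g g\in K\pi^\la K$, soit $g\in X_\g^\la$.

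\emph{Obstacle principal.} Le point d\'elicat est l'utilisation, dans la suffisance, de la rigidit\'e de Steinberg pour conclure \`a la conjugaison sous $G(F)$ de deux \'el\'ements r\'eguliers semi-simples de m\^eme caract\'eristique: cela requiert l'annulation du $H^1$ des tores maximaux sur $F$, laquelle d\'ecoule bien du cadre (corps r\'esiduel alg\'ebriquement clos, $G$ simplement connexe). L'in\'egalit\'e de Mazur dans la n\'ecessit\'e est classique mais demande une formulation pr\'ecise dans la dualit\'e poids/coracines adapt\'ee au semi-groupe de Vinberg.
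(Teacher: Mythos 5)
Votre preuve est correcte et, pour la direction non triviale (la suffisance, seule direction que l'article d\'emontre effectivement), elle suit essentiellement la m\^eme strat\'egie que le texte~: passage \`a $\g_{+}=(\pi^{-w_{0}\la},\g)$, int\'egralit\'e de $a=\chi_{+}(\g_{+})$, comparaison avec la section $\eps_{+}(a)$, puis conjugaison de deux \'el\'ements r\'eguliers semi-simples de m\^eme image par $\chi_{+}$ via la trivialit\'e des torseurs sous un tore sur $k((\pi))$ (l'article invoque le th\'eor\`eme de Lang l\`a o\`u vous invoquez l'annulation de $H^{1}(F,T)$ --- c'est le m\^eme fait). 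La diff\'erence r\'eelle est que vous red\'emontrez deux \'etapes que l'article traite par citation~: la n\'ecessit\'e, que l'article renvoie \`a Kottwitz--Viehmann [Cor.~3.6], est chez vous \'etablie directement par l'in\'egalit\'e de Mazur appliqu\'ee aux repr\'esentations fondamentales (votre r\'eduction de $\langle\omega_{i},[\nu_{\g}]\rangle\leq\langle\omega_{i},\la\rangle$ pour tout $i$ \`a $[\nu_{\g}]\leq\la$ est correcte puisque les poids fondamentaux extraient les coefficients sur les coracines simples, m\^eme si l'identification du \emph{segment} pertinent du polygone m\'eriterait d'\^etre pr\'ecis\'ee)~; et l'appartenance $a\in\kcd(\co)$, que l'article d\'eduit de [Kottwitz, Thm.~1.5.2], est chez vous v\'erifi\'ee par une minoration directe de $\val(\chi_{i}(\g_{+}))$. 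Ces arguments explicites sont un gain d'auto-suffisance~; en contrepartie vous passez un peu vite sur le r\^ole de la proposition \ref{racine} (extraction d'une racine $c$-i\`eme du $T$-torseur pour que $\eps_{+}(a)$ ait bien $\pi^{-w_{0}\la}$ comme composante ab\'elianis\'ee), point sans cons\'equence ici puisque tout fibr\'e en droites sur $\Spec(\co)$ est trivial, mais qu'il faudrait mentionner.
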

$\rmq$ Par Kottwitz-Viehmann \cite[Cor. 3.6]{KV}, on sait que si la fibre est non vide alors $[\nu_{\gamma}]\leq\lambda$. Il nous faut donc voir la réciproque.

\begin{proof}
Posons $\g_{+}:=(\pi^{-w_{0}\la},\g)$. Comme $[\nu_{\gamma}]\leq\lambda$, il résulte de \cite[Thm. 1.5.2]{Kot4}, que $a=\chi_{+}(\g_{+})\in\kcd(\co)$.
On considère alors $\g_{0}:=\eps_{+}(a)$. Il résulte de la proposition \ref{racine} que la fibre de Springer de $\g_{0}$ est non vide.
Montrons que $\g_{+}$ et $\g_{0}$ sont conjugués ce qui montrera le théorème.
On considère le schéma sur $k((\pi))$ suivant:
\begin{center}
$\mathcal{T}:=\{h\in G(F)\vert~ h^{-1}\g_{+}h=\g_{0}\}$
\end{center}
C'est un torseur sous le centralisateur $G_{\g_{+}}(F)$ qui est un tore et comme $\g_{+}$ est régulier semisimple et que nous avons $\chi_{+}(\g)=\chi_{+}(\g_{0})$, il admet localement des sections. D'après le théorème de Lang, tout torseur sous un schéma en tores sur $k((\pi))$ est trivial, on obtient donc un élément qui conjugue.
\end{proof}

\section{Preuve du théorème principal}\label{casgen}

\subsection{Décomposition de Jordan topologique}
Nous dirons qu'un élément $\gamma\in G(F)$ est compact s'il engendre un sous-groupe relativement compact de $G(F)$. Nous dirons qu'il est topologiquement unipotent si $\gamma^{p^{r}}\rightarrow 1$ quand $r\rightarrow +\infty$ dans $\End_{F}(V)$ où la topologie est celle induite par le corps $F$ et $V$ une représentation fidèle. Tout élément topologiquement unipotent est compact.

De plus, un élément $\gamma'\in G(F)$ est dit absolument semi-simple s'il est d'ordre fini premier à $p=\car k$. 
Enfin, on se donne $Z(F)$ est un sous-groupe fermé normal de $G(F)$.
Nous dirons qu'un élément est topologiquement unipotent modulo $Z$ (resp. absolument semi-simple modulo $N$), si son image dans $G(F)/Z(F)$ est topologiquement unipotente (resp. absolument semi-simple). On définit de même la notion de compacité modulo $Z$.
On a alors le théorème suivant dû à Spice \cite[Prop. 2.41]{Spi} :
\begin{thm}
Si $\gamma\in G(F)$ est compact modulo $Z$, alors il admet une décomposition de Jordan topologique modulo $Z$, i.e. on a l'écriture:
\begin{center}
$\gamma=\gamma_{s}\gamma_{u}$ 
\end{center}
avec $\gamma_{s}$ absolument semi-simple modulo $Z$ et $\gamma_{u}$ topologiquement unipotent modulo $Z$ qui commutent entre eux et avec $\gamma$.
Si $Z$ est trivial, il y a de plus unicité.
\end{thm}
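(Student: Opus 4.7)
The plan is to reduce to the case $Z=\{1\}$ and then analyze the closed abelian subgroup of $G(F)$ topologically generated by $\gamma$, using the dichotomy between its pro-$p$ and prime-to-$p$ parts induced by the residue characteristic.

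In the case $Z=\{1\}$, embed $G$ into some $GL(V)$ via a faithful representation $\rho$. Compactness of $\gamma$ allows us, after conjugation, to place $\gamma$ in a compact open subgroup $K\subset G(F)$ with $\rho(K)\subset GL(V)(\co)$. Set $H:=\overline{\langle\gamma\rangle}\subset K$, a compact abelian $F$-analytic group. The reduction map $K\to GL(V)(\kappa)$, where $\kappa$ is the residue field of $F$, has pro-$p$ kernel $K_{1}$; hence $H\cap K_{1}$ is an open pro-$p$ subgroup of $H$ with finite quotient. The profinite Schur--Zassenhaus theorem lifts the $p'$-part of this finite quotient uniquely to a closed subgroup $H_{s}\subset H$ of order prime to $p$, yielding a canonical internal product decomposition $H=H_{s}\times H_{u}$ with $H_{u}$ pro-$p$. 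Writing $\gamma=\gamma_{s}\gamma_{u}$ accordingly, the element $\gamma_{s}$ has finite order prime to $p$, hence is absolutely semi-simple; $\gamma_{u}$ lies in a pro-$p$ group, so $\gamma_{u}^{p^{r}}\to 1$ and it is topologically unipotent; and both commute with $\gamma$ and with each other because $H$ is abelian. Crucially, $\gamma_{s}$ lies in $G(F)$ and not merely in $GL(V)(F)$: with $N=|H_{s}|$, the projection $H\to H_{s}$ sends any $h$ to $\lim_{n\to\infty}h^{Np^{n}}$, a limit taken intrinsically in $G(F)$.

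For uniqueness when $Z=\{1\}$, any alternative decomposition $\gamma=\gamma_{s}'\gamma_{u}'$ satisfying the stated conditions forces $\gamma_{s}'$ to commute with $\gamma$, hence with $\gamma_{s}$ and $\gamma_{u}$ (which are continuous limits of powers of $\gamma$); the identity $\gamma_{s}^{-1}\gamma_{s}'=\gamma_{u}\gamma_{u}'^{-1}$ then exhibits a single element that is simultaneously of finite order prime to $p$ and topologically unipotent, hence equal to $1$. For general $Z$, apply the same construction to the compact abelian image $\overline{\langle\gamma\rangle}Z(F)/Z(F)$ in the topological group $G(F)/Z(F)$, producing $\bar\gamma=\bar\gamma_{s}\bar\gamma_{u}$; lift $\bar\gamma_{s}$ to some $\gamma_{s}\in G(F)$ via a convergent limit of integer powers of $\gamma$ (the compactness modulo $Z$ ensures convergence can be arranged) and set $\gamma_{u}:=\gamma_{s}^{-1}\gamma$. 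The ambiguity of this lift modulo $Z(F)$ accounts for the failure of uniqueness when $Z$ is non-trivial.

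The main obstacle is the structural decomposition $H=H_{s}\times H_{u}$: it rests on two ingredients that must be combined carefully, namely the pro-$p$-ness of the first congruence subgroup of $GL(V)(\co)$ and the unique splitting of a profinite extension of a finite prime-to-$p$ group by a pro-$p$ group. A secondary difficulty, in the modulo-$Z$ case, is producing the lift $\gamma_{s}\in G(F)$ as a convergent limit despite $\overline{\langle\gamma\rangle}$ possibly being non-compact in $G(F)$ itself; once this is achieved, all remaining verifications reduce to tracking the product structure through the exact sequence $1\to Z(F)\to G(F)\to G(F)/Z(F)\to 1$.
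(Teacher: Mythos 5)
The paper does not actually prove this statement: it is quoted directly from Spice \cite[Prop. 2.41]{Spi}, so there is no internal proof to compare yours against and your argument has to stand on its own.

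Your treatment of the case $Z=\{1\}$ is, in substance, the standard and correct argument: $H=\overline{\langle\gamma\rangle}$ is a compact, totally disconnected, abelian subgroup, hence profinite; after conjugating a stabilized lattice it sits in $GL(V)(\co)$, it meets the first congruence subgroup in an open subgroup all of whose finite quotients are $p$-groups, and $H=H_{s}\times H_{u}$ is just the Sylow decomposition of an abelian profinite group. Two slips should be fixed. First, $G(F)$ has no compact open subgroups here ($k$ is algebraically closed, so $G(\co)$ is open but not compact); what you need is only that the compact group $H$ stabilizes a lattice and hence conjugates into $GL(V)(\co)$. Second, your projection formula is wrong: since $h_{s}^{N}=1$ one gets $h^{Np^{n}}=h_{u}^{Np^{n}}\to 1$, not $\to h_{s}$; the correct formula is $h\mapsto\lim_{n}h^{p^{dn}}$ with $p^{d}\equiv 1\pmod{N}$. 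This matters for uniqueness, where you invoke that $\gamma_{s},\gamma_{u}$ are limits of powers of $\gamma$ (alternatively, argue that centralizers are closed, so commuting with $\gamma$ forces commuting with all of $H$).

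The genuine gap is the case of general $Z$. The theorem requires $\gamma_{s}$ and $\gamma_{u}$ to commute with each other and with $\gamma$ \emph{in $G(F)$}, not merely modulo $Z(F)$, so an arbitrary set-theoretic lift of $\bar\gamma_{s}$ is useless, and your proposed lift as ``a convergent limit of integer powers of $\gamma$ in $G(F)$'' does not exist in general: for $G=SL_{2}$, $Z(F)=T(F)$ the diagonal torus (the center of the Levi $M=T$) and $\gamma=\mathrm{diag}(\pi,\pi^{-1})$, the powers $\gamma^{m}$ form a discrete, unbounded family in $G(F)$ with no convergent subsequence along $m\to\infty$, while their images modulo $Z(F)$ are constant. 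So the parenthetical ``compactness modulo $Z$ ensures convergence can be arranged'' is false, and the commutation clause of the theorem is simply not established by your argument. The repair, in the situation the paper actually uses ($Z=Z_{M}$ central in the Levi containing $\gamma$), is to work inside the closed \emph{abelian} subgroup $C=\overline{\langle\gamma\rangle Z(F)}$: since $C$ is saturated for the quotient map, $\bar\gamma_{s}$ lifts to an element of $C$, and commutation is then automatic. For a general closed normal $Z(F)$ that $\gamma$ need not centralize, the existence of a commuting lift is precisely the delicate point of Spice's proposition and requires more than what you have written.
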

Nous appliquons donc ce théorème pour obtenir la proposition suivante:
\begin{prop}
Soit $\gamma\in G(F)$ régulier semi-simple, alors il existe un groupe de Lévi $M$ tel que $\g\in M(F)$ et admette une décomposition de Jordan modulo un certain groupe $Z_{M}(F)$.
\end{prop}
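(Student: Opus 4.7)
Pour produire la d�composition de Jordan souhait�e, le plan est de construire un sous-groupe de L�vi $M$ de $G$ contenant $\g$, tel que $\g$ devienne compact modulo $Z_{M}(F)$, puis d'appliquer le th�or�me de Spice rappel� ci-dessus. L'id�e g�om�trique est d'absorber la partie non compacte de $\g$ dans le centre d'un L�vi, partie non compacte que mesure pr�cis�ment le point de Newton $\nu_{\g}$.

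Comme $\g\in G(F)$ est r�gulier semisimple, son centralisateur $T:=G_{\g}$ est un tore maximal d�fini sur $F$. Soit $A$ le plus grand sous-tore d�ploy� sur $F$ de $T$, et posons $M:=Z_{G}(A)$. C'est un sous-groupe de L�vi de $G$ d�fini sur $F$, qui contient $T$ et donc $\g$. Par construction, $A\subset Z_{M}$.

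L'�tape cruciale consiste � v�rifier que $\g$ est compact modulo $Z_{M}(F)$ au sens requis par Spice. Notons $\bar{\g}$ l'image de $\g$ dans $(M/Z_{M})(F)$. L'�l�ment $\bar{\g}$ reste semisimple et sa compacit� se lit sur le tore $T':=T/(T\cap Z_{M})$ qui le contient: il suffit de v�rifier que le point de Newton $\nu_{T'}(\bar{\g})$ est nul. Or celui-ci est l'image de $\nu_{\g}\in X_{*}(A)_{\mathbb{Q}}$ par la fl�che $X_{*}(A)_{\mathbb{Q}}\rightarrow X_{*}(T')_{\mathbb{Q}}$; comme $A\subset Z_{M}$, cette fl�che est nulle, donc $\nu_{T'}(\bar{\g})=0$ et $\bar{\g}$ engendre un sous-groupe relativement compact de $(M/Z_{M})(F)$.

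Il suffit alors d'appliquer le th�or�me de Spice \cite{Spi} � $\g\in M(F)$, compact modulo $Z_{M}(F)$, pour obtenir l'�criture $\g=\g_{s}\g_{u}$ modulo $Z_{M}(F)$ avec les propri�t�s requises. Le principal point d�licat est l'argument de compacit� modulo centre: il repose crucialement sur la r�gularit� semisimple de $\g$, qui permet la r�duction au tore $T$, et sur le fait que le plus grand sous-tore d�ploy� $A$ absorbe enti�rement la direction non compacte de $\g$ gr�ce � l'inclusion $A\subset Z_{M}$ impos�e par le choix $M=Z_{G}(A)$.
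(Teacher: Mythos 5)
Votre preuve est correcte et suit essentiellement la m�me d�marche que celle de l'article : vous explicitez le L�vi $M=Z_{G}(A)$, o� $A$ est le plus grand sous-tore d�ploy� du centralisateur de $\gamma$, dans lequel $\gamma$ est elliptique donc compact modulo $Z_{M}(F)$, ce que l'article se contente d'affirmer avant d'appliquer le th�or�me de Spice. La v�rification de la compacit� via l'annulation du point de Newton est un compl�ment utile, mais il s'agit du m�me argument.
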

\begin{proof}
L'élément $\gamma$ étant régulier semi-simple, s'il est anisotrope, il est compact et donc on peut prendre pour $Z(F)$ le groupe trivial, sinon l'élément $\gamma$ est  elliptique dans un certain Lévi $M(F)$ et donc compact modulo le centre de ce Lévi $Z_{M}(F)$.
\end{proof}
On rappelle le théorème de Kottwitz-Viehmann de descente démontré sur $\mathbb{C}$, mais dont la preuve vaut en caractéristique $p$ également \cite[Th. 3.5]{KV}:

\begin{thm}\label{kv}
Soit $P$ un parabolique, on écrit $P=MN$ où $M$ est le Lévi.
Soit $\gamma\in M(F)$ et $X_{M}^{\lambda}$, la fibre de Springer pour ce Lévi, alors si $\Ad(\gamma)$ agit sur $\Lie(N)$ avec des pentes strictement positives (cf. \cite[sect. 2.1]{KV}), l'injection canonique $X_{M,\gamma}^{\lambda}\rightarrow X_{\gamma}^{\lambda}$ est une bijection.
\end{thm}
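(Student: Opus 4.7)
Le plan consiste \`a \'etablir la bijection sur les $k$-points en prouvant s\'epar\'ement l'injectivit\'e et la surjectivit\'e du morphisme canonique $X_{M,\g}^{\la}(k) \to X_{\g}^{\la}(k)$ d\'eduit de l'inclusion $M(F) \hookrightarrow G(F)$. L'injectivit\'e est formelle: pour un sous-groupe de L\'evi $M \subset G$ d'un groupe r\'eductif connexe sur un corps local complet, on a $M(F) \cap G(\co) = M(\co)$, de sorte que deux classes $m_{1} M(\co), m_{2} M(\co)$ d'image identique dans $G(F)/G(\co)$ v\'erifient $m_{1}^{-1} m_{2} \in G(\co) \cap M(F) = M(\co)$.

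Pour la surjectivit\'e, \'etant donn\'e $gG(\co) \in X_{\g}^{\la}$, on applique la d\'ecomposition d'Iwasawa $G(F) = M(F) N(F) G(\co)$ pour \'ecrire $g = m n k$ avec $m \in M(F)$, $n \in N(F)$, $k \in G(\co)$; quitte \`a remplacer $g$ par $mn$ sans changer sa classe modulo $G(\co)$, on se ram\`ene \`a $g = mn$. En posant $\g' := m^{-1} \g m \in M(F)$, la condition $g^{-1} \g g \in G(\co) \pi^{\la} G(\co)$ devient $n^{-1} \g' n \in G(\co) \pi^{\la} G(\co)$. Il suffit alors de montrer que $n \in N(\co)$: on aura $g = mn \in M(F) \cdot G(\co)$, donc $gG(\co)$ provient de $m M(\co) \in X_{M,\g}^{\la}$.

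Le c\oe{}ur de l'argument repose sur l'hypoth\`ese de pentes strictement positives. Comme la conjugaison par $m \in M(F)$ pr\'eserve le polygone de Newton de $\Ad(\g)$ sur $\Lie(N)(F)$, les pentes de $\Ad(\g')$ restent strictement positives. On fixe alors un param\'etrage de $N$ par ses groupes radiciels, compatible avec la filtration centrale descendante $N = N^{1} \supset N^{2} \supset \cdots \supset N^{d} = \{1\}$; cela induit une filtration par $\co$-r\'eseaux sur $\Lie(N)(F)$, sur chaque gradu\'e de laquelle $\Ad(\g')$ augmente strictement la valuation. En \'ecrivant $n$ en coordonn\'ees radicielles ordonn\'ees et en utilisant que $G(\co) \pi^{\la} G(\co)$ est born\'e (dans toute repr\'esentation fid\`ele $V$ de $G$, il est contenu dans $\pi^{-N} \End(V) \otimes_{k} \co$ pour un $N$ fix\'e), on montre par r\'ecurrence sur l'indice de filtration que chaque coordonn\'ee radicielle de $n$ a valuation positive ou nulle, d'o\`u $n \in N(\co)$.

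Le principal obstacle est de rendre cette r\'ecurrence pr\'ecise au niveau des groupes. Dans le cadre lin\'eaire des alg\`ebres de Lie de Kazhdan-Lusztig, la relation analogue se lin\'earise imm\'ediatement via l'op\'erateur $\Ad(\g') - \Id$; ici, il faut it\'erativement remplacer $n$ par un \'el\'ement modifi\'e par commutateur, en contr\^olant les termes crois\'es issus des relations de commutation entre groupes radiciels. Le gain \`a chaque \'etape provient de l'hypoth\`ese de pentes strictement positives; et comme ces estim\'ees ne font intervenir que des valuations sur les coordonn\'ees radicielles -- aucun recours \`a l'exponentielle n'est n\'ecessaire --, la preuve de \cite{KV}, r\'edig\'ee originellement sur $\mathbb{C}$, s'adapte verbatim en caract\'eristique $p > 0$.
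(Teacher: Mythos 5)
Le texte ne d\'emontre pas cet \'enonc\'e: il renvoie \`a [KV, Th.~3.5] en signalant seulement que la preuve, r\'edig\'ee sur $\mathbb{C}$, vaut encore en caract\'eristique $p$. Votre r\'edaction est donc une reconstruction de l'argument de Kottwitz--Viehmann, et son squelette est bien celui auquel on s'attend: injectivit\'e via $M(F)\cap G(\co)=M(\co)$, surjectivit\'e via la d\'ecomposition d'Iwasawa $G(F)=M(F)N(F)G(\co)$ et r\'eduction \`a montrer $n\in N(\co)$.

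Le point d\'ecisif est toutefois exactement celui que vous laissez dans le vague, et tel quel il ne se referme pas. De la condition $n^{-1}\g' n\in K\pi^{\la}K$ vous ne tirez que la \emph{bornitude} de $v:=n^{-1}(\g' n\g'^{-1})$ dans $N(F)$, c'est-\`a-dire $v\in N(\pi^{-m}\co)$ pour un $m$ d\'ependant de $\la$ et de $\g'$; or l'hypoth\`ese de pentes strictement positives rend $\Ad(\g')-\Id$ \emph{pr\'eservant} la valuation sur chaque gradu\'e de $\Lie N(F)$ (le terme $-\Id$ domine), de sorte que la r\'ecurrence ne donne que $n\in N(\pi^{-m}\co)$ et jamais $n\in N(\co)$. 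L'obstruction est r\'eelle: pour $G=SL_{2}$, $\g=\mathrm{diag}(\pi,\pi^{-1})$, $M=T$, $N=U$, la pente de $\Ad(\g)$ sur $\Lie N$ vaut $2>0$, et pourtant $n=\left(\begin{smallmatrix}1&\pi^{-1}\\0&1\end{smallmatrix}\right)$ v\'erifie $n^{-1}\g n=\left(\begin{smallmatrix}\pi&1-\pi^{-2}\\0&\pi^{-1}\end{smallmatrix}\right)\in K\pi^{2\check{\alpha}}K$, alors que $n\notin N(\co)$ et que $nK$ n'appartient pas \`a $T(F)K/K$; la fibre $X_{\g}^{2\check{\alpha}}$ est non vide et de dimension $1$ (en accord avec le th\'eor\`eme principal du texte), donc ne peut \^etre en bijection avec une partie de $T(F)/T(\co)$. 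Autrement dit, avec les seules hypoth\`eses figurant dans l'\'enonc\'e tel qu'il est transcrit, la surjectivit\'e est en d\'efaut d\`es que $\la$ d\'epasse $[\nu_{\g}]$ dans la direction des racines de $N$: l'\'enonc\'e de [KV, Th.~3.5] comporte une compatibilit\'e entre $\la$ et le point de Newton de $\g$ (ou une formulation c\^ot\'e L\'evi par r\'eunion de strates dans l'adh\'erence $\overline{K\pi^{\la}K}$) que votre preuve ne fait pas intervenir, et c'est pr\'ecis\'ement l\`a qu'elle doit intervenir pour que l'estimation sur les coordonn\'ees radicielles donne $n\in N(\co)$.
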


$\rmq$ Le corollaire 3.6 de \cite{KV} nous assure que le point de Newton définit un parabolique $P$, sur lequel $\ad(\gamma)$ agit sur $\Lie(N)$ via son point de Newton, avec des pentes positives.

D'après ci-dessus, nous pouvons donc considérer la décomposition de Jordan topologique de notre élément $\gamma\in G(F)^{rs}$. Alors, $\gamma=\gamma_{s}\gamma_{u}$ avec $\gamma_{u}$ topologiquement unipotent et $\gamma_{s}$ fortement semi-simple modulo le centre d'un certain Lévi $M(F)$. 

En utilisant Kottwitz-Viehmann, on sait que $X_{\gamma,M}^{\lambda}=X_{\gamma}^{\lambda}$. On se ramène donc au cas du Lévi. 
Posons $H=C_{G}(\gamma_{s})$. On peut  choisir un sous-groupe parabolique $P'$ de $M$ dont le facteur de Lévi est $H=C_{G}(\gamma_{s})$ et on réapplique le théorème de Kottwitz-Viehmann.
On obtient alors un isomorphisme:
\begin{center}
$X_{\gamma}^{\lambda}= X_{\gamma_{u}}^{\lambda,H}$.
\end{center}
En particulier, on peut supposer que $\gamma_{s}=1$ et donc $\gamma=\gamma_{u}$, ce que l'on fera dans la suite de ce travail.

On pose $\g_{+}=(\pi^{-w_{0}\la},\g)$, comme la fibre de Springer $X_{\g_{+}}=X_{\g}^{\la}$ est non vide par hypothèse, quitte à conjuguer, on peut supposer que $\g\in V_{G}^{0}(\co)$.
On rappelle que:
\begin{center}
$X_{\g_{+}}:=\{g\in G(F)/G(\co)\vert~g^{-1}\g_{+}g\in V_{G}^{0}(\co)\}$.
\end{center}

Un élément $M\in\End(V)$ est dit \textit{quasi-unipotent} si ses valeurs propres sont 0 ou 1 (le lieu d'annulation du polynôme $X^{n}(X-\Id)^{n}$).
On dit d'un élément de $g\in V_{G}(\co)$ qu'il est toplogiquement quasi-unipotent si $g^{p^{r}}$ tend vers un élément quasi-unipotent quand $r\rightarrow+\infty$ dans $\End_{F}(V)$.
En particulier, comme $\g_{u}$ est topologiquement unipotent modulo $Z(F)$, l'élément $\g_{+}$ est topologiquement quasi-unipotent. 
L'élément $\g_{+}$ est dit toplogiquement nilpotent s'il est toplogiquement quasi-unipotent de limite nulle.
Pour obtenir le théorème \ref{dim}, suivant Kazhdan-Lusztig \cite{KL}, nous avons besoin de démontrer l'équidimensionnalité d'une certaine variété de drapeaux associée. Comme l'élément $\g_{+}$ est topologiquement quasi-unipotent, il admet une partie nilpotente et une partie unipotente.
Commençons par rappeler comment on traite la partie unipotente.

\subsection{Le cas topologiquement unipotent}\label{droite}
L'élément $\g_{+}$ est topologiquement unipotent si et seulement si $\la=0$. Dans ce cas, on a $\g=\g_{+}\in G(\co)$.
On a une flèche $\ev:K\rightarrow G$ et on pose $I:=\ev^{-1}(B)$ et $\cB=G(F)/I$, la variété de drapeaux affine associée qui classifie les Iwahori $\hat{B}$.

On considère alors  la variété $\cB_{\g}:=\{g\in \cB\vert~ g^{-1}\g g\in I\}$.
Soit $\hat{\Delta}$ l'ensemble des racines simples du groupe de Weyl affine.
Pour chaque racine $\alpha\in\hat{\Delta}$, on a un parabolique $\hat{P}_{\alpha}:=I\cup Is_{\alpha}I$. Soit le ind-schéma $\mathcal{P}_{\alpha}:=G(F)/\hat{P}_{\alpha}$, on a un morphisme naturel $\pi_{\alpha}:\mathcal{B}\rightarrow\mathcal{P}_{\alpha}$ qui est une $\mathbb{P}^{1}$-fibration. Ses fibres seront appelées des droites de type $\alpha$.
Pour un point $g\in\cB$, on note $\bP_{\alpha}^{1}(g)$, la droite de type $\alpha$ passant par $g$.

Kazhdan-Lusztig considèrent alors pour chaque $\alpha\in\hat{\Delta}$, un fibré vectoriel $\cE_{\alpha}$ au-dessus de $\cB_{\g}$, dont la fibre au-dessus d'un Iwahori $\hat{B}$ est donnée par $R_{u}(\hat{B})/R_{u}(\hat{P}_{\alpha})$ où $R_{u}(\hat{B})$ désigne le radical pro-unipotent de $\hat{B}$.
Ils définissent alors une section $s_{\alpha}$ de ce fibré vectoriel $\cE_{\alpha}$ dont l'image en un point $\hat{B}\in\cB$ est donnée par l'image de $\g$ dans le quotient $R_{u}(\hat{B})/R_{u}(\hat{P}_{\alpha})$.
Cette section a la propriété notable suivante; soit $g\in\cB_{\g}$, alors 
\begin{center}
$s_{\alpha}(g)=0\Longleftrightarrow\bP_{\alpha}^{1}(g)\subset\cB_{\g}$.
\end{center}
L'existence de cette section nous permet d'établir la proposition suivante:
\begin{prop}\label{lemmun}
Soit $Y$ une composante irréductible de dimension maximale de $\cB_{\g}$. Soit $\cL$ une droite de type $\alpha$, incluse dans $\cB_{\g}$ et telle que $\cL\cap Y\neq\emptyset$, alors il existe une composante irréductible $Y'$ de dimension maximale de $\cB_{\g}$ telle que $\cL\subset Y'$.
\end{prop}
Ils démontrent également la propriété suivante sur $\cB_{\g}$, \cite[§4. Lem. 2]{KL}:
\begin{prop}\label{lemmedeux}
Soient $g,g'\in\cB_{\g}$, alors il existe des droites $\cL_{1},\dots, \cL_{n}$ de type $\alpha_{1},\dots,\alpha_{n}$ dans $\cB_{\g}$  telles que $g\in\cL_{1}$, $g'\in\cL_{n}$ et $\cL_{i}\cap\cL_{i+1}\neq\emptyset$.
En particulier, $\cB_{\g}$ est connexe.
\end{prop}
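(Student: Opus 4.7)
The plan is to follow the combinatorial argument of Kazhdan-Lusztig \cite[\S 4, Lem. 2]{KL} and adapt it to the group-theoretic setting, using the topological unipotence of $\gamma$ as the crucial hypothesis.

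The starting observation is that, for topologically unipotent $\gamma$, any line of type $\alpha$ containing two distinct $\gamma$-fixed alcoves is itself contained in $\mathcal{B}_\gamma$. Indeed, the action of $\gamma$ on such a line — a copy of $\mathbb{P}^1$ arising from a residual $\mathrm{PGL}_2$ inside the $\alpha$-type parahoric — factors through its reduction, which is a unipotent element of $\mathrm{PGL}_2$ (since $\gamma^{p^r} \to 1$). A non-trivial unipotent in $\mathrm{PGL}_2$ has a unique fixed point on $\mathbb{P}^1$, so the presence of two distinct $\gamma$-fixed alcoves on a common line forces this reduction to be trivial; equivalently, $s_\alpha$ vanishes on the line and by the criterion recalled just before Proposition \ref{lemmun}, the entire line lies in $\mathcal{B}_\gamma$.

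Given this preliminary observation, the proposition reduces to showing that $\mathcal{B}_\gamma$ is alcove-connected, i.e., that any two $g, g' \in \mathcal{B}_\gamma$ may be joined by a sequence of alcoves in $\mathcal{B}_\gamma$ with consecutive alcoves adjacent (sharing a codimension-one face). One proceeds by induction on the Iwahori-Bruhat distance $\ell(w)$, where $g^{-1}g' \in IwI$ in the extended affine Weyl group $\widetilde{W}$, following \cite[\S 4]{KL}. The inductive step produces, using topological unipotence of $\gamma$ and the vanishing behavior of the sections $s_\alpha$ on $\mathcal{B}_\gamma$, a simple affine reflection $s_\alpha$ and an intermediate $\gamma$-fixed alcove $g''$ on the line of type $\alpha$ through $g$, with $g''$ strictly closer to $g'$. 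The inductive hypothesis then yields a chain of lines in $\mathcal{B}_\gamma$ from $g''$ to $g'$, and prepending the line through $g$ and $g''$ gives the chain from $g$ to $g'$.

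The main obstacle will be the inductive step, where one must simultaneously control the length reduction in $\widetilde{W}$ and the $\gamma$-fixedness of the intermediate alcove. Kazhdan and Lusztig's original argument handles this via an analysis of the affine root hyperplanes separating $g$ from $g'$ combined with the sections $s_\alpha$, and transfers \emph{verbatim} to our setting: only the Iwahori combinatorics of $\widetilde{W}$ and the topological unipotence of $\gamma$ enter, both of which hold here. The connectedness of $\mathcal{B}_\gamma$ is then immediate from the chain of lines, as each $\mathcal{L}_i \cong \mathbb{P}^1$ is irreducible and consecutive lines share an alcove.
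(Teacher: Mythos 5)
Your proposal is correct and is essentially the paper's own approach: the paper states Proposition \ref{lemmedeux} as a recalled result of Kazhdan--Lusztig \cite[\S 4, Lem. 2]{KL} without reproving it, and your sketch (two fixed alcoves on a common line force the whole line into $\cB_{\g}$ because the reduction of $\g$ in the rank-one quotient is unipotent, then induction on the length of $w$ with $g^{-1}g'\in IwI$) is precisely the mechanism of that proof, which the paper later writes out explicitly in the monoid setting as Proposition \ref{un}. The only slip is cosmetic: the intermediate alcove $g''$ should be taken adjacent to $g'$ (across the wall given by the last letter of a reduced word for $w$, so that the Bruhat distance to $g$ drops from $\ell(w)$ to $\ell(w)-1$) rather than adjacent to $g$, since a neighbour of $g$ need not be strictly closer to $g'$.
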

De ces deux propositions, il résulte formellement le résultat suivant \cite[§4. Prop. 1]{KL} :
\begin{thm}\label{kl}
La variété $\cB_{\g}$ est équidimensionnelle.
\end{thm}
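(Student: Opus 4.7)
The plan is to derive the equidimensionality as a purely formal consequence of Propositions \ref{lemmun} and \ref{lemmedeux}, by propagating the property ``lies in a component of maximal dimension'' along chains of lines.

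First I would fix a component $Y$ of $\cB_{\g}$ of maximal dimension, and let $Y'$ be an arbitrary irreducible component. To compare them I would pick a point $g' \in Y'$ which avoids every other component of $\cB_{\g}$ (possible since components are finite in number locally and $Y'$ is irreducible, so the intersection of $Y'$ with the union of the other components is a proper closed subset of $Y'$), together with an arbitrary point $g \in Y$. By Proposition \ref{lemmedeux} applied to the pair $(g, g')$, I obtain a chain of lines $\cL_{1}, \dots, \cL_{n} \subset \cB_{\g}$, of respective types $\alpha_{1}, \dots, \alpha_{n} \in \hat{\Delta}$, such that $g \in \cL_{1}$, $g' \in \cL_{n}$, and $\cL_{i} \cap \cL_{i+1} \neq \emptyset$ for every $i$.

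Next, I would proceed by induction along the chain to show that each $\cL_{i}$ is contained in a component of maximal dimension. The base case: $\cL_{1}$ meets $Y$ (it contains $g \in Y$), so Proposition \ref{lemmun} provides a maximal-dimensional component $Y_{1}$ with $\cL_{1} \subset Y_{1}$. For the induction step, if $\cL_{i} \subset Y_{i}$ with $Y_{i}$ of maximal dimension, then $\cL_{i+1} \cap Y_{i} \supset \cL_{i+1} \cap \cL_{i} \neq \emptyset$, and Proposition \ref{lemmun} produces a maximal-dimensional component $Y_{i+1}$ containing $\cL_{i+1}$. At the last step we obtain a maximal-dimensional component $Y_{n}$ with $\cL_{n} \subset Y_{n}$, hence $g' \in Y_{n}$.

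Finally, since $g'$ was chosen outside every component other than $Y'$, the membership $g' \in Y_{n}$ forces $Y_{n} = Y'$, so $Y'$ is of maximal dimension, which concludes the proof. There is no real obstacle beyond this bookkeeping; the entire content of the theorem is packaged inside Propositions \ref{lemmun} and \ref{lemmedeux}, and the only point requiring minimal care is the choice of a ``sufficiently generic'' endpoint $g'$ in $Y'$ so that the propagation argument identifies the final component with $Y'$ itself rather than merely producing some maximal-dimensional component passing through $g'$.
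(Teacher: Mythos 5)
Your argument is correct and is essentially the same as the one the paper (following Kazhdan--Lusztig) gives: pick a point of the component to be tested that lies in no other component, join it by a chain of lines to a point of a maximal-dimensional component via Proposition \ref{lemmedeux}, and propagate the property of being contained in a maximal-dimensional component along the chain using Proposition \ref{lemmun}. The only cosmetic difference is the direction in which you run the induction along the chain, and your justification for choosing the generic endpoint $g'$ is a point the paper leaves implicit.
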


Pour faire le lien avec le cas qui nous intéresse, on commence par donner une autre formulation des éléments quasi-unipotents due à Putcha.
Soit $M$ un monoïde réductif, normal et intègre (cf. \eqref{rapsemi}), de groupe des inversibles $G_{+}$.
Soit un idempotent $e=e^{2}\in M$, on considère alors $H(e)$ le groupe des inversibles du monoïde algébrique $eMe$.
Ce groupe s'identifie à $eC_{G_{+}}(e)$. Il peut éventuellement être vide si $e=0$.
On définit une relation $\cH$ dite de Green par:
\begin{center}
$a\cH b$ si $aM=bM$ et $Ma=Mb$.
\end{center}
Nous avons alors d'après \cite[sect. 2]{Pu2},  qu'un élément $u\in M$ est quasi-unipotent s'il existe un entier $i$ et un idempotent $e=e^{2}\in M$ tel que
$u^{i}\cH e$ et $ue$ est unipotent dans le groupe $H(e)$. Si $M$ admet un zéro, alors tout élément nilpotent est clairement quasi-unipotent.

En particulier, si $\g_{+}^{p^{n}}\rightarrow e$, nous avons que  $\g_{+}e\in H(e)(\co)$ et qui est topologiquement unipotent, ce qui nous permettra d'isoler dans la fibre de Springer une partie `unipotente', pour se concentrer ensuite sur la partie nilpotente.
Nous allons maintenant montrer dans la section suivante comment se ramener à ce cas:

\subsection{Réduction au cas topologiquement nilpotent}
On a une flèche $\ev:V_{G}(\co)\rightarrow V_{G}$ et on pose $I^{\bullet}=\ev^{-1}(V_{B})$.
En particulier, quitte à changer de Borel, on peut supposer que $\g_{+}\in I^{\bullet}$.
Comme $\g_{+}$ est topologiquement quasi-unipotent, nous avons $\g_{+}^{p^{n}}\rightarrow e$ avec $e^{2}=e\in E(V_{T})$ où $E(V_{T})$ est l'ensemble des idempotents de $V_{T}$.

De plus, on a $\g_{+}e=e\g_{+}$, on note alors $G(e):=C_{G}(e)$ le centralisateur de $e$ dans $G$ et $G_{+}(e)=C_{G_{+}}(e)$.
Soit $\overline{G}^{0}_{+}(e)$ l'adhérence de $G_{+}(e)$ dans $V_{G}^{0}$.

Nous avons une description des éléments idempotents de $E(V_{T})$ d'après \cite[p. 434]{Pu2}  et Vinberg \cite[Thm. 7]{Vi} :
\begin{center}
$E(V_{T})=\coprod\limits_{w\in W}\coprod\limits_{(I,J)\in(\Delta\times\Delta)^{*}}we_{I,J}w^{-1}$,
\end{center}
où $(\Delta\times\Delta)^{*}$ est le sous-ensemble des racines simples $\Delta\times\Delta$ qui correspond aux paires $(I,J)$ essentielles (cf. \cite[Déf. 4]{Vi} ).
Si $I=J=\emptyset$, nous avons $e_{I,J}=0$.

Il résulte alors de la description des stabilisateurs des $e_{I,J}$ \cite[Thm. 7]{Vi}  et \cite[Thm. 21]{Ri}  que le centralisateur d'un idempotent $we_{I,J}w^{-1}$ est donné par: 
\begin{center}
$C_{G_{+}}(we_{I,J}w^{-1})=wL_{M}w^{-1}$, 
\end{center}
où $L_{M}$ est le Lévi associé à $M$ et 
\begin{center}
$M=(I\cap J^{0})\cup \mathstrut^{c}J$,
\end{center}
où $\mathstrut^{c}J$ est le complémentaire de $J$ dans $\Delta$  et $J^{0}\subset J$ l'intérieur de $J$, i.e. le sous-ensemble de $J$ qui consiste en les éléments dont les arêtes correspondantes dans le diagramme de Dynkin ne sont pas  adjacentes à une arête correspondant à un élément de $\mathstrut^{c}J$.
On remarque que si $e_{I,J}\in\overline{Z}_{+}$, alors on a $M=\Delta$ et $L_{M}=G$.
Dans la suite, on peut supposer sans restreindre la généralité que $e=e_{I,J}$ pour une certaine paire $(I,J)$.

On considère alors la fibre de Springer analogue dans le Lévi $G(e):=C_{G}(e)$ et en appliquant le théorème \ref{kv}, on se ramène à étudier la fibre de Springer pour $G(e)$:
\begin{center}
$X_{\g_{+}}=X_{\g_{+}}^{G(e)}:=\{g\in G(e)(F)/G(e)(\co)\vert~g^{-1}\g_{+}g\in \overline{G}^{0}_{+}(e)(\co)\}$.
\end{center}
et $e\in \overline{Z}_{G_{+}(e)}$.
Nous avons alors dans $G(e)$ un sous-groupe distingué:
\begin{center}
$G_{e}:=\{g\in G(e)\vert~ ge=eg=e\}^{0}$
\end{center}
et en considérant le groupe $G^{e}:=C_{G(e)}(G_{e})^{0}$, nous obtenons une décomposition de $G(e)$ d'après \cite[Thm. 27.5]{Hum} :
\begin{center}
$G(e)=G_{e}G^{e}=G^{e}G_{e}$.
\end{center}
De plus, nous avons $G_{e}\cap G^{e}=Z(G_{e})=Z(G^{e})$, nous distinguons alors deux cas d'après \cite[Thm. 7]{Vi} :
\begin{itemize}
\item
$e=e_{I,\Delta}$ et $G_{e}=\{1\}$,
\item
$e=e_{I,J}$ avec $J\varsubsetneq\Delta$, alors $G_{e}\neq\{1\}$ est semisimple et $Z(G_{e})$ est fini.
\end{itemize}

Si $e=e_{I}$, alors on obtient que le morphisme de groupes $\theta: G(e)\rightarrow eG(e)$ est un isomorphisme et on a un isomorphisme entre les fibres de Springer affines:
\begin{center}
$X_{\g_{+}}^{G(e)}=X_{e\g_{+}}^{eG(e)}$.
\end{center}
Or, comme nous avons maintenant que $e\g_{+}\in eG(e)(\co)$, on peut appliquer le théorème \ref{kl} pour obtenir
\begin{thm}
Si $e=e_{I}$, alors la variété $\cB_{e\g_{+}}^{eG(e)}$ est équidimensionnelle.
\end{thm}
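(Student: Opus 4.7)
Le plan est de se ramener enti�rement au cadre topologiquement unipotent trait� dans la section \ref{droite}, puis d'invoquer directement le th�or�me de Kazhdan-Lusztig (th�or�me \ref{kl}).

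D'abord, j'exploiterais l'hypoth�se $e=e_{I}=e_{I,\Delta}$. D'apr�s la discussion pr�c�dente, cela correspond au cas $J=\Delta$, pour lequel $G_{e}=\{1\}$, donc $G(e)=G^{e}$. La fl�che $\theta:G(e)\rightarrow eG(e)$, $g\mapsto eg$, est alors un isomorphisme de groupes alg�briques. Cet isomorphisme induit un isomorphisme des vari�t�s de drapeaux affines associ�es $\cB^{G(e)}\cong\cB^{eG(e)}$ qui envoie $\g_{+}$ sur $e\g_{+}$. En particulier, il identifie $\cB_{\g_{+}}^{G(e)}$ avec $\cB_{e\g_{+}}^{eG(e)}$, ce qui r�duit l'�quidimensionnalit� cherch�e � celle du second.

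Ensuite, je v�rifierais que $e\g_{+}$ est topologiquement unipotent dans $eG(e)(\co)$. Comme $e$ commute avec $\g_{+}$ et que $\g_{+}^{p^{n}}\rightarrow e$ dans $\End_{F}(V)$, on obtient
\begin{equation*}
(e\g_{+})^{p^{n}}=e\g_{+}^{p^{n}}\longrightarrow e^{2}=e,
\end{equation*}
et $e$ est pr�cis�ment l'�l�ment neutre du groupe $eG(e)$. L'�l�ment $e\g_{+}$ est donc bien topologiquement unipotent dans le groupe r�ductif $eG(e)$.

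Enfin, puisque $eG(e)$ est un groupe r�ductif connexe (isomorphe via $\theta$ au facteur de L�vi $G(e)$ de $G_{+}$), la machinerie de Kazhdan-Lusztig rappel�e � la section \ref{droite} (droites de type $\alpha$, sections $s_{\alpha}$ des fibr�s vectoriels $\cE_{\alpha}$, propositions \ref{lemmun} et \ref{lemmedeux}) s'applique verbatim � $eG(e)$ et $e\g_{+}$. Le th�or�me \ref{kl} permet alors de conclure que $\cB_{e\g_{+}}^{eG(e)}$ est �quidimensionnelle, d'o� le r�sultat. L'unique point d�licat est le contr�le que la topologie et les structures affines (Iwahori, paraboliques $\hat{P}_{\alpha}$) de $eG(e)$ sont h�rit�es de mani�re compatible via $\theta$, ce qui d�coule du fait que $\theta$ est un isomorphisme de groupes d�fini sur $k$. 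C'est crucialement l'hypoth�se $e=e_{I}$ (i.e. $J=\Delta$) qui rend $\theta$ bijective; le cas $J\varsubsetneq\Delta$, o� $G_{e}$ est non trivial, rel�vera d'un traitement distinct.
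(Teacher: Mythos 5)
Votre argument est correct et suit essentiellement la m\^eme d\'emarche que l'article : l'hypoth\`ese $e=e_{I}=e_{I,\Delta}$ donne $G_{e}=\{1\}$, donc $\theta:G(e)\rightarrow eG(e)$ est un isomorphisme qui identifie $\cB_{\g_{+}}^{G(e)}$ \`a $\cB_{e\g_{+}}^{eG(e)}$, et $e\g_{+}$ \'etant topologiquement unipotent dans $eG(e)(\co)$ (ce que l'article tire de la caract\'erisation de Putcha rappel\'ee juste avant), le th\'eor\`eme \ref{kl} s'applique. Votre v\'erification explicite de $(e\g_{+})^{p^{n}}\rightarrow e$ ne fait qu'expliciter un point que l'article laisse implicite.
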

On s'intéresse maintenant au cas où $e=e_{I,J}$ avec $J\varsubsetneq\Delta$.
On a une suite exacte:
$$\xymatrix{1\ar[r]&G_{e}\ar[r]&G(e)\ar[r]^-{m}&G(e)/G_{e}\ar[r]&1}$$
La flèche $m$ induit une surjection $G^{e}\rightarrow  G(e)/G_{e}$ de noyau fini isomorphe à $Z_{G^{e}}=Z_{G_{e}}$.

En particulier, on obtient un isomorphisme canonique entre les groupes $G(e)/G_{e}$ et $eG(e)\cong eG^{e}\cong G^{e}_{ad}$. On peut donc récrire la suite exacte sous la forme:
\begin{equation}
\xymatrix{1\ar[r]&G_{e}\ar[r]&G(e)\ar[r]^{m}&eG^{e}\ar[r]&1}
\label{suitex}
\end{equation}
On note $G_{e,+}$ le sous-groupe distingué correspondant dans $G_{+}(e)$, on a vu que l'élément $e\g_{+}\in eG^{e}(\co)$, on peut donc écrire une décomposition:
\begin{center}
$\g=\g_{1}\g_{2}$
\end{center}
avec $\g_{2}\in G^{e}(\co)$ topologiquement unipotent et $\g_{1}\in \overline{G}_{e,+}(\co)$ topologiquement nilpotent dans $\overline{G}^{0}_{e,+}$ (i.e. $\g_{1}^{p^{n}}\rightarrow e$).
La suite exacte \eqref{suitex} est une extension de groupes réductifs connexes, on  considère alors la flèche de projection $G(e)\rightarrow eG^{e}$ et comme $F=k((\pi))$ à corps résiduel algébriquement clos, on obtient une flèche surjective entre les grassmaniennes affines:
\begin{center}
$\Gr_{G(e)}\rightarrow \Gr_{eG^{e}}$
\end{center}
de fibre $\Gr_{G_{e}}$. 
On en déduit alors un isomorphisme non canonique entre les grassmaniennes affines:
\begin{center}
$\eta:\Gr_{G_{e}}\times\Gr_{eG^{e}}\rightarrow\Gr_{G(e)}$.
\end{center}

\begin{prop}
La flèche $\eta$ induit un isomorphisme:
\begin{center}
$\eta:X_{\g_{1}}^{G_{e}}\times X_{e\g_{+}}^{eG^{e}}\rightarrow X_{\g_{+}}^{G(e)}$.
\end{center}

\end{prop}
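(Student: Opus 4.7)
Mon plan est de tirer parti du fait que $\eta$ est d\'ej\`a un isomorphisme de grassmanniennes affines, obtenu via Lang appliqu\'e \`a la suite exacte \eqref{suitex} (le noyau $G_e$ \'etant r\'eductif connexe \`a corps r\'esiduel alg\'ebriquement clos). Il suffit donc d'\'etablir une bijection entre les deux ensembles de $k$-points~: d'une part $\eta$ doit envoyer $X_{\g_{1}}^{G_{e}}(k)\times X_{e\g_{+}}^{eG^{e}}(k)$ dans $X_{\g_{+}}^{G(e)}(k)$, et r\'eciproquement tout \'el\'ement de la fibre totale doit se relever de mani\`ere unique en un couple dans les deux fibres relatives.

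Pour le sens direct, je partirai de l'observation cruciale suivante~: puisque par d\'efinition $G^{e}=C_{G(e)}(G_{e})^{0}$, tout \'el\'ement de $G^{e}$ commute avec tout \'el\'ement de $G_{e}$. En particulier, pour $g_{1}\in X_{\g_{1}}^{G_{e}}(k)$ et $g_{2}\in X_{e\g_{+}}^{eG^{e}}(k)$, en posant $g=g_{1}g_{2}$, les \'el\'ements $\g_{1}$ et $g_{2}$ commutent, ainsi que $\g_{2}$ et $g_{1}$, ce qui livre la factorisation essentielle
\[ g^{-1}\g_{+}g = (g_{1}^{-1}\g_{1}g_{1})(g_{2}^{-1}\g_{2}g_{2}). \]
L'int\'egralit\'e de chaque facteur dans son sous-(semi-)groupe respectif entra\^\i nera celle du produit dans $\overline{G}_{+}^{0}(e)(\co)$, ce dernier contenant \`a la fois $\overline{G}_{e,+}$ et $G^{e}$ et \'etant stable par la multiplication du semi-groupe de Vinberg.

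Pour la r\'eciproque, je proc\'ederai par \og projection par l'idempotent $e$ \fg. Donn\'e $g\in X_{\g_{+}}^{G(e)}(k)$, je d\'ecomposerai $g=g_{1}g_{2}$ suivant $G(e)=G_{e}G^{e}$ (d\'ecomposition unique modulo le groupe fini $Z(G_{e})=Z(G^{e})$, ce qui ne modifie pas les classes dans les grassmanniennes). La m\^eme factorisation donne $g^{-1}\g_{+}g=(g_{1}^{-1}\g_{1}g_{1})(g_{2}^{-1}\g_{2}g_{2})\in\overline{G}_{+}^{0}(e)(\co)$; en multipliant par $e$, qui est central dans $C_{V_{G}}(e)$, trivial sur $\overline{G}_{e,+}$ et l'identit\'e sur $G^{e}$, on obtient $e\cdot g^{-1}\g_{+}g=g_{2}^{-1}(e\g_{+})g_{2}\in eG^{e}(\co)$, d'o\`u $g_{2}\in X_{e\g_{+}}^{eG^{e}}(k)$; l'int\'egralit\'e restante de $g_{1}^{-1}\g_{1}g_{1}$ dans $\overline{G}_{e,+}(\co)$ s'en d\'eduit aussit\^ot, et donne $g_{1}\in X_{\g_{1}}^{G_{e}}(k)$.

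L'obstacle principal me para\^\i t \^etre la description pr\'ecise de $\overline{G}_{+}^{0}(e)$ comme image (\`a isog\'enie finie centrale pr\`es) du produit $\overline{G}_{e,+}\times G^{e}$ au sein du semi-groupe de Vinberg, ainsi que la v\'erification que la condition d'int\'egralit\'e se d\'ecompose proprement le long de cette factorisation. Il faudra pour cela mobiliser la description des stabilisateurs des idempotents (proposition \ref{stabet}) et de l'action de $e$ sur les repr\'esentations irr\'eductibles via $\rho_{\la}(e)=p_{J}^{\la}$ (proposition \ref{strates} (iii)), afin de s'assurer que la projection par $e$ est bien d\'efinie sur l'anneau d'entiers, et que la restriction aux $\co$-points pr\'eserve la stratification de $\overline{G}_{+}^{0}(e)$ induite par le semi-groupe de Vinberg.
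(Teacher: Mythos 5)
Votre d\'emarche co\"{\i}ncide pour l'essentiel avec celle de l'article : m\^eme factorisation $g^{-1}\g_{+}g=(g_{1}^{-1}\g_{1}g_{1})(g_{2}^{-1}\g_{2}g_{2})$ fond\'ee sur la commutation entre $G_{e}$ et $G^{e}$, m\^eme recours \`a l'isog\'enie de noyau central fini $G^{e}\rightarrow eG^{e}$ pour transf\'erer l'int\'egralit\'e, et m\^eme appui sur la d\'ecomposition $G(e)=G_{e}G^{e}$ avec $G_{e}\cap G^{e}=Z_{G_{e}}$ fini. La seule variante, mineure, est que pour la surjectivit\'e vous isolez le facteur $eG^{e}$ en multipliant par l'idempotent $e$, l\`a o\`u l'article invoque directement la finitude de $G_{e}(F)\cap G^{e}(F)$ pour forcer l'int\'egralit\'e de chacun des deux facteurs ; les deux arguments reposent sur les m\^emes faits de structure.
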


$\rmqs$
\begin{itemize}
\item Un élément de $\overline{G}_{+,e}$ est une certaine matrice diagonale par blocs:
$$\begin{pmatrix}
   A & 0 \\
   0 & Id 
\end{pmatrix}.$$
En particulier, si $\g_{1}^{p^{n}}\rightarrow e$, cela veut dire que la matrice $A$ est bien topologiquement nilpotente.
\item
C'est l'énoncé auquel nous faisions allusion lorsque nous voulions décomposer la fibre de Springer en une partie unipotente, ici celle qui correspond à $eG^{e}$ et une partie, dont nous allons voir qu'elle est la partie nilpotente.
\end{itemize}
\begin{proof}
On commence par démontrer que $\eta$ induit une application au niveau des fibres de Springer.
Considérons la paire $(g_{1},g_{2})\in X_{\g_{1}}^{G_{e}}\times X_{e\g_{+}}^{eG^{e}}$ et $g\in G(F)$ l'élément canoniquement associé.
Nous avons:
\begin{center}
$g^{-1}\g_{+}g=g^{-1}\g_{1}gg^{-1}\g_{2}g$.
\end{center}
Comme $\g_{1}\in\overline{G}_{e,+}^{0}(\co)$, on a 
\begin{center}
$g^{-1}\g_{1}g=g_{1}^{-1}\g_{1}g_{1}$.
\end{center}
Or, nous avons $g_{1}\in X_{\g_{1}}^{G_{e}}$,  on en déduit donc l'intégralité de $g_{1}^{-1}\g_{1}g_{1}$.
Il nous faut voir l'intégralité de $g^{-1}\g_{2}g$.
Comme $\g_{2}\in G^{e}(\co)$, nous avons déjà que $g^{-1}\g_{2}g\in G^{e}(F)$.
Nous avons l'égalité:
\begin{center}
$g^{-1}\g_{2}ge=g_{2}^{-1}\g_{2}g_{2}\in eG^{e}(\co)$.
\end{center}
Comme la flèche $G^{e}\rightarrow eG^{e}$ est de noyau fini qui s'identifie au centre $Z_{G^{e}}$, on obtient que $g^{-1}\g_{2}g\in G^{e}(\co)$ et donc $g\in X_{\g_{+}}^{G(e)}$.
La flèche est alors bien injective. La surjectivité vient alors de l'égalité:
\begin{center}
$g^{-1}\g_{+}g=g^{-1}\g_{1}gg^{-1}\g_{2}g\in\overline{G}^{0}(e)(\co)$.
\end{center}
avec $g^{-1}\g_{1}g\in G_{e}(F)$ et $g^{-1}\g_{2}g\in G^{e}(F).$

On écrit alors $g=g_{1}g_{2}$ avec $(g_{1},g_{2})\in G_{e}(F)\times G^{e}(F)$.
Comme $G_{e}(F)\cap G^{e}(F)= Z_{G_{e}}$ qui est fini, cela force l'intégralité de $g^{-1}\g_{1}g$ et de $g^{-1}\g_{2}g\in G^{e}(F).$

A nouveau, $g^{-1}\g_{1}g=g_{1}^{-1}\g_{1}g_{1}$ et $g^{-1}\g_{2}g=g_{2}^{-1}\g_{2}g_{2}$.
Il nous suffit donc juste de considérer la paire $(g_{1},eg_{2})$.
\end{proof}

D'après le  théorème \ref{kl}, comme $e\g_{+}$ est topologiquement unipotent, on a:
\begin{prop}
La variété $\cB_{e\g_{+}}^{eG^{e}}$ est équidimensionnelle.
\end{prop}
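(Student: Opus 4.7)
The plan is to reduce directly to Kazhdan--Lusztig's equidimensionality theorem (Theorem \ref{kl}) applied inside the reductive group $eG^{e}$. Two things need to be checked: (i) that $eG^{e}$ is a connected reductive group to which the Kazhdan--Lusztig machinery of Section \ref{droite} applies, and (ii) that the element $e\g_{+}$ is topologically unipotent in $eG^{e}(F)$ and lies in the Iwahori-translate of a Borel.

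For (i), recall from the discussion preceding the proposition that in the case $e=e_{I,J}$ with $J\subsetneq\Delta$, the projection $G^{e}\to eG^{e}$ has finite central kernel $Z_{G^{e}}=Z_{G_{e}}$, so $eG^{e}\cong G^{e}_{ad}$ is a connected reductive (in fact semisimple adjoint) group. All of the ingredients used by Kazhdan--Lusztig in Section \ref{droite} --- the affine flag variety $\cB$, the parabolics $\hat{P}_{\alpha}$, the $\mathbb{P}^{1}$-fibrations $\pi_{\alpha}$, and the vector bundles $\cE_{\alpha}$ with their sections $s_{\alpha}$ --- are defined for any connected reductive group over $F$, so Propositions \ref{lemmun}, \ref{lemmedeux} and hence Theorem \ref{kl} apply to $eG^{e}$ without change; the hypothesis of simple connectedness made globally in the introduction plays no role in this local equidimensionality statement.

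For (ii), since $\g_{+}^{p^{n}}\to e$ in $V_{G}(\co)$ and $e$ is idempotent, multiplication by $e$ gives $(e\g_{+})^{p^{n}}=e\g_{+}^{p^{n}}\to e^{2}=e$. But $e$ is the identity element of the monoid $eG^{e}(\co)$, so this says exactly that $e\g_{+}$ is topologically unipotent in $eG^{e}(F)$, i.e.\ $e\g_{+}\in eG^{e}(\co)$ with $(e\g_{+})^{p^{n}}\to 1_{eG^{e}}$. This is the hypothesis needed for Theorem \ref{kl}.

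Combining the two observations, Theorem \ref{kl} applied in the group $eG^{e}$ to the topologically unipotent element $e\g_{+}$ yields the equidimensionality of $\cB_{e\g_{+}}^{eG^{e}}$. The only potential obstacle is a careful justification that Kazhdan--Lusztig's argument genuinely survives the passage from a semisimple simply connected group to an adjoint (or general connected reductive) group; this is a routine verification since their construction of $s_{\alpha}$ depends only on the Iwahori-level structure and the combinatorics of the affine Weyl group, both of which are intrinsic to any reductive group over $F$. No further calculation is needed.
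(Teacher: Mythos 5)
Your proof is correct and follows exactly the paper's route: the paper deduces this proposition in one line from Theorem \ref{kl} applied in the group $eG^{e}$, using precisely that $e\g_{+}$ is topologically unipotent there. Your additional verifications --- that $eG^{e}\cong G^{e}_{ad}$ is connected reductive so the Kazhdan--Lusztig argument applies, and that $(e\g_{+})^{p^{n}}=e\g_{+}^{p^{n}}\to e^{2}=e$, the identity of $eG^{e}$ --- only make explicit what the paper leaves implicit.
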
 
Il ne nous reste plus qu'à étudier la partie $X_{\g_{1}}^{G_{e}}$ avec $\g_{1}$ topologiquement nilpotent.

\subsection{L'équidimensionnalité de $\cB_{\g_{+}}$}
Dans la section précédente, nous nous sommes ramenés à l'étude d'un élément $\g_{+}\in M^{0}(\co)\cap H_{+}(F)^{rs}$ topologiquement nilpotent où $M$ est monoïde réductif, intègre,  de groupe des inversibles $H$ avec $M\subset V_{G}$ et $M^{0}:=M\cap V_{G}^{0}$. $M$ admet de plus un zéro et d'après \cite[Cor. 2.2.5]{Br}, , il est également normal.
On a la fibre de Springer associée: 
\begin{center}
$X_{\g_{+}}:=\{g\in H_{der}(F)/H_{der}(\co)\vert~g^{-1}\g_{+}g\in M^{0}(\co)\}$.
\end{center}
On se donne une paire de Borel $(B,T)$ de $M$.
Soit $\ev:M(\co)\rightarrow M$. On pose $I:=\ev^{-1}(B)$ et $I_{der}:=\ev^{-1}(B\cap H_{der})$. Si $\overline{B}$, désigne l'adhérence de $B$ dans $M$, nous définissons le semi-groupe $I^{\bullet}=\ev^{-1}(\overline{B})$ (resp. l'ouvert $I_{0}^{\bullet}:=\ev^{-1}(V_{B}^{0}))$ que nous appelons le semi-groupe d'Iwahori de groupe des inversibles $I$. 
On considère alors la variété de drapeaux :
\begin{center}
$\mathcal{B}_{\gamma_{+}}=\{g\in H_{der}(F)/I_{der}\vert~g^{-1}\gamma_{+}g\in I_{0}^{\bullet}\}$,
\end{center}
dont nous voulons démontrer l'équidimensionnalité.

Soit $\hat{\Delta}$ l'ensemble des racines simples du groupe de Weyl affine.
Pour chaque racine $\alpha\in\hat{\Delta}$, on a un semi-groupe parabolique $\hat{P}_{\alpha}^{\bullet}:=I^{\bullet}\cup I^{\bullet}s_{\alpha}I^{\bullet}$,
de groupes des inversibles $\hat{P}_{\alpha}:=I\cup Is_{\alpha}I$.

D'après \cite[Thm. 2.6]{Pu2} , on a un morphisme de monoïdes:
\begin{center}
$\phi: \overline{B}\rightarrow \overline{T}$,
\end{center}
qui prolonge la flèche de projection $B\rightarrow T$.
On considère alors le monoïde algébrique:
\begin{equation}
N:=\phi^{-1}(0).
\label{equnip}
\end{equation}
stable par conjugaison par $B$.
C'est un \textit{idéal} de $\overline{B}$, i.e. on a :
\begin{equation}
\forall~ b \in \overline{B}, bN\subset N ~\text{et}~ Nb\subset N.
\label{idealj}
\end{equation}
On pose alors $R:=\ev^{-1}(N)$ qui admet également une structure de monoïde.
Pour une racine affine simple $\alpha$, on considère alors $R_{\alpha}:=R\cap s_{\alpha}R s_{\alpha}\subset\hat{P}_{\alpha}^{\bullet}$.
On appelle $R_{\alpha}$ le pro-radical nilpotent de $\hat{P}_{\alpha}^{\bullet}$.
Plus généralement, pour un élément $w\in W_{aff}$, on définit de même $R_{w}=R\cap w Rw^{-1}$.
\begin{lem}\label{necracine}
Soit $g\in\cB_{\g_{+}}$, on suppose que $\bP^{1}_{\alpha}(g)\subset\cB_{\g_{+}}$, alors $\g_{+}\in \mathstrut^{g}R_{\alpha}$.
\end{lem}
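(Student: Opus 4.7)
The strategy is to translate the geometric hypothesis $\bP^{1}_{\alpha}(g)\subset\cB_{\g_{+}}$ into two integrality conditions on $\eta:=g^{-1}\g_{+}g$, and then to upgrade each of these to membership in $R$ by exploiting the topological nilpotency of $\g_{+}$ that was secured in the reduction of the preceding subsections.

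First, I parametrise the line via the Iwahori--Bruhat decomposition $\hat P_{\alpha}=I\sqcup Is_{\alpha}I$: the fibre $\bP^{1}_{\alpha}(g)=g\hat P_{\alpha}/I$ contains in particular the two points $gI$ and $gs_{\alpha}I$. The hypothesis $gI\in\cB_{\g_{+}}$ gives $\eta\in I_{0}^{\bullet}$, hence $\ev(\eta)\in\overline{B}$; the hypothesis $gs_{\alpha}I\in\cB_{\g_{+}}$ gives $s_{\alpha}^{-1}\eta s_{\alpha}\in I_{0}^{\bullet}$, equivalently $\eta\in s_{\alpha}I_{0}^{\bullet}s_{\alpha}$.

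Second, I push $\ev(\eta)$ through the monoid morphism $\phi:\overline B\to\overline T$ used in \eqref{equnip}. Since $\g_{+}$ is topologically nilpotent in $V_{G}(\co)$ and topological nilpotency is preserved under conjugation in $\End(V)(F)$ (conjugation only shifts the valuations of the matrix entries by a bounded constant), one has $\eta^{p^{n}}\to 0$, so $\ev(\eta)^{p^{n}}=0$ in $\overline B(k)$ for $n$ large enough. Applying the monoid morphism $\phi$ then yields $\phi(\ev(\eta))^{p^{n}}=0$ in $\overline T(k)$.

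Third, I observe that a nilpotent element of the monoid $\overline{T}$ must equal $0$: by the idempotent stratification $\overline{T}=\coprod_{J}T\cdot e_{J}$ of Proposition \ref{strates}, a generic element $x=t\cdot e_{J}$ of a stratum satisfies $x^{n}=t^{n}\cdot e_{J}$ (since $e_{J}$ is idempotent), so $x^{n}=0$ forces $e_{J}=0$, i.e.\ $x=0$. Hence $\phi(\ev(\eta))=0$, i.e.\ $\ev(\eta)\in N=\phi^{-1}(0)$, and $\eta\in R$. The same argument applied to $s_{\alpha}^{-1}\eta s_{\alpha}$, which is itself topologically nilpotent as a conjugate of $\g_{+}$, gives $s_{\alpha}^{-1}\eta s_{\alpha}\in R$, so $\eta\in s_{\alpha}R s_{\alpha}$. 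Combining, $\eta\in R\cap s_{\alpha}R s_{\alpha}=R_{\alpha}$, and therefore $\g_{+}=g\eta g^{-1}\in{}^{g}R_{\alpha}$.

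The only delicate step is the third one, namely justifying rigorously that nilpotent elements of $\overline T$ for the monoid law are all zero; this relies in an essential way on the existence of a zero in $M$ (which was arranged by the reduction to the topologically nilpotent case) and on the fact that every stratum of $\overline T$ is of the form $T\cdot e_{J}$ for an idempotent $e_{J}$.
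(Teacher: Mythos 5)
Your proof is correct and follows essentially the same route as the paper: translate the hypothesis into $\eta=g^{-1}\g_{+}g\in I^{\bullet}\cap s_{\alpha}I^{\bullet}s_{\alpha}$ and conclude $\eta\in R_{\alpha}$. In fact you go further than the paper's own argument, which passes from $\g'\in I^{\bullet}\cap s_{\alpha}I^{\bullet}s_{\alpha}$ to $\g'\in R_{\alpha}=R\cap s_{\alpha}Rs_{\alpha}$ without comment; your step showing that topological nilpotency of $\g_{+}$ forces $\phi(\ev(\eta))=0$ (via the fact that the only nilpotent of the toric monoid $\overline{T}$ is $0$), hence $\eta\in R$, is exactly the justification that implication needs.
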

\begin{proof}
Soit $\g':=g^{-1}\g_{+}g$, comme $g\in\cB_{\g_{+}}$, on a $\g'\in I^{\bullet}$. Les semi-groupes d'Iwahori de $\hat{P}_{\alpha}$ sont de la forme $bs_{\alpha}I^{\bullet}s_{\alpha}b^{-1}$ avec $b\in I$. Comme $\bP^{1}_{\alpha}(g)\subset\cB_{\g_{+}}$, on obtient que $\g'\in bs_{\alpha}I^{\bullet}s_{\alpha}b^{-1}$ pour tout $b\in I$.
Ainsi, on obtient en particulier $\g'\in s_{\alpha}I^{\bullet}s_{\alpha}$ et donc $\g'\in R_{\alpha}$, ce qu'on voulait.
\end{proof}

Pour pouvoir établir l'équidimensionnalité de $\cB_{\g_{+}}$, il nous faut définir une section qui nous permette d'obtenir un analogue de la proposition \ref{lemmun}.
On commence par considérer les racines simples $\alpha\in\Delta$ qui correspondent au groupe de Weyl fini $W$. 

Soit $\alpha\in\Delta$ et $P_{\alpha}$ le parabolique minimal associé à $\alpha$ et $L_{\alpha}$ son Lévi.
Soit $\overline{P}_{\alpha}$ (resp. $\overline{L}_{\alpha})$ son adhérence dans $M$.
Si l'on plonge $M$ dans $\End(V)$, pour $b\in \overline{P}_{\alpha}$, soit $\phi_{\alpha}(b)$ la matrice qui a le même facteur diagonal par bloc que la matrice $b$.
En particulier, c'est l'application identité sur $L_{\alpha}$.
Pour $x\in P_{\alpha}$, $x=lu$ avec $l\in L_{\alpha}$ et $u\in R_{u}(P_{\alpha})$.
Alors, $\phi_{\alpha}(x)=\phi_{\alpha}(l)\phi_{\alpha}(u)=l\in L_{\alpha}$.
Ainsi, $\phi_{\alpha}(\overline{P}_{\alpha})\subset\overline{L}_{\alpha}$.
On obtient donc un morphisme de monoïdes:
\begin{center}
$\phi_{\alpha}:\overline{P}_{\alpha}\rightarrow\overline{L}_{\alpha}$,
\end{center}
qui est l'identité sur $\overline{L}_{\alpha}$. On a de plus un diagramme commutatif:
\begin{equation}
\xymatrix{\overline{B}\ar[d]^{\phi}\ar[r]&\overline{P}_{\alpha}\ar[d]^{\phi_{\alpha}}\\\overline{T}\ar[r]&\overline{L}_{\alpha}}
\label{comm}
\end{equation}
où la flèche horizontale du bas est l'inclusion canonique.
On considère alors 
\begin{center}
$N_{\alpha}:=\phi_{\alpha}^{-1}(0)$.
\end{center}
A nouveau, $N_{\alpha}$ est un idéal (cf.\eqref{idealj}) de $\overline{P}_{\alpha}$. Nous avons le lemme suivant.
\begin{lem}\label{sectid}
On a  l'égalité:
\begin{center}
$N_{\alpha}=N\cap s_{\alpha}Ns_{\alpha}$,
\end{center}
et en particulier $N_{\alpha}$ est un idéal de $N$.
\end{lem}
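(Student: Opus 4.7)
Le plan est de prouver les deux inclusions de l'�galit� $N_{\alpha}=N\cap s_{\alpha}Ns_{\alpha}$, puis d'en d�duire que $N_{\alpha}$ est un id�al de $N$.

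Pour l'inclusion $(\supseteq)$, je prends $x\in N\cap s_{\alpha}Ns_{\alpha}$, de sorte que $x\in N\subset\overline{B}\subset\overline{P}_{\alpha}$ et $\phi(x)=0$. Le diagramme commutatif \eqref{comm} et le fait que l'inclusion $\overline{T}\hookrightarrow\overline{L}_{\alpha}$ est un morphisme de mono�des (donc envoie $0$ sur $0$) donnent alors $\phi_{\alpha}(x)=0$, d'o� $x\in N_{\alpha}$. Cette direction est imm�diate.

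Pour la r�ciproque, je commence par observer que $s_{\alpha}N_{\alpha}s_{\alpha}=N_{\alpha}$: en effet $s_{\alpha}\in L_{\alpha}$ et $\phi_{\alpha}$ est �quivariante pour la conjugaison par $L_{\alpha}$ (avec $s_{\alpha}\cdot 0\cdot s_{\alpha}^{-1}=0$). Il suffit donc d'�tablir $N_{\alpha}\subset N$, et en vertu du diagramme \eqref{comm} coupl� � l'injectivit� de $\overline{T}\hookrightarrow\overline{L}_{\alpha}$, cela se r�duit � l'inclusion $N_{\alpha}\subset\overline{B}$. Pour cette derni�re, j'envisage de plonger $M$ dans un $\End(V)$ via une repr�sentation fid�le, en choisissant une base de vecteurs de poids ordonn�e de mani�re � ce que $\overline{B}$ corresponde aux matrices triangulaires sup�rieures. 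Alors $\overline{P}_{\alpha}$ autorise en plus des entr�es dans les positions associ�es � $U_{-\alpha}$, et $\overline{L}_{\alpha}$ s'identifie aux matrices ``bloc-diagonales'' associ�es aux $\alpha$-cha�nes de $V$, le morphisme $\phi_{\alpha}$ �tant alors la projection sur cette partie. Un �l�ment $x\in N_{\alpha}$ a tous ses blocs de Levi nuls, ce qui force en particulier l'annulation des entr�es correspondant � $U_{-\alpha}$ (qui sont incluses dans ces blocs): $x$ est donc triangulaire sup�rieure, i.e.\ $x\in\overline{B}$.

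La propri�t� d'id�al est formelle une fois l'�galit� acquise: $\phi_{\alpha}$ �tant un morphisme de mono�des et $\{0\}$ un id�al bilat�re de $\overline{L}_{\alpha}$, son image r�ciproque $N_{\alpha}=\phi_{\alpha}^{-1}(0)$ est un id�al bilat�re de $\overline{P}_{\alpha}$, donc en particulier de $N$ puisque $N\subset\overline{B}\subset\overline{P}_{\alpha}$. L'�tape la plus d�licate est manifestement l'inclusion $N_{\alpha}\subset\overline{B}$: elle demande un choix soigneux de la repr�sentation fid�le et une manipulation matricielle qui serait avantageusement remplac�e par un argument intrins�que, par exemple via une d�composition de type Bruhat pour $\overline{P}_{\alpha}$ � la Rittatore--Brion.
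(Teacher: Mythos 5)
Your inclusion $N_{\alpha}\subset N\cap s_{\alpha}Ns_{\alpha}$ and the deduction of the ideal property follow essentially the paper's route (triangularity of elements of $N_{\alpha}$ read off a faithful representation, stability of $N_{\alpha}$ under conjugation by $s_{\alpha}\in L_{\alpha}$, and the fact that $\phi_{\alpha}^{-1}(0)$ is an ideal of $\overline{P}_{\alpha}$), so that half is fine.

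The reverse inclusion is where the content of the lemma lies, and your argument for it breaks down. You take $x\in N\cap s_{\alpha}Ns_{\alpha}$, use only $x\in N$, i.e. $\phi(x)=0$, and invoke the commutativity of \eqref{comm} to conclude $\phi_{\alpha}(x)=\iota(\phi(x))=0$. But the diagram is not commutative in that direction: for $x=tu_{\alpha}u'\in B$ with $u_{\alpha}\in U_{\alpha}$ and $u'\in R_{u}(P_{\alpha})$ one has $\phi(x)=t$ while $\phi_{\alpha}(x)=tu_{\alpha}$, so $\iota\circ\phi\neq\phi_{\alpha}$ on $\overline{B}$ (the diagram as drawn in the paper only commutes on the closure of $T\,R_{u}(P_{\alpha})$); the only usable implication is $\phi_{\alpha}(x)=0\Rightarrow\phi(x)=0$, which is the direction needed for the \emph{other} inclusion. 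If your argument were valid it would prove $N\subset N_{\alpha}$, hence, combined with the forward inclusion, $N=N\cap s_{\alpha}Ns_{\alpha}$, i.e. $N\subset s_{\alpha}Ns_{\alpha}$ --- false as soon as $U_{\alpha}\neq 1$, since a nonzero element of the closure of $U_{\alpha}$ lies in $N$ but its conjugate by $s_{\alpha}$ lands in the closure of $U_{-\alpha}$, outside $\overline{B}$. The hypothesis $x\in s_{\alpha}Ns_{\alpha}$, which you never use, is precisely what is needed: it forces the $U_{\alpha}$-component of $x$ to vanish, so that $N\cap s_{\alpha}Ns_{\alpha}$ lies in the closure of $T(U\cap s_{\alpha}Us_{\alpha})=T\,R_{u}(P_{\alpha})$, and on that locus $\phi_{\alpha}$ does coincide with $\phi$, whence $\phi(x)=0$ gives $\phi_{\alpha}(x)=0$. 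This is exactly how the paper argues.
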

\begin{proof}
On commence par montrer l'inclusion $N_{\alpha}\subset N\cap s_{\alpha}Ns_{\alpha}$.
Soit $x\in N_{\alpha}=\phi^{-1}_{\alpha}(0)$, il resulte de la description de l'application $\phi_{\alpha}$ dans un semi-groupe $\End(V)$ que la matrice $x$ est nécessairement triangulaire supérieure, en particulier, on a $x\in\overline{B}$ et $N_{\alpha}\subset\overline{B}$.
On a même par commutativité du diagramme \eqref{comm} que $N_{\alpha}\subset N$.
Comme de surcroît, $N_{\alpha}$ est un idéal de $P_{\alpha}$, on a:
\begin{center}
$s_{\alpha}N_{\alpha}s_{\alpha}\subset N_{\alpha}\subset N$
\end{center}
et $N_{\alpha}\subset N\cap s_{\alpha}Ns_{\alpha}$.

Montrons l'inclusion réciproque. Soit $x\in N\cap s_{\alpha}Ns_{\alpha}$, il nous faut montrer que $\phi_{\alpha}(x)=0$.
$N\cap s_{\alpha}Ns_{\alpha}$ est dans l'adhérence de $\overline{T(U\cap s_{\alpha}Us_{\alpha})}$  et nous avons que la flèche $(\phi_{\alpha})_{\vert N\cap s_{\alpha}Ns_{\alpha}}$ correspond à la partie dans $\overline{T}$ de la matrice $x$.
Or, $\phi(x)=0$, donc $\phi_{\alpha}(x)=0$, ce qu'on voulait.
Comme $N_{\alpha}$ est un idéal de $\overline{P}_{\alpha}$, c'est a fortiori un idéal de $\overline{B}$ et donc de $N$.
\end{proof}

\begin{cor}
Si $\alpha\in\Delta$, alors $R_{\alpha}\subset R$ est un idéal de $R$ et $R_{\alpha}$ est stable par conjugaison par $I$.
\end{cor}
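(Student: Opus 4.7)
L'approche consiste � ramener l'�nonc� enti�rement au lemme \ref{sectid} via le morphisme de mono�des d'�valuation $\ev:M(\co)\rightarrow M$, dont je vais exploiter la multiplicativit�.

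Premier point, je v�rifie l'identit� $R_\alpha=\ev^{-1}(N_\alpha)$. Comme $\alpha\in\Delta$ est une racine finie, la r�flexion $s_\alpha$ admet un rel�vement dans $N_G(T)(k)\subset G(\co)$, que je note encore $s_\alpha$. Le morphisme $\ev$ �tant un morphisme de mono�des, on a pour tout $r\in M(\co)$ l'�galit� $\ev(s_\alpha^{-1} r s_\alpha)=s_\alpha^{-1}\ev(r)s_\alpha$, d'o� $\ev^{-1}(s_\alpha N s_\alpha)=s_\alpha R s_\alpha$. Le lemme \ref{sectid} donne alors
\[R_\alpha=R\cap s_\alpha R s_\alpha=\ev^{-1}(N)\cap\ev^{-1}(s_\alpha N s_\alpha)=\ev^{-1}(N\cap s_\alpha N s_\alpha)=\ev^{-1}(N_\alpha).\]

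Deuxi�me point, je d�duis la structure d'id�al. Le lemme \ref{sectid} assure que $N_\alpha$ est un id�al bilat�re de $N$ (et m�me de $\overline{B}$). Soient $r\in R$ et $x\in R_\alpha$; par multiplicativit� de $\ev$, on a $\ev(rx)=\ev(r)\ev(x)\in N\cdot N_\alpha\subset N_\alpha$, donc $rx\in\ev^{-1}(N_\alpha)=R_\alpha$, et sym�triquement $xr\in R_\alpha$. Ainsi $R_\alpha$ est un id�al bilat�re de $R$.

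Troisi�me point, la stabilit� par conjugaison sous $I$. Comme $N_\alpha$ est un id�al bilat�re de $\overline{B}$, pour tout �l�ment inversible $b\in B$ et tout $x\in N_\alpha$, l'application successive des propri�t�s d'id�al � gauche puis � droite donne $bxb^{-1}\in N_\alpha$, de sorte que $N_\alpha$ est stable par conjugaison sous $B$. Par image r�ciproque via $\ev$, on en conclut que $R_\alpha$ est stable par conjugaison sous $I=\ev^{-1}(B)$.

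Il n'y a pas de v�ritable obstacle dans cette preuve : l'essentiel du travail a d�j� �t� accompli dans le lemme \ref{sectid}, et l'unique pr�caution est de s'assurer que l'action de $s_\alpha$ par conjugaison commute avec $\ev$, ce qui est rendu possible par le choix d'un rel�vement dans $G(\co)$ valable pour les r�flexions finies $\alpha\in\Delta$ (ce rel�vement n'existerait pas si l'on prenait une r�flexion affine).
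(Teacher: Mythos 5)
Votre preuve est correcte et suit essentiellement la m\^eme d\'emarche que celle de l'article : identification $R_{\alpha}=\ev^{-1}(N_{\alpha})$ via le lemme \ref{sectid} et la compatibilit\'e de $\ev$ avec la conjugaison par un relev\'e de $s_{\alpha}$, puis transfert par image r\'eciproque de la structure d'id\'eal de $N_{\alpha}$ dans $N$ et de sa stabilit\'e par conjugaison sous $B$ (cons\'equence de la propri\'et\'e d'id\'eal de $\overline{P}_{\alpha}$, donc de $\overline{B}$). Vous explicitez simplement davantage les v\'erifications de multiplicativit\'e que l'article laisse implicites.
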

\begin{proof}
On a tout d'abord $R_{\alpha}=\ev^{-1}(N\cap s_{\alpha}Ns_{\alpha})$ et $N_{\alpha}=\phi^{-1}_{\alpha}(0)=N\cap s_{\alpha}Ns_{\alpha}$ est un idéal de $N$ en vertu du lemme \ref{sectid}.
Enfin, $R_{\alpha}$ est stable par conjugaison par $I$ car $N_{\alpha}$ est stable par conjugaison par $B$ comme c'est un idéal de $P_{\alpha}$.
\end{proof}
Le lemme suivant va nous permettre de définir notre section:
\begin{lem}\label{radc}
Le monoïde $R_{\alpha}$ est contenu dans tous les semi-groupes d'Iwahori de $\hat{P}_{\alpha}^{\bullet}$.
\end{lem}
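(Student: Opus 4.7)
The plan is to identify the Iwahori semi-groups of $\hat{P}_{\alpha}^{\bullet}$ and then use the defining formula $R_{\alpha}=R\cap s_{\alpha}Rs_{\alpha}$ together with the stability of $R_{\alpha}$ under $I$-conjugation (established in the previous corollary) to show containment by checking only a small number of representatives.

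First I would recall that, as in the usual setting of affine flag varieties, the Iwahori subgroups of $\hat{P}_{\alpha}=I\cup Is_{\alpha}I$ form a single $\hat{P}_{\alpha}$-conjugacy class; they are parametrised by the line $\hat{P}_{\alpha}/I\simeq\mathbb{P}^{1}_{\alpha}$ and can be written as $I$ and $bs_{\alpha}Is_{\alpha}b^{-1}$ for $b\in I$. The same parametrisation extends to the Iwahori semi-groups of $\hat{P}_{\alpha}^{\bullet}$: every Iwahori semi-group is either $I^{\bullet}$ or of the form $bs_{\alpha}I^{\bullet}s_{\alpha}b^{-1}$ with $b\in I$, since the unit group of an Iwahori semi-group is the corresponding Iwahori subgroup and $I^{\bullet}$ is recovered from $I$ by taking the closure in the evaluation.

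Next I would use the $I$-stability of $R_{\alpha}$: by the corollary just established, $bR_{\alpha}b^{-1}=R_{\alpha}$ for all $b\in I$. Consequently, the inclusion $R_{\alpha}\subset bs_{\alpha}I^{\bullet}s_{\alpha}b^{-1}$ is equivalent to $R_{\alpha}\subset s_{\alpha}I^{\bullet}s_{\alpha}$. Therefore it suffices to verify the two containments
\begin{center}
$R_{\alpha}\subset I^{\bullet}\quad\text{and}\quad R_{\alpha}\subset s_{\alpha}I^{\bullet}s_{\alpha}.$
\end{center}

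Both containments are then immediate. The first one follows from $R_{\alpha}\subset R=\ev^{-1}(N)\subset\ev^{-1}(\overline{B})=I^{\bullet}$, using that $N\subset\overline{B}$ by construction. The second follows from the defining equality $R_{\alpha}=R\cap s_{\alpha}Rs_{\alpha}\subset s_{\alpha}Rs_{\alpha}\subset s_{\alpha}I^{\bullet}s_{\alpha}$. There is no real obstacle here; the slight subtlety, if any, is only to make sure that the enumeration of Iwahori semi-groups of $\hat{P}_{\alpha}^{\bullet}$ is correct, i.e.\ that it matches the enumeration of Iwahori subgroups of $\hat{P}_{\alpha}$ through the closure operation, which is a direct consequence of the normality of $M$ and the fact that $I^{\bullet}$ is the closure of $I$ in $\ev^{-1}(\hat{P}_{\alpha}^{\bullet})$.
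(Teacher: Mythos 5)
Your proof is correct and follows essentially the same route as the paper: both arguments rest on the parametrisation of the Iwahori semi-groups of $\hat{P}_{\alpha}^{\bullet}$ as $bs_{\alpha}I^{\bullet}s_{\alpha}b^{-1}$ ($b\in I$), the stability of $R_{\alpha}$ under conjugation by $s_{\alpha}$ (immediate from $R_{\alpha}=R\cap s_{\alpha}Rs_{\alpha}$) and under conjugation by $I$ (the preceding corollary), together with $R_{\alpha}\subset R\subset I^{\bullet}$. The only cosmetic difference is that you reduce to the two representatives $I^{\bullet}$ and $s_{\alpha}I^{\bullet}s_{\alpha}$ before concluding, whereas the paper conjugates a general element $x$ directly; the content is identical.
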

\begin{proof}
En effet, un semi-groupe d'Iwahori de $\hat{P}_{\alpha}^{\bullet}$ s'écrit $bs_{\alpha}I^{\bullet}s_{\alpha}b^{-1}$ avec $b\in I$.
On a que $R_{\alpha}$ est par définition stable par conjugaison par $s_{\alpha}$ et on vient de voir qu'il est stable par conjugaison par $I$, on obtient alors que :
\begin{center}
$s_{\alpha}b^{-1}xbs_{\alpha}\in R_{\alpha}\subset I^{\bullet}$,
\end{center}
d'où $x\in bs_{\alpha}I^{\bullet}s_{\alpha}b^{-1}$, ce qu'on voulait.
\end{proof}

On fait maintenant le quotient de $R$ par $R_{\alpha}$, au sens de Rees. 
Soit $S$ un semigroupe et $I$ un idéal de $S$. On considère la relation d'équivalence $\equiv$ suivante:
\begin{center}
$x\equiv y \Longleftrightarrow x=y$ ou $x,y\in I$.
\end{center}
Cette relation d'équivalence est compatible au produit du semigroupe et on forme le quotient $S/I:=S/\equiv$.

On applique  cette construction pour former le semigroupe quotient $R/R_{\alpha}$. On considère alors le fibré $\mathcal{E}_{\alpha}$ au-dessus de $\cB_{\g_{+}}^{+}$ dont la fibre en un point $g$ est donnée par le quotient $\mathstrut^{g}R/\mathstrut^{g}R_{\alpha}$.
On construit  une section $s_{\alpha}$ de $\mathcal{E}_{\alpha}$, dont l'image d'un point $g\in\cB_{\g_{+}}$ est donnée par l'image de $\g_{+}$ dans le quotient $\mathstrut^{g}R/\mathstrut^{g}R_{\alpha}$.
Comme $R_{\alpha}$ est le radical pro-nilpotent de $\hat{P}_{\alpha}^{\bullet}$, nous avons en vertu du lemme \ref{radc} pour $g\in\cB_{\g_{+}}$: 
\begin{center}
$s_{\alpha}(g)=0\Longleftrightarrow\bP^{1}_{\alpha}(g)\subset\cB_{\g_{+}}.$
\end{center}

Il ne  nous reste plus qu'à traiter le cas de la racine affine $\alpha_{0}$.
L'inconvénient de la racine affine $\alpha_{0}$ est que cette fois $R_{\alpha_{0}}$, comme le montre un calcul pour $SL_{2}$, n'est plus un idéal de $R$. Nous ne pouvons donc pas faire le quotient au sens de Rees.

Fort heureusement, le lemme suivant nous assure que la variété $\cB_{\g_{+}}$ ne contient pas de droites de type $\alpha_{0}$.

On rappelle que $s_{\alpha_{0}}=\pi^{\tilde{\alpha}\check{}}s_{\tilde{\alpha}}$ où $\tilde{\alpha}$ est l'unique racine la plus haute.
En particulier, si on écrit $\tilde\alpha=\sum m_{i}\alpha_{i}$ comme combinaison linéaire de racines simples et que l'on considère une autre racine positive $\beta=\sum p_{i}\alpha_{i}$, on a pour tout $i$, $p_{i}\leq m_{i}$.

\begin{prop}\label{inclnil}
On a l'inclusion $R_{\alpha_{0}}\subset\ev^{-1}(0)$.
\end{prop}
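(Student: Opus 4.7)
The plan is to work in the fundamental representations $V_{\omega_i}$ of $G$ and to show that, for every $x\in R_{\alpha_0}$, the matrix $\rho_{\omega_i}(x)$ reduces to the zero matrix modulo $\pi$, which forces $\ev(x)=0$ in $M$.

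First, I would compute the action of $s_{\alpha_0}=\pi^{\tilde\alpha^\vee}s_{\tilde\alpha}$ on a weight vector $v_\mu$: it sends $v_\mu$ to a unit multiple of $\pi^{-\langle\mu,\tilde\alpha^\vee\rangle}v_{\sigma\mu}$, where $\sigma:=s_{\tilde\alpha}$. The entries of the conjugate $y:=s_{\alpha_0}^{-1}xs_{\alpha_0}$ in a weight basis then satisfy
\begin{equation*}
(y)_{\mu',\mu}=c\cdot\pi^{\langle\mu'-\mu,\tilde\alpha^\vee\rangle}\,(x)_{\sigma\mu',\sigma\mu}
\end{equation*}
for a unit $c$. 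The integrality condition $y\in M(\co)$ gives $\val((x)_{\mu',\mu})\geq \langle\mu'-\mu,\tilde\alpha^\vee\rangle$ for all weight pairs, and the stronger condition $\ev(y)\in N$ (which requires $(y)_{\nu',\nu}\in\pi\co$ when $\nu'\leq\nu$) yields $\val((x)_{\mu',\mu})\geq 1+\langle\mu'-\mu,\tilde\alpha^\vee\rangle$ whenever $\sigma\mu'\leq\sigma\mu$.

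For entries $(x)_{\mu',\mu}$ with $\mu'\leq\mu$, the condition $x\in R$ already ensures $(x)_{\mu',\mu}\in\pi\co$. For $\mu'>\mu$, set $\gamma:=\mu'-\mu$, a non-zero positive combination of simple roots. Using the hint (any positive root $\beta=\sum p_i\alpha_i$ satisfies $p_i\leq m_i$), one observes that $\sigma\gamma=\gamma-\langle\gamma,\tilde\alpha^\vee\rangle\tilde\alpha\leq 0$ as soon as $\langle\gamma,\tilde\alpha^\vee\rangle\geq 1$: indeed $\gamma\leq\tilde\alpha\leq\langle\gamma,\tilde\alpha^\vee\rangle\tilde\alpha$ componentwise. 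Then $\sigma\mu'\leq\sigma\mu$, and the constraint coming from $\ev(y)\in N$ forces $\val((x)_{\mu',\mu})\geq 1+\langle\gamma,\tilde\alpha^\vee\rangle\geq 2$. The main obstacle is the remaining case $\langle\gamma,\tilde\alpha^\vee\rangle=0$ (for example when $\gamma$ is a simple root orthogonal to $\tilde\alpha$, as in type $D_4$ or for the middle root of $A_3$), where the direct matrix argument yields only $\val\geq 0$.

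To handle this residual case, I would exploit the polynomial relations defining $V_G$ as a subvariety of $\prod_i\End V_{\omega_i}$: each exterior power $\wedge^p\rho_{\omega_i}$ equals $\rho_{\omega_j}$ up to multiplication by a $T$-character $b$ coming from the $V_T$-coordinates, and $b\in\pi\co$ for elements of $R$ (since $\ev(x)\in N$ projects to $0\in \bar T$, so the characters $b_i=\alpha_i(t)$ vanish mod $\pi$). Combined with the bounds already established on entries in the other representations (in particular the constraints from $R_{\alpha_0}$-integrality applied to $\rho_{\omega_j}$), these polynomial identities propagate the $\pi$-divisibility to the problematic entries. Putting everything together, every matrix entry of $\rho_{\omega_i}(x)$ is in $\pi\co$ for every fundamental weight $\omega_i$, hence $\ev(x)=0$ in $M$ and $x\in\ev^{-1}(0)$, as required.
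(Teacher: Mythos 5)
Your strategy is the matrix--entry translation of the paper's argument: the paper works with root subgroups rather than weight vectors, writing $K_{0}=\ev^{-1}(U)$ as generated by $T(1+\pi\co)$ and the $U_{\pm\beta}$, computing $s_{\alpha_{0}}U_{\beta}(\pi^{k}\co)s_{\alpha_{0}}=\Ad(\pi^{\tilde{\alpha}\check{}})U_{s_{\tilde{\alpha}}\beta}(\pi^{k+\langle s_{\tilde{\alpha}}\beta,\tilde{\alpha}\check{}\rangle}\co)$, and concluding that $K_{0}\cap s_{\alpha_{0}}K_{0}s_{\alpha_{0}}\subset K_{1}=\ev^{-1}(1)$, which combined with $\ev(R)\subset N$ forces reduction $0$. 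The parallel is close, but your proof has a genuine gap precisely at the point you flag as ``the main obstacle'': the entries $(x)_{\mu',\mu}$ with $\langle\mu'-\mu,\tilde{\alpha}\check{}\rangle=0$. For such a pair (e.g.\ in type $A_{3}$ the entry of $\rho_{\omega_{1}}(x)$ attached to the root $\alpha_{2}\perp\tilde{\alpha}$), conjugation by $s_{\alpha_{0}}$ neither moves the entry nor changes its valuation, and neither $\ev(x)\in N$ nor $\ev(y)\in N$ constrains it, so a unit there is consistent with everything you have derived. The paragraph you offer to close this case is not an argument: the identification of $\wedge^{p}\rho_{\omega_{1}}$ with $\rho_{\omega_{p}}$ up to a character is a type-$A$ phenomenon, you do not specify which relations cutting out $V_{G}$ you invoke, which entries they couple, or why divisibility by $\pi$ of the coordinates $b_{i}$ and of the already-controlled entries would force divisibility of the orthogonal entries rather than merely of certain products of entries. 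As it stands, the conclusion in this case is asserted, not proved, and since this is exactly where the content of the proposition sits, the proof is incomplete.

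A secondary flaw: in the main case you justify $\sigma\gamma\leq 0$ by claiming $\gamma\leq\tilde{\alpha}$ componentwise, citing the fact that every positive root is $\leq\tilde{\alpha}$. But $\gamma=\mu'-\mu$ is a difference of weights of a fundamental representation, not a root; already in type $A_{3}$ the difference $(e_{1}+e_{2})-(e_{3}+e_{4})=\alpha_{1}+2\alpha_{2}+\alpha_{3}$ of two weights of $V_{\omega_{2}}$ is not $\leq\tilde{\alpha}$. Your chain $\gamma\leq\tilde{\alpha}\leq\langle\gamma,\tilde{\alpha}\check{}\rangle\tilde{\alpha}$ therefore needs a different justification, restricted to the weight pairs that actually occur. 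The paper sidesteps this particular issue by staying at the level of the subgroups $U_{\beta}$, for which $\beta\leq\tilde{\alpha}$ is legitimate; but note that its treatment, too, turns on the behaviour of the roots with $\langle\beta,\tilde{\alpha}\check{}\rangle=0$, which is the same delicate point you ran into --- a sign that this case requires a real additional idea and cannot be dispatched by the conjugation computation alone.
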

\begin{proof}
On a $I=T(\co)K_{0}$ où on a posé $K_{0}:=\ev^{-1}(U)$. Considérons également $K_{1}:=\ev^{-1}(1)$.
On a que $N$ est dans l'adhérence de $I$ et en particulier, $R\cap s_{\alpha_{0}}Rs_{\alpha_{0}}$ est dans l'adhérence de $T(\co)K_{0}\cap s_{\alpha_{0}}K_{0}s_{\alpha_{0}}$.
Il nous suffit donc de voir que $K_{0}\cap s_{\alpha_{0}}K_{0}s_{\alpha_{0}}\subset K_{1}$ pour conclure, comme en réduction, on tombe dans $N$.
Le groupe $K_{0}$ est engendré par le produit:
\begin{center}
$T(1+\pi\co)\times\prod\limits_{\beta>0}U_{\beta}(\co)\times U_{-\beta}(\pi\co)$.
\end{center}
Comme $s_{\alpha_{0}}$ stabilise $T(1+\pi\co)$, il nous suffit d'étudier les groupes radiciels.
Soit une racine positive $\beta$ et $k\in\mathbb{N}$, regardons comment agit $s_{\alpha_{0}}$ sur $U_{\beta}(\pi^{k}\co)$:
\begin{center}
$s_{\alpha_{0}}U_{\beta}(\pi^{k}\co)s_{\alpha_{0}}=\Ad(\pi^{\tilde{\alpha}\check{}})U_{s_{\tilde{\alpha}}\beta}(\pi^{k}\co)$
\end{center}
Comme $\tilde{\alpha}$ est la plus haute racine, si $\beta>0$ (resp. $\beta<0$), on a  $s_{\tilde{\alpha}}\beta<0$ (resp. $s_{\tilde{\alpha}}\beta>0$) et nous obtenons:
\begin{equation}
s_{\alpha_{0}}U_{\beta}(\pi^{k}\co)s_{\alpha_{0}}=\Ad(\pi^{\tilde{\alpha}\check{}})U_{s_{\tilde{\alpha}}\beta}(\pi^{k+\left\langle s_{\tilde{\alpha}}\beta ,\tilde{\alpha}\check{}\right\rangle}\co).
\label{conjal}
\end{equation}
et $\left\langle s_{\tilde{\alpha}}\beta,\tilde{\alpha}\check{}\right\rangle< 0$.
En particulier, $s_{\alpha_{0}}U_{\beta}(\pi^{k}\co)s_{\alpha_{0}}\in K_{0}$ si et seulement si 
\begin{center}
$k\geq k+\left\langle s_{\tilde{\alpha}}\beta ,\tilde{\alpha}\check{}\right\rangle\geq 1$.
\end{center}
Ainsi, nous avons $k\geq1$. De plus, si $\beta$ est en revanche négative, on a toujours $s_{\alpha_{0}}U_{\beta}(\pi\co)s_{\alpha_{0}}\in K_{1}$ en vertu de l'égalité \eqref{conjal} valable pour une racine négative, ce qui conclut.
\end{proof}

\begin{cor}
Soit $g\in\cB_{\g_{+}}$, alors $g^{-1}\g_{+}g\notin R_{\alpha_{0}}$, en particulier, il n'existe pas de droite de type $\alpha$ incluse dans $\cB_{\g_{+}}$.
\end{cor}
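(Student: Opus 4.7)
Mon plan est de proc\'eder par contradiction, en utilisant la proposition \ref{inclnil} conjointement avec la d\'efinition m\^eme de l'ensemble $\cB_{\g_+}$. Plus pr\'ecis\'ement, supposons qu'il existe $g\in\cB_{\g_+}$ tel que $g^{-1}\g_+ g\in R_{\alpha_0}$. D'une part, la proposition \ref{inclnil} nous donne $R_{\alpha_0}\subset\ev^{-1}(0)$, donc la r\'eduction modulo $\pi$ de $g^{-1}\g_+ g$ est l'\'el\'ement $0\in V_G$.

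D'autre part, par d\'efinition de $\cB_{\g_+}$, on a $g^{-1}\g_+ g\in I_0^\bullet=\ev^{-1}(V_B^0)$, donc la r\'eduction modulo $\pi$ de $g^{-1}\g_+ g$ appartient \`a $V_B^0\subset V_G^0$. Or, par construction, $V_G^0$ est l'image r\'eciproque dans $V_G$ de l'adh\'erence de $G_+$ dans $H_G^0=\prod_{i}\mathbb{A}^1_{\alpha_i}\times\prod_i(\End(V_{\omega_i})-\{0\})$, ouvert qui exclut pr\'ecis\'ement l'\'el\'ement nul. On obtient donc la contradiction voulue, ce qui \'etablit $g^{-1}\g_+ g\notin R_{\alpha_0}$ pour tout $g\in\cB_{\g_+}$.

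Pour la seconde assertion, supposons qu'il existe une droite $\bP^1_{\alpha_0}(g)$ de type $\alpha_0$ enti\`erement incluse dans $\cB_{\g_+}$, o\`u $g\in\cB_{\g_+}$. Le lemme \ref{necracine} nous dit alors qu'un tel ph\'enom\`ene impose $g^{-1}\g_+ g\in R_{\alpha_0}$, ce qui contredit la premi\`ere partie du corollaire. Il n'y a donc pas de droite de type $\alpha_0$ contenue dans $\cB_{\g_+}$. Le point cl\'e du raisonnement est la compatibilit\'e entre la condition d'int\'egralit\'e $g^{-1}\g_+ g\in I_0^\bullet$ impos\'ee par l'appartenance \`a $\cB_{\g_+}$ (qui force la r\'eduction \`a vivre dans l'ouvert $V_B^0$) et le fait que $R_{\alpha_0}$, par sa d\'efinition comme radical pro-nilpotent du semi-groupe parabolique associ\'e \`a la racine affine, ne contient que des \'el\'ements de r\'eduction nulle : les deux conditions sont incompatibles, ce qui refl\`ete une diff\'erence de comportement essentielle entre la racine affine $\alpha_0$ et les racines simples finies.
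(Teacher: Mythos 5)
Votre preuve est correcte et suit essentiellement la même démarche que celle du papier : combiner la proposition \ref{inclnil} (qui force la réduction d'un élément de $R_{\alpha_{0}}$ à être nulle) avec la condition d'intégralité $g^{-1}\g_{+}g\in I_{0}^{\bullet}=\ev^{-1}(V_{B}^{0})$ (qui force la réduction à être non nulle), puis invoquer le lemme \ref{necracine} pour exclure les droites de type $\alpha_{0}$. La seule différence est d'ordre rédactionnel : vous établissez d'abord la première assertion directement puis en déduisez la seconde, tandis que le papier part de l'hypothèse d'existence d'une droite et obtient la contradiction d'un seul tenant.
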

\begin{proof}
En effet, supposons par l'absurde qu'une telle droite existe. Posons $\g':=g^{-1}\g_{+}g$.
Alors, d'après le lemme \ref{necracine}, on a $\g'\in R_{\alpha_{0}}$, en particulier d'après la proposition \ref{inclnil}, la réduction de $\g'$ est nulle.
Or, $g\in\cB_{\g_{+}}$, donc $\g'\in M^{0}(\co)$ et est donc de réduction non nulle, une contradiction.
\end{proof}
On a la proposition suivante de même preuve que \cite[§4. Lem. 1]{KL} :
\begin{prop}\label{lemmeunb}
Soit $Y$ une composante irréductible de dimension maximale de $\cB_{\g_{+}}$. Soit $\cL$ une droite de type $\alpha$, incluse dans $\cB_{\g_{+}}$ et telle que $\cL\cap Y\neq\emptyset$, alors il existe une composante irréductible $Y'$ de dimension maximale de $\cB_{\g_{+}}$ telle que $\cL\subset Y'$.
\end{prop}
On démontre maintenant une proposition du même type que \ref{lemmedeux}:
\begin{prop}\label{un}
Soient $g, g'\in\cB_{\g_{+}}$ alors il existe des droites $\cL_{1},\dots, \cL_{n}$ de type $\alpha_{1},\dots,\alpha_{n}$ dans $\cB_{\g_{+}}$  telles que $g\in\cL_{1}$, $g'\in\cL_{n}$ et $\cL_{i}\cap\cL_{i+1}\neq\emptyset$.
En particulier, $\cB_{\g_{+}}$ est connexe.
\end{prop}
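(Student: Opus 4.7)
The plan is to adapt the strategy of Kazhdan–Lusztig \cite[§4 Lem. 2]{KL}: given $g, g' \in \cB_{\g_{+}}$, I would build the chain by induction on Bruhat cell length. Using the Iwahori–Bruhat decomposition $\cB = \bigsqcup_{w \in W_{\mathrm{aff}}} I_{\mathrm{der}} w I_{\mathrm{der}}/I_{\mathrm{der}}$, I attach to each $g$ the length $l(w)$ of the unique cell $C_w$ containing it; it then suffices to show that every point in a cell of positive length can be linked by a single line of some type $\alpha \in \Delta$ to a point in a strictly smaller cell. By the criterion from Subsection \ref{droite}, this amounts to finding $\alpha$ such that $s_{\alpha}(g) = 0$, equivalently $g^{-1}\g_{+} g \in R_{\alpha}$.

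The main obstacle is the absence of $\alpha_0$-lines in $\cB_{\g_{+}}$ (corollary to Proposition \ref{inclnil}), which a priori prevents the descent when the only reduction available involves $s_{\alpha_0}$. The way around this is to exploit the topological nilpotence of $\g_{+}$: the reduction modulo $\pi$ of $g^{-1}\g_{+} g$ lies in the nilpotent cone $\mathcal{N} \subset V_G$, and by Proposition \ref{nilstrat} we have $\mathcal{N} = \coprod_{J \subset \Delta} \coprod_{w \in W, \, \supp(w) = \Delta} X_{\emptyset, J, w}$, with $w$ ranging over the \emph{finite} Weyl group. I would show that this constraint on the residue forces the Bruhat representative of the cell of $g$ to admit a reduced expression that, up to conjugation by a suitable element, can be shortened by a finite simple reflection $s_{\alpha}$; combined with the fact that $R_\alpha$ is an ideal of $R$ (Lemma \ref{sectid}), this should yield $g^{-1}\g_{+} g \in R_{\alpha}$ on a non-empty open subset of $C_w \cap \cB_{\g_{+}}$, whence a line of type $\alpha$ through a nearby point of $g$ lies in $\cB_{\g_{+}}$.

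Once this inductive step is established, I iterate separately for $g$ and $g'$ until reaching cells of length zero — points in a single $I_{\mathrm{der}}$-orbit, which is a classical flag variety, itself chain-connected by lines of finite types through standard theory. Concatenating the chains from $g$ down to this base, across the base, and back up to $g'$ produces the desired chain, and the connectivity of $\cB_{\g_{+}}$ is immediate since each $\mathbb{P}^1$-link is connected.

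The principal difficulty I foresee is the combinatorial bookkeeping in the inductive step: one must show, using both the stratification of $\mathcal{N}$ and the explicit structure of the ideals $R_{\alpha}$, that at every stage of the descent one can avoid $s_{\alpha_0}$. This will require a careful analysis of how the affine Bruhat cells intersect the pullback of the strata $X_{\emptyset, J, w}$ along the reduction map $\ev$, and a compatibility check between Proposition \ref{lemmeunb} (which ensures lines preserve the maximal-dimension components) and the chosen descent, so that the iteration genuinely terminates in finitely many steps.
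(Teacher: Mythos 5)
Your overall skeleton (induction on the length of the affine Bruhat cell, descent along lines detected by the vanishing of the section, i.e.\ by $g^{-1}\g_{+}g\in R_{\alpha}$) matches the paper's, but the inductive step --- which is the entire content of the proposition --- is not actually carried out, and the mechanism you sketch for it would not deliver the statement. Two concrete problems. First, you only obtain $g^{-1}\g_{+}g\in R_{\alpha}$ \emph{on a non-empty open subset} of $C_{w}\cap\cB_{\g_{+}}$ and a line through ``a nearby point of $g$''; the proposition asserts a chain with $g\in\cL_{1}$ and $g'\in\cL_{n}$ through the given points themselves (and this is exactly what the equidimensionality argument via Proposition \ref{lemmeunb} consumes), so a statement valid only generically on each cell is not enough. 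Second, restricting the descent to $\alpha\in\Delta$ leaves you stuck at elements $w\in W_{aff}$ whose only right descent is $s_{\alpha_{0}}$ (e.g.\ $w=s_{\alpha_{0}}$ itself), and your proposed remedy --- extracting a finite simple reflection from the stratification of $\mathcal{N}$ by the $X_{\emptyset,J,w}$ --- is deferred to a ``careful analysis'' that is precisely the missing proof.

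The paper's argument is more direct and settles both points at once. Normalize $g=1$, write $g'=bw$ with $b\in I$, $w\in W_{aff}$, and set $v:=b^{-1}\g_{+}b$. Since $\g_{+}$ is topologically nilpotent and both $1$ and $g'$ lie in $\cB_{\g_{+}}$, the element $v$ lies in $R\cap wRw^{-1}=R_{w}$, hence in the pro-nilpotent radical of $\hat{P}_{\alpha}^{\bullet}$ for $\alpha$ the last simple affine root of any reduced word $w=s_{1}\dots s_{q}$; by Lemma \ref{radc} this places $\g_{+}$ in every Iwahori semigroup of $\hat{P}_{\alpha}^{\bullet}$, producing an actual line of type $\alpha$ joining $I^{\bullet}$ to $s_{q}I^{\bullet}s_{q}$, after which the induction continues. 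The case $\alpha=\alpha_{0}$ excludes itself: by Proposition \ref{inclnil} one has $R_{\alpha_{0}}\subset\ev^{-1}(0)$, so $v\in R_{\alpha_{0}}$ would force the reduction of $g'^{-1}\g_{+}g'$ to vanish, contradicting membership in $I_{0}^{\bullet}$. No stratification of the nilpotent cone and no genericity argument are needed. (A minor further slip: the length-zero cell is the single base point $I_{der}/I_{der}$, not a classical flag variety, though this does not affect the logic.)
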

\begin{proof}
Quitte à changer $I^{\bullet}$ en $gI^{\bullet}g^{-1}$, on peut supposer que $g=1$. On écrit alors $g'=bw$ avec $b\in I$ et $w\in W_{aff}$.
Prouvons le résultat par récurrence sur la longueur de $w$, $q:=l(w)$.

Si $w=s_{\alpha}$, on pose $v=b^{-1}\g_{+}b$. Nous avons alors $v\in R_{\alpha}$ et également $\g_{+}$. En particulier, $\g_{+}$ est dans tous les semi-groupes d'Iwahori de $\hat{P}_{\alpha}^{\bullet}$, on obtient alors une droite de type $\alpha$ qui relie 1 et $g'$.

Pour passer de $q-1$ à $q$, on considère une décomposition réduite $w=s_{1}\dots s_{q}$. Soit $\alpha$ la racine correspondant à $s_{q}$, alors $w\alpha<0$.
A nouveau, en considérant $v:=b^{-1}\g_{+}b$, nous avons que $v\in R_{w}=R\cap wRw^{-1}$ qui est le radical pro-nilpotent de $\hat{P}_{\alpha}^{\bullet}$ et de même $u\in R_{w}$. A nouveau, $\g_{+}$ est dans tous les semi-groupes d'Iwahori de $\hat{P}_{\alpha}^{\bullet}$, en particulier, $\g_{+}\in s_{q}I^{\bullet}s_{q}$ et ce semi-groupe d'Iwahori est relié à $I^{\bullet}$ par une droite de type $\alpha$.
Maintenant, par hypothèse de récurrence, $s_{q}I^{\bullet}s_{q}$ est relié par une chaîne de $\bP^{1}$ à $g'I^{\bullet}(g')^{-1}$, ce qui conclut.
\end{proof}

\begin{thm}
La variété $\cB_{\g_{+}}$ est équidimensionnelle.
\end{thm}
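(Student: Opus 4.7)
Le plan est d'imiter formellement la preuve de Kazhdan-Lusztig \cite[�4, Prop. 1]{KL} pour le th�or�me \ref{kl}, en utilisant comme briques de base les deux propositions \ref{lemmeunb} et \ref{un} que l'on vient d'�tablir. Tout le travail technique (construction de la section $s_{\alpha}$, �tude s�par�e de la racine affine $\alpha_{0}$, connexit� par cha�nes de droites) a �t� accompli en amont, de sorte qu'il ne reste qu'un argument formel de propagation.

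Plus pr�cis�ment, soit $d$ la dimension maximale d'une composante irr�ductible de $\cB_{\g_{+}}$ et soit $g' \in \cB_{\g_{+}}$ un point quelconque; il s'agit de montrer que $g'$ appartient � une composante irr�ductible de dimension $d$. On fixe une composante irr�ductible $Y$ de $\cB_{\g_{+}}$ de dimension $d$ et un point $g \in Y$. Par la proposition \ref{un}, on dispose d'une cha�ne de droites $\cL_{1},\dots,\cL_{n}$ de types respectifs $\alpha_{1},\dots,\alpha_{n}$, contenues dans $\cB_{\g_{+}}$, telles que $g \in \cL_{1}$, $g' \in \cL_{n}$ et $\cL_{i}\cap\cL_{i+1}\neq\emptyset$ pour tout $i$.

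On construit alors par r�currence sur $i$ une suite $Y_{0},Y_{1},\dots,Y_{n}$ de composantes irr�ductibles de $\cB_{\g_{+}}$ de dimension $d$ telles que $\cL_{i}\subset Y_{i}$ pour $i\geq 1$. On pose $Y_{0}:=Y$. Comme $\cL_{1}$ rencontre $Y_{0}$ (les deux contiennent $g$), la proposition \ref{lemmeunb} fournit une composante $Y_{1}$ de dimension $d$ contenant $\cL_{1}$. Supposons $Y_{i}$ construit avec $\cL_{i}\subset Y_{i}$; puisque $\cL_{i}\cap \cL_{i+1}\neq\emptyset$, la droite $\cL_{i+1}$ rencontre $Y_{i}$, et la proposition \ref{lemmeunb} fournit alors une composante $Y_{i+1}$ de dimension $d$ contenant $\cL_{i+1}$. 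Au final, $g'\in\cL_{n}\subset Y_{n}$, et $Y_{n}$ est bien de dimension $d$.

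L'obstacle principal de ce th�or�me ne se trouve donc pas ici mais dans les r�sultats pr�paratoires: le point d�licat �tait de d�finir un analogue de la section de Kazhdan-Lusztig $s_{\alpha}$ dans le cadre du semi-groupe de Vinberg pour les racines simples finies (ce qui a n�cessit� de construire le mono�de quotient $R/R_{\alpha}$ au sens de Rees, en exploitant que $R_{\alpha}$ est un id�al de $R$ via le lemme \ref{sectid}), ainsi que de contr�ler s�par�ment la racine affine $\alpha_{0}$ pour laquelle $R_{\alpha_{0}}$ n'est plus un id�al, mais o� l'on a pu montrer (proposition \ref{inclnil}) que $\cB_{\g_{+}}$ ne contient aucune droite de type $\alpha_{0}$, ce qui rend l'argument de propagation ci-dessus inoffensif pour cette racine.
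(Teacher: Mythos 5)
Votre argument est, pour l'essentiel, celui du papier : c'est la propagation formelle de Kazhdan--Lusztig, la seule diff�rence �tant le sens de parcours de la cha�ne (vous partez de la composante maximale contenant $g$ et propagez vers $g'$, tandis que le papier part de la composante maximale contenant $g'$ et propage vers $g$). Vous identifiez aussi correctement que tout le contenu non trivial r�side dans les propositions \ref{lemmeunb} et \ref{un} et dans le traitement s�par� de la racine affine $\alpha_{0}$.

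Il reste toutefois un petit d�faut de r�daction � la fin. Ce que votre argument �tablit litt�ralement est : tout point $g'$ de $\cB_{\g_{+}}$ appartient � \emph{une} composante irr�ductible de dimension maximale $d$. Cela n'est pas encore l'�quidimensionnalit� : une composante $Y'$ de dimension $<d$ pourrait a priori �tre enti�rement recouverte par des composantes de dimension $d$ sans leur �tre �gale. Pour conclure, il faut choisir le point de d�part dans la composante � tester de fa�on g�n�rique, c'est-�-dire prendre $g'\in Y'$ n'appartenant � \emph{aucune autre} composante irr�ductible (un tel point existe par locale finitude de la famille des composantes) ; alors $g'\in Y_{n}$ avec $\dime Y_{n}=d$ force $Y'=Y_{n}$. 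C'est exactement la pr�caution que prend le papier en fixant d'embl�e � $g\in Y$ qui n'est dans aucune autre composante irr�ductible �. L'ajout de cette ligne rend votre preuve compl�te et identique en substance � celle du texte.
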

\begin{proof}
La preuve est la même que \cite[§4. Prop. 1]{KL},  que l'on rappelle pour mémoire.
Soit $Y$ une composante irréductible  de $\cB_{\g_{+}}$ et $g\in Y$ qui n'est dans aucune autre composante irréductible. Soit $g'$ dans une composante irréductible  $Y_{0}$ de dimension maximale $d$. On peut alors d'après la proposition \ref{un} relier dans $\cB_{\g_{+}}$, $g$ et $g'$ par des droites $\cL_{1},\dots,\cL_{n}$, incluses dans $\cB_{\g_{+}}$, telles que $\cL_{i}\cap\cL_{i+1}\neq\emptyset$ et $g\in \cL_{1}$ et $g'\in\cL_{n}$.

En vertu de la proposition \ref{lemmeunb} appliqué à $(Y_{0},\cL_{n})$, on peut trouver une composante irréductible $Y_{1}$ de dimension maximale $d$ telle que $\cL_{n}\in Y_{1}$, en itérant le procédé, on trouve une suite de composantes irréductibles $(Y_{0},\dots, Y_{n})$ de dimension maximale telles que $\cL_{n-i}\in Y_{i}$. En particulier, comme $g\in\cL_{1}$, on obtient que $g\in Y_{n}$, donc $Y=Y_{n}$ et on obtient l'équidimensionnalité de $\cB_{\g_{+}}$.
\end{proof}
\subsection{Le cas où $\g$ est déployé}\label{split}
Supposons que $\gamma$ est déployé dans $T(F)$. Il s'écrit alors $\gamma=\gamma_{0}\pi^{\nu}$ où $\gamma_{0}\in T(\mathcal{O})$ et $\nu\in X_{*}(T)$, que l'on peut supposer dominant, quitte à conjuguer.
On veut  montrer que: 
\begin{prop}\label{splitdim}
Dans le cas où $\g$ est déployé, on a la formule de dimension:
\begin{center}
$\dime X_{\gamma}^{\lambda}=\left\langle \rho,\lambda\right\rangle+\frac{1}{2} d(\gamma)$.
\end{center}
\end{prop}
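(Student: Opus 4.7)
Puisque $\gamma = \gamma_0\pi^\nu$ est d\'eploy\'e avec $\nu \in X_*(T)^+$ et $\gamma_0 \in T(\mathcal{O})$ r\'egulier, le lemme \ref{renorm} identifie $X_\gamma^\lambda$ \`a la fibre $X_{\gamma_+}$ pour $\gamma_+ = (\pi^{-w_0\lambda}, \gamma) \in T_+(F)$. Suivant la strat\'egie de \cite{KL}, le plan comporte deux volets : ramener le calcul de dimension \`a celui de la vari\'et\'e de drapeaux affine $\mathcal{B}_{\gamma_+}$, puis effectuer ce dernier directement par d\'ecomposition de Bruhat-Iwahori, en exploitant la structure torique de $\gamma_+$.

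Pour la r\'eduction, on combine l'\'equidimensionnalit\'e de $\mathcal{B}_{\gamma_+}$ \'etablie dans la sous-section pr\'ec\'edente avec le corollaire \ref{finspring} : le morphisme $\lambda^{reg}:\tilde{V}_G^{reg}\to V_G^{reg}$ \'etant fini plat, le morphisme induit $\mathcal{B}_{\gamma_+}^{reg}\to X_{\gamma_+}^{reg}$ est fini surjectif, d'o\`u l'\'egalit\'e $\dim \mathcal{B}_{\gamma_+}^{reg} = \dim X_{\gamma_+}^{reg}$. La densit\'e du lieu r\'egulier dans chaque fibre de $\chi_+$ (Proposition \ref{stcoxeter}), jointe \`a l'\'equidimensionnalit\'e de $\mathcal{B}_{\gamma_+}$, entra\^ine alors $\dim X_\gamma^\lambda = \dim\mathcal{B}_{\gamma_+}$.

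Pour le calcul direct, on stratifie $\mathcal{B}_{\gamma_+} = \coprod_{w\in W_{aff}}\mathcal{B}_{\gamma_+}^w$ par la d\'ecomposition de Bruhat-Iwahori, o\`u $\mathcal{B}_{\gamma_+}^w := \{g = iwI_{der} : i\in U_w, \ w^{-1}i^{-1}\gamma_+ iw \in I_0^\bullet\}$, avec $U_w$ un produit ad\'equat de sous-groupes radiciels affines. Puisque $\gamma_+\in T_+(F)$ agit sur chaque $U_\beta$ par le caract\`ere $\beta(\gamma_+)$, la param\'etrisation $i = \prod_\beta u_\beta(x_\beta)$ transforme la condition d'appartenance \`a $I_0^\bullet$ en un syst\`eme d'in\'egalit\'es racine par racine de la forme $(1 - \beta(\gamma_+))x_\beta \in \pi^{n_\beta(w)}\mathcal{O}$, o\`u $n_\beta(w)\in\mathbb{Z}$ encode la position de $w$ relativement \`a l'Iwahori. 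Chaque strate $\mathcal{B}_{\gamma_+}^w$ est ainsi localement isomorphe \`a un espace affine de dimension $\sum_\beta \max(0, n_\beta(w) - \val(1 - \beta(\gamma_+)))$.

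Il reste \`a identifier l'\'el\'ement $w\in W_{aff}$ qui maximise cette somme et \`a v\'erifier que le maximum vaut pr\'ecis\'ement $\langle\rho,\lambda\rangle + \frac{1}{2}d(\gamma)$. Comme dans \cite{KL}, l'\'el\'ement optimal est essentiellement une translation par un cocaract\`ere reli\'e \`a $\nu$ compos\'ee avec un repr\'esentant de l'\'el\'ement long $w_0$, et la v\'erification repose sur la formule $d(\gamma) = \sum_\alpha \val(1 - \alpha(\gamma))$. Le point le plus d\'elicat est cette v\'erification combinatoire finale : la pr\'esence du facteur $\pi^{-w_0\lambda}$ dans $\gamma_+$ d\'ecale les valuations $\val(1 - \beta(\gamma_+))$ d'un terme $\langle\beta, -w_0\lambda\rangle$ relativement au cas originel des alg\`ebres de Lie, et c'est apr\`es sommation sur les racines affines pertinentes que ce d\'ecalage produit le terme suppl\'ementaire $\langle\rho,\lambda\rangle$ de la formule annonc\'ee.
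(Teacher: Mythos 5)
Votre démarche diverge sensiblement de celle du texte. Le papier ne passe pas du tout par la variété de drapeaux affine $\mathcal{B}_{\gamma_+}$ pour le cas déployé : il travaille directement sur la grassmannienne $G(F)/K$, utilise la rétraction sur les orbites semi-infinies $S_{\nu}$ de $U(F)$ (lemme de \cite{GHKR}, 2.5 : pour $Y$ localement fermé stable par $T(F)$, $\dim Y=\dim(Y\cap S_{\nu})$), se ramène ainsi à $Y_{\gamma,\lambda}=f_{\gamma}^{-1}(K\pi^{\lambda}K\pi^{-\nu}\cap U(F))/U(\co)$ avec $f_{\gamma}(n)=n^{-1}\gamma n\gamma^{-1}$, puis importe comme boîte noire la formule de Mirkovic--Vilonen $\dim(K\pi^{\lambda}K\pi^{-\nu}\cap U(F))=\left\langle\rho,\lambda-\nu\right\rangle$ (\cite{GHKR}, Prop. 2.14.2). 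Le facteur $f_{\gamma}$ agit sur chaque groupe radiciel par $(\alpha(\gamma)-1)$, d'où la contribution $\sum_{\alpha>0}\val(1-\alpha(\gamma))$, et l'identité $d(\gamma)=-\left\langle 2\rho,\nu\right\rangle+2\sum_{\alpha>0}\val(1-\alpha(\gamma))$ conclut. Votre premier volet (réduction à $\mathcal{B}_{\gamma_+}$ via l'équidimensionnalité et le corollaire \ref{finspring}) est le contenu de la proposition \ref{dimspring}, qui dans le papier sert ailleurs, à la toute fin ; il est superflu pour le cas déployé et alourdit l'argument sans rien apporter ici.

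Le point bloquant est votre dernier paragraphe. Toute la substance de la proposition réside précisément dans la maximisation sur $w\in W_{aff}$ de la somme $\sum_{\beta}\max(0,\,n_{\beta}(w)-\val(1-\beta(\gamma_{+})))$ et dans la vérification que ce maximum vaut $\left\langle\rho,\lambda\right\rangle+\frac{1}{2}d(\gamma)$ ; or vous vous contentez d'affirmer que \emph{\og il reste à identifier l'élément optimal \fg} sans le faire. Cette vérification combinatoire est essentiellement équivalente aux estimées de dimension de Mirkovic--Vilonen que le papier cite justement pour l'éviter : refaire ce calcul cellule par cellule sur $W_{aff}$ (qui est infini, ce qui demande déjà de borner la somme et de montrer que le supremum est atteint) est un vrai travail, non une formalité. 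S'y ajoute un point technique que vous passez sous silence : la condition $w^{-1}i^{-1}\gamma_{+}iw\in I_{0}^{\bullet}$ ne se découple pas littéralement \og racine par racine \fg, car l'application $i\mapsto\gamma_{+}^{-1}i^{-1}\gamma_{+}i$ sur $\prod_{\beta}U_{\beta}$ comporte des termes correctifs issus des formules de commutation de Chevalley ; ils sont triangulaires pour la hauteur et le compte de dimension survit, mais cela doit être dit. En l'état, votre esquisse reporte la totalité de la difficulté sur une étape non rédigée, alors que la preuve du papier la contourne proprement en s'appuyant sur un résultat établi de la littérature.
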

Le $k$-groupe $U(F)$ agit sur la grassmannienne affine $\Gr$ et les orbites sont indexées par $\nu\in X_{*}(T)$, que l'on note $S_{\nu}$. On a $S_{0}=U(F)/U(\mathcal{O})$.
Nous avons le lemme tiré de \cite[2.5]{GHKR} :
\begin{lem}
Si $Y$ un sous ensemble localement fermé de $X$ et stable par $T(F)$, alors: 
\begin{center}
$\dim(Y)=\dim(Y\cap S_{\nu})$, $\nu\in X_{*}(T)$.
\end{center}
\end{lem}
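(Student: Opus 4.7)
\medskip

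\noindent\textbf{Plan de preuve.} Le point clef est que $T(F)$ n'agit pas seulement en pr�servant $Y$, mais que son action sur $X$ permute transitivement les strates $S_{\nu}$. La preuve se d�composera en deux �tapes: (a) identifier tous les $Y\cap S_{\nu}$ par l'action de $T(F)$, puis (b) comparer leur dimension commune � celle de $Y$ gr�ce � la stratification.

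\medskip

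\noindent\textbf{Etape 1 : les $Y\cap S_{\nu}$ sont tous isomorphes.} Rappelons que $S_{\nu}=U(F)\pi^{\nu}K/K$. Pour $t\in T(F)$, on a $tU(F)t^{-1}=U(F)$ puisque $T$ normalise $U$, donc la translation � gauche par $t$ envoie $S_{\nu}$ sur $U(F)\cdot t\pi^{\nu}K/K$. En �crivant $t=t_{0}\pi^{\mu}$ avec $t_{0}\in T(\mathcal{O})$, on obtient $t\cdot S_{\nu}=S_{\nu+\mu}$. Ainsi $T(F)$ agit transitivement sur l'ensemble des strates $\{S_{\nu}\}_{\nu\in X_{*}(T)}$, et pour tous $\mu,\nu\in X_{*}(T)$, la multiplication par $\pi^{\mu-\nu}$ induit un automorphisme de $X$ qui envoie $S_{\nu}$ isomorphiquement sur $S_{\mu}$. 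Comme $Y$ est stable par $T(F)$, cet automorphisme restreint � un isomorphisme de $k$-sch�mas $Y\cap S_{\nu}\xrightarrow{\sim} Y\cap S_{\mu}$. En particulier, $\dim(Y\cap S_{\nu})$ est ind�pendant de $\nu$; notons $d$ cette valeur commune.

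\medskip

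\noindent\textbf{Etape 2 : $\dim Y=d$.} L'in�galit� $\dim Y\geq d$ est triviale puisque $Y\cap S_{\nu}\subset Y$. Pour la r�ciproque, il suffit de montrer que pour toute composante irr�ductible $Z$ de $Y$ (consid�r�e comme sous-sch�ma localement ferm�), on a $\dim Z\leq d$. L'argument repose sur le fait suivant : tout sous-sch�ma localement ferm� irr�ductible $Z$ de $X$ rencontre au moins une strate $S_{\nu}$ dans un ouvert dense de $Z$. Comme $Z\subset\bigsqcup_{\nu}S_{\nu}$ et que, sur chaque sous-sch�ma de type fini de $X$ (par exemple un $\overline{\Gr}_{\la}$ assez grand contenant un ouvert non vide de $Z$), la stratification par les $S_{\nu}$ est constructible et n'a qu'un nombre fini de strates non vides, l'un des $Z\cap S_{\nu}$ contient un ouvert dense de $Z$, d'o� $\dim Z=\dim(Z\cap S_{\nu})\leq \dim(Y\cap S_{\nu})=d$. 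En prenant le sup sur les composantes, on conclut $\dim Y\leq d$, donc $\dim Y=d=\dim(Y\cap S_{\nu})$ pour tout $\nu$.

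\medskip

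\noindent\textbf{Principale difficult�.} L'obstacle principal est la derni�re assertion de l'�tape 2, � savoir la constructibilit� de la stratification $X=\bigsqcup S_{\nu}$ en restriction aux sous-sch�mas de type fini. C'est un r�sultat standard pour la grassmannienne affine (pour tout $\la$ dominant, $\overline{\Gr}_{\la}\cap S_{\nu}\neq\emptyset$ seulement pour $\nu$ dans un ensemble fini, et chaque intersection est localement ferm�e), que nous pouvons invoquer via la description des orbites $S_{\nu}$ comme attracteurs d'un $\mathbb{G}_{m}$-cocaract�re strictement dominant de $T$. L'�tape 1, en revanche, est purement formelle.
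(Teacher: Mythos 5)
The paper does not actually prove this lemma: it quotes it verbatim from [GHKR, 2.5], and your argument is precisely the standard one behind that reference --- transitivity of the translation action of $T(F)/T(\mathcal{O})$ on the strata $S_{\nu}$, which identifies all the $Y\cap S_{\nu}$, followed by finiteness and constructibility of the stratification on each finite-type piece. One small adjustment in Step 2: an irreducible component $Z$ of a $T(F)$-stable $Y$ is never contained in a single $\overline{\Gr}_{\lambda}$ and need not be quasi-compact, so there may be no $\overline{\Gr}_{\lambda}$ containing a dense open of $Z$; it is cleaner to work directly with the convention $\dim Y=\sup_{\lambda}\dim(Y\cap\overline{\Gr}_{\lambda})$ and to bound each term by $\max_{\nu}\dim\bigl(Y\cap\overline{\Gr}_{\lambda}\cap S_{\nu}\bigr)\leq\dim(Y\cap S_{\nu})$, which yields the same conclusion.
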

On est donc ramené à étudier la dimension de:
\begin{center}
$Y_{\gamma,\lambda}:=\{g\in U(F)/U(\mathcal{O})\vert~g^{-1}\gamma g\in K\pi^{\la} K\}$.
\end{center}
En particulier, si nous notons $f_{\gamma}:U(F)\rightarrow U(F)$ défini par:
\begin{center}
$f_{\gamma}(n)=n^{-1}\gamma n\gamma^{-1}$,
\end{center}
nous avons que $Y_{\gamma,\lambda}=f_{\gamma}^{-1}(K\pi^{\lambda}K \pi^{-\nu}\cap U(F))/U(\mathcal{O})$.
\begin{defi}
On dit qu'un sous-ensemble $Y$ de $U(F)$ est admissible s'il existe $m, n\in\mathbb{N}$ tel que $Y\subset U(\pi^{-m}\co)$ et qu'il est l'image réciproque  d'une certaine sous-variété de $U(\pi^{-m}\co)/U(\pi^{n}\co)$.
Pour un ensemble admissible $Y$ de $U(F)$, on choisit $n\geq 0$ tel que $Y$ est stable par multiplication par $U(\pi^{n}\co)$ et on pose:
\begin{center}
$\dim Y:=\dim (Y/U(\pi^{n}\co))-\dim (U(\co)/U(\pi^{n}\co))$,
\end{center}
qui est bien indépendant de $n$.
\end{defi}
Calculons la dimension de $K\pi^{\lambda}K \pi^{-\nu}\cap U(F)$ qui se déduit des calculs de Mirkovic-Vilonen \cite{MV}:

\begin{prop}\cite[Prop. 2.14.2]{GHKR} \label{ghkr}
Nous avons la formule de dimension suivante, au sens des ensembles admissibles :
\begin{center}
$\dime K\pi^{\lambda}K \pi^{-\nu}\cap U(F)=\left\langle \rho,\lambda-\nu\right\rangle$.
\end{center}
\end{prop}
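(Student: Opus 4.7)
The plan is to reinterpret $Y:=K\pi^{\lambda}K\pi^{-\nu}\cap U(F)$ as the preimage in $U(F)$ of a Mirkovic-Vilonen-type intersection inside the affine Grassmannian, and then to apply the MV dimension theorem together with a bookkeeping of the stabilizer in the admissible-set formalism.

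First I would observe that $u\in U(F)$ lies in $Y$ precisely when $u\pi^{\nu}K$ lies in the Schubert cell $\Gr_{\lambda}=K\pi^{\lambda}K/K$. The orbit map $U(F)\rightarrow\Gr$, $u\mapsto u\pi^{\nu}K$, therefore sends $Y$ surjectively onto $S_{\nu}\cap\Gr_{\lambda}$, where $S_{\nu}:=U(F)\cdot\pi^{\nu}K$ is the semi-infinite $U(F)$-orbit, and the fibre over any point is a left coset of the stabilizer $H:=\pi^{\nu}U(\mathcal{O})\pi^{-\nu}$ of $\pi^{\nu}K$ in $U(F)$. I would then invoke the theorem of Mirkovic-Vilonen \cite{MV}: when $S_{\nu}\cap\Gr_{\lambda}$ is non-empty, it is equidimensional of dimension $\langle\rho,\lambda+\nu\rangle$, with the sign convention matching the positive unipotent $U$ fixed by the Borel $B$.

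The last step is the admissible-set bookkeeping. Since $\nu$ is dominant, $H=\prod_{\alpha>0}U_{\alpha}(\pi^{\langle\alpha,\nu\rangle}\mathcal{O})$ is contained in $U(\mathcal{O})$ with codimension $\sum_{\alpha>0}\langle\alpha,\nu\rangle=\langle 2\rho,\nu\rangle$. Choosing $n\geq\max_{\alpha>0}\langle\alpha,\nu\rangle$ so that $U(\pi^{n}\mathcal{O})\subset H$ and $Y$ is stable under right multiplication by $U(\pi^{n}\mathcal{O})$ (verified via conjugation by $\pi^{-\nu}$), the induced fibration $Y/U(\pi^{n}\mathcal{O})\rightarrow S_{\nu}\cap\Gr_{\lambda}$ has fibre $H/U(\pi^{n}\mathcal{O})$, so that
\[
\dim Y = \dim(S_{\nu}\cap\Gr_{\lambda}) + \dim\bigl(H/U(\pi^{n}\mathcal{O})\bigr) - \dim\bigl(U(\mathcal{O})/U(\pi^{n}\mathcal{O})\bigr) = \langle\rho,\lambda+\nu\rangle - \langle 2\rho,\nu\rangle = \langle\rho,\lambda-\nu\rangle,
\]
as required.

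The only substantive ingredient is the MV dimension theorem; the remaining manipulations are formal, and can be cross-checked at $\nu=0$, where the formula recovers the standard identity $\dim(K\pi^{\lambda}K\cap U(F))=\langle\rho,\lambda\rangle$. The main obstacle is not mathematical but conventional: matching the MV sign convention to the positive unipotent $U$ fixed by the Borel $B$ in the text, which is readily resolved by inspecting any extremal case such as $\nu=\lambda$.
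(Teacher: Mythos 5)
Your argument is correct. Note that the paper itself gives no proof of this proposition: it is quoted from \cite{GHKR} (Prop.\ 2.14.2) with the remark that it ``se d\'eduit des calculs de Mirkovic-Vilonen''. What you have written is precisely the deduction that this citation points to: identify $Y=K\pi^{\lambda}K\pi^{-\nu}\cap U(F)$ with the preimage of $S_{\nu}\cap\Gr_{\lambda}$ under $u\mapsto u\pi^{\nu}K$, apply the Mirkovic-Vilonen dimension theorem $\dim(S_{\nu}\cap\Gr_{\lambda})=\langle\rho,\lambda+\nu\rangle$, and correct by the codimension $\langle 2\rho,\nu\rangle$ of the stabilizer $H=\pi^{\nu}U(\mathcal{O})\pi^{-\nu}=\prod_{\alpha>0}U_{\alpha}(\pi^{\langle\alpha,\nu\rangle}\mathcal{O})$ inside $U(\mathcal{O})$ in the admissible-set normalization. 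The fibration bookkeeping and the stability of $Y$ under right multiplication by $U(\pi^{n}\mathcal{O})$ for $n\geq\langle\tilde{\alpha},\nu\rangle$ both check out, and the sign convention you fix (positive $U$, so that $\nu=\lambda$ gives the open dense piece of $\Gr_{\lambda}$ and $\nu=w_{0}\lambda$ gives a point) is the right one.
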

Passons donc  à la preuve de la proposition \ref{splitdim}.
\begin{proof}
En identifiant $U(F)$ avec $F^{r}$, par le biais des groupes radiciels $U_{\alpha}$, le morphisme $f_{\gamma}$ agit sur chacun de ces facteurs par:
\begin{center}
$f_{\gamma}(e_{\alpha})=(\alpha(\gamma)-1)e_{\alpha}$.
\end{center}
On en déduit donc d'après la proposition $\ref{ghkr}$ que la dimension de
$f_{\gamma}^{-1}(K\pi^{\lambda}K \pi^{-\nu}\cap U(F))$ vaut:
\begin{center}
$\sum\limits_{\alpha>0}\val(1-\alpha(\gamma))+\left\langle \rho,\lambda-\nu\right\rangle$.
\end{center}
Maintenant, comme $\gamma=\gamma_{0}\pi^{\nu}\in T(F)$, nous avons l'égalité:
\begin{center}
$d(\g)=-\left\langle 2\rho,\nu\right\rangle +2\sum\limits_{\alpha>0}\val(1-\alpha(\g))$.
\end{center}
Ainsi, dans le cas déployé, la dimension de la fibre de Springer $X_{\gamma}^{\lambda}$ est:
\begin{center}
$\left\langle \rho,\lambda\right\rangle+ \frac{1}{2}\val(\det_{F}(\Id-\ad(\gamma):\mathfrak{g}(F)/\mathfrak{g}_{\gamma}(F)\rightarrow\mathfrak{g}(F)/\mathfrak{g}_{\gamma}(F)))$.
\end{center}
\end{proof}
\subsection{Fin de la preuve du théorème $\ref{dim}$}\label{finrefdim}
On a vu que l'ouvert régulier $X_{\gamma}^{\lambda,reg}$ est non vide et est une orbite sous le centralisateur de $\gamma$.
\begin{prop}\label{dimspring}
On a $\dim X_{\gamma}^{\lambda,reg}= \dime X_{\gamma}^{\lambda}=\dime\mathcal{B}_{\gamma_{+}}$.
De plus, la dimension du complémentaire de l'ouvert régulier est de dimension strictement plus petite que celle de $\dime X_{\gamma}^{\lambda}$.
\end{prop}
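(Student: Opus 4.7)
Le plan consiste � exploiter la fl�che de projection naturelle $p:\cB_{\g_{+}}\rightarrow X_{\g_{+}}=X_{\g}^{\la}$ (l'identification r�sultant du lemme \ref{renorm}) qui, � une classe $gI_{der}$ satisfaisant $g^{-1}\g_{+}g\in I^{\bullet}_{0}$, associe la classe $gH_{der}(\co)$. La fibre de $p$ en $g$ s'identifie canoniquement � l'ensemble des semi-groupes de Borel de $V_{G}^{0}$ contenant la r�duction $\overline{\g}:=g^{-1}\g_{+}g\bmod\pi\in V_{G}^{0}$, c'est-�-dire � la fibre du morphisme propre $\la:\tilde{V}_{G}\rightarrow V_{G}$ au-dessus de $\overline{\g}$, puisque choisir un Iwahori de $I^{\bullet}_{0}$ contenant $g^{-1}\g_{+}g$ �quivaut � choisir un semi-groupe de Borel contenant sa r�duction. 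La surjectivit� de $p$ d�coule alors de celle de $\la$, laquelle r�sulte de la propret� de $\la$ et du fait que son image contient l'ouvert dense $G_{+}^{reg}$.

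L'argument se d�compose en deux estim�es sur les fibres de $p$. D'une part, par le corollaire \ref{finspring}, le morphisme $\la^{reg}$ est fini, d'o� la finitude de $p$ au-dessus de $X_{\g}^{\la,reg}$ et l'�galit� $\dim p^{-1}(X_{\g}^{\la,reg})=\dim X_{\g}^{\la,reg}$. D'autre part, par la proposition \ref{stnilb}, les fibres de $\la$ au-dessus du lieu irr�gulier du c�ne nilpotent sont de dimension au moins un ; par semi-continuit� sup�rieure de la dimension des fibres, cette propri�t� d�finit un ferm� $Z_{+}$-stable de $V_{G}$ qui, joint � l'argument de propagation via l'action du tore central $Z_{+}$ d�j� employ� � plusieurs reprises, s'�tend � tout le lieu irr�gulier $V_{G}\setminus V_{G}^{reg}$. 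On en d�duit la minoration $\dim p^{-1}(X_{\g}^{\la}\setminus X_{\g}^{\la,reg})\geq\dim(X_{\g}^{\la}\setminus X_{\g}^{\la,reg})+1$.

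La conclusion r�sulte alors de l'�quidimensionnalit� de $\cB_{\g_{+}}$ �tablie au th�or�me pr�c�dent. Posons $d=\dim\cB_{\g_{+}}$. Comme l'ouvert r�gulier $X_{\g}^{\la,reg}$ est non vide (c'est un espace principal homog�ne sous le champ de Picard $\cP(J_{a})$ par la proposition \ref{picardloc}), l'ouvert $p^{-1}(X_{\g}^{\la,reg})$ est non vide, donc dense dans $\cB_{\g_{+}}$ et de dimension $d$; on obtient $\dim X_{\g}^{\la,reg}=d$. Pour le compl�mentaire, la seconde estim�e combin�e � l'inclusion $p^{-1}(X_{\g}^{\la}\setminus X_{\g}^{\la,reg})\subset\cB_{\g_{+}}$ donne $\dim(X_{\g}^{\la}\setminus X_{\g}^{\la,reg})\leq d-1<d$, ce qui fournit � la fois l'in�galit� stricte annonc�e et les �galit�s $\dim X_{\g}^{\la}=\dim X_{\g}^{\la,reg}=\dim\cB_{\g_{+}}$.

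Le point le plus d�licat sera la propagation rigoureuse de la minoration des fibres de $\la$ depuis le c�ne nilpotent irr�gulier vers l'ensemble du lieu irr�gulier de $V_{G}$: il faut exhiber un voisinage $Z_{+}$-stable o� la propri�t� se maintient, et v�rifier qu'elle reste pertinente pour la r�duction modulo $\pi$ des �l�ments $g^{-1}\g_{+}g$ avec $g$ dans le compl�mentaire de $X_{\g}^{\la,reg}$. Le reste de l'argument (�quidimensionnalit�, finitude au-dessus du r�gulier) s'agence formellement comme dans la preuve classique de Kazhdan-Lusztig.
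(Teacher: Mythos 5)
Your argument is correct and follows essentially the same route as the paper: the projection $p:\cB_{\g_{+}}\rightarrow X_{\g}^{\la}$, finiteness over the regular locus (the paper cites Proposition \ref{stnil} directly rather than Corollary \ref{finspring}, but these are equivalent here), fibres of dimension at least one over the irregular locus via Proposition \ref{stnilb}, and the equidimensionality of $\cB_{\g_{+}}$. The ``propagation'' step you flag as delicate is not actually needed, since the reductions modulo $\pi$ of the elements $g^{-1}\g_{+}g$ already land in the (quasi-)nilpotent locus where Proposition \ref{stnilb} applies as stated.
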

\begin{proof}
On a une flèche projective lisse $p:\cB\rightarrow\Gr$ et comme nous avons que $\g_{+}$ est topologiquement quasi-unipotent, elle induit une flèche surjective $p:\mathcal{B}_{\gamma_{+}}\rightarrow X_{\gamma_{+}}=X_{\g}^{\la}$. Au-dessus de $X_{\gamma}^{\lambda,reg}$, en vertu de la proposition \ref{stnil} elle est finie et en dehors du lieu régulier, les fibres sont de dimension au moins un d'après la proposition \ref{stnilb}. Comme $\mathcal{B}_{\gamma_{+}}$ est équidimensionnelle, on en déduit que 
\begin{center}
$\dime X_{\gamma}^{\lambda,reg}=\dime\mathcal{B}_{\gamma_{+}}$,
\end{center}
ainsi que l'assertion sur la dimension du complémentaire.
De plus, $\dime X_{\gamma}^{\lambda}\leq \dime\mathcal{B}_{\gamma_{+}}$, donc nous avons l'égalité.
\end{proof}
Soit $\tilde{F}$ l'extension qui déploie $\gamma$ de degré $n$. On sait que $\gamma$ est stablement conjugué à un élément de $T(\tilde{F})$.
On note: 
\begin{center}
$X_{\gamma}^{\lambda}:=\{g\in G(F)/G(\mathcal{O})\vert~g^{-1}\gamma g\in K\pi^{\la}K\}$
\end{center}
(resp $\tilde{X}_{\gamma}^{\lambda}$,  la même chose dans $\tilde{F})$.
Le centralisateur de $\gamma$ agit sur $X_{\gamma}^{\lambda}$ et $\tilde{X}_{\gamma}^{\lambda}$.
Si $g$ est un point de $X_{\gamma}^{\lambda}$ (resp. $\tilde{X}_{\gamma}^{\lambda}$), on note $\co_{g}$ (resp. $\tilde{\co}_{g}$) l'orbite pour $G_{\gamma}(F)$ (resp. $G_{\gamma}(\tilde{F})$).
Nous avons déjà vu que $X_{\g}^{\la,reg}$ est une orbite sous le centralisateur $G_{\gamma}$ .
On rappelle alors la formule de Bezrukavnikov, dont la preuve est rigoureusement la même dans le cas qui nous concerne:
\begin{prop}[\cite{B}, Lem. 2-3]\label{dimspring2} 
On a l'égalité suivante:
\begin{center}
$\dime\tilde{\mathcal{O}}_{g}=n[\dime\mathcal{O}_{g}+\frac{1}{2}\defa(\gamma)]$.
\end{center}
\end{prop}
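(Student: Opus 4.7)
The plan is to adapt Bezrukavnikov's argument from the Lie algebra case (\cite{B}, Lemmes 2 and 3) to the present group setting. The proof should remain essentially unchanged, because the orbits in question are controlled entirely by the centralizer torus $T:=G_\gamma$, and the computation that follows is purely toric, insensitive to whether we work with $\kg$ or $G$.

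The first step is to express each orbit dimension in terms of the affine Grassmannian of $T$. The $T(F)$-orbit $\co_g\subset G(F)/G(\co)$ is, as an ind-subscheme over $k$, isomorphic to the quotient $T(F)/\bigl(T(F)\cap gG(\co)g^{-1}\bigr)$; its dimension coincides with that of the $k$-vector space $\kg_\gamma(F)/\bigl(\kg_\gamma(F)\cap \Ad(g)\kg(\co)\bigr)$. The same description holds for $\tilde{\co}_g$ after extension of scalars to $\tilde{F}$.

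Next, I would exploit the structure of $T$ as an $F$-torus: write, up to isogeny, $T=A_T\cdot T^{\mathrm{an}}$ with $A_T$ the maximal $F$-split subtorus and $T^{\mathrm{an}}$ anisotropic. Over $\tilde{F}$ the torus $T$ splits, so $\defa_{\tilde{F}}(\gamma)=0$, and the Weil restriction $\Res_{\tilde{F}/F}$ multiplies all relevant $k$-dimensions by $n$, producing the global factor $n$ in the formula. Comparing $T(F)\cap gG(\co)g^{-1}$ with the Galois-invariant part of $T(\tilde{F})\cap gG(\tilde{\co})g^{-1}$ yields an extra contribution arising solely from $T^{\mathrm{an}}$, equal to $\tfrac{1}{2}\defa(\gamma)$; the factor $\tfrac{1}{2}$ reflects the fact that each pair of Galois-conjugate roots of the anisotropic factor contributes once, not twice, in the lattice count.

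The main obstacle is the careful bookkeeping of dimensions under Weil restriction and Galois descent, in particular pinning down the contribution of the anisotropic factor as exactly $\tfrac{1}{2}\defa(\gamma)$. Once that identification is established, assembling the terms yields the stated equality; crucially, this argument does not invoke the full dimension formula of Theorem~\ref{dim}, so it can legitimately be used as an intermediate step towards proving it.
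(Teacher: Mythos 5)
The first thing to note is that the paper does not actually prove this proposition: it invokes \cite{B}, Lemmes 2--3, and asserts that the proof carries over verbatim, the justification being precisely the observation you make at the end --- the statement concerns only the centralizer torus $T=G_{\gamma}$ and its orbits on the affine Grassmannian, so nothing distinguishes the group setting from the Lie algebra setting. That framing of yours is correct and is all the paper itself claims.

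Your reconstruction of Bezrukavnikov's argument, however, has two concrete defects. First, the quotient $\kg_{\gamma}(F)/\bigl(\kg_{\gamma}(F)\cap\Ad(g)\kg(\co)\bigr)$ is an $F$-vector space modulo an $\co$-lattice, hence infinite-dimensional over $k$; it cannot compute $\dime\mathcal{O}_{g}$. The correct statement is $\dime\mathcal{O}_{g}=\dime\bigl(T(F)_{1}/(T(F)\cap gG(\co)g^{-1})\bigr)$, where $T(F)_{1}$ is the maximal bounded subgroup of $T(F)$: the orbit is a disjoint union, over the discrete group $T(F)/T(F)_{1}\cong X_{*}(A_{T})$, of copies of this finite-dimensional piece, and its dimension is a relative index of lattices in $\mathfrak{t}$. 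Second, and more seriously, you locate the defect term in the wrong place. Writing $\co_{\tilde{F}}$ for the ring of integers of $\tilde{F}$ and $\Gamma=\Gal(\tilde{F}/F)$, the stabilizers satisfy $T(F)\cap gG(\co)g^{-1}=\bigl(T(\tilde{F})\cap gG(\co_{\tilde{F}})g^{-1}\bigr)^{\Gamma}$ and $\Lie\bigl(T(\tilde{F})\cap gG(\co_{\tilde{F}})g^{-1}\bigr)=\Lie\bigl(T(F)\cap gG(\co)g^{-1}\bigr)\otimes_{\co}\co_{\tilde{F}}$, so the comparison you propose produces exactly the factor $n$ and \emph{no} correction term. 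The contribution $\tfrac{1}{2}\defa(\gamma)$ comes entirely from comparing the two maximal bounded subgroups, namely from the identity
\[
\dim_{k}\Bigl(\mathfrak{t}(\co_{\tilde{F}})\big/\bigl(\Lie(T(F)_{1})\otimes_{\co}\co_{\tilde{F}}\bigr)\Bigr)=\tfrac{n}{2}\defa(\gamma),
\]
which one proves by decomposing $X_{*}(T)\otimes\co_{\tilde{F}}$ into eigenspaces for the (totally ramified, tame, cyclic) Galois action: the eigenvalue $\zeta^{j}$ contributes a jump of $j$, conjugate eigenvalues $\zeta^{j}$ and $\zeta^{n-j}$ occur with equal multiplicity and contribute $n$ jointly, and the number of nontrivially acted-upon characters is $\defa(\gamma)$ --- this is the actual origin of the factor $\tfrac{1}{2}$. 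Since your step 1 never introduces $T(F)_{1}$, the argument as written has no mechanism to produce the defect and would yield $\dime\tilde{\mathcal{O}}_{g}=n\,\dime\mathcal{O}_{g}$.
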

On peut  terminer la preuve du théorème \ref{dim}:
\begin{proof}
On sait, d'après l'étude du cas déployé, que $\tilde{X}_{\gamma}^{\lambda}$ est de dimension: 
\begin{center}
$n[\left\langle\rho,\lambda \right\rangle +\frac{1}{2}\delta(\gamma)]$.
\end{center}
En combinant les propositions $\ref{dimspring}$ et $\ref{dimspring2}$ ainsi que le calcul dans le cas déployé, on a la formule désirée pour la dimension de la fibre de Springer:
\begin{center}
$\dime X^{\lambda}_{\gamma}=\left\langle \rho,\lambda\right\rangle+\frac{1}{2}[\delta(\gamma)-\defa(\gamma)]$.
\end{center}
\end{proof}

\begin{flushleft}
Alexis Bouthier \\
Université Paris-Sud UMR 8628\\
Mathématiques, Bâtiment 425, \\
F-91405 Orsay Cedex France \\
E-mail: alexis.bouthier@math.u-psud.fr \\
\end{flushleft}
\end{document}